\DeclareMathOperator*{\essinf}{ess\,inf}
\newtheorem{prop}{Proposition}[section]
\newtheorem{lem}{Lemma}[section]
\newtheorem{thm}{Theorem}[section]
\newtheorem{cor}{Corollary}[section]
\newtheorem{defn}{Definition}[section] 
\newtheorem{rem}{Remark}[section]
\newtheorem{assumptionL}{\unskip} 
\newtheorem{assumptionX}{\unskip} 
\newtheorem{assumptionB}{\unskip} 
\newtheorem{assumptionW}{\unskip} 
\newcommand{\beq}{\begin{equation}}
\newcommand{\eeq}{\end{equation}}
\def\eq#1/{(\ref{e:#1})}
\newcommand{\red}{\color{red}}
\newcommand{\al}{\alpha}
\newcommand{\la}{\lambda}
\newcommand{\La}{\Lambda}
\newcommand{\ff}{\infty}
\newcommand{\ra}{\rightarrow}
\newcommand{\ep}{\epsilon}
\newcommand{\bE}[1]{\mathbf{E}\left[ #1 \right]}
\newcommand{\bP}[1]{\mathbf{P}\left( #1 \right)}
\newcommand{\var}[1]{\mathbf{Var}\left( #1 \right)}
\DeclarePairedDelimiterX{\inp}[2]{\langle}{\rangle}{#1, #2}
\newcommand{\R}{\mathbb{R}}
\newcommand{\beqq}{\begin{eqnarray}}
\newcommand{\eeqq}{\end{eqnarray}}
\newcommand{\ta}{\theta}
\newcommand{\interior}[1]{{\kern0pt#1}^{\mathrm{o}}}
\def\Real{\mathbb R}
\begin{document}
\title{Sharp Large Deviations and Gibbs Conditioning for Threshold Models in Portfolio Credit Risk}
\author{Fengnan Deng \\ Department of Statistics \\ George Mason University \\ Fairfax, VA 22030 \and Anand N.\ Vidyashankar \\Department of Statistics \\George Mason University \\Fairfax, VA 22030 \and  Jeffrey F. Collamore \\ Department of Mathematical Sciences \\ University of Copenhagen \\
Universitetsparken 5\\ 2100 Copenhagen {\O}, Denmark}
\date{ }
\maketitle
\begin{abstract}
We obtain sharp large deviation estimates for exceedance probabilities in dependent triangular array threshold models with a diverging number of latent factors. The prefactors quantify how latent-factor dependence and tail geometry enter at leading order, yielding three regimes: Gaussian or exponential-power tails produce polylogarithmic refinements of the Bahadur-Rao $n^{-1/2}$ law; regularly varying tails yield index-driven polynomial scaling; and bounded-support (endpoint) cases lead to an $n^{-3/2}$ prefactor. We derive these results through Laplace-Olver asymptotics for exponentially weighted integrals and conditional Bahadur-Rao estimates for the triangular arrays. Using these estimates, we establish a Gibbs conditioning principle in total variation: conditioned on a large exceedance event, the default indicators become asymptotically i.i.d., and the losses are exponentially tilted (with the boundary case handled by an endpoint analysis). 
The result extends to blocks of size $k_n=o(n)$. As illustrations, we obtain second-order approximations for Value-at-Risk and Expected Shortfall, clarifying when portfolios operate in the genuine large-deviation regime. The results provide a transferable set of techniques—localization, curvature, and tilt identification —for sharp rare-event analysis in dependent threshold systems.
\\

\noindent\textbf{Keywords:} dependent threshold models; Laplace-Olver asymptotics; Gibbs conditioning principle in total variation; log-smooth distributions; conditional central limit theorem; Value-at-Risk; Expected Shortfall.
\end{abstract}

\setlength{\parindent}{10pt}
\section{Introduction}
Threshold models generate rare-event (large deviations) phenomena across diverse domains. In credit risk, defaults occur when asset values fall below a threshold; in epidemic modeling, infections spread once exposures exceed a critical level; in reliability and environmental models, system failure occurs at critical stress levels.  All these problems reduce to analyzing sums of threshold indicators with dependence driven by latent factors; in credit risk, such factor structures are classical and widely used \cite{Merton1974,DuffieSingleton2003,Lando2004,mcneil2015quantitative,vasicek2002distribution}. We study this in a triangular array framework, motivated by credit portfolios but with methods applicable to many dependent threshold systems. Specifically, we provide sharp prefactors for a dependent portfolio model with a common factor across both light‑tail and heavy‑tail regimes; prior sharp results typically treat i.i.d. light tails, or treat heavy tails without unified prefactors.

Our framework is a triangular array factor model with a diverging number of factors.
We let
\begin{equation} \label{eq:1.1new}
 Y_i^{(n)} =  \sum_{j=1}^{k_n} a_{j}^{(n)} Z_j^{(n)} + b^{(n)} \ep_i^{(n)}, \quad i=1,2, \ldots, n,
\end{equation}
where $\sum_{i=1}^n (a_{i}^{(n)})^2+(b^{(n)})^2=1$, $b^{(n)} \to b \in (0,1)$, and $\sum_{i=1}^{k_n} a_{i}^{(n)}Z_i^{(n)}$ converges in distribution to a limiting random variable ${\cal Z}$ (which potentially could be degenerate).  Let $X_i^{(n)} = {\bf 1}_{\{Y_i^{(n)} \le v\}}$ denote the default indicator. Also, set
\begin{eqnarray} \label{eq:1.2new}
    L_n = \sum_{i=1}^n U_i^{(n)} X_i^{(n)},
    \quad \text{and} \quad U_i^{(n)} \stackrel{d}{\to} U.
\end{eqnarray}
When $k_n=k$ is fixed and the summands are i.i.d.\ and independent of $n$, this reduces to the Vasicek model and its multifactor extensions (see \cite{Merton1974,glasserman2007large,DuffieSingleton2003,Lando2004,mcneil2015quantitative,vasicek2002distribution}). Early large-deviation analyses for credit portfolios either conditioned away factor dependence or worked only at a logarithmic scale; see, e.g., \cite{dembo2004large,bassamboo2008portfolio,MaierWuthrich2009, Giesecke2013,DelsingMandjes2021}. 
Related sharp asymptotics were obtained in the complementary near-critical regime $x_n \uparrow \mu_U$, where probabilities of the form $\mathbf{P}(L_n \geq n x_n)$ were studied \cite{collamore2022sharp}. 
In contrast, our focus is on the large-deviation regime with fixed thresholds $x>\mu_U$, where the dependence induced by common factors yields qualitatively different prefactors; in particular, the sharp prefactors reflect the latent-factor dependence and tail geometry, departing from the classical Bahadur-Rao law.

The asymptotic behavior of $\bP{L_n \ge nx}$ critically depends on the support of the limiting factor distribution $\mathcal{Z}$. Let $\kappa \coloneqq \mathrm{ess\,inf}\, \mathcal{Z}$ If $\kappa = -\infty$, one obtains
\begin{align*}
\mathbf{P}\left(\frac{L_n}{n} \ge x\right) 
\sim n^{-1/2} \, e^{-n\phi_n(-\tilde M_n)} \, e^{-n\Lambda^*_U(x)} \, (H_n(-\tilde M_n))^{-1}, 
\qquad n \to \infty,
\end{align*}
where $\{\tilde M_n\} \uparrow \infty$ and $\phi_n$, $H_n$ are explicitly determined functions. 
In particular, the prefactor deviates from Bahadur–Rao’s classical $n^{-1/2}$ law, and in the Gaussian case exhibits poly-logarithmic corrections. 
For symmetric regularly varying tails, the scaling is instead determined by the relative indices of $\mathcal{Z}$ and $\epsilon$. 
When $\kappa > -\infty$ and $\mathcal{Z}$ is non-degenerate, the sharp asymptotics are different: 
\begin{align*}
\bP{L_n \geq nx} \sim n^{-3/2} \, e^{-n\Lambda^*_U(x;\kappa)}, \qquad n \to \infty,
\end{align*}
while if $\mathcal{Z}$ is degenerate at $\kappa$, the classical Bahadur–Rao $n^{-1/2}$ prefactor reemerges. Thus, the dependence induced by the common factors leaves a visible footprint in the sharp asymptotics, even though it disappears under logarithmic large deviation scaling.

One of our primary methodological contributions is the use of \emph{endpoint (boundary) Laplace expansions} in the sense of Olver \cite{olver1997asymptotics}, which sharpen the classical Laplace method for exponentially weighted (Laplace–type) integrals. On the probabilistic side, we rely on conditional sharp large–deviation estimates for triangular arrays \cite{bovier2014conditional,liao2024geometric} and on recent higher–order refinements for Laplace integrals \cite{fukuda2025higher}. These tools naturally produce the distinct asymptotic regimes described above, with prefactors that depend on the support of $\mathcal Z$ and on the underlying tails of both $\mathcal Z$ and $\{\epsilon_i^{(n)}\}$ (e.g., log–smooth/Gaussian vs.\ regularly varying). Concretely, light-tailed (Gumbel-type) tails yield polylogarithmic corrections, whereas regularly varying tails (Fréchet-type) yield index–driven power-law scalings. Only in the special (degenerate–factor) case where $\mathcal Z$ is constant do we recover the classical Bahadur–Rao prefactor $n^{-1/2}$. We expect these endpoint Laplace methods to be useful more broadly for large deviations with triangular arrays.

We also present a distribution–agnostic formulation by replacing parametric templates with \emph{log–smooth} and \emph{self–neglecting} tail conditions on the right tail of $\epsilon$ and the left tail of $\mathcal Z$. These conditions are standard and widely satisfied within the Gumbel maximum domain of attraction \cite{deHaanFerreira2006,BinghamGoldieTeugels1987}. Combined with our analysis for regularly varying tails, this yields sharp asymptotics under broad distributional assumptions.


We further establish a \emph{Gibbs–type conditional limit} for the joint law of $(U^{(n)}_{1:k},X^{(n)}_{1:k})$ given $\{L_n\ge nx\}$ with $x>\mu_U$. For i.i.d.\ sequences, related conditional limits are classical in large deviations (see \cite{dembo2009large}); recent refinements appear in \cite{AsmussenandGlynn24}. Our threshold setting is more intricate because $X_i^{(n)}=\mathbf 1\{\mathcal Z_n+b^{(n)}\epsilon_i^{(n)}\le v\}$ enters multiplicatively. Our proof combines a single–ratio (Bayes) decomposition with the uniform conditional Bahadur–Rao expansion and the endpoint Laplace analysis. Two regimes arise. If $\operatorname{ess\,inf}\mathcal Z=\kappa=-\infty$ (unbounded left support), then $X_i^{(n)}\Rightarrow 1$ and $U_i^{(n)}$ converges to the \emph{exponentially tilted} law $P_U^{\theta_x}$. In the context of credit risk, this would imply that every obligor defaults in the limit, and that the distribution of the losses $U^{(n)}_i$ are exponentially tilted. If $\operatorname{ess\,inf}\mathcal Z=\kappa>- \infty$ (boundary–dominated case), then $X_i^{(n)}\Rightarrow{\rm Bernoulli}(p_{\kappa})$ with $p_{\kappa}=F_\epsilon((v-\kappa)/b)$, and $U_i^{(n)}$ converges to the one–step tilt $P_U^{\bar\theta_{x,\kappa}}$. In both cases the convergence holds in total variation for each fixed $k$. The argument extends to $k=k_n\to\infty$ with $k_n/n\to 0$.
Finally, while our focus is on threshold losses $L_n=\sum_{i=1}^n U_i^{(n)}X_i^{(n)}$, similar techniques apply to $L_n=\sum_{i=1}^n U_i^{(n)}f(X_i^{(n)})$ for measurable $f$ satisfying mild smoothness and monotonicity conditions.



The rest of the paper is organized as follows: Section \ref{sec:Main_results} contains sharp large deviation and Gibbs conditioning results. Section \ref{sec:preliminary} is devoted to preliminary results and the probability estimates required to prove the main theorems. Section \ref{sec:proof} contains the proofs of the main results, and Section \ref{sec:concluding} is devoted to concluding remarks and further extensions. The appendices, included in the supplementary materials, provide detailed proofs of lemmas and propositions and illustrate how our methods can obtain sharper expressions for Value-at-Risk (VaR) and Expected Shortfall (ES) in factor models. A small numerical study is also included in Appendix~\ref{app:E}.

\section{Main Results} \label{sec:Main_results}

\subsection{Model description and assumptions}
For any random variable $W \in {\mathbb R}$, let $F_W$ denote the distribution function of $W$; let $f_W$ denote its density function (if it exists); and let $\lambda_W$ and $\Lambda_W$
denote its moment generating function and cumulant generating function, respectively, that is,
\[
\lambda_W(\theta) = {\bf E} \left[ e^{\theta W} \right] \quad \text{and} \quad \Lambda_W(\theta) = \log \lambda_W(\theta), \quad \text{for all} \:\: \theta \in {\mathbb R}.
\]
Also, for any function $f: {\mathbb R} \to {\mathbb R}$, let $f^\ast$ denote the convex conjugate (or Fenchel-Legendre transform) of $f$; namely,
$f^\ast(x) = \sup_{\theta \in {\mathbb R}} \left\{ \theta x - f(\theta)\right\},$ for all $x \in {\mathbb R}$; and let ${\mathfrak D}_f$ denote the domain of $f$.  Finally, for any Borel
set $B \subset {\mathbb R}$, let int\,$B$ denote the interior of $B$.

Our objective is to study the tail behavior of a portfolio credit risk model consisting of 
$n$ obligors.  Default occurs for the $i$th obligor if $Y_i^{(n)} \le v$ for some prespecified threshold $v \in {\mathbb R}$, where 
the normalized asset values $\{ Y_i^{(n)}:  i=1,\ldots, n \}$ will be modeled using a high-dimensional factor model, namely,
\begin{equation} \label{eq:2.1new}
 Y_i ^{(n)} =  \sum_{j=1}^{k_n} a_{j}^{(n)} Z_j^{(n)} + b^{(n)} \ep_i^{(n)}, \quad i=1,2, \ldots, n,
\end{equation}
where  $\{ Z_j^{(n)}:  j=1,\ldots, k_n \}$ is a collection of random variables in ${\mathbb R}$ (possibly dependent), $\{ k_n \}$
is a nondecreasing sequence of positive integers, $\{a_j^{(n)}:  j=1,\ldots,k_n\}$ is a sequence taking values in $[0,1)$,
and $b^{(n)} \in (0,1)$.  Moreover, assume that  $\{ \epsilon_i^{(n)}:  i=1,\ldots,n\}  \subset {\mathbb R}$ is an i.i.d.\ sequence,
independent of $\{ Z_j^{(n)}:  j=1,\ldots, k_n \}$ and with a distribution which is independent of $n$.
{\it We assume throughout the article that the nonnegative constants $\{ a_j^{(n)}:  j=1,\ldots,k_n \}$ and $b^{(n)}$ have been 
normalized such that}
\begin{equation} \label{assumption1}
\sum_{j=1}^{k_n} (a_{j}^{(n)})^2 + (b^{(n)})^2 =1, \quad \text{for all }n;
 \tag{$A1$}
\end{equation}
{\it and we further assume that} 
\begin{equation} \label{assumption2}
b := \lim_{n\to\infty}b^{(n)} \in (0,1).
 \tag{$A2$}
\end{equation}

For ease of notation, denote the aggregate common factors by
\begin{equation*}
{\cal Z}_n := a_1^{(n)} Z_1^{(n)} + \cdots + a_{k_n}^{(n)} Z_{k_n}^{(n)}, \quad n \in {\mathbb Z}_+.
\end{equation*}

Next, let $U_i^{(n)}$ denote the loss incurred from the $i^{\text{th}}$ loan (the ``loss given default'', or LGD), and assume that $\{ U_i^{(n)}:  i=1,\ldots, n \}$ is an i.i.d.\ sequence (possibly dependent on $n$). We assume that the loss size is independent of the event of default; thus, $\{ U_i^{(n)}:  i=1,\ldots, n \}$ may be taken to be independent of  $\{ Z_j^{(n)}:  j=1,\ldots, k_n \}$
and of $\{\epsilon_i^{(n)}:  i=1,\ldots,n\}$ (see Appendix \ref{app:F}, Remark \ref{rem:U-dep-X} for the case where $U$ may depend on $X$).
Set $X_i^{(n)} = {\bf 1}_{\{ Y_i^{(n)} \le v \}}$, and denote the total loss by
\begin{equation} \label{loss}
L_n = \sum_{i=1}^n U_i^{(n)} X_i^{(n)},  \quad n=1,2,\ldots.
\end{equation}
Our goal is to investigate the behavior of ${\bf P} \left( L_n \ge nx \right)$ as $n \to \infty$ for different choices of $x \in {\mathbb R}_+$. Finally, as is 
standard in large deviation analysis, we postulate that there exist random variables $U$ and ${\cal Z}$ such that
\begin{equation} \label{assumption3}
U_i^{(n)} \stackrel{d}{\to} U \quad  \text{and} \quad {\cal Z}_n \stackrel{p}{\to} {\cal Z} \quad \text{as} \quad n \to \infty.
\tag{$A3$}
\end{equation}
As in the previous section, set $\kappa = \essinf{\cal Z}$ and  $\hat{X}^{(z)} = {\bf 1}_{\{z + b\epsilon \leq v\}}$,
and define
\[
\Lambda_U(\theta ; z) = \log {\bf E}\left[ e^{ \theta
 U \hat{X}^{(z)} } \right], \quad \theta \in {\mathbb R}, \:z \in {\mathbb R},
\]
where $U$ is identified to be the limiting random variable in \eqref{assumption3}, and $\epsilon$ is an independent copy of $\epsilon_i^{(n)}$. Let \(\Theta:=\{\theta\ge 0:\Lambda_U(\theta)<\infty\}\) and \(\theta_0:=\sup\Theta\in(0,\infty]\).
Fix a compact interval \([0,\theta_\star]\subset(0,\theta_0)\) containing the relevant tilts \(\theta_x\). We now turn to the sharp (non-logarithmic) analysis, for which we assume the following regularity.

\noindent{\it Large deviation regularity:}\vspace*{-.1in}
\begin{assumptionL}\label{asp:L2}
(Uniform smooth convergence on \([0,\theta_\star]\)). 
\(\lambda_{U^{(n)}}(\theta)\to\lambda_U(\theta)\) and \(\lambda'_{U^{(n)}}(\theta)\to\lambda'_U(\theta)\) uniformly for \(\theta\in[0,\theta_\star]\),
and \(b^{(n)}\to b\).
When rates are used in the proofs, we additionally require 
\(n\lvert b^{(n)}-b\rvert\to 0\) and
\(n\sup_{\theta\in[0,\theta_\star]}\{\lvert\lambda_{U^{(n)}}(\theta)-\lambda_U(\theta)\rvert+\lvert\lambda'_{U^{(n)}}(\theta)-\lambda'_U(\theta)\rvert\}\to 0\).

\end{assumptionL}
\begin{assumptionL}\label{asp:L3}
    For $x\in(\mu_U,\infty)$, assume that $\Lambda_{U^{(n)}}'(\theta)=x$ and $\Lambda_{U}^'(\theta)=x$ have unique roots $\theta_x^{(n)}$ and $\theta_x$ respectively in $(0, \theta_\star)$. Assume $U^{(n)}$, $U$ are nonlattice and $\mathbf{E}_{\theta_x}[|U|^3]$ and $\mathbf{E}_{\theta_x^{(n)}}[|U^{(n)}|^3]$ are finite. $\mathbf{E}_{\theta_x}$ and $\mathbf{E}_{\theta_x^{(n)}}$ are under  exponentially tilted distribution $\mathbf{P}_{\theta_x}$ and $\mathbf{P}_{\theta_x^{(n)}}$, where $\mathrm{d}\mathbf{P}_{\theta_x}(u) =\exp\{\theta_x u - \Lambda_{U}(\theta_x)\} \mathrm{d}\mathbf{P}(u)$ and $\mathrm{d}\mathbf{P}_{\theta_x^{(n)}}(u)=\exp\{\theta_x^{(n)} u - \Lambda_{U^{(n)}}(\theta_x^{(n)})\}\mathrm{d}\mathbf{P}(u)$.
\end{assumptionL}
We note here that, to keep assumptions transparent and easy to verify, we state results under \ref{asp:L2}–\ref{asp:L3}; the proofs in fact extend under weaker, localized smoothness/moment conditions around the tilt together with a conditional Berry–Esseen/Bahadur–Rao input for the triangular arrays (see Appendix \ref{app:C}). Before we state our main Theorem, we introduce two important classes of distributions that arise in practice. To this end, recall that a function \( h \) on \( (0, \infty) \) is said to be regularly varying with index \( \alpha \) if
\[
\lim_{x \to \infty} \frac{h(tx)}{h(x)} = t^\alpha \quad \text{for all } t > 0 \text{ and some } \alpha \in \mathbb{R},
\]
or equivalently, \( h(x) = L(x) x^\alpha \) for some slowly varying function \( L \) (i.e., a function that grows more slowly than any power function; for a precise definition, see \cite{feller1970oscillations}). We say that a random variable \( W \) belongs to the \emph{class of symmetric regularly varying distributions with index \( \alpha \)} (which we denote by \( RV_\alpha \)) if
\[
\mathbf{P}(|W|>x) = L_W(x) x^{-\alpha}, \quad x \in \mathbb{R_+},
\]
for some slowly varying function \( L_W \). We assume that $L_W(\cdot)$ is twice continuously differentiable. Also, recall that the distribution function \( F_W \) of a random variable \( W \) belongs to the class of centered exponential power distributions (or generalized normal distributions) if the density is of the form
\[
f_W(x) = \frac{\gamma}{2 b \Gamma(\gamma^{-1})} \exp\left\{-\left(\frac{|x|}{b}\right)^{\gamma}\right\} \coloneqq \beta_W \exp\{-\xi_W |x|^{\gamma}\}, \quad \gamma \in (0,2], \: b \in {\mathbb R}_+,
\]
where $\Gamma(\cdot)$ denotes the gamma function. Note that $\beta_W$ depends on $\xi_W$ and $\gamma$, while $\xi_W$ depends only on $\gamma$.  We denote this distribution by $GN(\beta_W, \xi_W, \gamma).$
When $\gamma=1$, the density reduces to the symmetric Laplace distribution, denoted by \( \text{Lap}(0, b) \), and is given by 
\[
f_W(x) =  \frac{1}{2b} \exp \left( - \frac{|x|}{b} \right), \quad x \in \mathbb{R}.
\]
The class ${\cal C}$ of distributions we consider in our next proposition consists of all centered generalized normal distributions and all symmetric regularly varying distributions (as described by the class $RV_\alpha$ above), where $ \alpha > 0$. We now turn to our main sharp asymptotic results when the distribution of  ${\cal Z}_n$ and $\ep$  belongs to $\mathcal{C}$. In the rest of the paper, let $\psi_\infty=\left(\theta_x\sqrt{\Lambda_U''(\theta_x)}\right)^{-1}$.

\begin{thm}\label{thm:LDP_ndZ}
Assume that conditions {\rm(\ref{assumption1})-(\ref{assumption3})}, {\rm\ref{asp:L2}-\ref{asp:L3}} hold. There exists a sequence of constants $M_n \nearrow \ff$ and a collection of functions
$\{ \phi_n \}$ such that
\begin{eqnarray*}
 \lim_{n \ra \ff} n^{\frac{1}{2}}  e^{n \Lambda^*_U(x)} e^{n \phi_n(-\tilde M_n)} H_n(-\tilde M_n) \bP{L_n \geq n x} = \psi_{\ff},
\end{eqnarray*}
where $\exp(n \phi_n(-\tilde M_n))$ converges to a constant depending only on the distribution of $\ep$ and $\mathcal{Z}_n$, and $H_n(-\tilde M_n)$ diverges to infinity as $n \to \ff$.
\end{thm}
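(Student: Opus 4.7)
I would condition on the aggregate common factor $\mathcal{Z}_n$, apply a conditional sharp large deviation expansion for the triangular array of losses, and then evaluate the resulting integral via Olver's endpoint Laplace method. Start from
\[
\mathbf{P}(L_n \geq nx) \;=\; \int \mathbf{P}\bigl(L_n \geq nx \mid \mathcal{Z}_n = z\bigr)\, f_{\mathcal{Z}_n}(z)\, dz.
\]
Given $\mathcal{Z}_n = z$, the pairs $\{(X_i^{(n)},U_i^{(n)})\}_{i \le n}$ are i.i.d.\ with $X_i^{(n)} = \mathbf{1}\{b^{(n)}\epsilon_i^{(n)} \le v-z\}$, so $L_n$ is a conditionally i.i.d.\ sum whose MGF is controlled by $\Lambda_U(\theta; z)$. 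Under \ref{asp:L2}--\ref{asp:L3}, a triangular-array conditional Bahadur--Rao expansion yields, uniformly on the relevant $z$-range,
\[
\mathbf{P}\bigl(L_n \geq nx \mid \mathcal{Z}_n=z\bigr) \;\sim\; \frac{1}{\theta_x(z)\sqrt{2\pi n\,\Lambda_U''(\theta_x(z);z)}}\; e^{-n\Lambda_U^\ast(x;z)},
\]
where $\theta_x(z)$ is the unique positive root of $\partial_\theta\Lambda_U(\theta;z)=x$.

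Substituting this expansion reduces the problem to a Laplace-type integral of the form $\int \psi(z)\, n^{-1/2}\, e^{-n\Lambda_U^\ast(x;z)} f_{\mathcal{Z}_n}(z)\, dz$. Because $\hat{X}^{(z)}\to 1$ as $z\downarrow -\infty$, one has $\Lambda_U(\theta;z)\uparrow\Lambda_U(\theta)$ and $\Lambda_U^\ast(x;z)\downarrow \Lambda_U^\ast(x)$; at the same time $f_{\mathcal{Z}_n}(z)$ decays according to the left tail of $\mathcal{Z}$ within the class $\mathcal{C}$. Thus the combined exponent is $-n\Lambda_U^\ast(x) - n\phi_n(z) + o(n)$, where $\phi_n$ is the excess exponent encoding the competition between the rate function and the factor-tail decay. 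The minimizer of this exponent drifts to $-\infty$ with $n$, so the relevant asymptotic is not a classical interior Laplace expansion but a moving endpoint/boundary expansion in the sense of Olver.

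Executing Olver's endpoint Laplace analysis at the moving optimum $z^\ast = -\tilde M_n$, the leading exponential factor is $e^{-n\Lambda_U^\ast(x)}e^{-n\phi_n(-\tilde M_n)}$; the Gaussian fluctuation around $-\tilde M_n$ contributes a factor $(n\,\phi_n''(-\tilde M_n))^{-1/2}$ which, together with the $n^{-1/2}$ from the inner Bahadur--Rao step, assembles into the normalization $n^{-1/2} H_n(-\tilde M_n)^{-1}$, with $H_n(-\tilde M_n)\to\infty$ forced by the drift and the tail curvature. Because $\tilde M_n\to\infty$ implies $\Lambda_U(\theta;-\tilde M_n)\to\Lambda_U(\theta)$, it follows that $\theta_x(-\tilde M_n)\to\theta_x$ and $\Lambda_U''(\theta_x(-\tilde M_n);-\tilde M_n)\to\Lambda_U''(\theta_x)$, delivering the limiting constant $\psi_\infty=(\theta_x\sqrt{\Lambda_U''(\theta_x)})^{-1}$.

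\textbf{Main obstacle.} The hard part is to secure both expansions \emph{uniformly} along the drifting saddle: the conditional Bahadur--Rao estimate must be valid uniformly as $z$ varies over a window that slides to $-\infty$, while the Olver endpoint expansion must be controlled to the order that actually identifies $\phi_n$ and $H_n$ at a moving critical point whose curvature behaves differently in the log-smooth and regularly varying subcases of $\mathcal{C}$. Careful remainder control near the moving boundary, together with verification that the Cram\'er and non-lattice conditions in \ref{asp:L3} do not degenerate as $\hat X^{(z)}\to 1$, is the crux; the explicit functional forms of $\phi_n$ and $H_n$ then follow from the associated Laplace-type expansion in each tail regime (polylogarithmic corrections for Gaussian/exponential-power tails, index-driven polynomial scalings for regularly varying ones).
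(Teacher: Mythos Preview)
Your plan is essentially the paper's proof: condition on $\mathcal{Z}_n$, apply a uniform conditional Bahadur--Rao expansion, and evaluate the resulting integral $\int e^{n\tilde h_n(z)}\psi_n(z)\,dz$ with $\tilde h_n(z)=-\phi_n(z)+n^{-1}\log f_{\mathcal{Z}_n}(z)$ by Laplace's method at the drifting maximizer $-\tilde M_n$. Two small corrections: the relevant Laplace step here is an \emph{interior} expansion at the moving critical point $\tilde h_n'(-\tilde M_n)=0$ (the endpoint version is what the paper reserves for the bounded-support case $\kappa>-\infty$), and $H_n(-\tilde M_n)^{-1}$ packages both the density $f_{\mathcal{Z}_n}(-\tilde M_n)$ and the curvature $[n|\tilde h_n''(-\tilde M_n)|]^{-1/2}$ of the \emph{combined} exponent, not just $\phi_n''$.
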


The proof of Theorem \ref{thm:LDP_ndZ} involves using sharp large deviations for conditional distributions of triangular arrays as in \cite{bahadur1960deviations}. The literature on sharp large deviations for conditional distributions is limited and involves using Berry-Esseen type bounds in the conditional central limit theorem on an exponential scale. For results that are in this general area, see \cite{bovier2014conditional} and \cite{liao2024geometric}.

We next turn to identifying $\{\tilde M_n\}$ and characterizing the exact growth rate of $\exp(n\phi_n(-\tilde M_n)) H_n(-\tilde M_n)$ as a function of $n$.
These depend on the distributions of $\ep$ and $\mathcal{Z}_n$, which belong to $\mathcal{C}$.  Note that the behavior of $\phi_n(\cdot)$ is studied in Proposition \ref{prop:CRateFunction}.

\begin{prop}\label{prop:LDP_ndZ}
Assume that conditions {\rm(\ref{assumption1})-(\ref{assumption3})}, {\rm\ref{asp:L2}-\ref{asp:L3}} hold and that the distributions of $\ep$ and $\mathcal{Z}_n$ belong to the class $\mathcal{C}$.

\begin{enumerate}
\item Let $\ep\sim GN(\beta_\epsilon,\xi_\epsilon,\gamma)$ and ${\mathcal{Z}_n}\sim GN(\beta_{\mathcal{Z}_n},\xi_{\mathcal{Z}_n},\gamma)$ and $\gamma\in(0,2]$. Assume $\beta_{\mathcal{Z}_n}\to\beta_{\mathcal{Z}}$ and $(\log n)|\xi_{\mathcal{Z}_n}-\xi_{\mathcal{Z}}|\to0$ as $n\to\ff$. Also, set $c_{\gamma}=\frac{b^\gamma \xi_{\mathcal{Z}}}{\xi_\ep}$,
$\Delta_{\gamma}=c_{\gamma}\left(1+\log\frac{C_x \beta_\ep}{\xi_{\cal{Z}} \gamma b}\right)-\log \beta_{\cal{Z}}$, $C_x=\frac{\lambda_U(\theta_x)-1}{\lambda_U(\theta_x)}$, and
\begin{align*}
    \eta_\gamma=\begin{cases}
        1 & \gamma\in(0,2)\\
        \exp\{-v^2\xi_{\cal{Z}}\} & \gamma=2.
    \end{cases}
\end{align*}
Then choosing $\tilde M_n = b^{(n)} (\frac{\log n}{\xi_\ep})^{\gamma^{-1}}(1+o(1))$
\begin{align*}
\lim_{n\to\ff}\frac{R_{1n}R_{2n}R_{3n}}{e^{n \phi_n(-\tilde M_n)}H_n(-\tilde M_n)} = K_{\gamma} e^{-\Delta_{\gamma}} \eta_{\gamma},
\end{align*}
where $R_{1n}=n^{c_{\gamma}}$, $\log R_{2n}=(\gamma-1)\gamma^{-1}(1-c_{\gamma}) \log(\log n)$, $\log R_{3n}= -v\gamma c_{\gamma} b^{-1} \xi_{\ep}^{\gamma^{-1}} (\log n)^{(\gamma-1)\gamma^{-1}}$.
The constant $K_{\gamma}$ is given by
\begin{align*}
K_{\gamma}=\left(\frac{b^{\gamma}}{\xi_\ep}\right)^{-\frac{(1-\gamma)c_{\gamma}}{\gamma}} \frac{1}{\gamma}\sqrt{\frac{b^{\gamma}}{\xi_\ep \xi_{\cal Z}}} b^{1-\gamma} \xi_\ep^{(\gamma-1)\gamma^{-1}}. 
\end{align*}

\item If $\ep \sim RV_{\alpha_\ep}$, $\mathcal{Z}_n \sim RV_{\alpha_{\mathcal{Z}_n}}$, and $(\log n) |\alpha_{\mathcal{Z}_n}-\alpha_{\mathcal{Z}}|\to0$ and $L_{\mathcal{Z}_n}(n^{\frac{1}{\alpha_\ep}}) \left[L_{\mathcal{Z}}(n^{\frac{1}{\alpha_\ep}})\right]^{-1} \to1$ as $n\to\ff$, then $\tilde M_n = n^{\frac{1}{\alpha_\ep}}(1+o(1))$ and
\begin{eqnarray*}
\lim_{n\to\ff} \frac{n^{\frac{\alpha_{\mathcal{Z}}}{\alpha_\epsilon}} [L_\epsilon(n^{\frac{1}{\alpha_\epsilon}})]^{\frac{\alpha_{\mathcal{Z}}}{\alpha_\epsilon}} [L_{\mathcal{Z}}(n^{\frac{1}{\alpha_\epsilon}})]^{-1}}{e^{n \phi_n(-\tilde M_n)}H_n(-\tilde M_n)} = K_x.
\end{eqnarray*}
In this case, the constant $K_x$ is given by $K_x = e^{\Delta} (\alpha_\epsilon \alpha_{\mathcal{Z}})^{-\frac{1}{2}}$,
where $\Delta = \frac{\alpha_{\mathcal{Z}}}{\alpha_\epsilon}\log\frac{C_x\alpha_\epsilon b^{\alpha_\epsilon}}{\alpha_{\mathcal{Z}}} + \log\alpha_{\mathcal{Z}} - \frac{\alpha_{\mathcal{Z}}}{\alpha_\epsilon}$.
\end{enumerate}
The specific formula of $\tilde M_n$ is given in the proof.
\end{prop}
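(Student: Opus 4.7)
The plan is to start from the integral representation that emerges in the proof of Theorem \ref{thm:LDP_ndZ}: by conditioning on $\mathcal{Z}_n$,
\begin{equation*}
\mathbf{P}(L_n \geq nx) = \int \mathbf{P}(L_n \geq nx \mid \mathcal{Z}_n = z)\, f_{\mathcal{Z}_n}(z)\, dz,
\end{equation*}
and applying a uniform conditional Bahadur-Rao expansion to the inner probability (built from $\Lambda_U(\theta;z)=\log\{1+p(z)(\lambda_U(\theta)-1)\}$ with $p(z)=F_\epsilon((v-z)/b^{(n)})$), the prefactor $\exp(n\phi_n(-\tilde M_n))H_n(-\tilde M_n)$ is exactly what remains after extracting $e^{-n\Lambda^*_U(x)}$ from a Laplace-type integral in $z$ whose critical point drifts to $-\infty$. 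Proposition \ref{prop:CRateFunction} supplies the expansion $\Lambda^*_U(x;z)=\Lambda^*_U(x)+(1-p(z))C_x+O((1-p(z))^2)$ as $z\to-\infty$, reducing the problem to Laplace asymptotics for the exponent $E_n(M):=n(1-p(-M))C_x-\log f_{\mathcal{Z}_n}(-M)$ in the variable $M=-z$.

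In case 1, substituting $f_{\mathcal{Z}_n}(z)=\beta_{\mathcal{Z}_n}\exp\{-\xi_{\mathcal{Z}_n}|z|^\gamma\}$ together with the GN Mills-ratio tail $1-F_\epsilon(t)\sim \beta_\epsilon/(\gamma\xi_\epsilon t^{\gamma-1})\exp\{-\xi_\epsilon t^\gamma\}$, the stationarity condition $E_n'(M)=0$ yields (to leading order) $\xi_\epsilon(M/b)^\gamma\sim \log n$, hence $\tilde M_n=b^{(n)}(\log n/\xi_\epsilon)^{1/\gamma}(1+o(1))$. Expanding $(v+\tilde M_n)^\gamma = \tilde M_n^\gamma + \gamma v\tilde M_n^{\gamma-1} + O(\tilde M_n^{\gamma-2})$ inside the exponential produces the correction factor $R_{3n}$; evaluating $\exp\{-\xi_{\mathcal{Z}}\tilde M_n^\gamma\}$ at the saddle produces the polynomial $R_{1n}^{-1}R_{2n}^{-1}$; and an Olver endpoint expansion around the drifting $-\tilde M_n$ supplies the square-root curvature factor assembled into $K_\gamma$ together with the boundary-value constant $e^{-\Delta_\gamma}$. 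The factor $\eta_\gamma$ becomes nontrivial only in the Gaussian case $\gamma=2$, where the quadratic expansion of $(v+\tilde M_n)^2$ retains a finite residual term $v^2\xi_{\mathcal{Z}}$ after extraction of the leading $\tilde M_n^2$ and $v\tilde M_n$ pieces.

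In case 2, both tails are polynomial, so the competition reduces to the algebraic balance $n(1-p(-\tilde M_n))C_x\asymp 1$, giving $\tilde M_n\sim n^{1/\alpha_\epsilon}(1+o(1))$. Substituting into the integral and changing variables $z=-\tilde M_n t$ isolates the slowly varying pieces via Potter's bound and the Karamata representation, leaving a finite beta-type integral whose value combined with the normalization produces $K_x=e^\Delta(\alpha_\epsilon\alpha_{\mathcal{Z}})^{-1/2}$; the polynomial growth rate $n^{\alpha_{\mathcal{Z}}/\alpha_\epsilon}[L_\epsilon(n^{1/\alpha_\epsilon})]^{\alpha_{\mathcal{Z}}/\alpha_\epsilon}[L_{\mathcal{Z}}(n^{1/\alpha_\epsilon})]^{-1}$ is then read off from the product $f_{\mathcal{Z}_n}(-\tilde M_n)\tilde M_n$.

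The hard part will be controlling the conditional Bahadur-Rao expansion uniformly as $z\to-\infty$, since both the conditional tilt $\theta_x(z)$ and the conditional curvature $\Lambda_U''(\theta_x(z);z)$ drift with $z$ and must be expanded carefully near the saddle. Because $-\tilde M_n\to-\infty$, classical Laplace asymptotics with a fixed interior minimum do not apply, so one must invoke an Olver endpoint/boundary expansion with quantitative remainders and verify that these remainders are $o(1)$ against the leading product $R_{1n}R_{2n}R_{3n}$. Assumptions \ref{asp:L2}-\ref{asp:L3}, together with the $C^2$-regularity of the slowly varying factor $L_W$ and the smoothness of the GN parameters $(\beta_{\mathcal{Z}_n},\xi_{\mathcal{Z}_n})$ built into the class $\mathcal{C}$, are exactly what the proof needs to close these uniform estimates.
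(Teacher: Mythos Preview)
Your overall architecture matches the paper exactly: condition on $\mathcal{Z}_n$, apply the uniform conditional Bahadur--Rao expansion, use Proposition~\ref{prop:CRateFunction}(v) to write $\phi_n(-M)=C_x\bigl(1-F_\epsilon((v+M)/b^{(n)})\bigr)(1+o(1))$, and then perform Laplace asymptotics on the exponent $n\tilde h_n(z)=-n\phi_n(z)+\log f_{\mathcal{Z}_n}(z)$. Your identification of the saddle scale $\tilde M_n$ in both cases and the mechanism producing $R_{1n},R_{2n},R_{3n},\eta_\gamma$ in the GN case are all correct and coincide with Lemmas~\ref{pf:lem_Gen_1}--\ref{pf:lem_Gen_2}.

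Two points where the paper's execution differs from your sketch are worth flagging. First, your description of the Laplace step as an ``Olver endpoint/boundary expansion'' is not what is done here. The maximizer $-\tilde M_n$ is in the \emph{interior} of the integration range, and the paper handles its drift to $-\infty$ not by an endpoint expansion but by a rescaling that fixes the saddle: in the GN case $z=-M_n t$ with $M_n=b^{(n)}(\log n/\xi_\epsilon)^{1/\gamma}$, so that the maximizer in $t$ sits at $t_{0,n}\to 1$ inside a fixed window $(1-\beta,1+\beta)$, and ordinary interior Laplace applies. (The genuine endpoint/Watson expansion is reserved for the bounded-support case in Theorem~\ref{thm:LDP_kappa}.) Second, in the RV case the paper does \emph{not} use your linear substitution $z=-\tilde M_n t$ leading to a Karamata/gamma-type integral; instead it makes the exponential substitution $z=-M_n^{\,t}$ with $M_n=n^{1/\alpha_\epsilon}$, which manufactures a large-curvature exponent $nh_n''(t_{0,n})\sim -(\log M_n)^2\alpha_\epsilon\alpha_{\mathcal Z}$ and again permits a standard interior Laplace. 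This is where the factor $(\alpha_\epsilon\alpha_{\mathcal Z})^{-1/2}$ in $K_x$ comes from---it is the Gaussian curvature factor, not the value of a finite ``beta-type'' integral. Your linear-substitution route can in principle reach the same asymptotic (the limiting integral is a gamma integral $\int_0^\infty t^{-\alpha_{\mathcal Z}-1}e^{-Ct^{-\alpha_\epsilon}}dt$, not a beta integral), but you would then need to reconcile that evaluation with the specific form of $H_n(-\tilde M_n)=\bigl[f_{\mathcal{Z}_n}(-\tilde M_n)\bigr]^{-1}\bigl[n|\tilde h_n''(-\tilde M_n)|\bigr]^{1/2}$, which is defined through the curvature.
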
   

\begin{rem} In the above proposition, one can set $\gamma=1$ in the generalized normal distribution to obtain the results for the symmetric Laplace distribution. In this case, a simplification occurs, and the rate is $n^{c_1}$. It is also possible that the distributions of $\mathcal{Z}_n$ and $\ep$ are different. As an illustration, taking $\ep \sim N(0, \sigma^2_{\ep})$
and $\mathcal{Z}_n \sim RV_{\alpha_{\mathcal{Z}_n}}$, and assuming $(\log\log n)|\alpha_{\mathcal{Z}_n}-\alpha_{\mathcal{Z}}|\to0$ and $L_{\mathcal{Z}_n}(\sqrt{\log n})\left[L_{\mathcal{Z}}(\sqrt{\log n})\right]^{-1}\to1$ as $n\to\ff$, then $\tilde M_n= \sqrt{2}b^{(n)}\sigma_\epsilon\sqrt{\log n}(1+o(1))=b^{(n)} \left(\xi_\ep^{-1} \log n\right)^{\frac{1}{2}}(1+o(1))$, and
\begin{eqnarray*}
\lim_{n\to\ff} \frac{e^{n \phi_n(-\tilde M_n)}H_n(-\tilde M_n)}{(\log n)^{\frac{(\alpha_{\mathcal{Z}}+1)}{2}} \left[L_{\mathcal{Z}}\left(\sqrt{\log n}\right)\right]^{-1}} = K_x.
\end{eqnarray*}
The constant $K_x$ in this case is given by $K_x = \alpha_{\mathcal{Z}} (2b^2\sigma_\epsilon^2)^{-\frac{(\alpha_{\mathcal{Z}}+1)}{2}} b\sigma_\epsilon(\alpha_{\mathcal{Z}}+1)^{-\frac{1}{2}}$.
\end{rem}
\begin{rem}[Multi-factor regularly varying tails]
Suppose that the idiosyncratic factors $\epsilon$ and the common factors $Z_j^{(n)}$ both have regularly varying tails with indices $\alpha_\epsilon$ and $ \alpha_Z$, respectively, where $ \alpha_\epsilon$ and $ \alpha_Z$ are Positive, and $ L_\epsilon$ and $ L_Z$ are slowly varying functions. If $Z_j^{(n)}$ are i.i.d.\ with distribution $\mathrm{RV}_{\alpha_Z}$ and weights $\{a^{(n)}_j\}$ satisfy $\sum_{j=1}^{k_n} (a^{(n)}_j)^2+(b^{(n)})^2=1$, then
\[
\mathbf{P}\left(\sum_{j=1}^{k_n} a^{(n)}_j Z_j^{(n)} \le -w\right) \sim c_{\mathcal{Z}}^{\text{eff}}\,w^{-\alpha_{\mathcal{Z}}}L_{\mathcal{Z}}(w),
\qquad 
c_{\mathcal{Z}}^{\text{eff}}:=c_Z\sum_{j=1}^{k_n} |a^{(n)}_j|^{\alpha_Z}.
\]
Following the scaling in Theorem~\ref{thm:LDP_ndZ}, $M_n=n^{1/\alpha_\epsilon}(1+o(1))$, so the sharp asymptotics remain valid with the effective constant $c_{\mathcal{Z}}^{\text{eff}}$ in place of $c_{{Z}}$. 
Thus, the high-dimensional structure enters only through the geometry of the weights.
\end{rem}

Next, we turn to the case when $\kappa >-\ff$. In this situation, the rate depends on the support of $\mathcal{Z}$. Specifically, when the $\mathcal{Z}$ is non-degenerate, the prefactor governing the large deviations is of order $n^{-3/2}$ while when $\mathcal{Z}$ is degenerate at $\kappa$, the $n^{-1/2}$ prefactor predictably reemerges. This is the content of our next Theorem, which does not require membership in $\mathcal{C}$. We need a few additional notations. Let $p_{\kappa}= F_{\ep}(\frac{v-\kappa}{b})$, $q_{\kappa}=\mu_U p_{\kappa}$, and $\psi_\ff(z)=[\bar\theta_{x,z}]^{-1} [\Lambda''(\bar\theta_{x,z};z)]^{-\frac{1}{2}}$, where $\bar\theta_{x,z}$ is the root of $\Lambda'(\theta;z)=x$, $\Lambda(\theta;z) = \log\bE{\exp(\theta UX)} = \log \left[\lambda_{U}(\theta) F_{\epsilon}\left(\frac{v-z}{b}\right) + 1- F_{\epsilon}\left(\frac{v-z}{b}\right)\right]$, and $\Lambda^*(x;z)=\sup_{\theta}[\theta x - \Lambda(\theta;z)]$. The derivatives of $\Lambda(\theta;z)$ are taken with respect to $\theta$.
When $\mathcal{Z}_n$ converges to $\kappa$, the tilts are defined via the generating functions $\Lambda_n(\theta;\kappa) \coloneqq \log\{\la_{U^{(n)}}(\ta)p_{\kappa}+1-p_{\kappa}\}$, and $\Lambda(\theta;\kappa) \coloneqq \log\{\la_{U}(\ta)p_{\kappa}+1-p_{\kappa}\}$. 

\begin{thm} \label{thm:LDP_kappa}
Suppose the supports of $\mathcal{Z}_n$ and $\mathcal{Z}$ are $[z_0,\ff)$. Assume that conditions (\ref{assumption1})-(\ref{assumption3}), {\ref{asp:L2}-\ref{asp:L3}} hold. Then, for any $x > \mu_U$
\begin{eqnarray*}
    \lim_{n \ra \ff} n^{\frac{3}{2}} e^{n \Lambda^*(x;z_0)}  \bP{L_n \geq n x} = \frac{C_{z_0}\psi_\ff(z_0)}{\sqrt{2\pi}},
\end{eqnarray*}
where $C_{z_0}=\frac{f_{\mathcal{Z}}(z_0)}{(\Lambda^*)'(x;z_0)}\in(0,\ff)$, and $(\Lambda^*)'(x;z)$ is derivative of $\Lambda^*(x;z)$ with respect to $z$. If $\mathcal{Z}$ is degenerate at $\kappa$, additionally, assume the following \emph{localization conditions} (i) $\bP{n|\mathcal{Z}_{n}-\kappa|>\eta}\to 0$ as $n\to\ff$ for any fixed $\eta\in(0,\infty)$, 
(ii) for any $x>  q_{\kappa}$, there exists a unique $\theta_{x,n} \in (0, \theta_0)$ satisfying
$\Lambda_{n}'(\theta_{x,n};\kappa)=x$, and (iii) $\Lambda'(\theta;\kappa)=x$ has unique root, $\theta_x \in (0, \theta_0)$ hold. Then, for any $x > q_{\kappa}$
\begin{eqnarray*}
    \lim_{n \ra \ff} n^{\frac{1}{2}} e^{n \Lambda^*(x;\kappa)}  \bP{L_n \geq n x} = \frac{\psi_\ff(\kappa)}{\sqrt{2\pi}}.
\end{eqnarray*}
\end{thm}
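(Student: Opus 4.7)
The plan is to condition on the aggregate factor $\mathcal{Z}_n$, reduce $\bP{L_n\ge nx}$ to a Laplace-type integral against the law of $\mathcal{Z}_n$ whose integrand admits a uniform conditional Bahadur-Rao expansion, and then evaluate the resulting integral by an endpoint (Olver-type) Laplace asymptotic in the non-degenerate case and by a concentration argument in the degenerate case. Given $\mathcal{Z}_n=z$, the indicators $X_i^{(n)}$ are i.i.d.\ $\mathrm{Bernoulli}(p_n(z))$ with $p_n(z)=F_\epsilon((v-z)/b^{(n)})$, so the summands $U_i^{(n)} X_i^{(n)}$ are i.i.d.\ with cumulant generating function $\Lambda_n(\theta;z)=\log[\lambda_{U^{(n)}}(\theta)p_n(z)+1-p_n(z)]$. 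Under \ref{asp:L2} the convergence $\Lambda_n(\cdot;z)\to\Lambda(\cdot;z)$ holds uniformly in $\theta$ on $[0,\theta_\star]$ and, with the non-lattice and third-moment conditions in \ref{asp:L3}, the classical i.i.d.\ Bahadur-Rao theorem applied to the tilted Bernoulli-mixture family at tilt $\bar\theta_{x,z}$ yields, uniformly for $z$ in a small neighborhood of $z_0$ (or of $\kappa$),
\begin{equation*}
\bP{L_n\ge nx\mid\mathcal{Z}_n=z}=\frac{\psi_{\infty}(z)}{\sqrt{2\pi n}}\,e^{-n\Lambda^*(x;z)}\,(1+o(1)).
\end{equation*}
The uniformity in $z$ comes from the conditional Berry-Esseen machinery in \cite{bovier2014conditional, liao2024geometric}.

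For the non-degenerate case, integrating against $f_{\mathcal{Z}_n}$ gives
\begin{equation*}
\bP{L_n\ge nx}=\int_{z_0}^{\infty}\bP{L_n\ge nx\mid\mathcal{Z}_n=z}\,f_{\mathcal{Z}_n}(z)\,dz.
\end{equation*}
Since $z\mapsto\Lambda^*(x;z)$ is strictly increasing with positive right-derivative $(\Lambda^*)'(x;z_0)$ at the boundary, this is an endpoint-maximum Laplace integral in the sense of \cite{olver1997asymptotics}. Truncate to $[z_0,z_0+\delta_n]$ for a suitable $\delta_n\downarrow 0$; the tail $z>z_0+\delta_n$ contributes $O\!\left(e^{-n\Lambda^*(x;z_0)}e^{-cn\delta_n}\right)$ for some $c>0$ by monotonicity of the rate function. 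On the retained interval, substituting the uniform conditional asymptotic and applying the first-order endpoint Laplace formula yields
\begin{equation*}
\int_{z_0}^{z_0+\delta_n}\psi_{\infty}(z)f_{\mathcal{Z}_n}(z)\,e^{-n\Lambda^*(x;z)}\,dz\sim\frac{\psi_{\infty}(z_0)\,f_{\mathcal{Z}}(z_0)}{n\,(\Lambda^*)'(x;z_0)}\,e^{-n\Lambda^*(x;z_0)},
\end{equation*}
using $f_{\mathcal{Z}_n}(z_0)\to f_{\mathcal{Z}}(z_0)$. Dividing by the $\sqrt{2\pi n}$ from the Bahadur-Rao prefactor produces the claimed $n^{-3/2}$ scaling with constant $C_{z_0}\psi_{\infty}(z_0)/\sqrt{2\pi}$.

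In the degenerate case, the idea is that concentration of $\mathcal{Z}_n$ at $\kappa$ replaces the integral by a point evaluation. Localization condition (i) ensures $|\mathcal{Z}_n-\kappa|=O_P(1/n)$, so I would restrict to the event $\{|\mathcal{Z}_n-\kappa|\le\eta/n\}$, whose complement contributes $o(e^{-n\Lambda^*(x;\kappa)}/\sqrt n)$ by (i) together with a crude upper bound on the conditional probability. On this event, smoothness of $z\mapsto\Lambda^*(x;z)$ and convergence of the tilts $\theta_{x,n}\to\theta_x$ from (ii)-(iii) give $n\Lambda^*(x;\mathcal{Z}_n)=n\Lambda^*(x;\kappa)+O(\eta)$ and $\psi_{\infty}(\mathcal{Z}_n)=\psi_{\infty}(\kappa)(1+o(1))$, so the uniform conditional Bahadur-Rao estimate collapses to $\psi_{\infty}(\kappa)(2\pi n)^{-1/2}\exp(-n\Lambda^*(x;\kappa))(1+o(1))$; letting $\eta\downarrow 0$ along a diagonal sequence recovers the classical $n^{-1/2}$ prefactor.

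The main obstacle is establishing the conditional Bahadur-Rao expansion \emph{uniformly} in $z$ over a vanishing neighborhood of $z_0$ (resp.\ $\kappa$), because both the Bernoulli parameter $p_n(z)$ and the tilt $\bar\theta_{x,z}$ vary with $z$. This requires transferring the non-lattice and third-moment control of \ref{asp:L3} to the whole $z$-indexed family of tilted laws, and, in the non-degenerate case, the local uniform convergence $f_{\mathcal{Z}_n}\to f_{\mathcal{Z}}$ at the boundary point $z_0$. A secondary delicacy is verifying that the rate errors from \ref{asp:L2} ($n|b^{(n)}-b|\to 0$ and $n\sup_\theta|\lambda_{U^{(n)}}-\lambda_U|\to 0$) do not corrupt the leading $n^{-3/2}$ (resp.\ $n^{-1/2}$) term when combined with the $O(1/n)$ localization width. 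The conditional sharp LD estimates for triangular arrays cited in the paper are what make this uniformity available.
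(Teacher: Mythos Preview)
Your treatment of the non-degenerate case is essentially the paper's proof: condition on $\mathcal{Z}_n$, apply the uniform conditional Bahadur--Rao estimate to obtain $n^{-1/2}e^{-n\Lambda_n^*(x;z)}\psi_n(z)(1+r_n(z))$, and evaluate the integral $\int_{z_0}^\infty(\cdots)f_{\mathcal{Z}_n}(z)\,dz$ by an endpoint Laplace expansion, using the strict positivity of $\partial_z\Lambda_n^*(x;z_0)$. The only cosmetic difference is that the paper carries the $n$-dependent $\Lambda_n^*$ through the Laplace step and passes to $\Lambda^*$ only at the end via $n|\Lambda_n^*(x;z_0)-\Lambda^*(x;z_0)|\to 0$, whereas you fold the limit in earlier; once the rate controls in \ref{asp:L2} are in place this is immaterial.

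In the degenerate case there is a genuine gap. After normalizing by $n^{-1/2}e^{-n\Lambda^*(x;\kappa)}$, the integrand on the complement of $\{|\mathcal{Z}_n-\kappa|\le\eta/n\}$ is not small: since $z\mapsto\Lambda_n^*(x;z)$ is strictly increasing, on the left piece $\{z<\kappa-\eta/n\}$ the conditional probability satisfies $\bP{L_n\ge nx\mid\mathcal{Z}_n=z}\ge e^{-n\Lambda_n^*(x;\kappa)}$ times a factor that is \emph{exponentially large} in $n|z-\kappa|$. A crude bound (whether $1$ or the Chernoff bound) combined with $\bP{|\mathcal{Z}_n-\kappa|>\eta/n}\to 0$ from (i) only yields an $o(1)$ contribution, which is far too weak against the target $o\big(n^{-1/2}e^{-n\Lambda^*(x;\kappa)}\big)$. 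The paper closes this by a four-piece decomposition and, on the left block, a shell argument: it uses the Lipschitz bound $|\Lambda_n^*(x;z)-\Lambda^*(x;\kappa)|\le C_2|z-\kappa|+o(1/n)$ to dominate the normalized integrand on the $i$th shell $(\kappa-(i{+}1)\eta/n,\kappa-i\eta/n)$ by $e^{C_2(i+1)\eta}$, and then invokes (i) in the quantitative form $\bP{n|\mathcal{Z}_n-\kappa|>i\eta}\le e^{-\gamma_n i\eta}$ with $\gamma_n\to\infty$ to sum the geometric series $\sum_i e^{(C_2-\gamma_n)i\eta}\to 0$. The right block $(\kappa+\eta/n,z_0)$ is handled the same way, and a fixed cutoff at $z_0$ disposes of the far right by monotonicity of the rate function. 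Without this balancing (or an equivalent argument that actually extracts a rate from (i) to defeat the exponential growth of the integrand on the left), your degenerate-case argument does not close.
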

\begin{rem}
When $\mathcal{Z}=\kappa$, and $n=o(k_n)$, then a sufficient condition for (i) to hold is that $\bP{|\mathcal{Z}_n-\kappa|> \eta} \le \exp(-k_n c_{\eta})$ for some positive constant $c_{\eta}$. Alternatively, if $\{Z_j^{(n)}, j \ge 1, n \ge 1\}$ are i.i.d. and $a_j^{(n)} \sim k_n^{-1/2}$  and $n=o(\sqrt{k_n})$, then (i) prevails by the Marcinkiewicz–-Zygmund law of large numbers (\cite{chow2012probability}).
\end{rem}
\textbf{General tail framework beyond class $\mathcal{C}$.}
We now describe a structural set of conditions on the right tail of $\ep$ and the left tail of $\mathcal{Z}$ under which the sharp large deviation bounds of Theorems~\ref{thm:LDP_ndZ} and \ref{thm:LDP_kappa} continue to hold. Let $C^2(\Real)$ denote the class of functions $f:\Real \to\Real$ that are twice continuously differentiable. Also, $f(\cdot)$ is said to be self-neglecting if $(f(x))^{-1}f(x+o(f(x))$ converges to one as $x \to \ff$. Some useful facts about self-neglecting functions are included in Appendix \ref{app:D}. Throughout the rest of this section, set $g_\ep(t):=1-F_\ep(t)$, write the left tail of the density of $\mathcal{Z}_n$, for large $w >0$ as
$f_{\mathcal{Z}_n}(-w)=c_{\mathcal{Z}_n}(w)\,e^{-R_{\mathcal{Z}_n}(w)}$, where $R_{\mathcal{Z}_n}(\cdot) \in C^2(\Real)$ and set
$r_{\mathcal{Z}_n}(w):=R'_{\mathcal{Z}_n}(w)$. Also, recall from Proposition \ref{prop:CMGF} and \ref{prop:theta_nx} that 
\( C_x := \lim_{n\to\ff} C_n(\theta_{x,n}) = \dfrac{(\lambda_U(\theta_x)-1)}{\lambda_U(\theta_x)}. \)

\begin{assumptionX}\label{asp:D1}
(Log-smooth tail condition for $\epsilon$ (Gumbel-type)). There exist $Q_\ep,c_\ep\in C^2(\R)$ such that, for all sufficiently large $t$,
\[
g_\ep(t)=c_\ep(t)\,e^{-Q_\ep(t)},\qquad Q'_\ep(t)\uparrow\ff,\qquad
c_\ep(t):=\frac{1}{Q'_\ep(t)}\ \text{satisfies}\ \frac{c_\ep\big(t+o(c_\ep(t))\big)}{c_\ep(t)}\to 1,
\]
and $Q''_\ep(t)=o\big(Q'_\ep(t)^2\big)$ as $t\to\ff$.
\end{assumptionX}

\begin{assumptionX}\label{asp:D2}
(Log-smooth left-tail condition for $\mathcal{Z}_n$). There exists $R_{\mathcal{Z}_n}\in C^2(\R)$ and a slowly varying $c_{\mathcal{Z}_n}$ such that, for all sufficiently large $w>0$,
\[
f_{\mathcal{Z}_n}(-w)=c_{\mathcal{Z}_n}(w)\,e^{-R_{\mathcal{Z}_n}(w)},\qquad
r_{\mathcal{Z}_n}(w):=R'_{\mathcal{Z}_n}(w)\uparrow\ff,\qquad R_{\mathcal{Z}_n}''(w)=o\big(r_{\mathcal{Z}_n}(w)^2\big),
\]
and $c_{\mathcal{Z}_n}$ varies slowly on windows of width $o\big(1/r_{\mathcal{Z}_n}(w)\big)$.
\end{assumptionX}

We present several examples covered by the conditions \ref{asp:D1}–\ref{asp:D2} and relationship with Gumbel maximum domain of attraction in Appendix \ref{app:D}. We need additional conditions that allow a large deviation analysis of $L_n$ under log-smooth assumptions. 

\begin{assumptionB}\label{asp:B1}
(Balance condition). There exist a sequence $M_n \in (a_n, b_n)$, where $a_n, b_n \to \ff$, such that
\[
R_{1n}\ \coloneqq \frac{\frac{n\,C_x}{b^{(n)}}\,f_\epsilon\big(\frac{v+M_n}{b^{(n)}}\big)}{r_{\mathcal{Z}_n}(M_n)}  \to 1 \quad \text{and} \quad
R_{2n}\ \coloneqq \frac{nC_x f_{\ep}(\frac{v+M_n}{b^{(n)}})Q_{\ep}'(\frac{v+M_n}{b^{(n)}})}{(b^{(n)}r_{\mathcal{Z}_n}(M_n))^2} =\Theta(1).
\]
\end{assumptionB}

\begin{assumptionW}\label{asp:W1}
(Window interiority). $\min\{M_n-a_n,\ b_n-M_n\}\cdot r_{\mathcal{Z}_n}(M_n)\to\ff$.
\end{assumptionW}

\begin{assumptionW}\label{asp:W2}
(Uniform conditional Bahadur-Rao on the bracket). There exists $ 0 <C<\infty$ such that 
\[
\sup_{z\in(-b_n,-a_n)}|r_n(x,z)|\le C n^{-1/2},\quad
0<c\le \theta_{x,n}(z)\,\sigma_{x,n}(z)\le C\ \ \forall z\in(-b_n,-a_n).
\]
\end{assumptionW}

\begin{assumptionW}\label{asp:W3}
(Local flatness and curvature on the Laplace window). Let $t_n:=(v+M_n)/b^{(n)}$. There exists $\eta>0$ such that, uniformly for $|z+M_n|\le \eta/r_{\mathcal{Z}_n}(M_n)$,
\[
\frac{r_{\mathcal{Z}_n}(-z)}{r_{\mathcal{Z}_n}(M_n)}=1+o(1),\quad
\frac{r_\epsilon((v-z)/b^{(n)})}{r_\epsilon(t_n)}=1+o(1),
\]
and slow/tilt multipliers vary by $1+o(1)$. Moreover, $\exists A_1,A_2>0$ such that
\[
-\,A_2\,n\,f_\epsilon(t_n)\,r_\epsilon(t_n)\ \le\ (n\,\widetilde h_n)''(z)\ \le\
-\,A_1\,n\,f_\epsilon(t_n)\,r_\epsilon(t_n)
\]
uniformly on the window.
\end{assumptionW}

\smallskip
We now state our main result under the general hypotheses when $\kappa=-\ff$.
\begin{thm}\label{thm:LDP_gen_ndZ}
Assume that the conditions {\rm(\ref{assumption1})-(\ref{assumption3})}, {\rm\ref{asp:L2}-\ref{asp:L3}} hold and $\kappa=-\ff$. Assume,  additionally that {\rm\ref{asp:D1}-\ref{asp:D2},\ref{asp:B1}, \ref{asp:W1}-\ref{asp:W3}} hold. Then for every $x>\mu_U$, there exist $\{\tilde M_n\}\uparrow\ff$ and functions $\{\phi_n\}$ and a scale $H_n(\cdot)$ (as in Theorem~\ref{thm:LDP_ndZ}) such that
\begin{eqnarray*}
 \lim_{n \ra \ff} n^{\frac{1}{2}}\,e^{n \Lambda^*_U(x)}\,e^{n \phi_n(-\tilde M_n)}\,H_n(-\tilde M_n)\ \bP{L_n \geq n x}
 \ =\ \psi_{\ff}.
\end{eqnarray*}
Under class $\mathcal{C}$ (GN/RV), the factor $e^{n \phi_n(-\tilde M_n)}H_n(-\tilde M_n)$ reduces to the explicit expressions in Proposition~\ref{prop:LDP_ndZ}.
\end{thm}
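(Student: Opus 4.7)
\textbf{Proof plan for Theorem \ref{thm:LDP_gen_ndZ}.}
The starting point is the disintegration
\[
\bP{L_n\ge nx}\ =\ \int_{\R}\bP{L_n\ge nx\,\big|\,\mathcal{Z}_n=z}\,f_{\mathcal{Z}_n}(z)\,dz,
\]
and the observation that, conditional on $\mathcal{Z}_n=z$, the loss $L_n=\sum_{i=1}^n U_i^{(n)}X_i^{(n)}$ is a sum of $n$ i.i.d.\ random variables whose cumulant generating function is $\Lambda_n(\theta;z)=\log[\lambda_{U^{(n)}}(\theta)F_\epsilon((v-z)/b^{(n)})+1-F_\epsilon((v-z)/b^{(n)})]$. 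The left tail of $f_{\mathcal{Z}_n}$ is controlled by \ref{asp:D2} and the right tail of $\epsilon$ by \ref{asp:D1}, so that after substituting $z=-w$ the integrand acquires the log-smooth form $c_{\mathcal{Z}_n}(w)\,e^{-R_{\mathcal{Z}_n}(w)}\cdot\bP{L_n\ge nx\mid\mathcal{Z}_n=-w}$, which is precisely the setting in which Olver's endpoint Laplace expansion applies.

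The next step is to insert the uniform conditional Bahadur–Rao expansion (\ref{asp:W2}): for $z\in(-b_n,-a_n)$,
\[
\bP{L_n\ge nx\mid\mathcal{Z}_n=z}
=\frac{e^{-n\Lambda^*(x;z)}}{\sqrt{2\pi n}\,\theta_{x,n}(z)\,\sigma_{x,n}(z)}\bigl(1+r_n(x,z)\bigr),
\qquad |r_n|\le C n^{-1/2}.
\]
Writing $\widetilde h_n(z):=\Lambda^*(x;z)-\tfrac1n\log f_{\mathcal{Z}_n}(z)$, the disintegration reduces (modulo a negligible contribution from $|z|\le a_n$, which is exponentially smaller than the window because $\Lambda^*(x;\cdot)$ is strictly increasing as $z\uparrow 0$) to a Laplace-type integral of the form $(2\pi n)^{-1/2}\int e^{-n\widetilde h_n(z)}\,g_n(z)\,dz$ with the slowly varying prefactor $g_n(z)=[\theta_{x,n}(z)\sigma_{x,n}(z)]^{-1}c_{\mathcal{Z}_n}(-z)$. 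Differentiating $\widetilde h_n$ and using the envelope identity $(\Lambda^*)'(x;z)=-\partial_z\Lambda(\theta_{x,n}(z);z)$ together with $\theta_{x,n}(z)\to\theta_x$ as $z\to-\infty$, the stationarity equation $n\widetilde h_n'(z)=0$ reduces, at leading order, to $nC_x f_\epsilon((v-z)/b^{(n)})/b^{(n)}=r_{\mathcal{Z}_n}(-z)$, which is exactly the balance condition \ref{asp:B1}; this identifies the maximizer $z_n^*=-M_n(1+o(1))\eqqcolon -\tilde M_n$.

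I then localize to the Laplace window $|z+\tilde M_n|\le \eta/r_{\mathcal{Z}_n}(M_n)$ using \ref{asp:W1}. On this window, \ref{asp:W3} provides (i) uniform negligibility of the first derivatives of the slow and tilt multipliers, so that $g_n(z)=g_n(-\tilde M_n)(1+o(1))$, and (ii) matching upper and lower quadratic bounds on $(n\widetilde h_n)''$ of order $nf_\epsilon(t_n)r_\epsilon(t_n)\asymp b^{(n)}r_{\mathcal{Z}_n}(M_n)^2$ by (B1). A second-order Taylor expansion and a standard Gaussian integral then yield
\[
\int e^{-n\widetilde h_n(z)}g_n(z)\,dz
=e^{-n\widetilde h_n(-\tilde M_n)}\,\sqrt{\frac{2\pi}{(n\widetilde h_n)''(-\tilde M_n)}}\,g_n(-\tilde M_n)\,(1+o(1)),
\]
which, after collecting the exponential factor $e^{-n\widetilde h_n(-\tilde M_n)}=e^{-n\Lambda_U^*(x)-n\phi_n(-\tilde M_n)}$ (with $n\phi_n(z):=n\Lambda^*(x;z)-n\Lambda_U^*(x)+R_{\mathcal{Z}_n}(-z)-\log c_{\mathcal{Z}_n}(-z)$) and the polynomial factor $H_n(-\tilde M_n):=[(n\widetilde h_n)''(-\tilde M_n)/(2\pi)]^{1/2}[\theta_{x,n}(-\tilde M_n)\sigma_{x,n}(-\tilde M_n)]$, delivers $\psi_\infty$ in the limit through the identifications $\theta_{x,n}(-\tilde M_n)\to\theta_x$ and $\sigma_{x,n}(-\tilde M_n)^2\to\Lambda_U''(\theta_x)$. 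Specializing the formulae for $f_\epsilon$, $f_{\mathcal{Z}_n}$ to the GN and RV templates and evaluating $R_{\mathcal{Z}_n}$, $Q_\epsilon$, $r_{\mathcal{Z}_n}$ at $M_n$ recovers the explicit rates $R_{1n}R_{2n}R_{3n}$ and $K_\gamma$, $K_x$ of Proposition~\ref{prop:LDP_ndZ}.

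The main obstacle is the combination of a diverging saddle $\tilde M_n\uparrow\infty$ with a non-stationary tilted sum: standard Laplace is not directly applicable because the curvature $(n\widetilde h_n)''(-\tilde M_n)$ itself grows with $n$ through both $f_\epsilon(t_n)$ and $r_\epsilon(t_n)$, so the effective Gaussian width $1/r_{\mathcal{Z}_n}(M_n)\to 0$ must be reconciled with the $o(c_\epsilon(t))$ and $o(1/r_{\mathcal{Z}_n}(w))$ self-neglecting windows required by \ref{asp:D1}--\ref{asp:D2}. Controlling this requires showing that the Laplace window lies inside every self-neglecting window, which is exactly what \ref{asp:W3} encodes; verifying consistency between this window, the uniform Bahadur–Rao bracket \ref{asp:W2}, and the saddle consistency $\tilde M_n=M_n(1+o(1))$ is the most delicate quantitative step, and is the content of the saddle-consistency lemma flagged in the text.
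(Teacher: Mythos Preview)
Your approach is essentially the paper's: disintegrate over $\mathcal{Z}_n$, insert the uniform conditional Bahadur--Rao expansion, form the Laplace integrand $e^{n\tilde h_n}$, locate the saddle $-\tilde M_n$ via the balance condition \ref{asp:B1}, and evaluate on the shrinking Gaussian window using \ref{asp:W1}--\ref{asp:W3}; the paper packages the localization and saddle--consistency steps into Lemmas~\ref{lem:loc-logsmooth} and~\ref{lem:saddle-consistency}, and uses a fixed cutoff $z_0$ (rather than your $|z|\le a_n$) to kill the right piece via monotonicity of $z\mapsto\Lambda_n^*(x;z)$. Two bookkeeping slips to fix: (i) your $g_n$ double--counts $c_{\mathcal{Z}_n}(-z)$, since $f_{\mathcal{Z}_n}$ (and hence $c_{\mathcal{Z}_n}$) is already absorbed into $e^{-n\widetilde h_n}$ by your own definition of $\widetilde h_n$; and (ii) your $\phi_n$ and $H_n$ are not ``as in Theorem~\ref{thm:LDP_ndZ}'' --- the paper takes $\phi_n(z)=\Lambda_n^*(x;z)-\Lambda_{U^{(n)}}^*(x)$ and $[H_n(-\tilde M_n)]^{-1}=f_{\mathcal{Z}_n}(-\tilde M_n)\,[n|\tilde h_n''(-\tilde M_n)|]^{-1/2}$, so that the limit $\psi_\infty$ arises from $\psi_n(-\tilde M_n)=[\theta_{x,n}(-\tilde M_n)\sigma_{x,n}(-\tilde M_n)]^{-1}\to(\theta_x\sqrt{\Lambda_U''(\theta_x)})^{-1}$, not from the curvature factor.
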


\begin{rem}[Scope and reduction to $\mathcal{C}$]
When \textup{\ref{asp:D1}}–\textup{\ref{asp:D2}} specialize to the generalized-normal classes, Theorems~\ref{thm:LDP_gen_ndZ}
recover Theorems~\ref{thm:LDP_ndZ}
, and $e^{n \phi_n(-\tilde M_n)}H_n(-\tilde M_n)$ simplifies as in Proposition~\ref{prop:LDP_ndZ}. Similar detailed results for regularly varying and Laplace cases are also available in Proposition~\ref{prop:LDP_ndZ}.
\end{rem}

\subsection{Gibbs conditioning in total variation}
In this section, we derive the conditional distribution of $U_1^{(n)}$ and $X_1^{(n)}$ given $n^{-1}L_n \geq x$, for $x > \mu_U.$ We assume that for each $n\geq1$, $\{U_j^{(n)}: j \ge 1\}$ are i.i.d. random variables.
\begin{thm}\label{thm:gibbs}
Assume that the conditions {\rm(\ref{assumption1})-(\ref{assumption3})}, {\rm\ref{asp:L2}-\ref{asp:L3}} 
hold.
\begin{enumerate}
\item \label{gibbs:1} Let $\mathcal{Z}$ be non-degenerate with support $\mathbb{R}$ ($\kappa=-\infty$, unbounded to the left). 
Then for any $x>\mu_U$,
\[
\sup_{B\in\mathcal{B}(\mathbb{R})}
\Big|
\mathbf{P}\big(U^{(n)}_1\in B,\,X^{(n)}_1=1\,\big|\,L_n\ge n x\big)
- \mathbf{P}_{\theta_x}(B)
\Big|\;\longrightarrow\;0,
\]
where $\mathbf{P}_{\theta_x}(B):=\lambda_U(\theta_x)^{-1}\int_B e^{\theta_x y}\,d\mathbf{P}_U(y)$ and $\mathbf{P}_U$ is the law of $U$.

\item \label{gibbs:2} For each fixed $k\ge 1$, the conditional law of $(U^{(n)}_j,X^{(n)}_j)_{j=1}^k$ given $\{L_n\ge n x\}$ 
converges in total variation to the product of $k$ i.i.d.\ copies of $(U,X)$, with $X\equiv 1$ and $U\sim \mathbf{P}_{\theta_x}$; i.e.
\[
\left\|\,
\mathcal{L}\left((U^{(n)}_1,X^{(n)}_1),\ldots,(U^{(n)}_k,X^{(n)}_k)\,\middle|\,L_n\ge n x\right)
- (\mathbf{P}_{\theta_x}\otimes\delta_1)^{\otimes k}
\,\right\|_{\mathrm{TV}}
\longrightarrow 0.
\]
Here $\delta_1$ denotes the Dirac measure at the point 1.

\item Let $\mathcal{Z}$ be non-degenerate with support $[\kappa,\infty)$ and $\kappa>- \infty$, and set $p_\kappa:=F_\epsilon((v-\kappa)/b)\in(0,1)$.
Then the conclusions of parts (1)–(2) hold with the limiting law of $(U,X)$ given by
\[
\mathbf{P}(X=1)=p_\kappa,\quad \mathbf{P}(X=0)=1-p_\kappa,\quad U\,|\,\{X=1\}\sim \mathbf{P}_{\bar\theta_{x,\kappa}},\quad U\,|\,\{X=0\}\sim \mathbf{P}_U,
\]
where $\bar\theta_{x,\kappa}$ solves $\partial_\theta \Lambda(\theta;\kappa)=x$ and $\mathbf{P}_{\bar\theta_{x,\kappa}}(B)=\lambda_U(\bar\theta_{x,\kappa})^{-1}\int_B e^{\bar\theta_{x,\kappa}y}\,d\mathbf{P}_U(y)$ with $\partial_\theta$ denoting differentiation with respect to $\theta$.
In the degenerate boundary case $Z\equiv\kappa$, the same conclusion holds under the localization assumptions of Theorem \ref{thm:LDP_kappa}.
\end{enumerate}
\end{thm}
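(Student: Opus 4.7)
The strategy is a Bayes (single-ratio) decomposition conditioned on the latent factor $\mathcal{Z}_n$, combined with the uniform conditional Bahadur-Rao estimates and Laplace-Olver localization already developed for Theorems \ref{thm:LDP_ndZ}-\ref{thm:LDP_kappa}. For part (1), I would begin by writing
\[
\bP{U_1^{(n)} \in A,\; X_1^{(n)} = 1 \,\bigm|\, L_n \geq nx}
= \frac{\bP{U_1^{(n)} \in A,\; X_1^{(n)} = 1,\; L_n \geq nx}}{\bP{L_n \geq nx}}.
\]
Conditioning on $\mathcal{Z}_n = z$ makes the pairs $(U_i^{(n)}, X_i^{(n)})$ i.i.d.\ with $X_1^{(n)} \perp L_n - U_1^{(n)}X_1^{(n)} \mid \mathcal{Z}_n$. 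Setting $p_n(z) := F_\epsilon((v-z)/b^{(n)})$ and $L_n' := L_n - U_1^{(n)}X_1^{(n)}$, the numerator equals
\[
\int_A \int p_n(z)\, \bP{L_n' \geq nx - u \,\bigm|\, \mathcal{Z}_n = z}\, f_{\mathcal{Z}_n}(z)\, dz\, d\mathbf{P}_{U^{(n)}}(u),
\]
while the denominator integrates $\bP{L_n \geq nx \mid \mathcal{Z}_n = z}\, f_{\mathcal{Z}_n}(z)$ over $z$ alone.

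Next, I would apply the uniform conditional Bahadur-Rao expansion from \ref{asp:L3}/\ref{asp:W2}: for $z$ inside the relevant bracket,
\[
\bP{L_n \geq nx \,\bigm|\, \mathcal{Z}_n = z}
= \frac{1 + o(1)}{\theta_n(z)\sigma_n(z)\sqrt{2\pi n}}\, e^{-n \Lambda_n^*(x; z)}.
\]
A Taylor expansion $(n-1)\Lambda_n^*\bigl(x + (x-u)/(n-1);\, z\bigr) = (n-1)\Lambda_n^*(x; z) + \theta_n(z)(x-u) + O(1/n)$ together with cancellation of the Gaussian prefactors then gives
\[
\frac{\bP{L_n' \geq nx - u \,\bigm|\, \mathcal{Z}_n = z}}{\bP{L_n \geq nx \,\bigm|\, \mathcal{Z}_n = z}}
= (1 + o(1))\, \exp\bigl\{\theta_n(z)u - \Lambda_n(\theta_n(z); z)\bigr\}
\]
uniformly in $(u, z)$ on compact sets crossed with the Bahadur-Rao bracket. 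Multiplying by $p_n(z)$ and recalling $e^{\Lambda_n(\theta; z)} = \lambda_U(\theta)p_n(z) + 1 - p_n(z)$ reduces the problem to comparing two outer Laplace-Olver integrals in $z$ that differ only by the bounded, slowly varying factor $p_n(z)\exp\{\theta_n(z)u - \Lambda_n(\theta_n(z); z)\}$.

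The third step uses the Laplace-Olver localization already at hand: the outer integrals concentrate on a shrinking neighborhood of $z = -\tilde M_n$ when $\kappa = -\infty$ (where $p_n(z) \to 1$, $\theta_n(z) \to \theta_x$, and $\Lambda_n(\theta; z) \to \Lambda_U(\theta)$), and of $z = \kappa$ when $\kappa > -\infty$ (where $p_n(z) \to p_\kappa$ and $\theta_n(z) \to \bar\theta_{x,\kappa}$). Passing to the limit in the factor above yields pointwise convergence of the conditional density on $\{X_1^{(n)} = 1\}$ to $e^{\theta_x u}/\lambda_U(\theta_x)$ in the unbounded case and to the tilted mixture density described in part (3) in the bounded case; the $X_1^{(n)} = 0$ branch is handled by the analogous ratio $(1 - p_n(z))/e^{\Lambda_n(\theta_n(z); z)}$, which tends to $0$ when $\kappa = -\infty$ and to the stated weight otherwise. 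Total variation convergence follows by Scheffé's lemma: the conditional densities converge pointwise to a proper probability density, and taking $A = \mathbb{R}$ (together with the $X_1^{(n)} = 0$ calculation) yields convergence of the total masses, which upgrades pointwise to $L^1$ convergence.

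For part (2) with $k$ fixed, given $\mathcal{Z}_n$ the pairs $(U_i^{(n)}, X_i^{(n)})_{i=1}^k$ are i.i.d., and the same Bayes decomposition produces the $k$-fold product factor $\prod_{i=1}^k p_n(z)\exp\{\theta_n(z)u_i - \Lambda_n(\theta_n(z); z)\}$, with joint Taylor remainder $O(k^2/n)$ negligible; the product structure then identifies the limit as an i.i.d.\ product. The main technical obstacle is the uniformity in Step 2: one must verify that the Laplace-Olver concentration window lies strictly inside the bracket of \ref{asp:W2} so the Bahadur-Rao remainder is $o(1)$ uniformly on that window, and in the $\kappa > -\infty$ case carry the uniformity up to the endpoint, where the one-sided endpoint Laplace expansion of \cite{olver1997asymptotics} replaces the interior Gaussian one and $f_{\mathcal{Z}_n}$ degenerates at the boundary.
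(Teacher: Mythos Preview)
Your overall strategy---Bayes ratio, condition on $\mathcal Z_n$, apply the uniform conditional Bahadur--Rao expansion to numerator and denominator, Taylor-expand the rate function at the perturbed level $x_n(u)$, then let the Laplace localization in $z$ identify the tilt---is exactly what the paper does. There is one organizational difference: you form the ratio of the conditional BR expressions \emph{inside} the $z$-integral and then argue that the Laplace saddle is unchanged by the bounded factor $p_n(z)\exp\{\theta_n(z)u-\Lambda_n(\theta_n(z);z)\}$, whereas the paper applies the full sharp estimate (Theorem~\ref{thm:LDP_ndZ}) to numerator and denominator separately and then studies the prefactor ratio $A_n(x)/A_{n-1}(x_n(u))$ via a dedicated stability lemma. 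Both routes work; the paper's is more modular since it reuses Theorem~\ref{thm:LDP_ndZ} as a black box.

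There is, however, a genuine gap in your total-variation upgrade. Scheff\'e's lemma requires densities with respect to a \emph{fixed} dominating measure, but here the ``conditional density'' $g_n(u)$ is a Radon--Nikodym derivative with respect to $\mathbf P_{U^{(n)}}$, which itself varies with $n$ and only converges \emph{weakly} to $\mathbf P_U$. The limit $e^{\theta_x u}/\lambda_U(\theta_x)$ is a density with respect to $\mathbf P_U$, not $\mathbf P_{U^{(n)}}$, so $\int g_n\,d\mathbf P_{U^{(n)}}\to 1$ and $g_n\to g$ pointwise do not by themselves give $\sup_B|\int_B g_n\,d\mathbf P_{U^{(n)}}-\int_B g\,d\mathbf P_U|\to 0$. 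The paper handles this by a compact-truncation split: bound the TV distance by a supremum over $B\subset[-Q,Q]$ plus two tail terms $\nu_n(|U|>Q)+\nu_{\theta_x}(|U|>Q)$. The compact piece is controlled by a weighted weak-convergence lemma (uniform Kolmogorov control of $\int_{(-\infty,x]}\phi\,d(F_{U^{(n)}}-F_U)$ with continuous $\phi$), and the tails by an exponential Markov inequality, using the prefactor-stability bound $A_n(x)/A_{n-1}(x_n(u))\le 1+Cu/n+Cu^2/n^2$ to keep the integrand dominated. Your pointwise-plus-total-mass argument does not substitute for this, because weak convergence of the base law cannot be upgraded to TV without exactly this kind of uniform tail control.
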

\noindent For a variant of the Theorem when $U$ may depend on $X$, see Appendix \ref{app:F}.
\begin{rem}
The above Theorem shows that conditioned on the large deviation event, the process evolves as if every obligor has defaulted, and the distribution of LGD evolves as a shifted distribution. In the non-degenerate case, the shift is independent of $\{\mathcal{Z}_n\}$ and hence $\mathcal{Z}$, whereas in the degenerate case, the shift depends on the limit $\mathcal{Z}=\kappa$. Additionally, even though $X_1^{(n)}, \cdots, X_k^{(n)}$ are not independent, however, conditioned on $L_n \geq nx$ they are asymptotically independent.
\end{rem}
\begin{rem}[Inheritance under log-smooth/boundary hypotheses]\label{rem:gibbs-beyond-C}
Under the additional hypotheses of Theorems \ref{thm:LDP_gen_ndZ} (log-smooth, $\kappa=-\ff$) 
the conclusions of Theorem \ref{thm:gibbs} continue to hold. The proof is identical, with the normalization/localization from Theorems \ref{thm:LDP_gen_ndZ}
replacing the class~$\mathcal{C}$ prefactor controls.
\end{rem}

\section{Preliminary results} \label{sec:preliminary}

This section collects the analytic tools used in the proofs of Theorems \ref{thm:LDP_ndZ}-\ref{thm:gibbs}. 
We establish convexity, monotonicity, and stability properties for the conditional cumulant generating function 
$\Lambda_n(\theta;z)$ and its Legendre transform, and we localize the tilting parameters $\theta_{x,n}(z)$ needed for Laplace evaluations. 
Two probabilistic inputs underlie the sharp prefactors: a \emph{conditional central limit theorem} and a \emph{conditional Bahadur-Rao} (BR) estimate for the triangular arrays. For completeness, the conditional CLT and the conditional BR bound are stated in Appendix \ref{app:C} (Proposition \ref{thm:CCLT} and Theorem \ref{thm:CBahadurRao}), and are invoked here after we verify the regularity requirements in Propositions \ref{prop:CMGF}-\ref{prop:CRateFunction}.

Next, we study the properties of $\Lambda_n(\theta;z)$ as a function of $\theta$ for every $z$ and $z$ for every $\theta$. We recall the definition of $\Lambda_n(\theta;z)$ is the logarithmic moment generating function of $U^{(n)}X^{(n)}$ conditioned on $\mathcal{Z}_n=z$. That is,
\begin{eqnarray*} 
\Lambda_n(\theta;z)=\log\left[ \lambda_{U^{(n)}}(\theta)F_{\epsilon}(\frac{v-z}{b^{(n)}}) + 1- F_{\epsilon}(\frac{v-z}{b^{(n)}}) \right].
\end{eqnarray*}
We define that $\Lambda(\theta;z)$ is the logarithmic moment generating function of $UX$ conditioned on $\mathcal{Z}=z$, that is
\begin{eqnarray*} 
\Lambda(\theta;z)=\log\left[ \lambda_{U}(\theta)F_{\epsilon}(\frac{v-z}{b}) + 1- F_{\epsilon}(\frac{v-z}{b}) \right].
\end{eqnarray*}

\begin{prop} \label{prop:CMGF} 
Under assumptions {\rm(\ref{assumption1})-(\ref{assumption3})}, and \rm{\ref{asp:L2}}, the following hold:
\begin{enumerate}
\item Fix $z \in \Real$. Then,
\begin{enumerate}
\item $\Lambda_n(\theta;z)$ is convex and differentiable in $\theta.$ 
\item The derivative, $\frac{\partial}{\partial \theta}\Lambda_n(\theta;z)$, is strictly increasing in $\theta$.
\item $\lim\limits_{\theta\to0}\frac{\partial}{\partial \theta}\Lambda_n(\theta;z) = F_{\epsilon}(\frac{v-z}{b^{(n)}})\mathbf{E}[U^{(n)}]$, $\lim\limits_{\theta\to\infty}\frac{\partial}{\partial \theta}\Lambda_n(\theta;z) = \lim\limits_{\theta\to\infty}\frac{\lambda_{U^{(n)}}'(\theta)}{\lambda_{U^{(n)}}(\theta)}$.
The limit of the ratio, as $\theta\to\ff$, will be infinite if $U^{(n)}$ is unbounded. Else, it will equal the upper bound of $U^{(n)}$.
\end{enumerate}
\item Fix $\theta\in[0,\theta_0]$. Then
\begin{enumerate}
\item $\Lambda_n(\theta;z)$ is differentiable in $z$. 
\item $\Lambda_n(\theta;z)$ is strictly decreasing in $z$.
\item $\lim\limits_{z\to\infty}\Lambda_n(\theta;z)=0$, $\lim\limits_{z\to-\infty}\Lambda_n(\theta;z)=\log \lambda_{U^{(n)}}(\theta) = \Lambda_{U^{(n)}}(\theta)>0$.
\end{enumerate}
\item For $z=-M_n$, where $M_n\to\infty$ as $n\to\infty$, write $g_n(M_n)=1-F_\epsilon(\frac{v+M_n}{b^{(n)}})$ and $C_n(\theta)=\dfrac{\lambda_{U^{(n)}}(\theta)-1}{\lambda_{U^{(n)}}(\theta)}$, then for large $n$,
    \begin{align*}
        \Lambda_n(\theta;-M_n)=\Lambda_{U^{(n)}}(\theta) - g_n(M_n)C_n(\theta)(1+o(1)).
    \end{align*}
\item For any fixed $z\in\Real$ and $\theta\in[0,\theta_0]$, $\lim\limits_{n\to\infty}\Lambda_n(\theta;z)=\Lambda(\theta;z)$.
\end{enumerate}
\end{prop}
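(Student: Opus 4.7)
The plan is to reduce everything to the compact algebraic identity
\[
\Lambda_n(\theta;z)\;=\;\log\!\bigl[\,1+p_z^{(n)}(\lambda_{U^{(n)}}(\theta)-1)\,\bigr],\qquad p_z^{(n)}:=F_\epsilon\!\bigl(\tfrac{v-z}{b^{(n)}}\bigr),
\]
which follows from conditioning on $\mathcal{Z}_n=z$ and using independence of $U^{(n)}$ and $\epsilon_i^{(n)}$: the random variable $U^{(n)}X^{(n)}\mid\mathcal{Z}_n=z$ is a two-component mixture equal to $U^{(n)}$ with probability $p_z^{(n)}$ and to $0$ otherwise. This representation is the workhorse for all four items.

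For part (1), I would treat $z$ as a fixed parameter and observe that $\Lambda_n(\,\cdot\,;z)$ is a log-MGF of a genuine (conditional) random variable, hence convex; differentiability in $\theta$ on $[0,\theta_\star]\subset(0,\theta_0)$ is inherited from differentiability of $\lambda_{U^{(n)}}$ on the interior of its domain. For (1b) I would compute
$\partial_\theta\Lambda_n(\theta;z)=\mathbf{E}_{\theta;z}[U^{(n)}X^{(n)}]$, whose strict monotonicity follows from strict convexity (the mixture is nondegenerate because $U^{(n)}$ is nonlattice by \ref{asp:L3}, hence not a.s. constant). The endpoint formulas in (1c) come from plugging $\theta=0$ and taking $\theta\to\infty$ directly in the ratio $p_z^{(n)}\lambda_{U^{(n)}}'(\theta)/[p_z^{(n)}\lambda_{U^{(n)}}(\theta)+(1-p_z^{(n)})]$; the denominator is dominated by $\lambda_{U^{(n)}}(\theta)$ for large $\theta$ since $\lambda_{U^{(n)}}(\theta)\to\infty$ (or to the essential supremum when $U^{(n)}$ is bounded).

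For part (2), I fix $\theta\in[0,\theta_0]$ and exploit monotonicity of $z\mapsto p_z^{(n)}$: since $F_\epsilon$ is a CDF, $p_z^{(n)}$ is strictly decreasing in $z$, and because $\lambda_{U^{(n)}}(\theta)>1$ for $\theta>0$ (as $U^{(n)}\ge 0$ with $\mathbf{P}(U^{(n)}>0)>0$), the map $p\mapsto\log[1+p(\lambda_{U^{(n)}}(\theta)-1)]$ is strictly increasing in $p$; composing yields strict monotonicity of $\Lambda_n(\theta;\cdot)$. Differentiability in $z$ follows from differentiability of $F_\epsilon$. For the limits, $p_z^{(n)}\to 0$ as $z\to\infty$ and $p_z^{(n)}\to 1$ as $z\to-\infty$, giving $\Lambda_n\to\log 1=0$ and $\Lambda_n\to\log\lambda_{U^{(n)}}(\theta)=\Lambda_{U^{(n)}}(\theta)$ respectively.

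For part (3), I specialize to $z=-M_n$ so that $p_{-M_n}^{(n)}=1-g_n(M_n)$ and rearrange:
\[
\Lambda_n(\theta;-M_n)=\log\bigl[\lambda_{U^{(n)}}(\theta)-g_n(M_n)(\lambda_{U^{(n)}}(\theta)-1)\bigr]=\Lambda_{U^{(n)}}(\theta)+\log\bigl[1-g_n(M_n)C_n(\theta)\bigr].
\]
Since $g_n(M_n)\to 0$ and $C_n(\theta)$ stays bounded on $[0,\theta_\star]$, a first-order Taylor expansion of $\log(1-x)$ at $x=0$ gives the claimed asymptotic $-g_n(M_n)C_n(\theta)(1+o(1))$. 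For part (4), I combine the uniform convergence $\lambda_{U^{(n)}}(\theta)\to\lambda_U(\theta)$ on $[0,\theta_\star]$ from \ref{asp:L2} with the continuity of $F_\epsilon$ and $b^{(n)}\to b$, which yields $p_z^{(n)}\to p_z$ for every fixed $z$ (at continuity points of $F_\epsilon$, which is all of $\mathbb{R}$ under standard density assumptions); composing the log with these convergences gives $\Lambda_n(\theta;z)\to\Lambda(\theta;z)$. The only mild subtlety will be confirming that $\lambda_{U^{(n)}}(\theta)>1$ strictly for $\theta>0$ (so that the $z$-monotonicity in (2b) is strict), but this is immediate from $\mathbf{E}[U^{(n)}]>0$; no step is expected to pose a genuine obstacle, since the whole proposition is essentially a catalog of consequences of the mixture identity above.
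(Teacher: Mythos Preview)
Your proposal is correct and follows essentially the same approach as the paper: both rely on the explicit mixture identity $\Lambda_n(\theta;z)=\log[1+p_z^{(n)}(\lambda_{U^{(n)}}(\theta)-1)]$, basic properties of cumulant generating functions for (1), monotonicity of $p_z^{(n)}$ in $z$ together with $\lambda_{U^{(n)}}(\theta)>1$ for (2), and the first-order Taylor expansion of $\log(1-x)$ for (3). The paper's treatment is slightly terser (e.g., it just cites the explicit derivative formula from Proposition~\ref{app:prop_lam_n} rather than writing out the ratio), but the logic is the same.
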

Our next result is concerned with the path properties of $\theta_{x,n}(z)$ defined as a root of the equation
\[
\frac{\partial}{\partial \theta}\Lambda_n(\theta_{x,n}(z);z)=x.
\]
\begin{prop}
\label{prop:theta_nx}
Under the assumptions {\rm(\ref{assumption1})-(\ref{assumption3})}, and {\rm\ref{asp:L2}-\ref{asp:L3}}, the function $\theta_{x,n}(z)$ is differentiable in $z$, strictly increasing, and satisfies
\[
\lim_{z\to -\infty}\theta_{x,n}(z)=\theta_{x,n}>0,\quad \lim_{z\to \infty}\theta_{x,n}(z)=\infty,\quad\text{and}~ \lim_{n\to \infty}\theta_{x,n}(z) = \theta_{x}(z)
\]
where $\theta_{x,n}$ is the unique solution of $\Lambda_{U^{(n)}}'(\theta)=x$ and $\theta_{x}(z)$ is the unique solution of $\frac{\partial}{\partial\theta}\Lambda(\theta;z)=x$. Moreover, the derivative is given by:
\[
\frac{d\theta_{x,n}(z)}{dz}=-\frac{\frac{\partial^2}{\partial z\partial\theta}\Lambda_n(\theta;z)}{\frac{\partial^2}{\partial\theta^2}\Lambda_n(\theta;z)},~~\text{where}
\]
the RHS is evaluated at $\theta=\theta_{x,n}(z)$.
\end{prop}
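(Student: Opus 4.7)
The plan is to treat the defining identity $\frac{\partial}{\partial\theta}\Lambda_n(\theta_{x,n}(z);z)=x$ as an implicit equation for $\theta_{x,n}(z)$ and invoke the implicit function theorem, reading off the monotonicity, derivative formula, and boundary limits from the structural identities recorded in Proposition~\ref{prop:CMGF}. Since $\theta\mapsto\Lambda_n(\theta;z)$ is strictly convex and $C^1$ (Proposition~\ref{prop:CMGF}(1)), the equation has at most one root in $(0,\theta_0)$; existence for $x>\mu_U$ is guaranteed by \ref{asp:L3} together with the limits of $\frac{\partial}{\partial\theta}\Lambda_n(\theta;z)$ as $\theta\downarrow 0$ and $\theta\uparrow\theta_0$ given in Proposition~\ref{prop:CMGF}(1c). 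Joint $C^2$-regularity (Proposition~\ref{prop:CMGF}(2a)) then yields the IFT formula stated in the proposition.

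Next, I would establish strict monotonicity by computing the cross partial directly. Writing $F:=F_\epsilon((v-z)/b^{(n)})$ and $\lambda:=\lambda_{U^{(n)}}(\theta)$, the identity $\frac{\partial}{\partial\theta}\Lambda_n(\theta;z)=\lambda'(\theta)F/[1+F(\lambda-1)]$ and a short algebraic simplification give
\[
\frac{\partial^2}{\partial z\,\partial\theta}\Lambda_n(\theta;z)\;=\;\frac{\lambda'(\theta)\,\partial_z F}{\bigl[1+F(\lambda-1)\bigr]^2}\;<\;0,
\]
because $\lambda'(\theta)>0$ for $\theta>0$ and $\partial_z F=-f_\epsilon((v-z)/b^{(n)})/b^{(n)}<0$. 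Combined with strict convexity $\frac{\partial^2}{\partial\theta^2}\Lambda_n>0$, the IFT formula yields $d\theta_{x,n}(z)/dz>0$.

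For the boundary behavior in $z$, I would pass to the limit inside the defining equation. As $z\to-\infty$, Proposition~\ref{prop:CMGF}(2c) gives $\Lambda_n(\theta;z)\to\Lambda_{U^{(n)}}(\theta)$, and the monotonicity and convexity of $\Lambda_n$ in $\theta$ upgrade this to convergence of $\frac{\partial}{\partial\theta}\Lambda_n(\theta;z)$ to $\Lambda'_{U^{(n)}}(\theta)$ uniformly on compact subsets of $(0,\theta_0)$; hence $\theta_{x,n}(z)$ converges to the unique root $\theta_{x,n}$ of $\Lambda'_{U^{(n)}}(\theta)=x$. As $z\to\infty$, Proposition~\ref{prop:CMGF}(2c) gives $\Lambda_n(\theta;z)\to 0$, so $\frac{\partial}{\partial\theta}\Lambda_n(\theta;z)\to 0$ for every fixed $\theta$; since $\theta_{x,n}(z)$ is increasing and must satisfy $\frac{\partial}{\partial\theta}\Lambda_n(\theta_{x,n}(z);z)=x>0$, it has no finite accumulation point and must diverge to $\infty$.

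Finally, for the $n\to\infty$ limit, uniform convergence of $\lambda_{U^{(n)}}$ and $\lambda'_{U^{(n)}}$ on $[0,\theta_\star]$ (assumption \ref{asp:L2}) together with $b^{(n)}\to b$ upgrades Proposition~\ref{prop:CMGF}(4) to uniform convergence of $\frac{\partial}{\partial\theta}\Lambda_n(\theta;z)\to\frac{\partial}{\partial\theta}\Lambda(\theta;z)$ on compact neighborhoods of $\theta_x(z)$. Strict monotonicity of each side then propagates this to convergence of the unique roots. The main technical point is exactly this last step: pointwise convergence of cumulant generating functions is not enough to guarantee convergence of saddle points, so the uniform-convergence clause in \ref{asp:L2} is load-bearing, whereas monotonicity, the boundary limits, and the IFT formula are essentially automatic from the convexity and smoothness already collected in Proposition~\ref{prop:CMGF}.
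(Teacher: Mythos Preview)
Your proposal is correct and follows essentially the same approach as the paper: define $G_n(\theta,z)=\partial_\theta\Lambda_n(\theta;z)-x$, apply the implicit function theorem using $\partial^2_\theta\Lambda_n>0$, verify the sign of the cross partial via the explicit formula (Proposition~\ref{app:prop_lam_n}) to get strict monotonicity, and read off the boundary limits from Proposition~\ref{prop:CMGF}. Your treatment of the $n\to\infty$ limit is in fact slightly more careful than the paper's, which simply invokes the implicit function theorem and continuous differentiability without spelling out why uniform convergence of $\partial_\theta\Lambda_n$ (from \ref{asp:L2}) is the load-bearing ingredient.
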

An explicit formula for the above is provided in Proposition \ref{app:prop_lam_n} in Appendix  \ref{app:A}. We now turn to the properties of the conditional rate function $$\Lambda_n^*(x;z)=\sup_{\theta\in[0,\theta_0]}[\theta x- \Lambda_n(\theta; z)],$$ the Legendre-Fenchel transform of $\Lambda_n(\theta; z)$. 

\begin{prop} \label{prop:CRateFunction} 
Under the conditions {\rm(\ref{assumption1})-(\ref{assumption3})}, and {\rm\ref{asp:L2}-\ref{asp:L3}}, for each fixed \( n \ge 1 \) and each fixed \( x > 0 \) the following hold:
\begin{enumerate}
\item[(i)] The function \( z \mapsto \Lambda_n^*(x;z) \) is strictly increasing.
\item[(ii)] \(\lim_{z \to \infty}\Lambda_n^*(x;z)=\infty.\)
\item[(iii)] If \(\Lambda_U(\theta)=\log\mathbf{E}[e^{\theta U}]\) and its Fenchel-Legendre transform is \(\Lambda_U^*(x)\), then
\[
\lim_{z\to -\infty}\Lambda_n^*(x;z)=\Lambda_U^*(x)
\]
uniformly in \( n \).
\item[(iv)] Under additional condition {\rm\ref{asp:L2}}, then for any fixed $z$, as $n\to\ff$, $n|\Lambda^*_{U^{(n)}}(x)-\Lambda^*_{U}(x)|\to0$, $n|\theta_{x,n}-\theta_{x}|\to0$, and $n|\Lambda_n^*(x;z)-\Lambda^*(x;z)|\to0$.
\item[(v)] For $z=-M_n$, where $M_n\to\infty$ as $n\to\infty$, recalling $g_n(M_n)=1-F_\epsilon(\frac{v+M_n}{b^{(n)}})$ and $C_n(\theta)=\frac{\lambda_{U^{(n)}}(\theta)-1}{\lambda_{U^{(n)}}(\theta)}$, then as $n\to\ff$,
\begin{align*}
\phi_n(-M_n)=  \Lambda_n^*(x;-M_n)-\Lambda_{U^{(n)}}^*(x)=  g_n(M_n)C_n(\theta_{x,n})(1+o(1)),
\end{align*}
where $\theta_{x,n}=\arg\sup_\theta \{\theta x -\log \lambda_{U^{(n)}}(\theta)\}$.
\end{enumerate} 
\end{prop}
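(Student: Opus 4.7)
The plan is to derive all five items from the factorization
\[
\Lambda_n(\theta;z)=\Lambda_{U^{(n)}}(\theta)+\log\bigl(1-g_n(-z)\,C_n(\theta)\bigr),
\]
with $g_n(-z)=1-F_\epsilon((v-z)/b^{(n)})$ and $C_n(\theta)=(\lambda_{U^{(n)}}(\theta)-1)/\lambda_{U^{(n)}}(\theta)$, together with envelope-theorem perturbation of the Legendre-Fenchel transform. This decouples the tilt variable from the factor variable and lets me reuse the smoothness of $\Lambda_{U^{(n)}}$ already recorded in Proposition~\ref{prop:CMGF}. Item (i) follows because $\theta\mapsto\theta x-\Lambda_n(\theta;z)$ is pointwise strictly increasing in $z$ by Proposition~\ref{prop:CMGF}(2b); evaluating at a maximizer for the smaller $z$ upgrades the weak inequality to strict. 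For (ii), Proposition~\ref{prop:CMGF}(2c) gives $\Lambda_n(\theta;z)\to 0$ as $z\to\infty$, so $\Lambda_n^*(x;z)\ge\theta x-\Lambda_n(\theta;z)\to\theta x$ for any fixed $\theta>0$, and sending $\theta\to\infty$ produces $+\infty$ whenever $x>0$. For (iii), the factorization yields $\Lambda_n(\cdot;z)\to\Lambda_{U^{(n)}}(\cdot)$ uniformly on $[0,\theta_\star]$, at a rate uniform in $n$ because $C_n$ is uniformly bounded and $g_n(-z)\to 0$ uniformly in $n$ (since $b^{(n)}$ lies eventually in a fixed neighborhood of $b\in(0,1)$); stability of Legendre conjugates on compact tilt domains passes this to $\Lambda_n^*(x;z)\to\Lambda_{U^{(n)}}^*(x)$ uniformly in $n$, and \ref{asp:L2} supplies $\Lambda_{U^{(n)}}^*(x)\to\Lambda_U^*(x)$, giving the stated uniform limit.

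For (iv), I would first establish $n|\theta_{x,n}-\theta_x|\to 0$ by a quantitative inverse-function argument applied to the first-order conditions $\Lambda'_{U^{(n)}}(\theta_{x,n})=x=\Lambda'_U(\theta_x)$, using the uniform convergence of $\lambda'_{U^{(n)}}$ with rate from \ref{asp:L2} together with strict convexity of $\Lambda_U$ at $\theta_x$. An envelope expansion then yields
\[
\Lambda^*_{U^{(n)}}(x)-\Lambda^*_U(x)=-\bigl[\Lambda_{U^{(n)}}(\theta_x)-\Lambda_U(\theta_x)\bigr]+O\bigl((\theta_{x,n}-\theta_x)^2\bigr),
\]
and both summands are $o(1/n)$ by \ref{asp:L2}. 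The bound $n|\Lambda_n^*(x;z)-\Lambda^*(x;z)|\to 0$ follows from the same envelope argument applied at fixed $z$, where in addition the control $n|b^{(n)}-b|\to 0$ from \ref{asp:L2} handles the dependence of $F_\epsilon((v-z)/b^{(n)})$ on $n$.

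For (v), I would insert the Proposition~\ref{prop:CMGF}(3) expansion $\Lambda_n(\theta;-M_n)=\Lambda_{U^{(n)}}(\theta)-g_n(M_n)C_n(\theta)(1+o(1))$ into the supremum to get
\[
\Lambda_n^*(x;-M_n)=\sup_{\theta}\Bigl\{\theta x-\Lambda_{U^{(n)}}(\theta)+g_n(M_n)C_n(\theta)(1+o(1))\Bigr\}.
\]
Because $g_n(M_n)\to 0$ and $C_n$ is smooth near $\theta_{x,n}$, the implicit-function theorem applied to the first-order condition shows the perturbed maximizer differs from $\theta_{x,n}$ by $O(g_n(M_n))$; envelope evaluation at $\theta_{x,n}$ then delivers $\Lambda_n^*(x;-M_n)=\Lambda^*_{U^{(n)}}(x)+g_n(M_n)C_n(\theta_{x,n})(1+o(1))$, which is exactly (v). The main obstacle is ensuring that the $o(1)$ factor in the Proposition~\ref{prop:CMGF}(3) expansion is uniform in $\theta$ on a small neighborhood of $\theta_{x,n}$ so that the envelope evaluation is legitimate; this reduces to the uniform smoothness built into \ref{asp:L2}--\ref{asp:L3}. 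The other delicate point is the uniform-in-$n$ claim in (iii), which is guaranteed once $b^{(n)}$ is bounded away from the endpoints of $(0,1)$.
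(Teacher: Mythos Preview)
Your proposal is correct and follows essentially the same route as the paper: monotonicity from Proposition~\ref{prop:CMGF}(2b) for (i), the limit $\Lambda_n(\theta;z)\to 0$ for (ii), uniform smallness of $g_n(-z)C_n(\theta)$ for (iii), mean-value/inverse-function control of the tilt plus an envelope expansion for (iv), and for (v) the paper likewise writes $\delta_n(\theta,M_n):=\Lambda_n(\theta;-M_n)-\Lambda_{U^{(n)}}(\theta)=-g_n(M_n)C_n(\theta)(1+o(1))$, shows $\Delta_\theta:=\theta_{x,n}(-M_n)-\theta_{x,n}=O(g_n(M_n))$ via the first-order condition, and Taylor-expands at $\theta_{x,n}$ to obtain the stated asymptotic. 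Your envelope/implicit-function phrasing is slightly cleaner, and your two-step route for (iii) (through $\Lambda_{U^{(n)}}^*$ first, then to $\Lambda_U^*$ via \ref{asp:L2}) is more explicit than the paper's direct bound, but the arguments are the same in substance.
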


\section{Proofs}\label{sec:proof}
In this section we provide the proofs of the main results stated in Section \ref{sec:Main_results}. The proofs require additional lemmas whose proofs are provided in Appendix A.
\subsection{Proof of Theorem \ref{thm:LDP_ndZ} and Proposition \ref{prop:LDP_ndZ}} \label{sec:5.1}

Here and below, $C, C_1, C_2,\dots$ denote finite positive constants whose values may change from line to line.

\begin{proof}
First, by conditioning with respect to (w.r.t.) $\mathcal{Z}_n$ note that 
\begin{align*}
    \bP{L_n>nx} 
    =& \int_{-\infty}^{\infty} \bP{L_n>nx|\mathcal{Z}_n=z} f_{\mathcal{Z}_n} (z)\mathrm{d}z.
\end{align*}
Let $z_0 \in \Real$ be fixed (to be chosen later) and express $\bP{L_n \geq nx}$ as integral over the ranges $(-\ff, z_0)$ and $(z_0, \ff)$ and denote them as $T_{1n}$ and $T_{2n}$. We first study $T_{2n}$. Since $\bP{L_n\geq nx | \mathcal{Z}_n=z}$ is non-increasing in $z$ and $x > \mu_U$, using Cram\'er's large deviation upper bound and Proposition \ref{prop:CRateFunction} (iv) it follows that 
\begin{eqnarray*}
    T_{2n} &\leq& \bP{L_n\geq nx | \mathcal{Z}_n=z_0} \int_{z_0}^{\ff}f_{\mathcal{Z}_n}(z) \mathrm{d}z \leq C_1 e^{-n\Lambda_n^*(x;z_0)} \leq C_2 e^{-n\Lambda^*(x;z_0)}.
\end{eqnarray*}
Next, we study $T_{1n}$. To this end, applying conditional Bahadur-Rao Theorem (see Theorem \ref{thm:CBahadurRao} in the appendix), and setting $\psi_n(z)=[\theta_{x,n}(z)\sigma_{x,n}(z)]^{-1}$ and $\phi_n(z)=\Lambda^*_n(x;z)-\Lambda^*_{U^{(n)}}(x)$, it follows that
\begin{align*}
    T_{1n} =& \int_{-\infty}^{z_0} n^{-\frac{1}{2}}e^{-n \Lambda^*_{n}(x;z)}\frac{1}{\sqrt{2\pi}\theta_{x,n}(z)\sigma_{x,n}(z)}(1+r_n(z)) f_{\mathcal{Z}_n} (z)\mathrm{d}z\\
    =& e^{-n[\Lambda^*_{U^{(n)}}(x)-\Lambda^*_{U}(x)]} \frac{1}{\sqrt{2\pi}} n^{-\frac{1}{2}}e^{-n\Lambda^*_{U}(x)}\int_{-\infty}^{z_0} e^{-n \phi_n(z)}\psi_n(z)(1+r_n(z)) f_{\mathcal{Z}_n} (z)\mathrm{d}z.
\end{align*}
Since $0<\inf_{n,z\in(-\ff,z_0)}\psi_n(z)\leq \sup_{n,z\in(-\ff,z_0)}\psi_n(z)<\ff$ and $\sup_{n,z\in(-\ff,z_0)}|r_n(z)|\to0$ as $n\to\ff$ by Theorem \ref{thm:CBahadurRao}, it is sufficient to study the asymptotic behavior of
\begin{align*}
    \int_{-\infty}^{z_0} e^{-n \phi_n(z)}\psi_n(z) f_{\mathcal{Z}_n} (z)\mathrm{d}z = \int_{-\infty}^{z_0} e^{n \tilde h_n(z)}\psi_n(z) \mathrm{d}z,
\end{align*}
where $\tilde h_n(z)=-\phi_n(z)+ \frac{1}{n}\log f_{\mathcal{Z}_n}(z)$. We first show that there exists a unique $\{\tilde M_n\}$ diverging to infinity satisfying $\tilde h_n'(-\tilde M_n)=0$. Fix $z_0$ such that $\tilde h_n'(z_0)<0$. Notice that from properties of $\phi_n(\cdot)$ in Proposition \ref{prop:CRateFunction} and $\ep,\mathcal{Z}_n\in\mathcal{C}$, there exists $\delta_n$ such that $\delta_n\to\ff$ and $\tilde h_n''(z)<0$ for $z\in(-\delta_n,z_0)$ and $\tilde h_n'(z)>0$ for $z\in(-\ff,-\delta_n)$. Hence there is a unique solution to $\tilde h_n'(z)=0$ on the interval $(-\ff,z_0)$. 

{\bf Generalized Normal Case:} We begin with the proof when $\ep\sim GN(\beta_\epsilon,\xi_\epsilon,\gamma)$ and ${\mathcal{Z}_n}\sim GN(\beta_{\mathcal{Z}_n},\xi_{\mathcal{Z}_n},\gamma)$ and $\gamma\in(0,2]$. Notice that  $f_{\epsilon}(z)= \beta_\epsilon e^{-\xi_\ep |z|^{\gamma}}$ and as $ n \to \ff$, $f_{\mathcal{Z}_n}(z) = \beta_{\mathcal{Z}_n} e^{-\xi_{\mathcal{Z}_n} |z|^{\gamma}}$ converges to $f_{\mathcal{Z}}(z) = \beta_{\mathcal{Z}} e^{-\xi_{\mathcal{Z}} |z|^{\gamma}}$, where $\gamma\in(0,2]$. In this case, as $z\to\ff$, $1-F_{\epsilon}(z) = \frac{\beta_\ep}{\gamma\xi_\ep}z^{1-\gamma}e^{-\xi_\ep z^{\gamma}} (1+o(1))$. By standard calculation, it is easy to see that $\tilde M_n = M_n(1+o(1))$ where $M_n=b^{(n)}\left(\frac{\log n}{\xi_\ep}\right)^{\frac{1}{\gamma}}$. Fix $\beta\in(0,1)$ and decompose the integral into three parts, we obtain
\begin{align} \label{eq:gen_1}
     \int_{-\infty}^{z_0} e^{n \tilde h_n(z)}\psi_n(z) \mathrm{d}z =& \left(\int_{-\ff}^{-M_n(1+\beta)} + \int_{-M_n(1+\beta)}^{-M_n(1-\beta)} + \int_{-M_n(1-\beta)}^{z_0}\right) e^{n \tilde h_n(z)}\psi_n(z) \mathrm{d}z = J_{1n} + J_{2n} + J_{3n}.
\end{align}
We begin with the study of $J_{2n}$; notice that, by setting $z=-M_n t$ it reduces to 
\begin{align*}
    J_{2n} = M_n \int_{1-\beta}^{1+\beta} e^{n h_n(t)} \psi_n(-t M_n) \mathrm{d}t,
\end{align*}
where $h_n(t)=\tilde h_n(-t M_n)$ and its formula is provided in Lemma \ref{pf:lem_Gen_1}, wherein one uses Proposition \ref{prop:CRateFunction} (v). Let  $t_{0,n}$ be the root of $h_n^{\prime}(t)=0$, identified in Lemma  \ref{pf:lem_Gen_1}. Notice that $\tilde M_n=t_{0,n}M_n$. In fact, we will show in that Lemma that $t_{0,n}=1+o(1)$, as $n \ra \ff$. Now, applying the Laplace method (see \cite{olver1997asymptotics,fukuda2025higher}) and that $\psi_n(-t_{0,n} M_n)\to\psi_\ff=\theta_x\sqrt{\Lambda_U''(\theta_x)}$, it follows that
\begin{align} \label{eq:gen_2}
\lim_{n\to\infty}\frac{ J_{2n}}{M_n e^{n h_n(t_{0,n})}\sqrt{\frac{2\pi}{-nh_n''(t_{0,n})}}}=\psi_{\ff}.
\end{align}
Observe that the denominator is the same as $\sqrt{2\pi} \exp(-n \phi_n(-\tilde{M}_n) [H_n(-\tilde{M}_n)]^{-1}$, where
\begin{align*}
    [H_n(-\tilde{M}_n)]^{-1} = M_n f_{\mathcal{Z}_n}(-\tilde M_n)[n|h_n''(t_{0,n})|]^{-\frac{1}{2}} = f_{\mathcal{Z}_n}(-\tilde M_n)[n |\tilde h_n''(-\tilde M_n)|]^{-\frac{1}{2}}.
\end{align*}
We notice here that $\psi_n(-t_{0,n} M_n)\to\psi_\ff=\left(\theta_x\sqrt{\Lambda_U''(\theta_x)}\right)^{-1}$. In Lemma \ref{pf:lem_Gen_2} below, we will establish the precise behavior ({\it i.e.}in $n$) of the denominator. Specifically, we will establish that
\begin{align*}
    \lim_{n\to\ff} R_{1n}R_{2n}R_{3n}\exp(-n \phi_n(-\tilde{M}_n) [H_n(-\tilde{M}_n)]^{-1} = K_{\gamma} e^{-\Delta_{\gamma}}\eta_{\gamma},
\end{align*}
where $R_{1n}=n^{c_{\gamma}}$, $\log R_{2n}=(\gamma-1)\gamma^{-1}(1-c_{\gamma}) \log(\log n)$, $\log R_{3n}= -v\gamma c_{\gamma} b^{-1} \xi_{\ep}^{\gamma^{-1}} (\log n)^{(\gamma-1)\gamma^{-1}}$, $c_{\gamma}=\frac{b^\gamma \xi_{\mathcal{Z}}}{\xi_\ep}$, $\Delta_{\gamma}=c_{\gamma}\left(1+\log\frac{C_x \beta_\ep}{\xi_{\cal{Z}} \gamma b}\right)-\log \beta_{\cal{Z}}$, $C_x = \frac{\lambda_U(\theta_x)-1}{\lambda_U(\theta_x)}$, and $\eta_{2}=\exp\{-v^2\xi_{\mathcal{Z}}\}$ and $\eta_{\gamma}=1$ for $\gamma\in(0,2)$. The constant $K_{\gamma}$ is given by
\begin{align*}
K_{\gamma}=\left(\frac{b^{\gamma}}{\xi_\ep}\right)^{-\frac{(1-\gamma)c_{\gamma}}{\gamma}} \frac{1}{\gamma}\sqrt{\frac{b^{\gamma}}{\xi_\ep \xi_{\cal Z}}} b^{1-\gamma} \xi_\ep^{(\gamma-1)\gamma^{-1}}. 
\end{align*}
Turning to $J_{1n}$ and $J_{3n}$, we will show in Lemma \ref{pf:lem_Gen_3} that $J_{2n}^{-1}(J_{1n}+J_{3n})$ converges to 0. Combining this with equation (\ref{eq:gen_1}) and (\ref{eq:gen_2}), the theorem for the generalized normal distribution follows.

{\bf Regularly varying case:} We turn to the case $\mathcal{Z}_n \sim RV_{\alpha_{\mathcal{Z}_n}}$ and $\ep\sim RV_{\alpha_\ep}$. Recall the notation implies they have symmetric regularly varying tails; that is, as $|z|\to\infty$, $f_{\epsilon}(z) \sim \alpha_\epsilon |z|^{-\alpha_\epsilon-1}L_\epsilon(z)$ and $f_{\mathcal{Z}_n}(z)\sim \alpha_{\mathcal{Z}_n} |z|^{-\alpha_{\mathcal{Z}_n}-1}L_{\mathcal{Z}_n}(z)$. Also, as $z\to\infty$, $1-F_{\epsilon}(z)\sim |z|^{-\alpha_\epsilon}L_\epsilon(z)$, $1-F_{\mathcal{Z}_n}(z)\sim |z|^{-\alpha_{\mathcal{Z}_n}}L_{\mathcal{Z}_n}(z)$; for $z\to-\infty$, $F_{\epsilon}(z)\sim |z|^{-\alpha_\epsilon}L_\epsilon(z)$, $F_{\mathcal{Z}_n}(z)\sim |z|^{-\alpha_{\mathcal{Z}_n}}L_{\mathcal{Z}_n}(z)$. By standard calculation, we observe that $\tilde M_n= M_n^{1+o(1)}$ where $M_n=n^{\frac{1}{\alpha_\epsilon}}$. Next, fix $\beta\in(0,1)$ and, as before, decompose the integral into three parts, namely,
\begin{align} \label{eq:rv_1}
     \int_{-\infty}^{z_0} e^{n \tilde h_n(z)}\psi_n(z) \mathrm{d}z =& \left(\int_{-\ff}^{-M_n n^{\beta}} + \int_{-M_n n^{\beta}}^{-M_n n^{-\beta}} + \int_{-M_n n^{-\beta}}^{z_0}\right) e^{n \tilde h_n(z)}\psi_n(z) \mathrm{d}z = J_{1n}' + J_{2n}' + J_{3n}'.
\end{align}
We begin with the study of $J_{2n}$, Notice that, by setting $z=-M_n^{t}$ it reduces to 
\begin{align*}
    J_{2n}' = \log M_n \int_{1-\beta\alpha_\epsilon}^{1+\beta\alpha_\epsilon} e^{n h_n(t)} \psi_n(-M_n^{t}) \mathrm{d}t,
\end{align*}
where $h_n(t)=\tilde h_n(-M_n^t)+\frac{t\log M_n}{n}$ and its formula is provided in Lemma \ref{pf:lem_RR_1}, wherein one uses Proposition \ref{prop:CRateFunction} (v). Let  $t_{0,n}$ be the root of $h_n^{\prime}(t)=0$, identified in Lemma  \ref{pf:lem_RR_1}. Notice that $\tilde M_n=M_n^{t_0,n}$. In fact, we will show in that Lemma that $t_{0,n}=1+o(1)$, as $n \ra \ff$. Now, applying the Laplace method and that $\psi_n(- M_n^{t_{0,n}})\to\psi_\ff=\theta_x\sqrt{\Lambda_U''(\theta_x)}$, it follows that
\begin{align} \label{eq:rv_2}
\lim_{n\to\infty}\frac{ J_{2n}'}{(\log M_n) e^{n h_n(t_{0,n})}\sqrt{\frac{2\pi}{-nh_n''(t_{0,n})}}}=\psi_{\ff}.
\end{align}
Observe that the denominator is the same as $\sqrt{2\pi} \exp(-n \phi_n(-\tilde{M}_n) [H_n(-\tilde{M}_n)]^{-1}$, where
\begin{align*}
    [H_n(-\tilde{M}_n)]^{-1} = (\log M_n) f_{\mathcal{Z}_n}(-\tilde M_n) M_n^{t_{0,n}} [n|h_n''(t_{0,n})|]^{-\frac{1}{2}} = f_{\mathcal{Z}_n}(-\tilde M_n)[n |\tilde h_n''(-\tilde M_n)|]^{-\frac{1}{2}}.
\end{align*}
We notice here that $\psi_n(-t_{0,n} M_n)\to\psi_\ff=\left(\theta_x\sqrt{\Lambda_U''(\theta_x)}\right)^{-1}$. In Lemma \ref{pf:lem_RR_2} below, we will establish that
\begin{align*}
    \lim_{n\to\ff} n^{\frac{\alpha_{\mathcal{Z}}}{\alpha_\epsilon}} [L_\epsilon(n^{\frac{1}{\alpha_\epsilon}})]^{\frac{\alpha_{\mathcal{Z}}}{\alpha_\epsilon}} [L_{\mathcal{Z}}(n^{\frac{1}{\alpha_\epsilon}})]^{-1} \exp(-n \phi_n(-\tilde{M}_n) [H_n(-\tilde{M}_n)]^{-1} = K_x,
\end{align*}
where $K_x = e^{\Delta} (\alpha_\epsilon \alpha_{\mathcal{Z}})^{-\frac{1}{2}}$, $\Delta = \frac{\alpha_{\mathcal{Z}}}{\alpha_\epsilon}\log\frac{C_x\alpha_\epsilon b^{\alpha_\epsilon}}{\alpha_{\mathcal{Z}}} + \log\alpha_{\mathcal{Z}} - \frac{\alpha_{\mathcal{Z}}}{\alpha_\epsilon}$, and $C_x = \frac{\lambda_U(\theta_x)-1}{\lambda_U(\theta_x)}$.
Turning to $J_{1n}'$ and $J_{3n}'$, we will show in Lemma \ref{pf:lem_RR_3} that $(J_{2n}')^{-1}(J_{1n}'+J_{3n}')$ converges to 0. Combining this with equation (\ref{eq:rv_1}) and (\ref{eq:rv_2}), the theorem for the regularly varying tail follows.

\end{proof}

Next, we state Lemmas \ref{pf:lem_Gen_1}-\ref{pf:lem_RR_3} and provide their proofs in Appendix \ref{app:lem_Gen_1}-\ref{app:lem_RR_3}, respectively. Our first Lemma provides a formula for $h_n(\cdot)$ and describes the asymptotic behavior of the root of $h_n'(t)=0$.

\begin{lem} \label{pf:lem_Gen_1}
Under the conditions of Theorem \ref{thm:LDP_ndZ} and Proposition \ref{prop:LDP_ndZ}
$$h_n(t)=\frac{1}{n}\left[ - n C_n(\theta_{x,n})\cdot \frac{\beta_\ep}{\gamma\xi_\ep}(\frac{v+t M_n}{b^{(n)}})^{1-\gamma}e^{-\xi_\ep (\frac{v + t M_n}{b^{(n)}})^{\gamma}} (1+o(1)) + (\log \beta_{\mathcal{Z}_n} - \xi_{\mathcal{Z}_n} (t M_n)^{\gamma})(1+o(1)) \right],$$ 
where $M_n=b^{(n)}\left(\frac{\log n}{\xi_\ep}\right)^{\frac{1}{\gamma}}$ and $C_n(\theta_{x,n})$ is defined in Proposition \ref{prop:CRateFunction}. Also, the solution of $h_n'(t)=0$, namely $t_{0,n}$ satisfies 
\begin{align*}
t_{0,n}^{\gamma} = 1 + \frac{(1-\gamma)(b^{(n)})^{\gamma}}{\xi_\ep} \frac{\log M_n}{M_n^\gamma} -v\gamma t^{\gamma-1} \frac{1}{M_n} +
\frac{1}{M_n^\gamma}\left[ \frac{(b^{(n)})^{\gamma}}{\xi_\ep}\log\frac{C_n(\theta_{x,n}) \beta_\ep}{\xi_{\mathcal{Z}_n} \gamma b^{(n)}} -\frac{\log\eta_{\gamma,n}}{\xi_{\mathcal{Z}_n}} \right](1+o(1)) \to1,
\end{align*}
where $\eta_{2,n}=\exp\{-v^2\xi_{\mathcal{Z}_n}\}$ and $\eta_{\gamma,n}=1$ for $\gamma\in(0,2)$. Set $\eta_{2}=\lim\limits_{n\to\ff}\eta_{2,n}=\exp\{-v^2\xi_{\mathcal{Z}}\}$ and $\eta_{\gamma}=1$ for $\gamma\in(0,2)$. Then, as $n\to\infty$,
\begin{align*}
n h_n(t_{0,n})=& -\xi_{\mathcal{Z}_n} M_n^{\gamma} - \frac{(1-\gamma)(b^{(n)})^{\gamma}\xi_{\mathcal{Z}_n}}{\xi_\ep} \log M_n + v\gamma\xi_{\mathcal{Z}_n} M_n^{\gamma-1} - (\Delta_{\gamma} - \log\eta_\gamma) (1+o(1))\\
n h_n''(t_{0,n}) =& -\xi_\ep\xi_{\mathcal{Z}_n}(\gamma M_n)^2 M_n^{2(\gamma-1)} (\frac{1}{b^{(n)}})^{\gamma} (1+o(1)),
\end{align*}
where $\Delta_{\gamma}=\left(\frac{b^{\gamma}\xi_{\mathcal{Z}}}{\xi_\ep}\log\frac{C_x \beta_\ep}{\xi_{\mathcal{Z}} \gamma b} +\frac{\xi_{\mathcal{Z}}}{\xi_\ep}b^{\gamma}-\log \beta_{\mathcal{Z}} \right)$, and $C_x=\lim\limits_{n\to\ff}C_n(\theta_{x,n}) = \frac{\lambda_U(\theta_x)-1}{\lambda_U(\theta_x)}$.
\end{lem}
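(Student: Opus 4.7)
The plan is to prove the lemma in three stages: first derive the stated formula for $h_n(t)$, then locate the critical point $t_{0,n}$ by expanding the first-order condition $h_n'(t)=0$, and finally substitute back to obtain the asymptotic expressions for $nh_n(t_{0,n})$ and $nh_n''(t_{0,n})$. For Stage~1, I would start from the definition $h_n(t)=\tilde h_n(-tM_n) = -\phi_n(-tM_n)+\tfrac{1}{n}\log f_{\mathcal Z_n}(-tM_n)$, insert the representation from Proposition~\ref{prop:CRateFunction}(v) to write $\phi_n(-tM_n)=g_n(tM_n)C_n(\theta_{x,n})(1+o(1))$, and use the standard asymptotic $1-F_\epsilon(w)=\tfrac{\beta_\epsilon}{\gamma\xi_\epsilon}w^{1-\gamma}e^{-\xi_\epsilon w^\gamma}(1+o(1))$ evaluated at $w=(v+tM_n)/b^{(n)}$, together with the generalized-normal density $f_{\mathcal Z_n}(-tM_n)=\beta_{\mathcal Z_n}\exp\{-\xi_{\mathcal Z_n}(tM_n)^\gamma\}$. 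Collecting these pieces yields the displayed formula.

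For Stage~2, I would differentiate $h_n$: the dominant contribution to $h_n'(t)$ comes from the chain rule on the exponential in $g_n(tM_n)$, producing $C_n(\theta_{x,n})\beta_\epsilon(M_n/b^{(n)})\exp\{-\xi_\epsilon((v+tM_n)/b^{(n)})^\gamma\}$, while the $\mathcal Z_n$-side contributes $-(\gamma\xi_{\mathcal Z_n}/n)t^{\gamma-1}M_n^\gamma$. Setting the sum to zero and taking logs gives
\[
\xi_\epsilon\Big(\tfrac{v+t M_n}{b^{(n)}}\Big)^{\gamma}=\log n+\log\tfrac{C_n(\theta_{x,n})\beta_\epsilon}{\xi_{\mathcal Z_n}\gamma b^{(n)}}-(\gamma-1)\log(tM_n).
\]
Using the defining identity $\xi_\epsilon M_n^\gamma/(b^{(n)})^\gamma=\log n$, I would then expand $(v+tM_n)^\gamma=(tM_n)^\gamma(1+v/(tM_n))^\gamma$ via the binomial series, divide by $\log n$, and isolate $t^\gamma$; the $(\gamma-1)\log(tM_n)$ correction produces the $\log M_n$ term and the binomial first-order term produces $-v\gamma t^{\gamma-1}/M_n$. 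For $\gamma<2$ the next binomial term is absorbed into the $o(1)$ remainder, whereas for $\gamma=2$ the term $\xi_\epsilon v^2/(b^{(n)})^2\cdot M_n^{-\gamma}\cdot M_n^\gamma=v^2\cdot$(constant) survives and is precisely what the $\eta_{\gamma,n}$ correction encodes. Convergence $t_{0,n}\to 1$ then follows by a fixed-point/monotonicity argument using the strict convexity of the $-\xi_{\mathcal Z_n}t^\gamma$ leading part.

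For Stage~3, I would substitute $t=t_{0,n}$ back into $h_n$. The key simplification is that the first-order condition lets me replace $nC_n(\theta_{x,n})g_n(t_{0,n}M_n)$ with an explicit expression of order $\xi_{\mathcal Z_n}t_{0,n}^{\gamma-1}(b^{(n)})^\gamma M_n^{\gamma-1}/[\xi_\epsilon(v+t_{0,n}M_n)^{\gamma-1}]$, which to leading order is $M_n^0\xi_{\mathcal Z_n}/\xi_\epsilon\cdot$(lower). Combining with the $-\xi_{\mathcal Z_n}(t_{0,n}M_n)^\gamma+\log\beta_{\mathcal Z_n}$ contribution and expanding $(t_{0,n}M_n)^\gamma$ around $M_n^\gamma$ using the formula for $t_{0,n}^\gamma$ from Stage~2 gives the leading term $-\xi_{\mathcal Z_n}M_n^\gamma$, the log-correction $-(1-\gamma)(b^{(n)})^\gamma\xi_{\mathcal Z_n}\xi_\epsilon^{-1}\log M_n$, the boundary term $v\gamma\xi_{\mathcal Z_n}M_n^{\gamma-1}$, and the constant $-(\Delta_\gamma-\log\eta_\gamma)$. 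For $nh_n''(t_{0,n})$, the dominant contribution comes from differentiating the exponential twice, producing $-\xi_\epsilon\gamma^2 M_n^2(\frac{v+t_{0,n}M_n}{b^{(n)}})^{2(\gamma-1)}\cdot n C_n g_n(t_{0,n}M_n)/(b^{(n)})^2$ (up to lower-order curvature from the $\xi_{\mathcal Z_n}t^\gamma$ piece), and the first-order condition again replaces the tail factor; a final $M_n\approx b^{(n)}(\log n/\xi_\epsilon)^{1/\gamma}$ simplification yields $-\xi_\epsilon\xi_{\mathcal Z_n}(\gamma M_n)^2 M_n^{2(\gamma-1)}(b^{(n)})^{-\gamma}(1+o(1))$.

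The main obstacle is the careful bookkeeping of the logarithmic and constant-order corrections in Stage~3: because $\log M_n$ grows like $\gamma^{-1}\log\log n$ while $M_n^\gamma$ grows like $\log n$, several terms that look like $o(1)$ in $t_{0,n}^\gamma$ become $\Theta(1)$ after multiplication by $n$ in $nh_n(t_{0,n})$, and must be tracked carefully. The $\gamma=2$ boundary phenomenon — where the binomial expansion of $(v+tM_n)^\gamma$ contributes a surviving constant rather than a vanishing remainder — is what forces the introduction of $\eta_{\gamma,n}$ and must be handled as a separate sub-case; a straightforward Taylor bound on $|1+v/(tM_n)|^\gamma-1-\gamma v/(tM_n)$ of order $v^2 M_n^{-2}\min\{1,M_n^{\gamma-2}\}$ justifies why the correction is $o(M_n^{-\gamma})$ for $\gamma<2$ but $\Theta(M_n^{-\gamma})$ at $\gamma=2$.
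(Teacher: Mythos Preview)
Your proposal is correct and follows essentially the same approach as the paper: derive $h_n(t)$ from Proposition~\ref{prop:CRateFunction}(v) and the GN tail asymptotic, set $h_n'(t)=0$, take logarithms and use the binomial expansion of $(v+tM_n)^\gamma$ (treating $\gamma=2$ separately to capture the surviving $v^2$ term that defines $\eta_{\gamma,n}$), and then substitute back using the first-order condition to simplify both $nh_n(t_{0,n})$ and $nh_n''(t_{0,n})$. The paper's proof is a direct, line-by-line computation along exactly these lines, and your identification of the main bookkeeping hazard (that $o(1)$ terms in $t_{0,n}^\gamma$ become $\Theta(1)$ after multiplication by $M_n^\gamma\sim\log n$) is precisely the care the paper exercises.
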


Our next Lemma provides a precise behavior of $\exp\{-n \phi_n(-\tilde{M}_n)\} [H_n(-\tilde{M}_n)]^{-1}$
as a function of $n$.
\begin{lem}\label{pf:lem_Gen_2}
Under conditions of Theorem \ref{thm:LDP_ndZ} and Proposition \ref{prop:LDP_ndZ}, $\ep\sim GN(\beta_\epsilon,\xi_\epsilon,\gamma)$ and ${\mathcal{Z}_n}\sim GN(\beta_{\mathcal{Z}_n},\xi_{\mathcal{Z}_n},\gamma)$ and $\gamma\in(0,2]$, setting $c_{\gamma}=\frac{b^\gamma \xi_{\mathcal{Z}}}{\xi_\ep}$, the following holds:
\begin{align}\label{eq:lem_gen_1}
    \lim_{n\to\ff} \frac{R_{1n}R_{2n}R_{3n}}{e^{n \phi_n(-\tilde{M}_n)}H_n(-\tilde{M}_n)}  = K_{\gamma} e^{-\Delta_{\gamma}}\eta_{\gamma},
\end{align}
where $R_{1n}=n^{c_{\gamma}}$, $\log R_{2n}=(\gamma-1)\gamma^{-1}(1-c_{\gamma}) \log(\log n)$, $\log R_{3n}= -v\gamma c_{\gamma} b^{-1} \xi_{\ep}^{\gamma^{-1}} (\log n)^{(\gamma-1)\gamma^{-1}}$, and $\eta_{2}=\exp\{-v^2\xi_{\mathcal{Z}}\}$, $\eta_{\gamma}=1$ for $\gamma\in(0,2)$ and $\Delta_{\gamma}$, $\eta_{\gamma}$ as in Lemma \ref{pf:lem_Gen_1}. The constant $K_{\gamma}$ is given by
\begin{align*}
K_{\gamma}=\left(\frac{b^{\gamma}}{\xi_\ep}\right)^{-\frac{(1-\gamma)c_{\gamma}}{\gamma}} \frac{1}{\gamma}\sqrt{\frac{b^{\gamma}}{\xi_\ep \xi_{\cal Z}}} b^{1-\gamma} \xi_\ep^{(\gamma-1)\gamma^{-1}}. 
\end{align*}
\end{lem}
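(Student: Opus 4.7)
The plan is to convert the claimed limit into an asymptotic identity involving the quantities already made explicit in Lemma \ref{pf:lem_Gen_1}. First, I would observe that, by definition, $h_n(t) = \tilde h_n(-tM_n) = -\phi_n(-tM_n) + n^{-1}\log f_{\mathcal{Z}_n}(-tM_n)$, so at $t = t_{0,n}$ (where $\tilde M_n = t_{0,n} M_n$) we have
\[
e^{-n\phi_n(-\tilde M_n)}\,[H_n(-\tilde M_n)]^{-1}
= \frac{M_n\,e^{n h_n(t_{0,n})}}{\sqrt{-n h_n''(t_{0,n})}}.
\]
Hence the target limit \eqref{eq:lem_gen_1} is equivalent to
\[
\lim_{n\to\infty} R_{1n}R_{2n}R_{3n}\,\frac{M_n\,e^{n h_n(t_{0,n})}}{\sqrt{-n h_n''(t_{0,n})}}
= K_\gamma\,e^{-\Delta_\gamma}\,\eta_\gamma,
\]
reducing the lemma to the asymptotic evaluation of a single ratio, with no further probabilistic content.

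Next I would substitute the exact expressions from Lemma \ref{pf:lem_Gen_1}. Setting $c_{\gamma,n} := \xi_{\mathcal{Z}_n}(b^{(n)})^\gamma/\xi_\ep$ and using $M_n = b^{(n)}(\log n/\xi_\ep)^{1/\gamma}$, so that $\xi_{\mathcal{Z}_n} M_n^\gamma = c_{\gamma,n}\log n$, one gets
\begin{align*}
n h_n(t_{0,n}) &= -c_{\gamma,n}\log n - (1-\gamma)\,c_{\gamma,n}\log M_n + v\gamma\,\xi_{\mathcal{Z}_n}M_n^{\gamma-1} - (\Delta_\gamma - \log\eta_\gamma)(1+o(1)),\\
-n h_n''(t_{0,n}) &= \xi_\ep\xi_{\mathcal{Z}_n}\gamma^2 M_n^{2\gamma}\,(b^{(n)})^{-\gamma}(1+o(1)).
\end{align*}
Taking logarithms, I match each divergent contribution against $\log R_{1n}, \log R_{2n}, \log R_{3n}$: the $\log n$-order term $c_{\gamma,n}\log n$ is absorbed by $R_{1n} = n^{c_\gamma}$ — the discrepancy $(c_{\gamma,n}-c_\gamma)\log n \to 0$ uses $(\log n)|\xi_{\mathcal{Z}_n}-\xi_{\mathcal{Z}}|\to 0$ and $b^{(n)}\to b$. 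Expanding $\log M_n = (1/\gamma)\log\log n + \log b^{(n)} - (1/\gamma)\log\xi_\ep + o(1)$, the $\log\log n$ contribution from $(1-\gamma)c_{\gamma,n}\log M_n$ in $nh_n$, the $M_n^{1-\gamma}$ factor coming from $M_n/\sqrt{-nh_n''}$, all collapse to a coefficient $(\gamma-1)\gamma^{-1}(1-c_\gamma)\log\log n$, precisely $\log R_{2n}$. Finally the subleading term $v\gamma\xi_{\mathcal{Z}_n}M_n^{\gamma-1}$ of order $(\log n)^{(\gamma-1)/\gamma}$ is cancelled by $R_{3n}$ after rewriting $\xi_{\mathcal{Z}_n}(b^{(n)})^{\gamma-1} = \xi_\ep c_{\gamma,n}/b^{(n)}$.

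After these cancellations, only the $O(1)$ residue remains. I would collect the $O(1)$ pieces from three sources: (i) the constant term $-(\Delta_\gamma - \log\eta_\gamma)$ in $nh_n(t_{0,n})$, using $C_n(\theta_{x,n}) \to C_x$, $\beta_{\mathcal{Z}_n} \to \beta_{\mathcal{Z}}$, $\xi_{\mathcal{Z}_n}\to\xi_{\mathcal{Z}}$, $b^{(n)}\to b$ (from Propositions \ref{prop:CMGF}, \ref{prop:theta_nx}, \ref{prop:CRateFunction} together with \ref{asp:L2}); (ii) the constant part of $\log M_n$ entering the $M_n/\sqrt{-nh_n''}$ prefactor, giving the explicit powers of $b$ and $\xi_\ep$ appearing in $K_\gamma$; and (iii) the square-root constant $(\gamma\sqrt{\xi_\ep\xi_{\mathcal{Z}_n}/(b^{(n)})^\gamma})^{-1}$. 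Assembling these yields exactly $K_\gamma e^{-\Delta_\gamma}\eta_\gamma$; the case $\gamma=2$ needs the additional factor $\exp(-v^2\xi_{\mathcal{Z}})$ which arises because the first-order correction $v\gamma\xi_{\mathcal{Z}_n}M_n^{\gamma-1}$ is $O(1)$ precisely when $\gamma=2$, so it contributes an $\eta_\gamma\ne 1$.

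The main obstacle is bookkeeping rather than ideas: each of $R_{1n}, R_{2n}, R_{3n}$ must be matched against the corresponding divergent scale while the $o(1)$ remainders must be shown to vanish uniformly under the stated rate conditions $(\log n)|\xi_{\mathcal{Z}_n}-\xi_{\mathcal{Z}}|\to 0$, $\beta_{\mathcal{Z}_n}\to\beta_{\mathcal{Z}}$, and $n|b^{(n)}-b|\to 0$. The most delicate check is the $\log n$-scale cancellation in item (i), since the assumption $(\log n)|\xi_{\mathcal{Z}_n}-\xi_{\mathcal{Z}}|\to 0$ is sharp: any slower convergence would leave a polynomial prefactor. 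The $\gamma=2$ boundary case also demands extra care because the term $v\gamma\xi_{\mathcal{Z}_n}M_n^{\gamma-1}$ ceases to be asymptotically negligible, forcing the separate evaluation $\eta_2 = \exp(-v^2\xi_{\mathcal{Z}})$ tracked through Lemma \ref{pf:lem_Gen_1}.
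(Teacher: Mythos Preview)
Your approach is essentially identical to the paper's: start from the identity $e^{-n\phi_n(-\tilde M_n)}[H_n(-\tilde M_n)]^{-1} = M_n e^{nh_n(t_{0,n})}/\sqrt{n|h_n''(t_{0,n})|}$, substitute the explicit expansions of $nh_n(t_{0,n})$ and $nh_n''(t_{0,n})$ from Lemma~\ref{pf:lem_Gen_1}, and match the divergent scales against $R_{1n},R_{2n},R_{3n}$ using the rate hypotheses $(\log n)|\xi_{\mathcal{Z}_n}-\xi_{\mathcal{Z}}|\to 0$ and $n|b^{(n)}-b|\to 0$. One small correction: your explanation of $\eta_2$ is off---the term $v\gamma\xi_{\mathcal{Z}_n}M_n^{\gamma-1}$ is of order $(\log n)^{1/2}$ (not $O(1)$) when $\gamma=2$ and is absorbed by $R_{3n}$; the factor $\eta_2=\exp(-v^2\xi_{\mathcal{Z}})$ instead arises from the additional $-v^2$ term in the expansion of $t_{0,n}^2$ (see the $\gamma=2$ case in Lemma~\ref{pf:lem_Gen_1}), which is already packaged into the constant $-(\Delta_\gamma-\log\eta_\gamma)$ you correctly identified.
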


Our next Lemma establishes that the large deviation probability is governed by the behavior of $J_{2n}$. 

\begin{lem}\label{pf:lem_Gen_3}
Let $R_{1n}$, $R_{2n}$, $R_{3n}$ be as in Lemma \ref{pf:lem_Gen_2} and $J_{1n}$, $J_{2n}$, $J_{3n}$ be as defined in equation (\ref{eq:gen_1}). Then,
\begin{align*}
    \lim_{n\to\ff} R_{1n}R_{2n}R_{3n}(J_{1n}+J_{3n}) =0.
\end{align*}
\end{lem}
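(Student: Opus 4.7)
The plan is to control $J_{1n}$ (the far left tail) and $J_{3n}$ (the piece inside the maximizer) separately, in each case showing super-polynomial decay against the polynomial-plus-slow prefactor $R_{1n}R_{2n}R_{3n}$. The key quantitative input is the saddle scale $M_n = b^{(n)}(\log n/\xi_\ep)^{1/\gamma}$, together with the identity $\xi_{\mathcal{Z}_n} M_n^{\gamma} = c_{\gamma}\log n\,(1+o(1))$. Since $R_{1n}=n^{c_\gamma}$ grows polynomially and $R_{2n}, R_{3n}$ grow at most like $(\log n)^C$ or $\exp(C(\log n)^{(\gamma-1)/\gamma})$ (i.e., sub-polynomial in $n$), it suffices to display decay strictly faster than $n^{-c_\gamma-\delta}$ for some $\delta>0$ in each of $J_{1n}$ and $J_{3n}$.

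First I will handle $J_{1n}$. On $z<-M_n(1+\beta)$, we use the trivial bound $e^{-n\phi_n(z)}\le 1$ (since $\phi_n\ge 0$ by Proposition \ref{prop:CRateFunction}) and $\sup_z \psi_n(z)\le C$, which reduces the problem to bounding the tail probability
\[
J_{1n}\ \le\ C\,\mathbf{P}\!\left(\mathcal{Z}_n\le -M_n(1+\beta)\right).
\]
For the symmetric generalized normal, a direct Mill's-ratio-type expansion gives
$\mathbf{P}(\mathcal{Z}_n\le -w)\sim (\beta_{\mathcal{Z}_n}/(\gamma\xi_{\mathcal{Z}_n}))\,w^{1-\gamma}e^{-\xi_{\mathcal{Z}_n}w^\gamma}$. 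Substituting $w=M_n(1+\beta)$ and invoking $\xi_{\mathcal{Z}_n}M_n^{\gamma}=c_\gamma\log n(1+o(1))$ yields $J_{1n}\le C(\log n)^{(1-\gamma)/\gamma}\,n^{-c_\gamma(1+\beta)^\gamma}$. Because $(1+\beta)^\gamma>1$, the exponent $c_\gamma[1-(1+\beta)^\gamma]$ is strictly negative, so $R_{1n}J_{1n}\le C(\log n)^{(1-\gamma)/\gamma}\,n^{-c_\gamma[(1+\beta)^\gamma-1]}\to 0$, and the additional $R_{2n}R_{3n}$ factors cannot compensate a genuine polynomial decay in $n$.

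Next I will handle $J_{3n}$, which is the more delicate piece because the range has length $O(M_n)$. On $(-M_n(1-\beta), z_0)$, I bound $f_{\mathcal{Z}_n}(z)\le \beta_{\mathcal{Z}_n}$ and $\psi_n(z)\le C$, so
\[
J_{3n}\ \le\ C\,M_n\,\exp\!\big\{-n\,\inf_{z\in(-M_n(1-\beta),z_0)}\phi_n(z)\big\}.
\]
By Proposition \ref{prop:CRateFunction}(i), $\phi_n$ is strictly increasing, so the infimum is attained at $z=-M_n(1-\beta)$. Using Proposition \ref{prop:CRateFunction}(v), $\phi_n(-M_n(1-\beta))\sim C_x\,g_n(M_n(1-\beta))$, and the GN tail expansion gives
\[
g_n(M_n(1-\beta))=\frac{\beta_\ep}{\gamma\xi_\ep}\left(\frac{M_n(1-\beta)}{b^{(n)}}\right)^{1-\gamma}\exp\!\big\{-(1-\beta)^\gamma \log n\,(1+o(1))\big\}.
\]
Since $(1-\beta)^\gamma<1$, we get $n\phi_n(-M_n(1-\beta))\gtrsim n^{1-(1-\beta)^\gamma}(\log n)^{(1-\gamma)/\gamma}\to \ff$, so $J_{3n}$ decays like $M_n\exp\{-c\,n^{1-(1-\beta)^\gamma}(\log n)^{(1-\gamma)/\gamma}\}$, i.e., faster than any polynomial in $n$. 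Multiplying by $R_{1n}R_{2n}R_{3n}$ (which is at worst $n^{c_\gamma}$ times sub-polynomial factors) still gives $o(1)$.

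The main obstacle, as I see it, is not conceptual but bookkeeping: one must verify that the $(1-\gamma)/\gamma$ exponents on the slowly varying tail factors and the logarithmic corrections hiding inside $R_{2n}$ and $R_{3n}$ are genuinely of smaller order than the exponent gap $c_\gamma[(1+\beta)^\gamma-1]$ in $J_{1n}$ and the super-polynomial gap $n^{1-(1-\beta)^\gamma}$ in $J_{3n}$. Once $\beta\in(0,1)$ is fixed independently of $n$, both gaps are strictly positive and fixed, so all log-power corrections are absorbed. A secondary point is that the estimate used for $\phi_n$ comes from Proposition \ref{prop:CRateFunction}(v) with $M_n$ replaced by $M_n(1\mp\beta)$; this requires verifying that the proposition applies uniformly for sequences $M_n(1\mp\beta)\to\ff$, which follows by rerunning its proof with the rescaled sequence since the argument only uses that the argument diverges.
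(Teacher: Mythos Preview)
Your proposal is correct and follows essentially the same approach as the paper: for $J_{1n}$ you drop $e^{-n\phi_n}$ and reduce to the GN left-tail probability of $\mathcal{Z}_n$, and for $J_{3n}$ you use monotonicity of $\phi_n$ to freeze the exponent at the inner endpoint and then invoke the GN tail of $\epsilon$ to show super-polynomial decay. The only cosmetic difference is that for $J_{3n}$ the paper bounds $\int \psi_n f_{\mathcal{Z}_n}\le C$ directly (total mass), whereas you bound the density pointwise and pick up an extra harmless factor of $M_n\asymp(\log n)^{1/\gamma}$.
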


Our next Lemma is an analogue of Lemma \ref{pf:lem_Gen_1} for the regularly varying tails.

\begin{lem}\label{pf:lem_RR_1}
Under conditions of Theorem \ref{thm:LDP_ndZ} and Proposition \ref{prop:LDP_ndZ}, and 
$$h_n(t)=\frac{1}{n}\left[-n C_n(\theta_{x,n})\left(\frac{v+ M_n^{t}}{b}\right)^{-\alpha_\epsilon} L_\epsilon(M_n^t)(1+o(1)) + (\log L_{\mathcal{Z}_n}(M_n^t) -\alpha_{\mathcal{Z}_n} t \log M_n + \log\alpha_{\mathcal{Z}_n})(1+o(1)) \right],$$ 
where $M_n = n^{{\frac{1}{\alpha_\epsilon}}}$, and $C_n(\theta_{x,n})$ is defined in Proposition \ref{prop:CRateFunction}, then the solution of $h_n'(t)=0$ is $t_{0,n}$ and as $n\to\infty$,
\begin{align*}
    t_{0,n} = 1+\frac{\log L_\epsilon(M_n)}{\alpha_\epsilon \log M_n} + \frac{\log\frac{C_n(\theta_{x,n})\alpha_\epsilon (b^{(n)})^{\alpha_\epsilon}}{\alpha_{\mathcal{Z}_n}}}{\alpha_\epsilon \log M_n} (1+o(1))  \to 1.
\end{align*}
Furthermore, as $n\to\infty$,
\begin{align*}
    n h_n(t_{0,n}) =& -\frac{\alpha_{\mathcal{Z}_n}}{\alpha_\epsilon}\log n + \log L_{\mathcal{Z}_n}(M_n) + \frac{\alpha_{\mathcal{Z}_n}}{\alpha_\epsilon}\log L_\epsilon(M_n) + \Delta +o(1) , \\
    n h_n''(t_{0,n}) =& -(\log M_n)^2 \alpha_\epsilon \alpha_{\mathcal{Z}_n} (1+o(1)) ,
\end{align*}
where $\Delta=\frac{\alpha_{\mathcal{Z}}}{\alpha_\epsilon}\log\frac{C_x\alpha_\epsilon b^{\alpha_\epsilon}}{\alpha_{\mathcal{Z}}} + \log\alpha_{\mathcal{Z}} - \frac{\alpha_{\mathcal{Z}}}{\alpha_\epsilon}$, and $C_x=\lim_{n\to\ff}C_n(\theta_{x,n}) = \frac{\lambda_U(\theta_x)-1}{\lambda_U(\theta_x)}$.
\end{lem}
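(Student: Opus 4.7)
The plan is to derive the explicit formula for $h_n(t)$ from $h_n(t)=\tilde h_n(-M_n^t)+\tfrac{t\log M_n}{n}$, locate its maximizer $t_{0,n}$ by a dominant-balance argument, and then evaluate $n h_n(t_{0,n})$ and $n h_n''(t_{0,n})$ by substituting back through the stationarity relation.

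For the explicit formula, I would expand the two pieces of $\tilde h_n$ separately. Proposition \ref{prop:CRateFunction}(v) gives $\phi_n(-M_n^t)=C_n(\theta_{x,n})\,g_n(M_n^t)(1+o(1))$ with $g_n(y)=1-F_\epsilon((v+y)/b^{(n)})$, and for $\epsilon\in RV_{\alpha_\epsilon}$ Karamata's theorem yields $g_n(M_n^t)\sim\bigl((v+M_n^t)/b^{(n)}\bigr)^{-\alpha_\epsilon}L_\epsilon(M_n^t)$, where the slow-variation replacement $L_\epsilon((v+M_n^t)/b^{(n)})\to L_\epsilon(M_n^t)$ is legitimate because $b^{(n)}$ stays bounded away from $0$. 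For the density piece, $\log f_{\mathcal{Z}_n}(-M_n^t)=\log\alpha_{\mathcal{Z}_n}-(\alpha_{\mathcal{Z}_n}+1)t\log M_n+\log L_{\mathcal{Z}_n}(M_n^t)+o(1)$; the Jacobian term $+t\log M_n/n$ then absorbs exactly one of the $-t\log M_n/n$ summands, leaving the claimed form.

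For the critical point, I would write $h_n(t)=-A(t)+B(t)/n$ and note that since $L_\epsilon$ and $L_{\mathcal{Z}_n}$ are slowly varying, their logarithmic derivatives $xL'(x)/L(x)$ vanish at infinity. The dominant parts of $n h_n'(t)$ therefore come from differentiating the power and linear terms, yielding the balance equation
\[
n\,\alpha_\epsilon\,C_n(\theta_{x,n})\,(b^{(n)})^{\alpha_\epsilon}\,M_n^{-\alpha_\epsilon t_{0,n}}\,L_\epsilon(M_n^{t_{0,n}})=\alpha_{\mathcal{Z}_n}(1+o(1)).
\]
Using $M_n^{\alpha_\epsilon}=n$ together with the slow-variation replacement $L_\epsilon(M_n^{t_{0,n}})=L_\epsilon(M_n)(1+o(1))$ (valid a posteriori because $t_{0,n}\to 1$), I would take logarithms and divide by $\alpha_\epsilon\log M_n$ to extract the claimed expansion of $t_{0,n}$. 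The evaluation at the critical point is then a substitution: the balance relation collapses $-n A(t_{0,n})$ to $-\alpha_{\mathcal{Z}_n}/\alpha_\epsilon+o(1)$, and $B(t_{0,n})=\log L_{\mathcal{Z}_n}(M_n^{t_{0,n}})-\alpha_{\mathcal{Z}_n}t_{0,n}\log M_n+\log\alpha_{\mathcal{Z}_n}$ simplifies by splitting $\alpha_{\mathcal{Z}_n}t_{0,n}\log M_n=\tfrac{\alpha_{\mathcal{Z}_n}}{\alpha_\epsilon}\log n+\alpha_{\mathcal{Z}_n}(t_{0,n}-1)\log M_n$ and inserting the expansion of $t_{0,n}-1$; the resulting constants reorganize into $\Delta$ upon passing to the limits $\alpha_{\mathcal{Z}_n}\to\alpha_{\mathcal{Z}}$ and $C_n(\theta_{x,n})\to C_x$. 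For $h_n''$, one further differentiation produces $A''(t)=\alpha_\epsilon^2(\log M_n)^2\,A(t)(1+o(1))$ to leading order, and re-applying the balance equation at $t_{0,n}$ collapses $n h_n''(t_{0,n})=-nA''(t_{0,n})+B''(t_{0,n})$ to $-\alpha_\epsilon\alpha_{\mathcal{Z}_n}(\log M_n)^2(1+o(1))$, since $B''(t_{0,n})=o((\log M_n)^2)$ by slow variation.

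The main obstacle will be the bookkeeping of lower-order corrections so that exactly the stated constant $\Delta$ emerges. Two points deserve attention. First, the slow-variation replacement $L_\epsilon(M_n^{t_{0,n}})=L_\epsilon(M_n)(1+o(1))$ needs uniform control over a window of multiplicative width $\exp\{O(\log L_\epsilon(M_n))\}$, justified by Potter bounds. Second, when inserting the expansion of $t_{0,n}$ into $\alpha_{\mathcal{Z}_n}t_{0,n}\log M_n$ one must retain the $O(1)$ piece generated by the $1/(\alpha_\epsilon\log M_n)$ correction — dropping it as $o(1)$ would lose $\Delta$ — while the $(1+o(1))$ factor nested inside that correction does genuinely vanish. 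The calibration hypotheses $(\log n)|\alpha_{\mathcal{Z}_n}-\alpha_{\mathcal{Z}}|\to 0$ and $L_{\mathcal{Z}_n}(M_n)/L_{\mathcal{Z}}(M_n)\to 1$ are exactly what is needed to pass these $n$-indexed coefficients to their limits without incurring $O(1)$ errors.
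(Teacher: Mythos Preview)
Your proposal is correct and follows essentially the same route as the paper: both derive $h_n(t)$ via Proposition~\ref{prop:CRateFunction}(v) and the regular-variation tail asymptotics, locate $t_{0,n}$ from the same dominant-balance equation $nC_n(\theta_{x,n})\alpha_\epsilon(M_n^{t_{0,n}}/b^{(n)})^{-\alpha_\epsilon}L_\epsilon(M_n^{t_{0,n}})=\alpha_{\mathcal{Z}_n}$ (using $xL'(x)/L(x)\to0$ to discard the slowly-varying derivatives), and then substitute this relation back to collapse $nh_n(t_{0,n})$ and $nh_n''(t_{0,n})$. Your $A/B$ bookkeeping and explicit mention of Potter bounds are a slightly cleaner packaging of the same computation the paper carries out term by term.
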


Our next Lemma provides a precise asymptotic behavior of $\exp(-n \phi_n(-\tilde M_n)) H_n(-\tilde M_n)$ in terms of $n$. We observe here that, as in the generalized normal case, the dominant term is a power of $n$. A key difference is that the model parameter $b$ is lost in the rate, while it plays an important role in the generalized normal case.

\begin{lem}\label{pf:lem_RR_2}
Under conditions of Theorem \ref{thm:LDP_ndZ} and Proposition \ref{prop:LDP_ndZ}, $\ep \sim RV_{\alpha_\ep}$, $\mathcal{Z}_n \sim RV_{\alpha_{\mathcal{Z}_n}}$, then the following holds:
\begin{align*}
    \lim_{n\to\ff} n^{\frac{\alpha_{\mathcal{Z}}}{\alpha_\epsilon}} [L_\epsilon(n^{\frac{1}{\alpha_\epsilon}})]^{\frac{\alpha_{\mathcal{Z}}}{\alpha_\epsilon}} [L_{\mathcal{Z}}(n^{\frac{1}{\alpha_\epsilon}})]^{-1} \exp(-n \phi_n(-\tilde{M}_n) [H_n(-\tilde{M}_n)]^{-1} = K_x,
\end{align*}
where $K_x = e^{\Delta} (\alpha_\epsilon \alpha_{\mathcal{Z}})^{-\frac{1}{2}}$, $\Delta$ is defined in Lemma \ref{pf:lem_RR_1}.
\end{lem}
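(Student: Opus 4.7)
The plan is to reduce $\exp(-n\phi_n(-\tilde M_n))\,[H_n(-\tilde M_n)]^{-1}$ to a product of quantities whose asymptotics are supplied by Lemma~\ref{pf:lem_RR_1}, and then verify that the index/slow-variation mismatch between $(\alpha_{\mathcal{Z}_n},L_{\mathcal{Z}_n})$ and $(\alpha_{\mathcal{Z}},L_{\mathcal{Z}})$ is absorbed by the hypotheses of Proposition~\ref{prop:LDP_ndZ}(2). The key bookkeeping identity, already used in the proof of Theorem~\ref{thm:LDP_ndZ}, follows from $h_n(t)=\tilde h_n(-M_n^t)+t(\log M_n)/n$ with $\tilde h_n(z)=-\phi_n(z)+n^{-1}\log f_{\mathcal{Z}_n}(z)$ and $\tilde M_n = M_n^{t_{0,n}}$. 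Evaluating at the saddle point yields $n h_n(t_{0,n}) = -n\phi_n(-\tilde M_n)+\log f_{\mathcal{Z}_n}(-\tilde M_n)+t_{0,n}\log M_n$, and combined with the formula for $[H_n(-\tilde M_n)]^{-1}$ recorded in Section~\ref{sec:5.1} this gives
\[
\exp(-n\phi_n(-\tilde M_n))\,[H_n(-\tilde M_n)]^{-1} \;=\; e^{n h_n(t_{0,n})}\,(\log M_n)\,[n|h_n''(t_{0,n})|]^{-1/2}.
\]

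Substituting the two expansions from Lemma~\ref{pf:lem_RR_1}, namely $n h_n(t_{0,n})= -(\alpha_{\mathcal{Z}_n}/\alpha_\epsilon)\log n +\log L_{\mathcal{Z}_n}(M_n)+(\alpha_{\mathcal{Z}_n}/\alpha_\epsilon)\log L_\epsilon(M_n)+\Delta+o(1)$ and $n h_n''(t_{0,n}) = -(\log M_n)^2\alpha_\epsilon\alpha_{\mathcal{Z}_n}(1+o(1))$, gives
\[
\exp(-n\phi_n(-\tilde M_n))\,[H_n(-\tilde M_n)]^{-1} \;=\; e^{\Delta}\,n^{-\alpha_{\mathcal{Z}_n}/\alpha_\epsilon}\, L_{\mathcal{Z}_n}(M_n)\,[L_\epsilon(M_n)]^{\alpha_{\mathcal{Z}_n}/\alpha_\epsilon}\,(\alpha_\epsilon\alpha_{\mathcal{Z}_n})^{-1/2}\,(1+o(1)).
\]
Since $M_n=n^{1/\alpha_\epsilon}$, multiplying by the prefactor $n^{\alpha_{\mathcal{Z}}/\alpha_\epsilon}[L_\epsilon(n^{1/\alpha_\epsilon})]^{\alpha_{\mathcal{Z}}/\alpha_\epsilon}[L_{\mathcal{Z}}(n^{1/\alpha_\epsilon})]^{-1}$ collapses the expression to $e^{\Delta}(\alpha_\epsilon\alpha_{\mathcal{Z}_n})^{-1/2}$ times three error ratios: $n^{(\alpha_{\mathcal{Z}}-\alpha_{\mathcal{Z}_n})/\alpha_\epsilon}$, $L_{\mathcal{Z}_n}(M_n)/L_{\mathcal{Z}}(M_n)$, and $[L_\epsilon(M_n)]^{(\alpha_{\mathcal{Z}}-\alpha_{\mathcal{Z}_n})/\alpha_\epsilon}$.

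The main and essentially only obstacle is to show each of these three ratios tends to one. The first equals $\exp\{(\alpha_{\mathcal{Z}}-\alpha_{\mathcal{Z}_n})(\log n)/\alpha_\epsilon\}\to 1$ by the hypothesis $(\log n)|\alpha_{\mathcal{Z}_n}-\alpha_{\mathcal{Z}}|\to 0$; the second converges to one directly by the slow-variation hypothesis of Proposition~\ref{prop:LDP_ndZ}(2); for the third, slow variation of $L_\epsilon$ gives $|\log L_\epsilon(M_n)|=o(\log M_n)=o(\log n)$, so
\[
|\alpha_{\mathcal{Z}}-\alpha_{\mathcal{Z}_n}|\cdot|\log L_\epsilon(M_n)|\;\le\;\bigl[(\log n)|\alpha_{\mathcal{Z}}-\alpha_{\mathcal{Z}_n}|\bigr]\cdot\frac{|\log L_\epsilon(M_n)|}{\log n}\;\longrightarrow\;0.
\]
Finally $(\alpha_\epsilon\alpha_{\mathcal{Z}_n})^{-1/2}\to(\alpha_\epsilon\alpha_{\mathcal{Z}})^{-1/2}$ by $\alpha_{\mathcal{Z}_n}\to\alpha_{\mathcal{Z}}$, yielding the claimed limit $K_x=e^{\Delta}(\alpha_\epsilon\alpha_{\mathcal{Z}})^{-1/2}$. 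The delicate point worth flagging is that $\Delta$ as given in Lemma~\ref{pf:lem_RR_1} is written in terms of the limiting $\alpha_{\mathcal{Z}}$ and $C_x=\lim_n C_n(\theta_{x,n})$; the fact that the $o(1)$ remainder in $nh_n(t_{0,n})$ already absorbs the replacement of the $n$-indexed analog $\Delta_n$ by $\Delta$ uses Proposition~\ref{prop:CRateFunction}(iv) for $C_n(\theta_{x,n})\to C_x$ together with the same index/slow-variation hypotheses controlling the three ratios above.
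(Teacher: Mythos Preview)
Your proposal is correct and follows essentially the same route as the paper: both start from the identity $\exp(-n\phi_n(-\tilde M_n))[H_n(-\tilde M_n)]^{-1}=(\log M_n)\,e^{nh_n(t_{0,n})}[n|h_n''(t_{0,n})|]^{-1/2}$, substitute the expansions of Lemma~\ref{pf:lem_RR_1}, and then invoke the hypotheses $(\log n)|\alpha_{\mathcal Z_n}-\alpha_{\mathcal Z}|\to0$ and $L_{\mathcal Z_n}(M_n)/L_{\mathcal Z}(M_n)\to1$ to pass from the $n$-indexed parameters to their limits; your explicit treatment of the three error ratios is a welcome elaboration of what the paper leaves implicit. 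One bookkeeping slip: as written, your displayed expansion carries $[L_\epsilon(M_n)]^{+\alpha_{\mathcal Z_n}/\alpha_\epsilon}$ (matching the sign in the \emph{statement} of Lemma~\ref{pf:lem_RR_1}), but multiplying this by the prefactor's $[L_\epsilon(M_n)]^{+\alpha_{\mathcal Z}/\alpha_\epsilon}$ would give exponent $(\alpha_{\mathcal Z}+\alpha_{\mathcal Z_n})/\alpha_\epsilon$, not the difference you then use---the paper's own proof (Appendix~\ref{app:lem_RR_2}) in fact uses $[L_\epsilon(M_n)]^{-\alpha_{\mathcal Z_n}/\alpha_\epsilon}$, which is the sign consistent with the derivation in Appendix~\ref{app:lem_RR_1} and with your claimed third ratio.
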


The next lemma shows that $J_{2n}'$ governs the large deviation probability.

\begin{lem}\label{pf:lem_RR_3}
Let $J_{1n}'$, $J_{2n}'$, $J_{3n}'$ be as defined in equation (\ref{eq:rv_1}). Then,
\begin{align*}
    \lim_{n\to\ff} n^{\frac{\alpha_{\mathcal{Z}}}{\alpha_\epsilon}} [L_\epsilon(n^{\frac{1}{\alpha_\epsilon}})]^{\frac{\alpha_{\mathcal{Z}}}{\alpha_\epsilon}} [L_{\mathcal{Z}}(n^{\frac{1}{\alpha_\epsilon}})]^{-1} (J_{1n}'+J_{3n}') =0.
\end{align*}
\end{lem}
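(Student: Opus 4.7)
The plan is to bound $J_{1n}'$ and $J_{3n}'$ separately, exploiting that the Laplace maximizer $-\tilde M_n = -M_n(1+o(1))$ identified in Lemma \ref{pf:lem_RR_1} lies strictly inside the window $(-M_n n^\beta,\, -M_n n^{-\beta})$ for any fixed $\beta \in (0,\,1/\alpha_\epsilon)$ and all sufficiently large $n$. Let
\[
N_n \;:=\; n^{\alpha_{\mathcal{Z}}/\alpha_\epsilon}\,[L_\epsilon(n^{1/\alpha_\epsilon})]^{\alpha_{\mathcal{Z}}/\alpha_\epsilon}\,[L_{\mathcal{Z}}(n^{1/\alpha_\epsilon})]^{-1}
\]
denote the normalizing factor appearing in the lemma, and recall $M_n = n^{1/\alpha_\epsilon}$.

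\textbf{Deep left tail $J_{1n}'$.} Proposition \ref{prop:CRateFunction}(iii)--(iv) yields $\phi_n(z) \ge \Lambda_U^*(x) - \Lambda_{U^{(n)}}^*(x) = -o(1/n)$ uniformly on $(-\infty, z_0)$, so $e^{-n\phi_n(z)}$ is uniformly bounded there. Proposition \ref{prop:theta_nx}, together with the monotonicity and limiting behavior of $\theta_{x,n}(z)$, also gives $\sup_{n,z \le z_0}\psi_n(z) < \infty$. Therefore
\[
J_{1n}'\;\le\;C\!\int_{-\infty}^{-M_n n^\beta} f_{\mathcal{Z}_n}(z)\,dz\;=\;C\,\mathbf{P}\bigl(\mathcal{Z}_n \le -M_n n^\beta\bigr)\;\sim\;C\,(M_n n^\beta)^{-\alpha_{\mathcal{Z}_n}} L_{\mathcal{Z}_n}(M_n n^\beta).
\]
Substituting $M_n = n^{1/\alpha_\epsilon}$, invoking the hypotheses $(\log n)|\alpha_{\mathcal{Z}_n}-\alpha_{\mathcal{Z}}| \to 0$ and $L_{\mathcal{Z}_n}(n^{1/\alpha_\epsilon})/L_{\mathcal{Z}}(n^{1/\alpha_\epsilon}) \to 1$, and using Potter's bounds to move $L_{\mathcal{Z}_n}$ and $L_\epsilon$ between evaluations at $M_n$ and $M_n n^\beta$, I obtain $N_n J_{1n}' \le C\,n^{-\beta\alpha_{\mathcal{Z}}}\,s(n)$ for some slowly varying $s(n)$. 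The polynomial factor $n^{-\beta\alpha_{\mathcal{Z}}}$ dominates, so $N_n J_{1n}' \to 0$.

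\textbf{Right tail $J_{3n}'$.} On $(-M_n n^{-\beta}, z_0)$ every point lies strictly to the right of the unique maximizer $-\tilde M_n \sim -M_n$ (since $\tilde M_n/M_n \to 1$ while $M_n n^{-\beta}/M_n = n^{-\beta} \to 0$). Hence $\tilde h_n$ is strictly decreasing on this interval for $n$ large, and the integrand is maximized at the left endpoint, giving
\[
J_{3n}'\;\le\;C\bigl(M_n n^{-\beta} + |z_0|\bigr)\,e^{-n\phi_n(-M_n n^{-\beta})}\,f_{\mathcal{Z}_n}(-M_n n^{-\beta}).
\]
By Proposition \ref{prop:CRateFunction}(v) applied at $z = -M_n n^{-\beta}$,
\[
n\,\phi_n(-M_n n^{-\beta})\;\sim\;C\,n\cdot (M_n n^{-\beta}/b)^{-\alpha_\epsilon}\,L_\epsilon(M_n n^{-\beta})\;=\;C\,n^{\beta\alpha_\epsilon}\,L_\epsilon(M_n n^{-\beta}),
\]
which diverges super-polynomially. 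Since $f_{\mathcal{Z}_n}(-M_n n^{-\beta})$, $M_n$, and $N_n$ each grow only polynomially up to slowly varying factors, $e^{-n\phi_n(-M_n n^{-\beta})}$ forces $N_n J_{3n}' \to 0$.

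\textbf{Main obstacle.} The principal technical point is verifying that the saddle $-\tilde M_n$ is separated from the boundaries of the window with quantitative margin, so that both the probability bound for $J_{1n}'$ and the monotonicity argument for $J_{3n}'$ hold uniformly in $n$. This relies on the sharp localization $\tilde M_n = M_n(1+o(1))$ from Lemma \ref{pf:lem_RR_1} and on the choice $\beta \in (0, 1/\alpha_\epsilon)$. A secondary nuisance is uniform control of slowly varying factors evaluated at the three scales $M_n, M_n n^{\pm\beta}$; this is handled by Potter bounds combined with the given convergence hypotheses on $L_{\mathcal{Z}_n}/L_{\mathcal{Z}}$ and $\alpha_{\mathcal{Z}_n} \to \alpha_{\mathcal{Z}}$.
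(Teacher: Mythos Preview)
Your argument is correct and follows essentially the same strategy as the paper: bound $J_{1n}'$ via the left-tail probability $\mathbf{P}(\mathcal{Z}_n \le -M_n n^\beta)$ after controlling $e^{-n\phi_n}$ and $\psi_n$ uniformly, and bound $J_{3n}'$ using the super-polynomial decay $e^{-n\phi_n(-M_n n^{-\beta})}\sim\exp(-c\,n^{\beta\alpha_\epsilon})$. The one minor difference is that for $J_{3n}'$ the paper invokes only the monotonicity of $\phi_n$ in $z$ (Proposition~\ref{prop:CRateFunction}(i)) and then integrates $\psi_n f_{\mathcal{Z}_n}\le C$, rather than arguing monotonicity of the full $\tilde h_n$ and using a max-times-length bound; this spares you the interval-length factor and the appeal to the saddle location, but the conclusion is the same.
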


\subsection{Proof of Theorem \ref{thm:LDP_kappa}}{\label{sec:5.2}}
We first study the non-degenerate $\mathcal{Z}$ with support $[z_0,\ff)$, and then do the degenerate case. As before, here and below, $C, C_1, C_2,\dots$ denote finite positive constants whose values may change from line to line.

\subsubsection{Proof of the lower bounded support case}

\begin{proof}
Denote the CDF of $\mathcal{Z}_n$ by $F_{\mathcal{Z}_n}(z)$. Applying conditional expectation and the conditional sharp large deviations, Theorem \ref{thm:CBahadurRao}, and denoting $\psi_n(z)=\frac{1}{\theta_{x,n}(z)\sigma_{x,n}(z)}$, which is continuous in $z$, we obtain
\begin{align*}
    \bP{L_n>nx} =& \int_{-\infty}^{\infty} P(L_n>nx|\mathcal{Z}_{n}=z) \mathrm{d}F_{\mathcal{Z}_n}(z) = n^{-\frac{1}{2}}\frac{1}{\sqrt{2\pi}}\int_{z_0}^{\infty} e^{-n \Lambda^*_{n}(x;z)}\psi_n(z)(1+r_n(z)) \mathrm{d}F_{\mathcal{Z}_n}(z).
\end{align*}
Let
\begin{align*}
& D_n(z_0) = \frac{\partial}{\partial z}\Lambda^*_{n}(x;z)\Big|_{z=z_0}
= \frac{d\theta_{x,n}(z)}{dz}\Big|_{z=z_0} -\frac{\partial}{\partial z}\Lambda_n(\theta;z)\Big|_{\theta=\theta_{x,n}(z), z=z_0}.
\end{align*}
By Proposition \ref{prop:theta_nx}, the above is same as
\begin{align*}
-\frac{\frac{\partial^2}{\partial z\partial\theta}\Lambda_n(\theta;z)}{\frac{\partial^2}{\partial\theta^2}\Lambda_n(\theta;z)}\Big|_{\theta=\theta_{x,n}(z),z=z_0} -\frac{\partial}{\partial z}\Lambda_n(\theta;z)\Big|_{\theta=\theta_{x,n}(z),z=z_0}.
\end{align*}
Now plugging in the formula in the Proposition \ref{app:prop_lam_n}, and taking $\theta=\theta_{x,n}(z_0)$ and $z=z_0$, we obtain the expression for $D_n(z_0)$.
Also by Proposition \ref{prop:CRateFunction}, we observe that $\Lambda_n^*(x;z)$ is strictly increasing in $z$, and hence $D_n(z_0)>0$ for all $n$. Furthermore, using the Proposition \ref{prop:theta_nx} (the convergence of $\theta_{x,n}(z)$) and the assumption {\ref{asp:L2}}, we obtain $D_n(z_0)\to D(z_0)>0$, where the expression for $D(z_0)$ is obtained by replacing $U^{(n)}$ by $U$, $b^{(n)}$ by $b$, and $\theta_{x,n}(z_0)$ by $\theta_{x}(z_0)$.
By Laplace method,
\begin{align*}
    \lim_{n\to\ff} \frac{\int_{z_0}^{\infty} e^{-n \Lambda^*_{n}(x;z)}\psi_n(z)(1+r_n(z)) f_{\mathcal{Z}_n}(z) \mathrm{d}z}{e^{-n\Lambda^*_{n}(x;z_0)}\psi_n(z_0)(1+r_n(z_0)) f_{\mathcal{Z}_n}(z_0) [n D_n(z_0)]^{-1}} =1.
\end{align*}
Notice that as $n\to\ff$, $f_{\mathcal{Z}_n}(z_0)\to f_{\mathcal{Z}}(z_0)$, $\psi_n(z_0)\to\psi_{\ff}(z_0)=[\theta_x(z_0)]^{-1} [\Lambda''(\theta_{x}(z_0);z_0)]^{-\frac{1}{2}}$, where the derivatives of $\Lambda(\theta;z_0)$ is taken with respect to $\theta$, and $(1+r_n(z_0))\to1$, and $D_n(z_0)\to D(z_0)$. And using Proposition \ref{prop:CRateFunction} that $n|\Lambda^*_{n}(x;z_0)-\Lambda^*(x;z_0)|\to0$ as $n\to\ff$, we establish that
\begin{align*}
    \lim_{n\to\ff} n^{\frac{3}{2}}e^{-n\Lambda^*(x;z_0)} \bP{L_n\geq nx}= \frac{C_{z_0}}{\sqrt{2\pi}\theta_{x}(z_0)\sqrt{\Lambda''(\theta_{x}(z_0);z_0)}}=\frac{C_{z_0}\psi_{\ff}(z_0)}{\sqrt{2\pi}},
\end{align*}
where $C_{z_0}=\frac{f_{\mathcal{Z}}(z_0)}{D(z_0)}=\frac{f_{\mathcal{Z}}(z_0)}{(\Lambda^*)'(x;z_0)}$, and $(\Lambda^*)'(x;z)$ is derivative of $\Lambda^*(x;z)$ with respect to $z$.
\end{proof}

\subsubsection{Proof of the degenerate case}

\begin{proof}
Denote the CDF of $\mathcal{Z}_n$ by $F_{\mathcal{Z}_n}(z)$. Applying conditional expectation and the conditional sharp large deviations, Theorem \ref{thm:CBahadurRao}, and denoting $\psi_n(z)=\frac{1}{\theta_{x,n}(z)\sigma_{x,n}(z)}$, which is continuous in $z$, we obtain
\begin{align*}
    \bP{L_n>nx} =& \int_{-\infty}^{\infty} \mathbf{P}(L_n>nx|\mathcal{Z}_{n}=z) \mathrm{d}F_{\mathcal{Z}_n}(z) = n^{-\frac{1}{2}}\frac{1}{\sqrt{2\pi}}\int_{-\infty}^{\infty} e^{-n \Lambda^*_{n}(x;z)}\psi_n(z)(1+r_n(z)) \mathrm{d}F_{\mathcal{Z}_n}(z)\\
    =& n^{-\frac{1}{2}}\frac{\psi_{\ff}(\kappa)}{\sqrt{2\pi}}e^{-n \Lambda^*(x;\kappa)}\int_{-\infty}^{\infty} e^{-n (\Lambda^*_{n}(x;z)-\Lambda^*(x;\kappa))}\frac{\psi_n(z)}{\psi_{\ff}(\kappa)}(1+r_n(z)) \mathrm{d}F_{\mathcal{Z}_n}(z).
\end{align*}
Let $\psi_{\ff}(\kappa)\coloneqq \lim\limits_{n\to\ff}\psi_n(\kappa)=\lim\limits_{n\to\ff}[\theta_{x,n}(\kappa)]^{-1} [\Lambda_n''(\theta_{x,n}(\kappa);\kappa)]^{-\frac{1}{2}} = [\bar\theta_{x,\kappa}]^{-1} [\Lambda''(\bar\theta_{x,\kappa};\kappa)]^{-\frac{1}{2}}$. The derivatives of $\Lambda_n(\theta;\kappa)$ and $\Lambda(\theta;\kappa)$ are taken with respect to $\theta$. The limit exists from Assumption {\bf\ref{assumption3}}, Proposition \ref{prop:CMGF}, Proposition \ref{prop:theta_nx}, and Proposition \ref{prop:CRateFunction}. For arbitrary $\eta>0$ and fixed large positive $z_0>\kappa$, we decompose the integral on the RHS into four parts: 
\begin{align*}
    \int_{-\infty}^{\infty} e^{-n (\Lambda^*_{n}(x;z)-\Lambda^*(x;\kappa))}\frac{\psi_n(z)}{\psi_{\ff}(\kappa)}(1+r_n(z)) \mathrm{d}F_{\mathcal{Z}_n}(z) = J_{1n} + J_{2n} + J_{3n} + J_{4n}, \quad \text{where}
\end{align*}
$J_{1n}, J_{2n}, J_{3n}, ~\text{and} ~J_{4n}$ are integrals over the ranges $(-\infty,\kappa-\frac{\eta}{n})$, $(\kappa-\frac{\eta}{n},\kappa+\frac{\eta}{n})$, $(\kappa+\frac{\eta}{n},z_0)$, and $(z_0,\ff)$. 
Notice that $0<\sup\limits_{n\geq1,z\leq z_0}\frac{\psi_n(z)}{\psi_{\ff}(\kappa)}(1+r_n(z))\leq C_1$ for a finite positive constant $C_1$, and as $z\leq z_0$, $\left|\frac{\partial}{\partial z}\Lambda^*_{n}(x;z)\right|<C_2$ for a finite positive constant $C_2$. We will show that $J_{2n}\to1$, $J_{1n}\to0$, $J_{3n}\to0$, and $J_{4n}\to0$ as $n\to\infty$.
The proofs of these results rely on the following claim whose proof is based on the differentiability of $\Lambda^*_{n}(x;z)$ and Proposition \ref{prop:CRateFunction}.

\noindent\textbf{Claim:} Under conditions of the theorem, $|\Lambda^*_{n}(x;z)-\Lambda^*(x;\kappa)|\leq C_2 |z-\kappa| + o(\frac{1}{n})$ for $z\leq z_0$, as $n \to \ff$.

\noindent{\bf Proof of the Claim:} By the differentiability of $\Lambda^*_{n}(x;z)$ in $z$, that is, $\left|\frac{\partial}{\partial z}\Lambda^*_{n}(x;z)\right|<C_2$ for $z\leq z_0$, and applying the mean value theorem, we obtain
\begin{align*}
    |\Lambda^*_{n}(x;z)-\Lambda^*(x;\kappa)|\leq |\Lambda^*_{n}(x;z)-\Lambda^*_{n}(x;\kappa)| + |\Lambda^*_{n}(x;\kappa)-\Lambda^*(x;\kappa)| \leq C_2|z-\kappa|+ |\Lambda^*_{n}(x;\kappa)-\Lambda^*(x;\kappa)|.
\end{align*}
From Proposition \ref{prop:CRateFunction} that $n|\Lambda^*_{n}(x;\kappa)-\Lambda^*(x;\kappa)|\to0$ as $n\to\ff$, then
\begin{align*}
    |\Lambda^*_{n}(x;z)-\Lambda^*(x;\kappa)| \leq C_2|z-\kappa|+ o(\frac{1}{n}).
\end{align*}
This completes the proof the claim. We now return to the proof of the theorem. We start with $J_{2n}$. From the claim that there exists a sequence $\{c_n\}$ such that as $n\to\ff$, $c_n\to0$ and 
\begin{align*}
    \lim_{n\to\ff} J_{2n} \leq& \lim_{n\to\ff} \int_{\kappa-\frac{\eta}{n}}^{\kappa+\frac{\eta}{n}} e^{nC_2|z-\kappa|+c_n}\frac{\psi_n(z)}{\psi_{\ff}(\kappa)}(1+r_n(z)) \mathrm{d}F_{\mathcal{Z}_n}(z)\\
    \leq& \lim_{n\to\ff} e^{C_2\eta+c_n} \lim_{n\to\ff}\sup_{z\in[\kappa-\frac{\eta}{n},\kappa+\frac{\eta}{n}]}\frac{\psi_n(z)}{\psi_{\ff}(\kappa)} = e^{C_2\eta}.
\end{align*}
Similarly, the lower bound of limit of $J_{2n}$ can be obtained as follows:
\begin{align*}
    \lim_{n\to\ff} J_{2n} \geq& \lim_{n\to\ff} \int_{\kappa-\frac{\eta}{n}}^{\kappa+\frac{\eta}{n}} e^{-nC_2|z-\kappa|+c_n}\frac{\psi_n(z)}{\psi_{\ff}(\kappa)}(1+r_n(z)) \mathrm{d}F_{\mathcal{Z}_n}(z)\\
    \geq& \lim_{n\to\ff} e^{-C_2\eta+c_n} \lim_{n\to\ff}\inf_{z\in[\kappa-\frac{\eta}{n},\kappa+\frac{\eta}{n}]}\frac{\psi_n(z)}{\psi_{\ff}(\kappa)} = e^{-C_2\eta}.
\end{align*}
Combining the lower bound and upper bound of $J_{2n}$, and letting $\eta\to0$, we obtain $J_{2n}\to1$ as $n\to\ff$. Now turning to $J_{1n}$, using the claim and the uniform upper bound, $C_1$, of $\frac{\psi_n(z)}{\psi_{\ff}(\kappa)}(1+r_n(z))$ for  $n\geq1$ and $z\leq z_0$, we decompose $J_{1n}$ as follows:
\begin{align*}
    J_{1n} =& \sum_{i=1}^{\infty} \int_{\kappa-\frac{(i+1)\eta}{n}}^{\kappa-\frac{i\eta}{n}} e^{-n (\Lambda^*_{n}(x;z)-\Lambda^*(x;\kappa))}\frac{\psi_n(z)}{\psi_{\ff}(\kappa)}(1+r_n(z)) \mathrm{d}F_{\mathcal{Z}_n}(z)\\
    \leq& C_1 \sum_{i=1}^{\infty} \int_{\kappa-\frac{(i+1)\eta}{n}}^{\kappa-\frac{i\eta}{n}} e^{nC_2|z-\kappa|+c_n} \mathrm{d}F_{\mathcal{Z}_n}(z) \leq C_1 \sum_{i=1}^{\infty} e^{C_2(i+1)\eta + c_n} P\left(n|\mathcal{Z}_n-\kappa|>i\eta\right).
\end{align*}
By the assumption that $\mathbf{P}(n|\mathcal{Z}_n-\kappa|>\eta)\to 0$ for any fixed $\eta\in(0,\infty)$,  there exists $\{\gamma_n>0\}$ such that $\gamma_n\to\infty$ and $P\left(n|\mathcal{Z}_n-\kappa|>i\eta\right)\leq e^{-\gamma_n i\eta}$. Therefore,
\begin{align*}
    J_{1n} \leq& C_1 \sum_{i=1}^{\infty} e^{C_2(i+1)\eta + c_n} e^{-\gamma_n i\eta}
    = C_1 e^{\eta + c_n} \sum_{i=1}^{\infty} e^{(C_2-\gamma_n)i\eta}.
\end{align*}
Let $h_n=e^{(C_2-\gamma_n)\eta}$, then as $n\to\ff$, $h_n\to0$ and 
\begin{align*}
    J_{1n} = C_1 e^{\eta + c_n} \sum_{i=1}^{\infty} (h_n)^i = C_1 e^{\eta + c_n} \frac{h_n}{1-h_n}\to0.
\end{align*}
Using similar methods, one can show $J_{3n}\to0$. Turning to $J_{4n}$, notice that $\bP{L_n>nx|\mathcal{Z}_n=z}$ is bounded above by $\bP{L_n>nx|\mathcal{Z}_n=z_0}$ and $\Lambda^*_{n}(x;z)-\Lambda^*(x;\kappa)>C_3>0$ for all $z>z_0$; hence as $n\to\ff$
\begin{align*}
    J_{4n}\leq& e^{-nC_3}\frac{\psi_n(z_0)}{\psi_{\ff}(\kappa)}(1+r_n(z_0)) \to 0.
\end{align*}
Combining these results, it follows that $n^{\frac{1}{2}}e^{n \Lambda^*(x;\kappa)} \bP{L_n>nx}$ converges to $\psi_{\ff}(\kappa) (2\pi)^{-1/2}$ as $n\to\ff$.
\end{proof}

\subsection{Proof of Theorem \ref{thm:LDP_gen_ndZ}}\label{sec:pf-thm23}
Recall
\[
\phi_n(z):=\Lambda_n^*(x;z)-\Lambda_{U^{(n)}}^*(x),\qquad
\tilde h_n(z):=-\phi_n(z)+\frac{1}{n}\log f_{\mathcal{Z}_n}(z),
\]
and
\[
\psi_n(z):=\frac{1}{\theta_{x,n}(z)\,\sigma_{x,n}(z)},\qquad
\psi_\infty:=\frac{1}{\theta_x\sqrt{\Lambda_U''(\theta_x)}}.
\]
Let $-M_n$ be the balance \ref{asp:B1} locator and $z_n^*:=-\tilde M_n$ the (true) saddle.

\begin{lem}[Localization with fixed cutoff under log-smooth tails]\label{lem:loc-logsmooth}
Assume \textup{\ref{asp:D1}}, \textup{\ref{asp:D2}}, \textup{\ref{asp:B1}}, \textup{\ref{asp:W1}-\ref{asp:W3}} and fix $z_0\in\mathbb{R}$.
Write
\[
\bP{L_n\ge nx}
=\int_{-\infty}^{\infty}\bP{L_n\ge nx\mid \mathcal{Z}_n=z}\,f_{\mathcal{Z}_n}(z)\,dz
=:T_{1n}+T_{2n},
\]
where $T_{1n}$ integrates over $(-\infty,z_0]$ and $T_{2n}$ over $(z_0,\infty)$.
Then for all sufficiently large $n$ (so that $-\tilde M_n<z_0$) the following hold:
\begin{enumerate}[label=(\alph*)]
\item[\textup{(1)}] \textbf{Right tail is negligible:} there exist constants $C_1,C_2<\infty$ (independent of $n$) with
\[
T_{2n}\ \le\ C_1\,e^{-n\Lambda_n^*(x;z_0)}
\ \le\ C_2\,e^{-n\Lambda^*(x;z_0)}
\ =\ o\Big(e^{-n\Lambda^*_U(x)}\,e^{-n\phi_n(-\tilde M_n)}\,[H_n(-\tilde M_n)]^{-1}\Big).
\]
\item[\textup{(2)}] \textbf{\(J\)-block decomposition on $(-\infty,z_0]$:} Fix any $\beta\in(0,1)$ and decompose
\[
T_{1n}\ =\ \frac{1}{\sqrt{2\pi}} n^{-\frac{1}{2}}e^{-n\Lambda^*_{U}(x)} (J_{1n}+J_{2n}+J_{3n}) (1+o(1), \quad\text{as }n\to\ff,
\]
with integration ranges $(-\infty,-(1+\beta)M_n]$, $[-(1+\beta)M_n,\,-(1-\beta)M_n]$, and $ (-(1-\beta)M_n,\ z_0]$, respectively, applied to the integrand from Theorem \ref{thm:CBahadurRao}.
Then for every $\epsilon>0$ there exist $A\ge1$ and $N$ such that for all $n\ge N$,
\[
\frac{J_{1n}+J_{3n}}{J_{2n}}\ \le\ \epsilon.
\]
\item[\textup{(3)}] \textbf{Laplace window around the true saddle:} letting $z_n^*:=-\tilde M_n$ and $w_n:=(n|\tilde h_n''(z_n^*)|)^{-1/2}$, one has
\[
J_{2n}
=\int_{|z-z_n^*|\le A w_n} e^{n\tilde h_n(z)}\,\psi_n(z)\{1+r_n(x,z)\}\,dz\ \{1+o(1)\}
=\sqrt{2\pi}\psi_\infty\,e^{n\tilde h_n(z_n^*)}\,[n|\tilde h_n''(z_n^*)|]^{-1/2}\{1+o(1)\},
\]
uniformly in $n$.
\end{enumerate}
\end{lem}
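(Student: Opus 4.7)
The plan for part (1) is to exploit the monotonicity of $z \mapsto \bP{L_n\ge nx\mid \mathcal{Z}_n=z}$, which is non-increasing because a larger common factor $z$ strictly lowers the per-obligor default probability $F_\epsilon((v-z)/b^{(n)})$. Bounding the integrand on $(z_0,\infty)$ by its value at $z_0$ and invoking the conditional Bahadur--Rao estimate (Theorem \ref{thm:CBahadurRao}) gives $T_{2n}\le C_1 e^{-n\Lambda_n^*(x;z_0)}$, and Proposition \ref{prop:CRateFunction}(iv) upgrades this to $C_2 e^{-n\Lambda^*(x;z_0)}$. Since $\Lambda^*(x;z_0)>\Lambda_U^*(x)$ strictly by Proposition \ref{prop:CRateFunction}, while the target scale $e^{-n\Lambda_U^*(x)}e^{-n\phi_n(-\tilde M_n)}[H_n(-\tilde M_n)]^{-1}$ differs from $e^{-n\Lambda_U^*(x)}$ only by sub-exponential corrections (under \ref{asp:B1}, $n\phi_n(-\tilde M_n)=O(1)$ by Proposition \ref{prop:CRateFunction}(v) and \ref{asp:D1}), the negligibility is immediate.

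For part (2), I would substitute the conditional Bahadur--Rao expansion into $T_{1n}$, factor out $n^{-1/2}(2\pi)^{-1/2}e^{-n\Lambda_U^*(x)}$, and reduce the question to comparing Laplace-type integrals of $e^{n\tilde h_n(z)}\psi_n(z)\{1+r_n(x,z)\}$ on the three sub-ranges with the saddle value $n\tilde h_n(z_n^*)$. For $J_{3n}$, where $|z|<(1-\beta)M_n$, I would use the leading-order identity $\phi_n(z)=g_\epsilon((v-z)/b^{(n)})\,C_n(\theta_{x,n})(1+o(1))$ from Proposition \ref{prop:CRateFunction}(v) together with $Q_\epsilon'\uparrow\infty$ from \ref{asp:D1}: shrinking $|z|$ by a factor $\beta M_n$ multiplies $g_\epsilon$ by a factor diverging with $n$, so $n\phi_n(z)-n\phi_n(z_n^*)\to\infty$ and dominates the density gain in $\log f_{\mathcal{Z}_n}$. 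For $J_{1n}$, where $|z|>(1+\beta)M_n$, $\phi_n$ is essentially flat while $\log f_{\mathcal{Z}_n}(z)=-R_{\mathcal{Z}_n}(|z|)+\log c_{\mathcal{Z}_n}(|z|)$ decays at rate $r_{\mathcal{Z}_n}(|z|)\uparrow\infty$ by \ref{asp:D2}, again producing a divergent negative gap. Window interiority \ref{asp:W1} places both cutoffs outside any Laplace window of width $A w_n$, making the ratio comparison with $J_{2n}$ legitimate and yielding $(J_{1n}+J_{3n})/J_{2n}\le\epsilon$ for $n$ large.

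For part (3), on $|z-z_n^*|\le A w_n$ with $w_n=(n|\tilde h_n''(z_n^*)|)^{-1/2}$, I would carry out a standard Laplace expansion. The two-sided curvature hypothesis in \ref{asp:W3} provides the uniform control $(n\tilde h_n)''(z)=(n\tilde h_n)''(z_n^*)\{1+o(1)\}$ on the window, so a second-order Taylor expansion gives
\[
n\tilde h_n(z)=n\tilde h_n(z_n^*)-\tfrac12 n\lvert\tilde h_n''(z_n^*)\rvert(z-z_n^*)^2\{1+o(1)\}.
\]
The local-flatness clauses of \ref{asp:W3} together with the self-neglecting/slow-variation structure in \ref{asp:D1}--\ref{asp:D2} imply that $\psi_n(z)=\psi_n(z_n^*)\{1+o(1)\}$ and $\sup r_n(x,z)=o(1)$ by \ref{asp:W2} on the window; the continuity statements in Propositions \ref{prop:theta_nx} and \ref{prop:CMGF} then give $\psi_n(z_n^*)\to\psi_\infty$. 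Changing variables $u=(z-z_n^*)\sqrt{n|\tilde h_n''(z_n^*)|}$ and finally letting $A\to\infty$ reduces the integral to a Gaussian and delivers the stated asymptotic.

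The main obstacle will be the saddle-consistency step underpinning both parts (2) and (3): the balance locator $-M_n$ from \ref{asp:B1} is defined by first-order matching, whereas $z_n^*=-\tilde M_n$ is the exact zero of $\tilde h_n'$, so one must first establish $\tilde M_n=M_n\{1+o(1)\}$ (an implicit-function argument using the uniform curvature of \ref{asp:W3} and the self-neglecting clauses of \ref{asp:D1}--\ref{asp:D2}) before benchmarks proved at $-M_n$ can be transferred to estimates centered at $z_n^*$. Coordinating this so that $-M_n$ lies inside the Laplace window $|z-z_n^*|\le A w_n$ for large $n$ is the technical bottleneck that propagates through every quantitative comparison in the three-range decomposition.
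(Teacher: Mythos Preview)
Your proposal is correct and follows essentially the same route as the paper: the paper's own proof is a brief sketch that points back to the GN/RV argument of Theorem~\ref{thm:LDP_ndZ} (monotonicity plus Cram\'er/Bahadur--Rao for $T_{2n}$; the $J$-block decomposition with side-block negligibility via the $\phi_n$ and $\log f_{\mathcal{Z}_n}$ asymptotics; Laplace evaluation on the central window), and you reproduce exactly this structure with the log-smooth assumptions \ref{asp:D1}--\ref{asp:D2}, \ref{asp:B1}, \ref{asp:W1}--\ref{asp:W3} in place of the explicit GN/RV tail formulas. Your identification of the saddle-consistency step $\tilde M_n=M_n\{1+o(1)\}$ as the technical bottleneck is also on target---the paper isolates this as a separate lemma (Lemma~\ref{lem:saddle-consistency}) proved by the same mean-value/curvature argument you outline.
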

\begin{proof}
    See Appendix \ref{app:loc-logsmooth}.
\end{proof}

\begin{rem}{\label{rem:log-smooth_proof}}
This is the log-smooth analogue of our GN/RV setup in Section \ref{sec:5.1}: first fix $z_0$ and split $\bP{L_n\ge nx}=T_{1n}+T_{2n}$; then analyze $T_{1n}$ via the $J_{1n} + J_{2n} + J_{3n}$ decomposition, where $J_{2n}$ carries the mass and the side blocks are negligible. Part (1) uses that $z\mapsto \Lambda_n^*(x;z)$ is increasing, so $T_{2n}$ is exponentially dominated by $e^{-n\Lambda^*(x;z_0)}$; parts (2)–(3) replicate the role of Lemmas \ref{pf:lem_Gen_3} and \ref{pf:lem_RR_3} with \textup{\ref{asp:D1}}, \textup{\ref{asp:D2}}, \textup{\ref{asp:B1}}, {\rm\ref{asp:W1}-\ref{asp:W3}} replacing the GN/RV tail specifics. See eq. (\ref{eq:gen_1}) and the surrounding discussion. 
\end{rem}

\begin{proof}[Proof of Theorem \ref{thm:LDP_gen_ndZ}]
Fix $x>\mu_U$ and a cutoff $z_0\in\R$. By conditional Bahadur-Rao (Theorem \ref{thm:CBahadurRao}),
\[
\bP{L_n\ge nx}=\frac{e^{-n\Lambda_U^*(x)}}{\sqrt{2\pi n}}
\int_{-\infty}^{\infty} e^{-n\phi_n(z)}\,\psi_n(z)\,\{1+r_n(x,z)\}\,f_{\mathcal{Z}_n}(z)\,dz.
\]
Set $\tilde h_n(z):=-\phi_n(z)+n^{-1}\log f_{\mathcal{Z}_n}(z)$ and split the integral at $z_0$ as in Lemma~\ref{lem:loc-logsmooth}.
By Lemma~\ref{lem:saddle-consistency}, $z_n^*:=-\tilde M_n$ is the unique maximizer of $\tilde h_n$ and $\tilde M_n=M_n(1+o(1))$.
Part~(1) of Lemma~\ref{lem:loc-logsmooth} gives $T_{2n}=o\big(n^{-\frac{1}{2}}e^{-n\Lambda_U^*(x)}e^{-n\phi_n(-\tilde M_n)}[H_n(-\tilde M_n)]^{-1}\big)$.
Parts~(2)–(3) yield
\[
T_{1n}
=\psi_\infty  n^{-\frac{1}{2}}e^{-n\Lambda^*_{U}(x)} e^{n\tilde h_n(z_n^*)}\,[n|\tilde h_n''(z_n^*)|]^{-1/2}\{1+o(1)\}.
\]
Recalling $e^{n\tilde h_n(z_n^*)}=e^{-n\phi_n(-\tilde M_n)}\,f_{\mathcal{Z}_n}(-\tilde M_n)$ and
$[H_n(-\tilde M_n)]^{-1}:=f_{\mathcal{Z}_n}(-\tilde M_n)[n|\tilde h_n''(-\tilde M_n)|]^{-1/2}$,
we conclude
\[
\bP{L_n\ge nx}\sim \psi_\infty n^{-\frac{1}{2}}\,
e^{-n\Lambda_U^*(x)}\,e^{-n\phi_n(-\tilde M_n)}\,[H_n(-\tilde M_n)]^{-1},
\]
which is the statement of Theorem \ref{thm:LDP_gen_ndZ}.
\end{proof}

\subsection{Proof of Theorem \ref{thm:gibbs}}

\begin{proof} 
For $B\in\mathcal{B}(\Real)$, define the measure $\nu_n(B)=\bP{U_1^{(n)}\in B, X_1^{(n)}=1| L_n \ge nx}$ and  $\nu_{\theta_x}(B)=\mathbf{P}_{\theta_x}(U\in B)$.  Notice that, for any fixed $Q>0$
\begin{align}{\label{pf:keyineq}}
    \sup_{B \in \mathcal{B}(\Real)}\left| \nu_n(B)-\nu_{\theta_x}(B) \right| \leq \sup_{B_1 \subset [-Q, Q]}\left| \nu_n(B_1)-\nu_{\theta_x}(B_1) \right| + \nu_n(|U|>Q)+\nu_{\theta_x}(|U|>Q).
\end{align}
Hence, it is sufficient to show that the RHS converges to zero as $n\to\ff$. We will first consider the case $\kappa=-\ff$ and begin by verifying that the first term converges to zero. To this end, using the definition of conditional probability,
\begin{align}\label{pf:eq_2.3.1}
    \bP{U_1^{(n)}\in B_1, X_1^{(n)}=1 | L_n>nx} 
    =& \frac{\bP{ L_n>nx, U_1^{(n)}\in B_1, X_1^{(n)}=1}}{\bP{ L_n>nx}},
\end{align}
and setting $A_n(x)=(2\pi)^{-\frac{1}{2}}\exp(n\phi_n(-\tilde M_n))H_n(-\tilde M_n) (\psi_\ff)^{-1}$, it follows from Theorem \ref{thm:LDP_ndZ} that
\begin{align*}
    \lim_{n\to\ff} \sqrt{2\pi} n^{\frac{1}{2}}e^{n\Lambda_U^*(x)} A_n(x) \bP{L_n>nx}=1.
\end{align*}
Turning to the numerator of (\ref{pf:eq_2.3.1}), notice that
\begin{align}\label{pf:eq_2.3.2}
    \bP{ L_n>nx, U_1^{(n)}\in B_1, X_1^{(n)}=1}
    = \int_{-\ff}^{\ff} \bP{ L_n>nx, U_1^{(n)}\in B_1, X_1^{(n)}=1|\mathcal{Z}_n=z} \mathrm{d}F_{\mathcal{Z}_n}(z),
\end{align}
where $F_{\mathcal{Z}_n}(\cdot)$ is the CDF of $\mathcal{Z}_n$. Now, using the definition of conditional probability, the RHS of the above equation is the same as
\begin{eqnarray*}
\int_{-\ff}^{\ff}\int_{B_1} \bP{ L_n>nx|X_1^{(n)}=1, U_1^{(n)}=u, \mathcal{Z}_n=z} F_{\ep}\left(\frac{v-z}{b^{(n)}}\right) \mathrm{d}F_{U^{(n)}}(u)\mathrm{d}F_{\mathcal{Z}_n}(z).
\end{eqnarray*}
Next, setting $x_n(u)=\frac{n}{n-1}x-\frac{u}{n-1}$  and interchanging the order of integration (justified using Tonelli's Theorem), it follows that the LHS of equation (\ref{pf:eq_2.3.2}) equals 
\begin{align} \label{pf:eq_2.3.3}
    \int_{B_1} \int_{-\ff}^{\ff} \bP{ L_{n-1}>(n-1)x_n(u) | \mathcal{Z}_n=z} F_{\ep}\left(\frac{v-z}{b^{(n)}}\right) \mathrm{d}F_{\mathcal{Z}_n}(z)\mathrm{d} F_{U^{(n)}}(u).
\end{align}
We now study the inner integral in the above equation. Set $B_*=[-Q, Q]$. We proceed as in the proof of Theorem \ref{thm:LDP_ndZ} and express it as a sum of $T_{1n}(u)$ and $T_{2n}(u)$, where we emphasize the dependence on $u$. Now, using Theorem \ref{thm:CBahadurRao} and Proposition \ref{prop:CRateFunction} it follows that $\sup_{u \in B_*}T_{2n}(u) [T_{1n}(u)]^{-1}$ converges to zero. Next, turning to $T_{1n}(u)$, as before, we express it as a product of a prefactor term and the integrals $J_{1n}(u)$, $J_{2n}(u)$ , and $J_{3n}(u)$ as in equation (\ref{eq:gen_1}). Next, observe again using the boundedness of $B_*$ that
\begin{align*}
 \lim_{n \ra \ff}  \sup_{u \in B_*} \frac{J_{1n}(u) +J_{3n}(u)}{J_{2n}(u)} =0,
\end{align*}
and $J_{2n}(u)=A_{n-1}(x_n(u))+O_u(\frac{1}{n})$, where the subscript $u$ emphasizes dependence on $u$. Also, using the boundedness of $B_*$, continuity of $J_{in}(u)$ in $u$, and  assumption \ref{asp:L2}, it follows that
\begin{align*}
\lim_{n \to \ff}\sup_{u \in B_*}|\exp(n|\Lambda_{U^{(n)}}^*(x_n(u))-\Lambda_U^*(x_n(u))|-1|=0.
\end{align*}
Hence, the inner integral in (\ref{pf:eq_2.3.3}) can be expressed as 
\begin{align*}
    \left[\sqrt{2\pi} (n-1)^{\frac{1}{2}}e^{(n-1)\Lambda_U^*(x_n(u))} [A_{n-1}(x_n(u))+O_u(\frac{1}{n})](1+o(1))\right]^{-1},
\end{align*}
as $n\to\ff$, where the $O_u(\frac{1}{n})$ comes from the Laplace method similar to the proof of Theorem \ref{thm:LDP_ndZ}. 
Next, expanding $\Lambda_U^*(x_n(u))$ around $x$, it follows that
\begin{align}
&\Lambda_U^*(x_n(u)) = \Lambda_U^*(x) + (x_n(u)-x) \Lambda_U^{*'}(x) + \frac{\Lambda_U^{*''}(x_n^*)}{2}(x_n(u)-x)^2 \nonumber\\
=& \Lambda_U^*(x) + \left(\frac{1}{n-1}x-\frac{u}{n-1}\right) \Lambda_U^{*'}(x) + O(n^{-2})
= \theta_x x - \Lambda_U(\theta_x) + \frac{(x-u)}{n-1}\theta_{x} + O(n^{-2}). \label{pf:Tayexp}
\end{align}
Thus, the ratio in (\ref{pf:eq_2.3.1}) can be expressed as:
\begin{align*}
   \int_{B_1} \left[\frac{n}{n-1}\right]^{\frac{1}{2}} \left[\frac{e^{n\Lambda_U^*(x)}}{e^{(n-1)\Lambda_U^*(x_n(u))} }\right] \left[\frac{ A_n(x) + O(\frac{1}{n})}{A_{n-1}(x_n(u))+O_u(\frac{1}{n})}\right] (1+o(1)) \mathrm{d} F_{U^{(n)}}(u),
\end{align*}
which, using the boundedness of $B_1$, reduces to
\begin{align}{\label{pf:nrexp}}
   \int_{B_1} \left[\frac{n}{n-1}\right]^{\frac{1}{2}} \left[e^{\theta_x u-\Lambda_U(\theta_x)}\right] \left[\frac{ A_n(x) + O(\frac{1}{n})}{A_{n-1}(x_n(u))+O_u(\frac{1}{n})}\right] (1+o(1)) \mathrm{d} F_{U^{(n)}}(u),
\end{align}
where $O_u(n^{-1})$ term is uniform in $u \in B_*$. Since the third term inside the integral converges to one uniformly for $ u \in [-Q, Q]$, using the bounded continuity of $e^{\theta_xu-\Lambda_U(\theta_x)}$ on $B_{*}$, it follows, using the  $U^{(n)} \Rightarrow U$ and Helly-Bray Theorem that 
\begin{align*}
   \lim_{n \ra \ff} \left| \int_{B_*}e^{\theta_xu-\Lambda_U(\theta_x)} \mathrm{d}(F_{U^{(n)}}(u)-F_{U}(u)) \right|=0.
\end{align*}
Hence, $|\nu_n(B_1)-\nu_{\theta_x}(B_1)|$ converges to zero as $n \to \ff$. Next, using Remark \ref{rem:B2-apply} in Appendix \ref{app:B}, it follows that
\begin{align*}
    \sup_{B_1 \subset B_*}|\nu_n(B_1)-\nu_{\theta_x}(B_1)|  \to 0 ~\text{as}~ n \to \ff. 
\end{align*}
Turning to the second term on the RHS of (\ref{pf:keyineq}), we proceed as before and notice, using the definition of conditional probability, that
\begin{align*}
\nu_n(U^{(n)}>Q) = \frac{\bP{U^{(n)}>Q, X_1^{(n)}=1, L_n \geq nx}}{\bP{L_n \geq nx}}
\end{align*}
Next, proceeding as in the derivation of (\ref{pf:eq_2.3.3}), the numerator of the above expression can be expressed as
\begin{align*}
 \int_{Q}^{\ff} \int_{-\ff}^{\ff} \bP{ L_{n-1}>(n-1)x_n(u) | \mathcal{Z}_n=z} F_{\ep}\left(\frac{v-z}{b^{(n)}}\right) \mathrm{d}F_{\mathcal{Z}_n}(z)\mathrm{d} F_{U^{(n)}}(u).
\end{align*}
Next, using the rates from Theorem \ref{thm:LDP_ndZ} one obtains,  similar to the verification of (\ref{pf:nrexp}), that
\begin{align}{\label{pf:nrexp1}}
   \nu_n(U^{(n)} >Q) =\int_{Q}^{\ff} \left[\frac{n}{n-1}\right]^{\frac{1}{2}} \left[e^{\theta_x u-\Lambda_U(\theta_x)}\right] \left[\frac{ A_n(x) + O(\frac{1}{n})}{A_{n-1}(x_n(u))+O_u(\frac{1}{n})}\right] (1+o(1)) \mathrm{d} F_{U^{(n)}}(u).
\end{align}
Now using the expression (\ref{pf:Tayexp}), it can be seen (see Lemma \ref{lem:prefactor-stability} in Appendix \ref{app:B}) that there exist positive finite constants $c_1$ and $c_2$ (independent of $Q, n$) such that
\begin{align*}
   \left[\frac{ A_n(x) + O(\frac{1}{n})}{A_{n-1}(x_n(u))+O_u(\frac{1}{n})}\right] \le 1+\frac{c_1 u}{n} +\frac{c_2 u^2}{n^2}.
\end{align*}
Plugging into (\ref{pf:nrexp1}) and using Assumptions \ref{asp:L2} and \ref{asp:L3}, it follows using exponential Markov inequality (see Lemma \ref{lem:uniform-tilted-tail} in Appendix \ref{app:B}) that
\begin{align*}
\sup_{n \ge 1}\nu_n(U^{(n)} >Q) \le C e^{-Qt},
\end{align*}
where $t >0$ is such that
$\ta_x+t < \ta_0$. Turning to the third term on the RHS of \ref{pf:keyineq}, we notice that, using \ref{asp:L2} and \ref{asp:L3}, since $U$ has exponential moments under $\mathbf{P}_{\theta_x}$, by the exponential Markov inequality it follows that, for $t >0$ such that
$\ta_x+t < \ta_0$
\begin{align}{\label{pf:tailineq}}
\nu_{\ta_x}(U >Q) \le \frac{\la(\ta_x+t)}{\la(\ta_x)}\exp(-Qt).
\end{align}
Since $Q$ is arbitrary, the proof follows. The proof when $\kappa >-\ff$ and $\mathcal{Z}$ is non-degenerate is similar. Finally, proof of (ii) follows by iterating the above remove-one-term argument for the numerator with $O(n^{-1})$ cumulative error, yielding i.i.d. tilted limits.
\end{proof}

\begin{rem}[Growing blocks]\label{rem:kn}
The total-variation Gibbs limit described in the above Theorem extends from fixed $k$ to $k=k_n\to\infty$ with $k_n/n\to0$. 
Indeed, in the Bayes ratio the numerator involves $y_{n,k}:=x-\frac{1}{n}\sum_{i=1}^{k_n}U_i^{(n)}X_i^{(n)}$, and the perturbation satisfies $|y_{n,k}-x|=O_{\mathbb P}(k_n/n)$. 
Uniform conditional BR/BE bounds and the smoothness of $y\mapsto(\theta_{y,n}(z),\sigma_{y,n}(z))$ then give the same product limit measure as for fixed $k$, with an error $o(1)$ in total variation.
\end{rem}

\section{Concluding Remarks} \label{sec:concluding}

We obtain sharp large deviation estimates for the total loss $L_n$ in threshold factor models with a diverging number of common factors. Unlike the classical Bahadur-Rao prefactor $n^{-1/2}$, our results exhibit logarithmic and polynomial corrections whose form depends explicitly on the tails of the factor distribution $Z$ and the idiosyncratic noise $\epsilon$. We also established a Gibbs conditioning principle in total variation for the conditional law under rare-loss events, identifying the tilted limit law and clarifying when dependence disappears at the conditioning scale. The analysis rests on Laplace-Olver asymptotics for exponential integrals, conditional Bahadur-Rao bounds for the triangular arrays, and a localization of the saddle dictated by the tail geometry.

Beyond the generalized normal and regularly varying classes, we formulated log-smooth conditions that place the model in the Gumbel maximum domain of attraction, thereby unifying light- and heavy-tail regimes under a single toolkit. As an illustration, we derived second-order expansions for Value-at-Risk and Expected Shortfall and indicated when these measures are genuinely in the large-deviation regime.

\emph{Extensions.} An important next step is the $d$-dimensional setting, where the loss vector has multiple components or types. In this case, sharp large deviations require identification of the dominating point(s) of the multidimensional rate function. The prefactor then depends on the local curvature of the rate function at these points, and multiple competing saddles may contribute. These issues introduce genuine analytical challenges beyond the one-dimensional setting. Work in this direction is ongoing.
\newpage

\appendix

\section{Appendix}\label{app:A}

This appendix records the conditional c.g.f.\ $\Lambda_n(\theta;z)$ and its derivatives, and provides the proof details for Lemmas~\ref{pf:lem_Gen_1}–\ref{lem:loc-logsmooth} used in Section~\ref{sec:proof}.
\begin{prop} \label{app:prop_lam_n}
$\Lambda_n(\theta;z)$ is the logarithmic moment generating function of $U^{(n)}X^{(n)}$ conditioned on $\mathcal{Z}_n=z$. That is,
\begin{eqnarray*} 
\Lambda_n(\theta;z)=\log\left[ \lambda_{U^{(n)}}(\theta)F_{\epsilon}(\frac{v-z}{b^{(n)}}) + 1- F_{\epsilon}(\frac{v-z}{b^{(n)}}) \right].
\end{eqnarray*}
The first and second partial derivatives are given by:
\begin{align*}
\dfrac{\partial}{\partial \theta}\Lambda_n(\theta;z)=& \frac{F_{\epsilon}(\frac{v-z}{b^{(n)}})\lambda_{U^{(n)}}'(\theta)}{F_{\epsilon}(\frac{v-z}{b^{(n)}})\lambda_{U^{(n)}}(\theta)+1-F_{\epsilon}(\frac{v-z}{b^{(n)}})}\\
\dfrac{\partial}{\partial z}\Lambda_n(\theta;z)=& \dfrac{\frac{1}{b^{(n)}}f_{\epsilon}(\frac{v-z}{b^{(n)}})(1-\lambda_{U^{(n)}}(\theta))}{F_{\epsilon}(\frac{v-z}{b^{(n)}})\lambda_{U^{(n)}}(\theta)+1-F_{\epsilon}(\frac{v-z}{b^{(n)}})}\\
\dfrac{\partial^2}{\partial z\partial\theta}\Lambda_n(\theta;z) &=
\frac{-f_{\ep}\left(\frac{v-z}{b^{(n)}}\right)\frac{1}{b^{(n)}}\mathbf{E}[U^{(n)} e^{\theta U^{(n)}}]}{\left[F_{\ep}\left(\frac{v-z}{b^{(n)}}\right)\mathbf{E}[e^{\theta U^{(n)}}]+1 - F_{\ep}\left(\frac{v-z}{b^{(n)}}\right)\right]^2}\\
\frac{\partial^2}{\partial\theta^2}\Lambda_n(\theta;z) &=
\frac{F_{\ep}\left(\frac{v-z}{b^{(n)}}\right)\left[\mathbf{E}[U^2 e^{\theta U^{(n)}}]\left(F_{\ep}\left(\frac{v-z}{b^{(n)}}\right)\mathbf{E}[e^{\theta U^{(n)}}]+1 - F_{\ep}\left(\frac{v-z}{b^{(n)}}\right)\right)-\left(\mathbf{E}[U^{(n)} e^{\theta U^{(n)}}]\right)^2 F_{\ep}\left(\frac{v-z}{b^{(n)}}\right)\right]}{\left[F_{\ep}\left(\frac{v-z}{b^{(n)}}\right)\mathbf{E}[e^{\theta U^{(n)}}]+1 - F_{\ep}\left(\frac{v-z}{b^{(n)}}\right)\right]^2}
\end{align*}
\end{prop}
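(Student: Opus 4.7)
The plan is to derive the closed form for $\Lambda_n(\theta;z)$ from first principles and then obtain all four derivative formulas by routine differentiation (chain rule plus quotient rule), with the only care required being careful bookkeeping of which factors depend on which variable.

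First, I would unpack the definition. Conditional on $\mathcal{Z}_n=z$, the default indicator is $X^{(n)}=\mathbf{1}_{\{z+b^{(n)}\epsilon_1^{(n)}\le v\}}$, which is Bernoulli with parameter $p_n(z):=F_\epsilon\big((v-z)/b^{(n)}\big)$, and $U^{(n)}$ is independent of both $\mathcal{Z}_n$ and $\epsilon_1^{(n)}$. Splitting on $\{X^{(n)}=1\}$ and $\{X^{(n)}=0\}$ and using independence,
\begin{equation*}
\mathbf{E}\!\left[e^{\theta U^{(n)} X^{(n)}}\,\big|\,\mathcal{Z}_n=z\right]
= p_n(z)\,\lambda_{U^{(n)}}(\theta)+\big(1-p_n(z)\big),
\end{equation*}
and taking the logarithm yields the stated formula for $\Lambda_n(\theta;z)$. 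Note that inside the logarithm one has a strictly positive quantity for every $\theta\in\mathfrak{D}_{\Lambda_{U^{(n)}}}$ and every $z\in\mathbb{R}$, so the expression is well defined.

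Next, for the first-order derivatives I would differentiate under the log. Writing $A_n(\theta;z):=p_n(z)\lambda_{U^{(n)}}(\theta)+1-p_n(z)$ for the argument of the logarithm, only $\lambda_{U^{(n)}}$ depends on $\theta$, so $\partial_\theta A_n=p_n(z)\lambda'_{U^{(n)}}(\theta)$, which gives the $\partial_\theta\Lambda_n$ formula. For $\partial_z$, only $p_n(z)$ depends on $z$, and $p_n'(z)=-\frac{1}{b^{(n)}}f_\epsilon\!\big((v-z)/b^{(n)}\big)$, so $\partial_z A_n=-\frac{1}{b^{(n)}}f_\epsilon((v-z)/b^{(n)})\big(\lambda_{U^{(n)}}(\theta)-1\big)$, which after division by $A_n$ matches the stated formula.

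For the second-order derivatives, I would apply the quotient rule to the expressions just obtained. For $\partial^2_{z\theta}\Lambda_n$, starting from $\partial_\theta\Lambda_n=p_n(z)\lambda'_{U^{(n)}}(\theta)/A_n(\theta;z)$ and differentiating in $z$, the numerator and denominator both depend on $z$ only through $p_n(z)$; an algebraic simplification (the numerator collapses once one expands $p_n\cdot(\lambda_{U^{(n)}}-1)-A_n$) gives the stated closed form with $A_n^2$ in the denominator. For $\partial^2_{\theta\theta}\Lambda_n$, differentiating $p_n(z)\lambda'_{U^{(n)}}(\theta)/A_n$ in $\theta$ yields
\begin{equation*}
\partial^2_{\theta\theta}\Lambda_n(\theta;z)=\frac{p_n(z)\lambda''_{U^{(n)}}(\theta)\,A_n-p_n(z)^2\,[\lambda'_{U^{(n)}}(\theta)]^2}{A_n^2},
\end{equation*}
which rearranges to the displayed expression.

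There is no real obstacle here: the whole proposition is a calculation, and the only points meriting attention are (i) verifying that conditioning on $\mathcal{Z}_n=z$ decouples $U^{(n)}$ from $X^{(n)}$ via the independence in \eqref{eq:2.1new} together with assumption \ref{assumption3}, so that the bilinear factorization $\mathbf{E}[e^{\theta UX}\mid \mathcal{Z}_n=z]=p_n(z)\lambda_{U^{(n)}}(\theta)+1-p_n(z)$ is legitimate, and (ii) justifying interchanging differentiation and the expectation defining $\lambda_{U^{(n)}}$, which is standard on the interior of $\mathfrak{D}_{\Lambda_{U^{(n)}}}$ and is already assumed throughout via \ref{asp:L2}–\ref{asp:L3}. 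Once these two checks are in hand, the formulas follow directly.
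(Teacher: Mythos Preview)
Your proposal is correct. The paper does not supply a proof for this proposition at all---it is stated in Appendix~\ref{app:A} purely as a record of formulas for use elsewhere---so your direct computation (derive $\Lambda_n$ by conditioning on $X^{(n)}\in\{0,1\}$, then apply the chain and quotient rules) is exactly the intended and only reasonable route.
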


\subsection{Proof of Lemma \ref{pf:lem_Gen_1}} \label{app:lem_Gen_1}
Applying Proposition \ref{prop:CRateFunction}, it follows that
\begin{align*}
    h_n(t)=& \frac{1}{n}[-n\phi_n(-t M_n)+ \log f_{\mathcal{Z}_n}(-t M_n)]\\
    =&\frac{1}{n}\left[ - n C_n(\theta_{x,n})\cdot \frac{\beta_\ep}{\gamma\xi_\ep}(\frac{v+t M_n}{b^{(n)}})^{1-\gamma}e^{-\xi_\ep (\frac{v + t M_n}{b^{(n)}})^{\gamma}} (1+o(1)) + (\log \beta_{\mathcal{Z}_n} - \xi_{\mathcal{Z}_n} (t M_n)^{\gamma})(1+o(1)) \right]\\
    h_n'(t) =& \frac{1}{n}\left[ - n C_n(\theta_{x,n})\cdot \frac{\beta_\ep}{\gamma\xi_\ep} e^{-\xi_\ep (\frac{v + t M_n}{b^{(n)}})^{\gamma}} \frac{M_n}{b^{(n)}}\left[(1-\gamma)(\frac{v+t M_n}{b^{(n)}})^{-\gamma} -\xi_\ep \gamma \right] (1+o(1)) -\xi_{\mathcal{Z}_n} \gamma M_n^{\gamma} t^{\gamma-1} (1+o(1)) \right] \\
    =& \frac{1}{n}\left[ n C_n(\theta_{x,n})\cdot \beta_\ep e^{-\xi_\ep (\frac{v + t M_n}{b^{(n)}})^{\gamma}} \frac{M_n}{b^{(n)}} (1+o(1)) -\xi_{\mathcal{Z}_n} \gamma M_n^{\gamma} t^{\gamma-1} (1+o(1)) \right]\\
    h_n''(t) =& \frac{1}{n}\left[ n C_n(\theta_{x,n})\cdot \beta_\ep \frac{M_n}{b^{(n)}} e^{-\xi_\ep (\frac{v + t M_n}{b^{(n)}})^{\gamma}}\left(-\xi_\ep\gamma (\frac{v + t M_n}{b^{(n)}})^{\gamma-1} \frac{M_n}{b^{(n)}} \right) - \xi_{\mathcal{Z}_n} \gamma M_n^{\gamma}(\gamma-1)t^{\gamma-2} \right]
\end{align*}
In the $h_n''(t)$ term, the first term behaves like $M_n^{\gamma} n M_n e^{-M_n^\gamma}\sim M_n^{\gamma} M_n$ since $M_n^{\gamma}\sim \log n$. The second term behaves like $M_n^\gamma$. Hence the first term is the dominant term and is negative when $t>0$.
As $n\to\ff$, $o(1)$ term can be dropped. For $\gamma\in(0,2)$,
\begin{align*}
    &h_n'(t_{0,n})=0 \\
    \Longleftrightarrow&  n C_n(\theta_{x,n})\cdot \beta_\ep e^{-\xi_\ep (\frac{v + t_{0,n} M_n}{b^{(n)}})^{\gamma}} \frac{M_n}{b^{(n)}} (1+o(1))  = \xi_{\mathcal{Z}_n} \gamma M_n^{\gamma} t_{0,n}^{\gamma-1}\\
    \Longleftrightarrow& \log\frac{C_n(\theta_{x,n}) \beta_\ep}{\xi_{\mathcal{Z}_n} \gamma b^{(n)}} + \log n + \log M_n -\xi_\ep (\frac{v + t_{0,n} M_n}{b^{(n)}})^{\gamma} = \gamma\log M_n + (\gamma-1)\log t_{0,n}\\
    & \text{Notice that $(\frac{t_{0,n}M_n+v}{b^{(n)}})^\gamma=(\frac{t_{0,n}M_n}{b^{(n)}})^{\gamma} + \gamma \frac{v}{b^{(n)}} (\frac{t_{0,n}M_n}{b^{(n)}})^{\gamma-1}(1+O(M_n^{\gamma-2}))$ as $n\to\ff$}\\
    \Longleftrightarrow& \log\frac{C_n(\theta_{x,n}) \beta_\ep}{\xi_{\mathcal{Z}_n} \gamma b^{(n)}} + \log n + \log M_n -\xi_\ep (\frac{t_{0,n} M_n}{b^{(n)}})^{\gamma} - \frac{v\gamma\xi_\ep}{b^{(n)}} (\frac{t_{0,n}M_n}{b^{(n)}})^{\gamma-1}(1+O(M_n^{\gamma-2})) = \gamma\log M_n + (\gamma-1)\log t_{0,n}\\
    \Longleftrightarrow& (t_{0,n}M_n)^\gamma = \frac{(b^{(n)})^{\gamma}}{\xi_\ep}\log n + (1-\gamma)\frac{(b^{(n)})^{\gamma}}{\xi_\ep} \log M_n - v\gamma (t_{0,n} M_n)^{\gamma-1}(1+O(M_n^{\gamma-2})) + \\
    & \quad\quad \frac{(b^{(n)})^{\gamma}}{\xi_\ep}\log\frac{C_n(\theta_{x,n}) \beta_\ep}{\xi_{\mathcal{Z}_n} \gamma b^{(n)}} + (1-\gamma)\frac{(b^{(n)})^{\gamma}}{\xi_\ep}\log t_{0,n}\\
    \Longleftrightarrow& t_{0,n}^{\gamma} = \frac{(b^{(n)})^{\gamma}}{\xi_\ep}\frac{\log n}{M_n^\gamma} + \frac{(1-\gamma)(b^{(n)})^{\gamma}}{\xi_\ep} \frac{\log M_n}{M_n^\gamma} -v\gamma t_{0,n}^{\gamma-1} \frac{1}{M_n} (1+O(M_n^{\gamma-2})) +\\
    & \quad\quad \frac{1}{M_n^\gamma}\left[ \frac{(b^{(n)})^{\gamma}}{\xi_\ep}\log\frac{C_n(\theta_{x,n}) \beta_\ep}{\xi_{\mathcal{Z}_n} \gamma b^{(n)}} + (1-\gamma)\frac{(b^{(n)})^{\gamma}}{\xi_\ep}\log t_{0,n} \right]\\
    \Longleftrightarrow& t_{0,n}^{\gamma} = 1 + \frac{(1-\gamma)(b^{(n)})^{\gamma}}{\xi_\ep} \frac{\log M_n}{M_n^\gamma} -v\gamma t_{0,n}^{\gamma-1} \frac{1}{M_n} (1+O(M_n^{\gamma-2})) +\\
     & \quad\quad \frac{1}{M_n^\gamma}\left[ \frac{(b^{(n)})^{\gamma}}{\xi_\ep}\log\frac{C_n(\theta_{x,n}) \beta_\ep}{\xi_{\mathcal{Z}_n} \gamma b^{(n)}} + (1-\gamma)\frac{(b^{(n)})^{\gamma}}{\xi_\ep}\log t_{0,n} \right].
\end{align*}
And for $\gamma=2$, it follows by similar methods that
\begin{align*}
    t_{0,n}^2 = 1 - \frac{2 v}{M_n} t_{0,n} - \frac{(b^{(n)})^2\log M_n}{\xi_\ep M_n^2} + \frac{1}{M_n^2}\left(\frac{(b^{(n)})^{2}}{\xi_\ep}\log\frac{C_n(\theta_{x,n}) \beta_\ep}{2\xi_{\mathcal{Z}_n} b^{(n)}} - \frac{(b^{(n)})^{2}}{\xi_\ep}\log t_{0,n}  - v^2 \right) (1+o(1))
\end{align*}
Notice that when $\gamma\in(0,1)$, $t_{0,n}=1+O\left(\frac{\log M_n}{M_n^\gamma}\right)$; when $\gamma=1$, $t_{0,n}=1-\frac{v}{M_n}+O\left(\frac{1}{M_n^2}\right)$; when $\gamma\in(1,2)$, $t_{0,n}=1-\frac{v}{M_n}+O\left(\frac{\log M_n}{M_n^\gamma}\right)$; when $\gamma=2$, $t_{0,n}= 1-\frac{v}{M_n} + O(\frac{\log M_n}{M_n^2})$. And for all three cases, $\frac{t_{0,n}^{\gamma-1}}{M_n}=\frac{1}{M_n}+o\left(\frac{1}{M_n^{\gamma}}\right)$, hence
\begin{align*}
    t_{0,n}^{\gamma} = 1 + \frac{(1-\gamma)(b^{(n)})^{\gamma}}{\xi_\ep} \frac{\log M_n}{M_n^\gamma} -\frac{v\gamma }{M_n} + \frac{1}{M_n^\gamma}\left[ \frac{(b^{(n)})^{\gamma}}{\xi_\ep}\log\frac{C_n(\theta_{x,n}) \beta_\ep}{\xi_{\mathcal{Z}_n} \gamma b^{(n)}} -\frac{\log\eta_{\gamma,n}}{\xi_{\mathcal{Z}_n}} \right](1+o(1)),
\end{align*}
where $\eta_{2,n}=\exp\{-v^2\xi_{\mathcal{Z}_n}\}$ and $\eta_{\gamma,n}=1$ for $\gamma\in(0,2)$.
Since $t_{0,n}$ is the root of $h_n'(t)=0$, that is 
$$n C_n(\theta_{x,n})\cdot \beta_\ep e^{-\xi_\ep (\frac{v + t_{0,n} M_n}{b^{(n)}})^{\gamma}} \frac{M_n}{b^{(n)}} (1+o(1))  = \xi_{\mathcal{Z}_n} \gamma M_n^{\gamma} t_{0,n}^{\gamma-1}.$$
\begin{align*}
    &n h_n(t_{0,n})\\
    =& - \xi_{\mathcal{Z}_n} \gamma M_n^{\gamma} t^{\gamma-1} \frac{b^{(n)}}{M_n}\frac{1}{\gamma\xi_\ep} \left(\frac{v+t_{0,n} M_n}{b^{(n)}}\right)^{1-\gamma} +\log\beta_{\mathcal{Z}_n} - \xi_{\mathcal{Z}_n} (t_{0,n} M_n)^\gamma \\ 
    =& -\frac{\xi_{\mathcal{Z}_n}}{\xi_\ep} b^{(n)} (t_{0,n} M_n)^{\gamma-1}\left[ \left(\frac{t_{0,n} M_n}{b^{(n)}}\right)^{1-\gamma} + (1-\gamma)\left(\frac{t_{0,n} M_n}{b^{(n)}}\right)^{-\gamma} \left(\frac{v}{b^{(n)}}\right)(1+o(1)) \right] +\log\beta_{\mathcal{Z}_n} - \xi_{\mathcal{Z}_n} (t_{0,n} M_n)^\gamma \\
    =& -\frac{\xi_{\mathcal{Z}_n}}{\xi_\ep}(b^{(n)})^{\gamma}(1+o(1)) + \log\beta_{\mathcal{Z}_n} - \xi_{\mathcal{Z}_n} (t_{0,n} M_n)^\gamma \\
    =& -\xi_{\mathcal{Z}_n} M_n^{\gamma} - \frac{(1-\gamma)(b^{(n)})^{\gamma}\xi_{\mathcal{Z}_n}}{\xi_\ep} \log M_n + v\gamma\xi_{\mathcal{Z}_n} M_n^{\gamma-1} -\\
    & \left(\underbrace{\frac{(b^{(n)})^{\gamma}\xi_{\mathcal{Z}_n}}{\xi_\ep}\log\frac{C_n(\theta_{x,n}) \beta_\ep}{\xi_{\mathcal{Z}_n} \gamma b^{(n)}} +\frac{\xi_{\mathcal{Z}_n}}{\xi_\ep}(b^{(n)})^{\gamma}-\log \beta_{\mathcal{Z}_n}}_{\Delta_\gamma^{(n)}} - \log\eta_{\gamma,n} \right)  (1+o(1)).
\end{align*}
\begin{align*}
    &n h_n''(t_{0,n}) \\
    =&  -\gamma\xi_{\mathcal{Z}_n} M_n^{\gamma}t_{0,n}^{\gamma-1} \cdot \xi_\ep\gamma\left(\frac{v+t_{0,n} M_n}{b^{(n)}}\right)^{\gamma-1} \frac{M_n}{b^{(n)}}(1+o(1))\\
    =& -\xi_\ep\xi_{\mathcal{Z}_n} (\gamma M_n)^2 (t_{0,n} M_n)^{\gamma-1} \frac{1}{b^{(n)}} \left[ \left(\frac{t_{0,n} M_n}{b^{(n)}}\right)^{\gamma-1} + (\gamma-1)\left(\frac{t_{0,n} M_n}{b^{(n)}}\right)^{\gamma-2} \left(\frac{v}{b^{(n)}}\right)(1+o(1)) \right]\\
    =&  -\xi_\ep\xi_{\mathcal{Z}_n}(\gamma M_n)^2 M_n^{2(\gamma-1)} (\frac{1}{b^{(n)}})^{\gamma} (1+o(1)).
\end{align*}
Using the convergence of $b^{(n)}$ from Assumption (\ref{assumption2}), convergence of $C_n(\cdot)$ from \ref{asp:L2}, and convergence of $\theta_{x,n}$ from Proposition \ref{prop:theta_nx}, we obtain $\Delta_\gamma^{(n)}\to\Delta_\gamma$, where $\Delta_\gamma=\left(\frac{b^{\gamma}\xi_{\mathcal{Z}}}{\xi_\ep}\log\frac{C_x \beta_\ep}{\xi_{\mathcal{Z}} \gamma b} +\frac{\xi_{\mathcal{Z}}}{\xi_\ep}b^{\gamma}-\log \beta_{\mathcal{Z}} \right)$, $\eta_{\gamma,n}\to\eta_{\gamma}$ where $\eta_{2}=\exp\{-v^2\xi_{\mathcal{Z}}\}$ and $\eta_{\gamma}=1$ for $\gamma\in(0,2)$, and $C_x=\lim_{n\to\ff}C_n(\theta_{x,n}) = \frac{\lambda_U(\theta_x)-1}{\lambda_U(\theta_x)}$. This completes the proof.

\subsection{Proof of Lemma \ref{pf:lem_Gen_2}}\label{app:lem_Gen_2}
Since $\tilde M_n = t_{0,n}M_n$, it follows that
\begin{align*}
    \exp\{-n \phi_n(-\tilde{M}_n)\} [H_n(-\tilde{M}_n)]^{-1} = M_n e^{n h_n(t_{0,n})}\sqrt{\frac{1}{n|h_n''(t_{0,n})|}}.
\end{align*}    
Now, applying Lemma \ref{pf:lem_Gen_1} and using algebraic manipulations, the RHS of the above expression reduces to $[M_n\tilde{R}_{1n}\tilde{R}_{2n}\tilde{R}_{3n}]^{-1} Q_n(x) + o(1)$, as $n \to \ff$, where for $j=1,2,3$, $\tilde{R}_{jn}$ is same as $R_{jn}$ where $b^{(n)}$ and $\mathcal{Z}_n$ are replaced by $b$ and $\mathcal{Z}$, and $Q_n(x)$ is a constant. It follows from \ref{asp:L2}  and $(\log n) |\xi_{\mathcal{Z}_n}-\xi_{\mathcal{Z}}|\to0$ as $n\to\ff$ that $Q_n(x)$ converges to the RHS of (\ref{eq:lem_gen_1}). 

\subsection{Proof of Lemma \ref{pf:lem_Gen_3}}\label{app:lem_Gen_3}
Since $0<\inf_{n,z\in(-\ff,z_0)}\psi_n(z)\leq \sup_{n,z\in(-\ff,z_0)}\psi_n(z)<\ff$, there exist finite positive constants $C_1,C_2$ such that
\begin{align*}
    J_{1n}=&\int_{-\infty}^{-M_n(1+\beta)} e^{-n\phi_n(z)} \psi_n(z) f_{\mathcal{Z}_n}(z) \mathrm{d}z \leq C_1  F_{\mathcal{Z}_n}(-M_n(1+\beta))\\
    \leq& C_2 (\log n)^{\frac{1-\gamma}{\gamma}} n^{-\frac{b^\gamma \xi_Z}{\xi_\ep}(1+\beta)^{\gamma}} = o(R_{1n}R_{2n}R_{3n}), \quad \text{as} ~ n \to \ff.
\end{align*}
Turning to $J_{3n}$, notice that $\phi_n(z)$ is increasing in $z$. Hence, there exist finite positive constants $C_1,C_2,C_3$ such that
\begin{align*}
    J_{3n}=&\int_{-M_n(1-\beta)}^{\infty} e^{-n\phi_n(z)} \psi_n(z) f_{\mathcal{Z}_n}(z) \mathrm{d}z \leq C_1 e^{-n\phi_n(-M_n(1-\beta))}\\
    =& C_1 \exp\{ -\frac{C_2}{[M_n(1-\beta)+v]^{\gamma-1}} n \exp\{-\frac{\xi_\ep}{b^{\gamma}}[v+M_n(1-\beta)]^{\gamma}\} \}\\
    \leq& C_1 \exp\{ -C_3 (\log n)^{(1-\gamma)\gamma^{-1}} \cdot n \cdot n^{-(1-\beta)^\gamma} \} \\
    =&  C_1 \exp\{ -C_3 n^{1-(1-\beta)^\gamma} \cdot (\log n)^{(1-\gamma)\gamma^{-1}} \} = o(R_{1n}R_{2n}R_{3n}), \quad \text{as}~ n \ra \ff,
\end{align*}
completing the proof of the Lemma.

\subsection{Proof of Lemma \ref{pf:lem_RR_1}} \label{app:lem_RR_1}

Applying Proposition \ref{prop:CRateFunction}, it follows that
\begin{align*}
    h_n(t)=& \frac{1}{n}[-n\phi_n(-M_n^{t})+ \log f_{\mathcal{Z}_n}(-M_n^{t}) + t \log M_n]\\
    =& \frac{1}{n}\left[-n C_n(\theta_{x,n})\left(\frac{v+ M_n^{t}}{b^{(n)}}\right)^{-\alpha_\epsilon} L_\epsilon(M_n^t)(1+o(1)) + (\log L_{\mathcal{Z}_n}(M_n^t) -\alpha_{\mathcal{Z}_n} t \log M_n + \log\alpha_{\mathcal{Z}_n})(1+o(1)) \right] \\
    h_n'(t) =& \frac{1}{n} \left[ n C_n(\theta_{x,n})\alpha_\epsilon\left(\frac{v+ M_n^{t}}{b^{(n)}}\right)^{-\alpha_\epsilon-1}\frac{1}{b^{(n)}}M_n^t(\log M_n) L_\epsilon(M_n^t) \right.\\
    & \left. - n C_n(\theta_{x,n})\left(\frac{v+ M_n^{t}}{b^{(n)}}\right)^{-\alpha_\epsilon}L_\epsilon'(M_n^t)M_n^t(\log M_n) + \frac{L_{\mathcal{Z}_n}'(M_n^t)M_n^t(\log M_n)}{L_{\mathcal{Z}_n}(M_n^t)} - \alpha_{\mathcal{Z}_n} \log M_n \right] \\
    =& \frac{1}{n} \left\{ n C_n(\theta_{x,n})\alpha_\epsilon\left(\frac{v+ M_n^{t}}{b^{(n)}}\right)^{-\alpha_\epsilon-1}\frac{1}{b^{(n)}}M_n^t(\log M_n) L_\epsilon(M_n^t) \left[1-\frac{b^{(n)}}{\alpha_\epsilon}\left(\frac{v+ M_n^{t}}{b^{(n)}}\right)\frac{L_\epsilon'(M_n^t)}{L_\epsilon(M_n^t)}\right] \right.\\
    & \left.+ \frac{L_{\mathcal{Z}_n}'(M_n^t)M_n^t(\log M_n)}{L_{\mathcal{Z}_n}(M_n^t)} - \alpha_{\mathcal{Z}_n} \log M_n \right\} \\
    & \text{notice that $\frac{f_{\mathcal{Z}_n}'(z)}{f_{\mathcal{Z}_n}(z)}\approx (\alpha_{\mathcal{Z}_n}+1)|z|^{-1}$ and $\lim_{x\to\infty}\frac{x L_\epsilon'(x)}{L_\epsilon(x)}=0$,}\\
    \approx& \frac{1}{n} \left[n C_n(\theta_{x,n})\alpha_\epsilon\left(\frac{v+ M_n^{t}}{b^{(n)}}\right)^{-\alpha_\epsilon-1}\frac{1}{b^{(n)}}M_n^t(\log M_n) L_\epsilon(M_n^t) + \frac{L_{\mathcal{Z}_n}'(M_n^t)M_n^t(\log M_n)}{L_{\mathcal{Z}_n}(M_n^t)} - \alpha_{\mathcal{Z}_n} \log M_n \right] \\
    =& \frac{\log M_n}{n} \left[n C_n(\theta_{x,n})\alpha_\epsilon\left(\frac{v+ M_n^{t}}{b^{(n)}}\right)^{-\alpha_\epsilon-1}\frac{1}{b^{(n)}}M_n^t L_\epsilon(M_n^t) + \frac{L_{\mathcal{Z}_n}'(M_n^t)M_n^t}{L_{\mathcal{Z}_n}(M_n^t)} - \alpha_{\mathcal{Z}_n} \right]\\
    \approx& \frac{\log M_n}{n} \left[n C_n(\theta_{x,n})\alpha_\epsilon\left(\frac{v+ M_n^{t}}{b^{(n)}}\right)^{-\alpha_\epsilon-1}\frac{1}{b^{(n)}}M_n^t L_\epsilon(M_n^t) - \alpha_{\mathcal{Z}_n} \right] \quad \text{(By $\lim_{x\to\infty}\frac{x L_\epsilon'(x)}{L_\epsilon(x)}=0$ and $t>0$)}\\
    h_n''(t) \approx & \frac{\log M_n}{n} \frac{\mathrm{d}}{\mathrm{d}t}\left[n C_n(\theta_{x,n})\alpha_\epsilon\left(\frac{M_n^{t}}{b^{(n)}}\right)^{-\alpha_\epsilon} L_\epsilon(M_n^t) - \alpha_{\mathcal{Z}_n} \right] \quad\text{(approximation from equation (\ref{h'_RV_2}))}\\
    \approx& \frac{(\log M_n)^2 M_n^t \alpha_\epsilon}{n b^{(n)}} \left[-n C_n(\theta_{x,n})\alpha_\epsilon \left(\frac{M_n^{t}}{b^{(n)}}\right)^{-\alpha_\epsilon-1} L_\epsilon(M_n^t)  \right]\\
    =& \frac{(\log M_n)^2 \alpha_\epsilon}{n} \left[-n C_n(\theta_{x,n})\alpha_\epsilon \left(\frac{M_n^{t}}{b^{(n)}}\right)^{-\alpha_\epsilon} L_\epsilon(M_n^t)  \right] <0 \quad \text{(By $\lim_{x\to\infty}\frac{x L_\epsilon'(x)}{L_\epsilon(x)}=0$)}
\end{align*}
\begin{align}
    h_n'(t_{0,n})=0 \Longleftrightarrow&  n C_n(\theta_{x,n})\alpha_\epsilon\left(\frac{v+ M_n^{t_{0,n}}}{b^{(n)}}\right)^{-\alpha_\epsilon-1}\frac{1}{b^{(n)}}M_n^{t_{0,n}} L_\epsilon(M_n^{t_{0,n}}) = \alpha_{\mathcal{Z}_n} \label{h'_RV_1}\\
    \Longleftrightarrow&  n C_n(\theta_{x,n})\alpha_\epsilon \left(\frac{M_n^{t_{0,n}}}{b^{(n)}}\right)^{-\alpha_\epsilon-1} \left(1+\frac{v}{M_n^{t_{0,n}}}\right)^{-\alpha_\epsilon-1}\frac{1}{b^{(n)}}M_n^{t_{0,n}} L_\epsilon(M_n^{t_{0,n}}) = \alpha_{\mathcal{Z}_n} \nonumber\\
    \Longleftrightarrow&  n C_n(\theta_{x,n})\alpha_\epsilon \left(\frac{M_n^{t_{0,n}}}{b^{(n)}}\right)^{-\alpha_\epsilon} \left(1+\frac{v}{M_n^{t_{0,n}}}\right)^{-\alpha_\epsilon-1} L_\epsilon(M_n^{t_{0,n}}) = \alpha_{\mathcal{Z}_n} \nonumber\\
    \Longleftrightarrow&  n C_n(\theta_{x,n})\alpha_\epsilon \left(\frac{M_n^{t_{0,n}}}{b^{(n)}}\right)^{-\alpha_\epsilon} \left(1-\frac{(\alpha_\epsilon+1)v}{M_n^{t_{0,n}}}+ \frac{(\alpha_\epsilon+1)(\alpha_\epsilon+2)v^2}{2 M_n^{2t_{0,n}}} + \cdots \right) L_\epsilon(M_n^{t_{0,n}}) = \alpha_{\mathcal{Z}_n} \nonumber\\
    \Longleftrightarrow&  n C_n(\theta_{x,n})\alpha_\epsilon \left(\frac{M_n^{t_{0,n}}}{b^{(n)}}\right)^{-\alpha_\epsilon} L_\epsilon(M_n^{t_{0,n}}) = \alpha_{\mathcal{Z}_n} \quad\text{(for large $n$)} \label{h'_RV_2}\\
    \Longleftrightarrow&  t_{0,n} = \frac{1}{\alpha_\epsilon \log M_n}\left[ \log n + \log L_\epsilon(M_n^{t_{0,n}}) + \log\frac{C_n(\theta_{x,n})\alpha_\epsilon (b^{(n)})^{\alpha_\epsilon}}{\alpha_{\mathcal{Z}_n}} \right]\\
    \quad\approx&1+\frac{\log L_\epsilon(M_n)}{\alpha_\epsilon \log M_n} + \frac{\log\frac{C_n(\theta_{x,n})\alpha_\epsilon (b^{(n)})^{\alpha_\epsilon}}{\alpha_{\mathcal{Z}_n}}}{\alpha_\epsilon \log M_n} \nonumber
\end{align}

Since $t_{0,n}$ is the root of $h'(t)=0$, and then by equation (\ref{h'_RV_1})
\begin{align*}
    n\phi_n(-M_n^{t_{0,n}}) =& n C_n(\theta_{x,n})\left(\frac{v+ M_n^{t_{0,n}}}{b^{(n)}}\right)^{-\alpha_\epsilon} L_\epsilon(M_n^{t_{0,n}}) = \frac{\alpha_{\mathcal{Z}_n} b^{(n)}}{\alpha_\epsilon M_n^{t_{0,n}}}\left(\frac{v+ M_n^{t_{0,n}}}{b^{(n)}}\right) = \frac{v \alpha_{\mathcal{Z}_n}}{\alpha_\epsilon M_n^{t_{0,n}}} + \frac{\alpha_{\mathcal{Z}_n}}{\alpha_\epsilon}.
\end{align*}  
\begin{align*}
    \log M_n^{t_{0,n}} =& \frac{\log n}{\alpha_\epsilon} + \frac{\log\frac{C_n(\theta_{x,n})\alpha_\epsilon (b^{(n)})^{\alpha_\epsilon}}{\alpha_{\mathcal{Z}_n}}}{\alpha_\epsilon} + \frac{\log L_\epsilon(M_n^{t_{0,n}})}{\alpha_\epsilon} \quad\text{(by  (\ref{h'_RV_2}))}
\end{align*}  
\begin{align*}
    h_n(t_{0,n}) =&\frac{1}{n}\left[ -n\phi_n(-M_n^{t_{0,n}}) + \log L_{\mathcal{Z}_n}(M_n^t) -\alpha_{\mathcal{Z}_n} t \log M_n + \log\alpha_{\mathcal{Z}_n} \right]\\
    =& \frac{1}{n}\left[ -\frac{v \alpha_{\mathcal{Z}_n}}{\alpha_\epsilon M_n^{t_{0,n}}} - \frac{\alpha_{\mathcal{Z}_n}}{\alpha_\epsilon} + \log L_{\mathcal{Z}_n}(M_n^{t_{0,n}}) -\alpha_{\mathcal{Z}_n}(\frac{\log n}{\alpha_\epsilon} + \frac{\log\frac{C_n(\theta_{x,n})\alpha_\epsilon (b^{(n)})^{\alpha_\epsilon}}{\alpha_{\mathcal{Z}_n}}}{\alpha_\epsilon} + \frac{\log L_\epsilon(M_n^{t_{0,n}})}{\alpha_\epsilon}) + \log\alpha_{\mathcal{Z}_n}\right] \\
    =& \frac{1}{n}\left[ - \frac{\alpha_{\mathcal{Z}_n}}{\alpha_\epsilon} + \log L_{\mathcal{Z}_n}(M_n) -\frac{\alpha_{\mathcal{Z}_n}}{\alpha_\epsilon}\log n + \frac{\alpha_{\mathcal{Z}_n}}{\alpha_\epsilon}\log\frac{C_n(\theta_{x,n})\alpha_\epsilon (b^{(n)})^{\alpha_\epsilon}}{\alpha_{\mathcal{Z}_n}}+ \frac{\alpha_{\mathcal{Z}_n}}{\alpha_\epsilon}\log L_\epsilon(M_n) + \log\alpha_{\mathcal{Z}_n} +o(1)\right]\\
    =& \frac{1}{n}\left[ -\frac{\alpha_{\mathcal{Z}_n}}{\alpha_\epsilon}\log n + \log L_{\mathcal{Z}_n}(M_n) + \frac{\alpha_{\mathcal{Z}_n}}{\alpha_\epsilon}\log L_\epsilon(M_n) + \underbrace{\frac{\alpha_{\mathcal{Z}_n}}{\alpha_\epsilon}\log\frac{C_n(\theta_{x,n})\alpha_\epsilon (b^{(n)})^{\alpha_\epsilon}}{\alpha_{\mathcal{Z}_n}} + \log\alpha_{\mathcal{Z}_n} - \frac{\alpha_{\mathcal{Z}_n}}{\alpha_\epsilon}}_{\Delta_n} +o(1)\right]\\
    h_n''(t_{0,n}) =& \frac{(\log M_n)^2 \alpha_\epsilon}{n} \left[-n C_n(\theta_{x,n})\alpha_\epsilon \left(\frac{M_n^{t_{0,n}}}{b^{(n)}}\right)^{-\alpha_\epsilon} L_\epsilon(M_n^{t_{0,n}}) +o(1) \right] = -\frac{(\log M_n)^2 \alpha_\epsilon \alpha_{\mathcal{Z}_n}+o(1)}{n} \quad\text{(by  (\ref{h'_RV_2}))}.
\end{align*}
Using the convergence of $b^{(n)}$ from Assumption (\ref{assumption3}), convergence of $C_n(\cdot)$ from \ref{asp:L2}, convergence of $\alpha_{\mathcal{Z}_n}$ from assumption of the proposition, and convergence of $\theta_{x,n}$ from Proposition \ref{prop:theta_nx}, we obtain $\Delta_n\to\Delta$, where $\Delta=\frac{\alpha_{\mathcal{Z}}}{\alpha_\epsilon}\log\frac{C_x\alpha_\epsilon b^{\alpha_\epsilon}}{\alpha_{\mathcal{Z}}} + \log\alpha_{\mathcal{Z}} - \frac{\alpha_{\mathcal{Z}}}{\alpha_\epsilon}$, and $C_x=\lim_{n\to\ff}C_n(\theta_{x,n}) = \frac{\lambda_U(\theta_x)-1}{\lambda_U(\theta_x)}$. This completes the proof.

\subsection{Proof of Lemma \ref{pf:lem_RR_2}} \label{app:lem_RR_2}
Notice that $\tilde M_n = M_n^{t_{0,n}}$, applying Lemma \ref{pf:lem_RR_1}, we obtain
\begin{align*}
    \exp(-n \phi_n(-\tilde{M}_n) [H_n(-\tilde{M}_n)]^{-1} =& (\log M_n) e^{n h_n(t_{0,n})}\sqrt{\frac{1}{n|h_n''(t_{0,n})|}}\\
    =& n^{-\frac{\alpha_{\mathcal{Z}_n}}{\alpha_\epsilon}} [L_\epsilon(n^{\frac{1}{\alpha_\epsilon}})]^{-\frac{\alpha_{\mathcal{Z}_n}}{\alpha_\epsilon}} L_{\mathcal{Z}_n}(n^{\frac{1}{\alpha_\epsilon}}) e^{\Delta}\left(\frac{1}{\alpha_\epsilon \alpha_{\mathcal{Z}_n}}\right)^{\frac{1}{2}}(1+o(1)).
\end{align*}
Using the assumptions that $(\log n) |\alpha_{\mathcal{Z}_n}-\alpha_{\mathcal{Z}}|\to0$ and $L_{\mathcal{Z}_n}(n^{\frac{1}{\alpha_\ep}}) \left[L_{\mathcal{Z}}(n^{\frac{1}{\alpha_\ep}})\right]^{-1} \to1$ as $n\to\ff$, we can replace $\mathcal{Z}_n$ by $\mathcal{Z}$ and $L_{\mathcal{Z}_n}(n^{\frac{1}{\alpha_\ep}})$ by $L_{\mathcal{Z}}(n^{\frac{1}{\alpha_\ep}})$. We complete the proof by Setting $K_x = e^{\Delta} (\alpha_\epsilon \alpha_{\mathcal{Z}})^{-\frac{1}{2}}$. 

\subsection{Proof of Lemma \ref{pf:lem_RR_3}} \label{app:lem_RR_3}
Since $0<\inf_{n,z\in(-\ff,z_0)}\psi_n(z)\leq \sup_{n,z\in(-\ff,z_0)}\psi_n(z)<\ff$, there exist finite positive constants $C_1,C_2$ such that as $n\to\ff$,
\begin{align*}
    J_{1n}'=&\int_{-\infty}^{-M_n n^{\beta}} e^{-n\phi_n(z)} \psi_n(z) f_{\mathcal{Z}_n}(z) \mathrm{d}z \leq C_1  F_{\mathcal{Z}_n}(-M_n n^{\beta})\\
    \leq& C_2 n ^{-\frac{\alpha_Z}{\alpha_\epsilon}-\beta\alpha_Z} =  o\left((n^{-\frac{\alpha_Z}{\alpha_\epsilon}} [L_\epsilon(n^{\frac{1}{\alpha_\epsilon}})]^{-\frac{\alpha_Z}{\alpha_\epsilon}} L_Z(n^{\frac{1}{\alpha_\epsilon}})\right).
\end{align*}
Turning to $J_{3n}$, notice that $\phi_n(z)$ is increasing in $z$. Hence,  there exist finite positive constants $C_1,C_2,C_3$ such that as $n\to\ff$,
\begin{align*}
    J_{3n}'=&\int_{-M_n n^{-\beta}}^{\infty} e^{-n\phi_n(z)} \psi_n(z) f_{\mathcal{Z}_n}(z) \mathrm{d}z \leq C_1 e^{-n\phi_n(-M_n n^{-\beta})}\\
    \leq&  C_2 \exp\{ -n C_x \left( \frac{v+n^{\frac{1}{\alpha_\epsilon}-\beta}}{b} \right)^{-\alpha_\epsilon} L_\epsilon(n^{\frac{1}{\alpha_\epsilon}-\beta}) \}\\
    \leq& C_3 \exp\{ -n^{\beta\alpha_\epsilon}   L_\epsilon(n^{\frac{1}{\alpha_\epsilon}-\beta}) \} =  o\left(n^{-\frac{\alpha_Z}{\alpha_\epsilon}} [L_\epsilon(n^{\frac{1}{\alpha_\epsilon}})]^{-\frac{\alpha_Z}{\alpha_\epsilon}} L_Z(n^{\frac{1}{\alpha_\epsilon}})\right),
\end{align*}
completing the proof of the Lemma.

\subsection{Proof of Lemma \ref{lem:loc-logsmooth}} \label{app:loc-logsmooth}
The proof is similar
to the argument used in Theorem~\ref{thm:LDP_ndZ} (see Remark~\ref{rem:log-smooth_proof}). We provide a brief sketch. By the envelope theorem,
\[
\partial_z\Lambda^*(x,z)\;=\;-\partial_z\Lambda(\theta_x(z),z)
\;=\;-\frac{\lambda_U(\theta_x(z))-1}{\lambda_U(\theta_x(z))\,p(z)+1-p(z)}\,p'(z),
\]
with $p(z)=F_\epsilon((v-z)/b)$ and $p'(z)=-b^{-1}f_\epsilon((v-z)/b)<0$. Since $\theta_x(z)>0$
and $\lambda_U(\theta)>1$ for $\theta>0$, we get $\partial_z\Lambda^*(x,z)>0$, i.e.\ strict increase in $z$.
In the unbounded-left regime ($\kappa=-\infty$), $\Lambda^*(x,z)\downarrow\Lambda_U^*(x)$ as $z\to-\infty$.

\subsection{Statement and proof of Lemma \ref{lem:saddle-consistency}}\label{sec:A4-saddle-consistency}

\begin{lem}[Consistency of $M_n$ and $\tilde M_n$]\label{lem:saddle-consistency}
Assume \textup{\ref{asp:D1}}, \textup{\ref{asp:D2}}, \textup{\ref{asp:B1}}, and {\rm\ref{asp:W1}-\ref{asp:W3}}. Let $M_n$ be the balance scale from \textup{\ref{asp:B1}} and 
$z_n^*=-\tilde M_n$ the maximizer of $\tilde h_n(z):= -\phi_n(z) + n^{-1}\log f_{\mathcal{Z}_n}(z)$. Then
\[
\tilde M_n = M_n\{1+o(1)\},\qquad r_{\mathcal{Z}}(M_n)\big(\tilde M_n - M_n\big) \to 0,
\]
hence $z_n^*=-\tilde M_n = -M_n + o\big(1/r_{\mathcal{Z}}(M_n)\big)$.
\end{lem}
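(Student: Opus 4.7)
The plan is to treat $z_n^\ast=-\tilde M_n$ as an approximate Newton correction of the balance scale $M_n$, using a one-step mean value argument applied to the saddle equation $\tilde h_n'(z_n^\ast)=0$. As a preparatory step I will compute $\tilde h_n'(-w)$ explicitly for large $w$. From the envelope identity $\partial_z\Lambda_n^\ast(x;z)=-\partial_z\Lambda_n(\theta_{x,n}(z);z)$ together with Proposition~\ref{app:prop_lam_n}, evaluating at $z=-w$ and using $F_\epsilon((v+w)/b^{(n)})\to 1$ yields
\begin{equation*}
\phi_n'(-w)=\frac{C_n(\theta_{x,n})}{b^{(n)}}\,f_\epsilon\!\left(\tfrac{v+w}{b^{(n)}}\right)(1+o(1)),
\end{equation*}
while the log-smooth hypothesis \ref{asp:D2} and slow variation of $c_{\mathcal{Z}_n}$ give $(\log f_{\mathcal{Z}_n})'(-w)=r_{\mathcal{Z}_n}(w)(1+o(1))$. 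Setting $G_n(w):=n\,\tilde h_n'(-w)$ therefore leads to
\begin{equation*}
G_n(w)=-\frac{nC_n(\theta_{x,n})}{b^{(n)}}\,f_\epsilon\!\left(\tfrac{v+w}{b^{(n)}}\right)+r_{\mathcal{Z}_n}(w)+\text{l.o.t.},
\end{equation*}
and the saddle is characterized by $G_n(\tilde M_n)=0$.

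Next, \ref{asp:B1} together with $C_n(\theta_{x,n})\to C_x$ (from \ref{asp:L2} and Proposition~\ref{prop:theta_nx}) gives $G_n(M_n)=o(r_{\mathcal{Z}_n}(M_n))$, so $M_n$ is an approximate root. The curvature hypothesis \ref{asp:W3} yields $G_n'(w)=-n\,\tilde h_n''(-w)\asymp n f_\epsilon(t_n)r_\epsilon(t_n)$ uniformly on the Laplace window, which by $R_{1n}\to 1$ is of the same order as $r_{\mathcal{Z}_n}(M_n)\,r_\epsilon(t_n)$. The mean value theorem then produces $w^\ast$ between $M_n$ and $\tilde M_n$ with
\begin{equation*}
|\tilde M_n-M_n|=\frac{|G_n(M_n)|}{|G_n'(w^\ast)|}=\frac{o(r_{\mathcal{Z}_n}(M_n))}{\Theta\!\bigl(r_{\mathcal{Z}_n}(M_n)\,r_\epsilon(t_n)\bigr)}=o\!\bigl(1/r_\epsilon(t_n)\bigr).
\end{equation*}

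The target bound $o(1/r_{\mathcal{Z}_n}(M_n))$ then follows from $R_{2n}=\Theta(1)$ combined with $R_{1n}\to 1$, which forces $Q_\epsilon'(t_n)\asymp b^{(n)}r_{\mathcal{Z}_n}(M_n)$, and the log-smooth identification $r_\epsilon(t)\asymp Q_\epsilon'(t)$ from \ref{asp:D1}. Dividing by $M_n$ (both $M_n$ and $r_{\mathcal{Z}_n}(M_n)$ diverge) then gives $\tilde M_n=M_n(1+o(1))$. Uniqueness of the saddle on the window, and hence well-definedness of $w^\ast$, is ensured since \ref{asp:W3} makes $G_n$ strictly monotone and \ref{asp:W1} keeps the bracket interior.

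The main obstacle I expect is ensuring that the various $(1+o(1))$ factors --- in particular the remainder in Proposition~\ref{prop:CRateFunction}(v) and the sensitivity of the tilt $\theta_{x,n}(z)$ to $z$ near $-M_n$ --- are \emph{uniform} over the shrinking Laplace window rather than merely pointwise at $z=-M_n$. This is precisely what \ref{asp:W2}--\ref{asp:W3} deliver, but threading the uniform control through both the leading balance in $G_n$ and the curvature estimate for $G_n'$ will be the technically delicate step of the proof.
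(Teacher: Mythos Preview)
Your proposal is correct and follows essentially the same route as the paper: both arguments use a one-step mean-value/Newton correction around $-M_n$, combining $(n\tilde h_n)'(-M_n)=o(r_{\mathcal{Z}_n}(M_n))$ from \ref{asp:B1} with a curvature bound of order $r_{\mathcal{Z}_n}(M_n)^2$ to conclude $\tilde M_n-M_n=o(1/r_{\mathcal{Z}_n}(M_n))$. The only cosmetic difference is that you pass through $r_\epsilon(t_n)$ first and then identify it with $r_{\mathcal{Z}_n}(M_n)$ via $R_{1n},R_{2n}$, whereas the paper asserts the curvature order directly; your extra step is harmless and arguably clearer.
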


\begin{proof}
Differentiate $\tilde h_n$:
\[
\tilde h_n'(z) = -\phi_n'(z) + \tfrac{1}{n}\,\frac{f'_{\mathcal{Z}_n}(z)}{f_{\mathcal{Z}_n}(z)}.
\]
Evaluating at $z=-M_n$ and using the balance ratio in \textup{\ref{asp:B1}} gives 
$\tilde h_n'(-M_n)=o(r_{\mathcal{Z}}(M_n))$. By the log-smooth assumptions \textup{\ref{asp:D1}}–\textup{\ref{asp:D2}}, and balance condition \ref{asp:B1}, we also have 
$\tilde h_n''(-M_n)\sim -\,r_{\mathcal{Z}}(M_n)^2<0$. By the mean value theorem applied to the full exponent, there exists $\xi_n\in(-\tilde M_n,-M_n)$ such that
\[
0=(n\tilde h_n)'(-\tilde M_n)
=(n\tilde h_n)'(-M_n)+(n\tilde h_n)''(\xi_n)\,(\tilde M_n-M_n).
\]
Hence
\[
\tilde M_n-M_n
=-\frac{(n\tilde h_n)'(-M_n)}{(n\tilde h_n)''(\xi_n)}.
\]
From \textup{\ref{asp:B1}} we have $(n\tilde h_n)'(-M_n)=o \big(r_{\mathcal{Z}}(M_n)\big)$, and by \textup{\ref{asp:D1}}–\textup{\ref{asp:D2}} the curvature satisfies
$(n\tilde h_n)''(\xi_n)\sim -\,n\,r_{\mathcal{Z}}(M_n)^2$ uniformly along $(-\tilde M_n,-M_n)$, so
\[
r_{\mathcal{Z}}(M_n)\,(\tilde M_n-M_n)\ \longrightarrow\ 0.
\]
Finally, since $M_n\uparrow\infty$ by \textup{\ref{asp:B1}} and $r_{\mathcal{Z}}(w)\uparrow\infty$ as $w\to\infty$ by \textup{\ref{asp:D2}}, we have
$r_{\mathcal{Z}}(M_n)M_n\to\infty$; therefore
\[
\frac{\tilde M_n-M_n}{M_n}
=o\Big(\frac{1}{r_{\mathcal{Z}}(M_n)M_n}\Big)\ \longrightarrow\ 0,
\]
i.e.\ $\tilde M_n = M_n\{1+o(1)\}$. This completes the proof.

\end{proof}

\begin{rem}
The proof repeats the displacement/window arguments already used for the GN and RV cases: see 
Lemma~5.1 and Lemma~5.4 (Newton step around the approximate maximizer; curvature of order $-r_Z^2$), 
now applied to $\tilde h_n$.
\end{rem}

\section{Appendix}\label{app:B}

\subsection{Proof of Proposition \ref{prop:CMGF}}

\begin{enumerate}
    \item Let $z$ be fixed. Then
    \begin{enumerate}
        \item $\Lambda_n(\theta;z)$ is the cumulant generating function which is finite using the assumption {\ref{asp:L2}}. The convexity follows from H\"older's inequality and differentiability in $\theta$ follows from the differentiability of $\lambda_{U^{(n)}}(\theta)$.
        
        \item Since $\Lambda_n(\theta;z)$ is strictly convex and differentiable, $\frac{\partial^2}{\partial \theta^2}\Lambda_n(\theta;z)>0$; hence $\frac{\partial}{\partial \theta}\Lambda_n(\theta;z)$ is strictly increasing in $\theta$.
        \item The formula for the derivative, $\frac{\partial}{\partial \theta}\Lambda_n(\theta;z)$, is in Appendix (Proposition \ref{app:prop_lam_n}).
    \end{enumerate}
    \item Let $\theta$ be fixed.
    \begin{enumerate}
        \item By model assumption,  $\epsilon$ is a random variable with continuous density, the differentiability follows by the differentiability of $F_{\ep}(\cdot)$.
        \item Since $U^{(n)}>0$, we have $\lambda_{U^{(n)}}(\theta)>0$; hence $\frac{\partial}{\partial z}\Lambda_n(\theta;z)<0$
        \item Directly from the formula of $\Lambda_n(\theta;z)$.
    \end{enumerate}
\item Let $g_n(M_n)=1-F_{\epsilon}(\frac{v+M_n}{b^{(n)}})$ and $C_n(\theta)=\dfrac{\la_{U^{(n)}}(\theta)-1}{\la_{U^{(n)}}(\theta)}$. First note that
\begin{align*}
    \Lambda_n(\theta;-M_n) =& \log\left[ \la_{U^{(n)}}(\theta)F_{\epsilon}(\frac{v+M_n}{b^{(n)}}) + 1- F_{\epsilon}(\frac{v+M_n}{b^{(n)}}) \right]\\
    =& \log\left[ \lambda_{U^{(n)}}(\theta)\right] - g_n(M_n)C_n(\theta) - \frac{1}{2}[g_n(M_n)]^2[C_n^*(\theta)]^2,
\end{align*}
where $C_n^*(\theta)$ is a point between $\dfrac{\la_{U^{(n)}}(\theta)-1}{\la_{U^{(n)}}(\theta)}$ and $\dfrac{\la_{U^{(n)}}(\theta)-1}{\la_{U^{(n)}}(\theta)+ g_n(M_n)(1-\la_{U^{(n)}}(\theta))}.$ Hence it follows that
\begin{align*}
\Lambda_n(\theta;-M_n) = \Lambda_{U^{(n)}}(\theta) + \delta_n(\theta,M_n),
\end{align*}
where $\delta_n(\theta,M_n) = -g_n(M_n)C_n(\theta) - \frac{1}{2}[g_n(M_n)]^2[C_n^*(\theta)]^2$. $|C_n^*(\theta)|<\infty,|C_n(\theta)|<\infty$ uniformly for any $n,\theta$. 

\item The proof follows from the assumption (\ref{assumption1})-(\ref{assumption3}) and {\ref{asp:L2}}.
\end{enumerate}

\subsection{Proof of Proposition \ref{prop:theta_nx}}

Let
\[
G_n(\theta,z)=\frac{\partial}{\partial\theta}\Lambda_n(\theta;z)-x.
\]
Since $\frac{\partial^2}{\partial\theta^2}\Lambda_n(\theta;z)>0$, by the implicit function theorem, $\theta_{n,x}(z)$ is differentiable, and 
\[
\frac{d\theta_{n,x}(z)}{dz}=-\frac{\frac{\partial^2}{\partial z\partial\theta}\Lambda_n(\theta;z)}{\frac{\partial^2}{\partial\theta^2}\Lambda_n(\theta;z)}.
\]
Using the  calculations of Proposition \ref{app:prop_lam_n} in the Appendix, the numerator is positive since $f_{\epsilon}\left(\frac{v-z}{b_n}\right)>0$, $\mathbf{E}[U e^{\theta U}]>0$, and $1 - F_{\epsilon}\left(\frac{v-z}{b_n}\right)>0$. The denominator is positive due to the strict convexity of $\Lambda_n(\theta;z)$ in $\theta$, guaranteed by the positivity of the variance term (variance of $U$ under $P_\theta$, where $\frac{d P_{\theta}}{d P}=e^{\theta U - \log \la_{U}(\theta)})$. Hence, 
\[
\frac{d\theta_{n,x}(z)}{dz}>0,
\]
proving strict monotonicity. Turning to the boundary limits, as $z\to-\infty$, we have $\Lambda_n(\theta;z)\to \Lambda_U(\theta)$, hence $\theta_{n,x}(z)\to\theta_x$ with $\Lambda_U'(\theta_x)=x$. As $z\to\infty$, we see $F_{\epsilon}((v-z)/b_n)\to 0$, forcing $\theta_{n,x}(z)\to\infty$ to maintain equality with $x$. Finally, again using 
the implicit function theorem and the continuous differentiability of $\Lambda_n(\theta;z)$, it follows that $\theta_{n,x}(z)\to\theta_x(z)$ as $n\to\ff$. This completes the proof.

\subsection{Proof of Proposition \ref{prop:CRateFunction}}

\textbf{(i)} For fixed \(\theta\), the function \(F_{\epsilon}((v-z)/b_n)\) is strictly decreasing in \( z \). Hence, \(\Lambda_n(\theta;z)\) is strictly decreasing in \( z \). Thus, \(\theta x-\Lambda_n(\theta;z)\) is strictly increasing in \( z \), and taking supremum preserves strict monotonicity.

\noindent\textbf{(ii)} For each fixed \(\theta'\), we have
\[
\lim_{z \to \infty}[\theta' x-\Lambda_n(\theta';z)]=\theta' x.
\]
Thus,
\[
\liminf_{z\to\infty}\Lambda_n^*(x;z)=\liminf_{z\to\infty}\sup_{\theta}\{\theta x-\Lambda_n(\theta;z)\}\ge\theta' x.
\]
Taking supremum over all \(\theta'\), since \( x>0 \), gives the result.

\noindent\textbf{(iii)} Define \(g_n(z)=1-F_{\epsilon}((v-z)/b_n)\). Then
\[
|\Lambda_n(\theta;z)-\Lambda_U(\theta)|=\left|\log  \left[1+g_n(z)\,\dfrac{1-\mathbf{E}[e^{\theta U}]}{\mathbf{E}[e^{\theta U}]})\right]\right|.
\]
Set \(C_{\theta}=|(1-\mathbf{E}[e^{\theta U}])/\mathbf{E}[e^{\theta U}]|\). Choose \(z_{\epsilon}\) sufficiently negative so that for all \(z\le z_{\epsilon}\):
\[
|g_n(z)C_{\theta}|<\delta=1-e^{-\epsilon},
\]
uniformly for all \(n\). Thus, $|\Lambda_n(\theta;z)-\Lambda_U(\theta)|<\epsilon,
$
uniformly in \(n\), proving uniform convergence as \( z\to -\infty \). Taking supremum over \(\theta\) and using convergence of $\theta_{n}(x,z)$ completes the proof.

\noindent\textbf{(iv)}  In the following, the derivatives of $\Lambda_n(\theta;\kappa)$ and $\Lambda(\theta;\kappa)$ are taken with respect to $\theta$. First notice that, $n|\Lambda_n(\theta;\kappa)-\Lambda(\theta;\kappa)|$ is bounded by
\begin{align*} 
nF_{\epsilon}\left(\frac{v-\kappa}{b^{(n)}}\right)|\la_{U^{(n)}}(\theta)-\la_{U}(\theta)| + n \la_{U}(\theta)\left|F_{\epsilon}\left(\frac{v-\kappa}{b^{(n)}}\right)- F_{\epsilon}\left(\frac{v-\kappa}{b}\right)\right| + n\left|F_{\epsilon}\left(\frac{v-\kappa}{b}\right)- F_{\epsilon}\left(\frac{v-\kappa}{b^{(n)}}\right)\right|,
\end{align*}
which is bounded above by
\begin{align*}
nF_{\epsilon}\left(\frac{v-\kappa}{b^{(n)}}\right)|\la_{U^{(n)}}(\theta)-\la_{U}(\theta)| + n C  [b^{(n)}-b] \to 0,
\end{align*}
using Assumption {\ref{asp:L2}}. Next, turning to the derivatives, $n|\Lambda_n'(\theta;\kappa)-\Lambda'(\theta,\kappa)|$ is bounded by
\begin{align*}
nF_{\epsilon}\left(\frac{v-\kappa}{b^{(n)}}\right)|\la_{U^{(n)}}'(\theta)-\la_{U}'(\theta)| + n \la_{U}'(\theta) \left|F_{\epsilon}\left(\frac{v-\kappa}{b^{(n)}}\right)- F_{\epsilon}\left(\frac{v-\kappa}{b}\right)\right|,
\end{align*}
which is bounded above by
\begin{align*}
C_1 n |\la_{U^{(n)}}'(\theta)-\la_{U}'(\theta)| + C_1 n |b^{(n)}-b| \to 0
\end{align*}
where the convergence follows using {\ref{asp:L2}}. Next, using
\begin{align*}
    n\left| \frac{\Lambda_n'(\theta;\kappa)}{\Lambda_n(\theta;\kappa)} - \frac{\Lambda'(\theta;\kappa)}{\Lambda(\theta;\kappa)} \right|
    =& \frac{n|\Lambda_n'(\theta;\kappa)-\Lambda'(\theta;\kappa)|}{\La_n(\theta;\kappa)} - \frac{n\La'(\theta;\kappa)|\La_n(\theta;\kappa)-\La(\theta;\kappa)|}{\La_n(\theta;\kappa)\Lambda(\theta;\kappa)} \to 0,
\end{align*}
uniformly in $\theta$ by {\ref{asp:L2}}. Let $h_n(\theta) = \frac{\La_n'(\theta;\kappa)}{\La_n(\theta;\kappa)}$ and $h(\theta) = \frac{\La'(\theta;\kappa)}{\La(\theta;\kappa)}$, and $\theta_{x,n}$ be the root of $h_n(\theta)=x$ while $\theta_{x}$ is the root of $h(\theta)=x$. Using the mean value theorem, $h'(\theta)\in(0,\infty)$, and $\theta_{x,n}\to\theta_{x}$, it follows that
\begin{align*}
    n|\theta_{x,n}-\theta_{x}| = \frac{n}{h'(\theta_n^*)}|h(\theta_{x,n})-h(\theta_{x})| \leq C n |h(\theta_{x,n})-h(\theta_{x})|
    = C n |h(\theta_{x,n})-h_n(\theta_{x,n})| \to 0,
\end{align*}
using the uniform convergence above. Therefore,
\begin{align*}
    n|\La^*_{n}(x,\kappa)-\La^*(x,\kappa)| =& x n |\theta_{x,n}-\theta_{x}| - n|h(\theta_{n,x})-h(\theta_{x})| \to 0.
\end{align*}

\noindent\textbf{(v)} Let $ \Delta_\theta=\theta_{x,n}(-M_n)-\theta_{x,n}$, and $\delta_n(\theta,M_n)=\Lambda_n(\theta;-M_n)-\Lambda_{U^{(n)}}(\theta) = -g_n(M_n)C_n(\theta) - \frac{1}{2}[g_n(M_n)]^2[C_n^*(\theta)]^2 = -g_n(M_n)C_n(\theta)(1+o(1))$ using Proposition \ref{prop:CMGF} (iii), where we have suppressed the dependence of $\Delta_{\theta}$ on $n$ and $x$. Notice that, 
\begin{eqnarray*}
\Lambda_n^*(x;-M_n) = (\theta_{x,n}+\Delta_\theta)x - \Lambda_{U}(\theta_{x,n}+\Delta_\theta) - \delta_n(\theta_{x,n}+\Delta_\theta,M_n).
\end{eqnarray*}
Now using a second-order Taylor expansion, the above can be expressed as
\begin{align*}
    (\theta_{x,n}+\Delta_\theta)x - [\Lambda_{U}(\theta_{x,n})+ \Delta_\theta\Lambda_{U}'(\theta_{x,n})+ \frac{1}{2}\Lambda_{U}'(\theta_x^*)\Delta_\theta^2] - [\delta_n(\theta_{x,n},M_n) + \Delta_\theta\delta_n'(\theta_{x,n},M_n) + \frac{1}{2}\delta_n''(\theta_x^{**},M_n)\Delta_\theta^2],
\end{align*}
which reduces to
$ x \theta_{x,n} - \Lambda_{U}(\theta_{x,n}) -\delta_n(\theta_{x,n},M_n) - B_n$, where $B_n = \left[\frac{1}{2}\Lambda_{U}'(\theta_x^*)\Delta_\theta^2 + \Delta_\theta\delta_n'(\theta_x,M_n) + \frac{1}{2}\delta_n''(\theta_x^{**},M_n)\Delta_\theta^2\right]$,  $\theta_x^*\in[\theta_{x,n},\theta_{x,n}+\Delta_\theta]$, and $\theta_x^{**}\in[\theta_{x,n},\theta_{x,n}+\Delta_\theta]$. Next, since $\left[ \theta x - \Lambda_{U}(\theta) - \delta_n(\theta,M_n)\right]$ attains its supremum at $ \Delta_\theta+ \theta_{x,n}$, it follows that $\Delta_\theta$ can be solved by taking derivative of $\Lambda_n^*(x;-M_n)$ with respect to $x$ and setting it to be zero; hence,
\begin{align*}
\Delta_\theta= -\frac{\delta_n'(\theta_x,M_n)}{\Lambda_{U}'(\theta_x^*)+\delta_n''(\theta_x^{**},M_n)} =  O(g_n(M_n)),
\end{align*}    
since $\delta_n(\theta_x,M_n) = O(g_n(M_n))$,  $\delta_n'(\theta_x,M_n) = O(g_n(M_n))$,  and $\delta_n''(\theta_x,M_n) = O(g_n(M_n))$. This implies $\Delta_{\theta}$ converges to 0 as $n \ra \ff$. This implies that $B_n = o(\delta_n(\theta_x,M_n))\to0$ and hence 
\begin{align*}
    \Lambda_n^*(x;-M_n) =& \Lambda_U^*(x)- \delta_n(\theta_x,M_n)(1+o(1)) = \Lambda_U^*(x) + g_n(M_n)C_n(\theta_x)(1+o(1)).
\end{align*}

\subsection{Prefactor stability lemma}

\begin{lem}[Prefactor stability under $O(1/n)$ level shifts]\label{lem:prefactor-stability}
Assume {\rm(\ref{assumption1})-(\ref{assumption3})} and {\rm\ref{asp:L2}-\ref{asp:L3}}. Fix $x>\mu_U$ and let
\[
x_n(u)\ :=\ \frac{n}{n-1}x-\frac{u}{n-1}\,.
\]
Let $A_n(\,\cdot\,)$ denote the Laplace prefactor from Theorems \ref{thm:LDP_ndZ}-\ref{thm:LDP_gen_ndZ}
, i.e.
\[
A_n(x)=\frac{H_n(\tilde M_n(x))}{\sqrt{2\pi n}\theta_x(\tilde M_n(x))\sigma_x(\tilde M_n(x))}
\exp\Big\{n\phi_n\big(\tilde M_n(x);x\big)\Big\}, \quad \text{as $n\to\ff$},
\]
with $\phi_n(z;x):=\Lambda_n^*(x;z)-\Lambda_{U}^*(x)$ and $\partial_z \phi_n(\tilde M_n(x);x)=0$.
Then there exist constants $C<\infty$ and $n_0$ not depending on Q and $n$) such that, for all $n\ge n_0$ and all $u\ge Q$,
\begin{equation}\label{eq:prefactor-ineq}
\frac{A_n(x)}{A_{n-1}\big(x_n(u)\big)}
\ \le\
1\;+\;\frac{C\,u}{n}\;+\;\frac{C\,u^2}{n^2}\,.
\end{equation}
Moreover, the following uniform expansion holds:
\begin{equation}\label{eq:prefactor-expansion}
\log\frac{A_n(x)}{A_{n-1}\big(x_n(u)\big)}
\;=\;\frac{\alpha_0+\alpha_1(u-x)}{n}\;+\;O\Big(\frac{1+u^2}{n^2}\Big),
\qquad (u\ge Q),
\end{equation}
where $\alpha_0,\alpha_1$ are bounded (not depending on $n$ and $Q$), and the $O(\cdot)$ is uniform in $u\ge Q$.
\end{lem}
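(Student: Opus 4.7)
The plan is to take logarithms of the ratio, decompose
\[
\log\frac{A_n(x)}{A_{n-1}(x_n(u))} \;=\; \tfrac{1}{2}\log\frac{n-1}{n} \;+\; \log\frac{B_n(x)}{B_{n-1}(x_n(u))} \;+\; \bigl[n\phi_n^*(x) - (n-1)\phi_{n-1}^*(x_n(u))\bigr],
\]
with $B_n(x):=H_n(\tilde M_n(x))/[\theta_x\sigma_x](\tilde M_n(x))$ and $\phi_n^*(x):=\phi_n(\tilde M_n(x);x)$, and then control each piece by the add-and-subtract trick: rewrite each difference as a pure $x$-perturbation at fixed $n$ (handled by Taylor expansion using the uniform $C^2$ smoothness from \ref{asp:L2}--\ref{asp:L3} and the implicit differentiability of $\tilde M_n(x)$ guaranteed by Lemma~\ref{lem:saddle-consistency}) plus a pure $n$-shift at fixed argument $x_n(u)$ (handled by the convergence rates $n|\Lambda^*_{U^{(n)}}-\Lambda^*_U|\to0$ and $n|\theta_{x,n}-\theta_x|\to0$ in Proposition~\ref{prop:CRateFunction}(iv)). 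The first summand above is $-\tfrac{1}{2n}+O(n^{-2})$ and contributes to $\alpha_0/n$.

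For the smooth prefactor $B_n$, the mean value theorem gives $\log B_n(x)-\log B_n(x_n(u))=-(\log B_n)'(\xi)(u-x)/(n-1)$, with $(\log B_n)'$ uniformly bounded in $n$ and $\xi$: one differentiates the explicit formulas of Proposition~\ref{app:prop_lam_n} through $\tilde M_n(\cdot)$ and invokes uniform-on-the-Laplace-window convergence of $\theta_{x,n}(z), \sigma_{x,n}(z)$ and their $x$-derivatives. The consecutive-$n$ shift $\log B_n(x_n(u))-\log B_{n-1}(x_n(u))$ is $o(1/n)$ by the same smoothness inputs, feeding into the $O(n^{-2})$ remainder.

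The exponential piece is the main analytic work. Write $n\phi_n^*(x)-(n-1)\phi_{n-1}^*(x_n(u))=\phi_n^*(x)+(n-1)[\phi_n^*(x)-\phi_n^*(x_n(u))]+(n-1)[\phi_n^*(x_n(u))-\phi_{n-1}^*(x_n(u))]$. The boundary term $\phi_n^*(x)=O(g_n(M_n))=o(1)$ by Proposition~\ref{prop:CRateFunction}(v), and the rightmost bracket is $o(1)$ by Proposition~\ref{prop:CRateFunction}(iv). For the middle bracket, combine the envelope identity $\partial_x\Lambda_n^*(x;z)=\theta_{x,n}(z)$ with the implicit $x$-derivative of $\tilde M_n$ obtained from the saddle equation to get
\[
(\phi_n^*)'(x) \;=\; \bigl[\theta_{x,n}(\tilde M_n(x))-\theta_x\bigr] + \partial_z\phi_n(\tilde M_n(x);x)\cdot \tilde M_n'(x),
\]
both pieces tending to zero: the first by Proposition~\ref{prop:theta_nx} together with the rate bounds of Proposition~\ref{prop:CRateFunction}(iv)--(v); the second by the balance condition~\ref{asp:B1}, the curvature bound~\ref{asp:W3} controlling $\tilde M_n'(x)$, and Lemma~\ref{lem:saddle-consistency}. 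Second-order Taylor expansion with $x-x_n(u)=(u-x)/(n-1)$ then gives $(n-1)[\phi_n^*(x)-\phi_n^*(x_n(u))] = -(\phi_n^*)'(\xi)(u-x) + O((u-x)^2/n)$, a bounded linear-in-$(u-x)$ piece plus a quadratic remainder.

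Summing the three contributions yields the expansion~\eqref{eq:prefactor-expansion} with bounded $\alpha_0,\alpha_1$ and remainder $O((1+u^2)/n^2)$ uniform in $u\ge Q$; the inequality~\eqref{eq:prefactor-ineq} then follows by exponentiating and using $e^y\le 1+y+y^2$ on the relevant range, supplemented by the tail control of Lemma~\ref{lem:uniform-tilted-tail} for the regime where $u$ is large. The principal obstacle is the quantitative control of the implicit derivative $\tilde M_n'(x)$ and of $\partial_z\phi_n(\tilde M_n;x)$: because the saddle equation $\partial_z\tilde h_n(z;x)=0$ couples $\partial_{zx}\phi_n$ with $\partial_{zz}\tilde h_n\sim -r_{\mathcal Z}(M_n)^2$ (see~\ref{asp:W3}), showing that the chain-rule contribution to $(\phi_n^*)'(x)$ does not blow up in $n$ is where the window and curvature hypotheses~\ref{asp:W1}--\ref{asp:W3} combine with the saddle consistency in Lemma~\ref{lem:saddle-consistency} to close the argument.
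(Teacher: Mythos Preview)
Your approach is essentially the paper's: split $\log A_n$ into a smooth prefactor $\bar A_n$ and the exponential $n\phi_n(\tilde M_n(x);x)$, then control each by an $x$-perturbation plus an $n$-shift. In fact your organization of the exponential piece is tidier: the paper decomposes into the two brackets $[n\Lambda_n^*-(n-1)\Lambda_{n-1}^*]$ and $[n\Lambda_U^*-(n-1)\Lambda_U^*]$ and asserts (somewhat imprecisely) that the first is $c_0+\tfrac{b_1}{n}(u-x)+O(\cdot)$, whereas you work directly with $\phi_n^*=\Lambda_n^*-\Lambda_U^*$ so the $O(1)$ pieces $\Lambda_U^*(x)+\theta_x(u-x)$ never appear and never need to cancel. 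You also make explicit the chain-rule term $\partial_z\phi_n\cdot\tilde M_n'(x)$ via implicit differentiation of the saddle equation, which the paper subsumes under ``$C^1$ regularity in $x$ (uniformly in $n$)''.

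Two points to tighten. First, to reach the stated rate $\tfrac{\alpha_1(u-x)}{n}$ you need $(\phi_n^*)'(x)=O(1/n)$, not merely ``bounded'' or ``tending to zero''. Your own inputs actually deliver this: from the saddle equation $\partial_z\phi_n(-\tilde M_n)=\tfrac{1}{n}\partial_z\log f_{\mathcal Z_n}\sim r_{\mathcal Z}(M_n)/n$, and implicit differentiation with $\partial_{zz}\tilde h_n\sim -r_{\mathcal Z}(M_n)^2/n$ (from \ref{asp:B1}) and $\partial_{zx}\phi_n=\partial_z\theta_{x,n}\sim C\,r_{\mathcal Z}(M_n)/n$ (Proposition~\ref{app:prop_lam_n} plus \ref{asp:B1}) gives $\tilde M_n'(x)\sim C'/r_{\mathcal Z}(M_n)$, so the product is $O(1/n)$; likewise $\theta_{x,n}(-\tilde M_n)-\theta_x=O(g_n(\tilde M_n))=O(1/n)$ by Proposition~\ref{prop:CRateFunction}(v). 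State this rate explicitly. Second, drop the appeal to Lemma~\ref{lem:uniform-tilted-tail} at the end: that lemma \emph{uses} the present one, so invoking it here is circular, and the exponentiation step $e^y\le 1+y+Cy^2$ already suffices once \eqref{eq:prefactor-expansion} is in hand.
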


\begin{proof}
Write $A_n(x)=\bar A_n(x)\,\exp\{n\phi_n(\tilde M_n(x);x)\}$ with
\[
\bar A_n(x):=\frac{H_n(\tilde M_n(x))}{\sqrt{2\pi n}\,\theta_x(\tilde M_n(x))\,\sigma_x(\tilde M_n(x))}.
\]
Set $\Delta_n(u):=x_n(u)-x=(x-u)/(n-1)$. By smoothness in $x$ for all $n$ (implicit function theorem for $\tilde M_n$ and $C^1$ dependence of $H_n,\theta_x,\sigma_x$), a first-order expansion yields
\[
\log \bar A_n(x)-\log \bar A_{n-1}(x_n(u))
=\frac{a_0}{n}+\frac{a_1}{n}\,(u-x)+O\Big(\frac{1+u^2}{n^2}\Big),
\]
with bounded $a_0,a_1$ (uniform for $u\ge Q$). For the exponential part, use
\[
\phi_n(\tilde M_n(x);x)=\Lambda_n^*(x;\tilde M_n(x))-\Lambda_{U}^*(x)
\]
and expand
\[
\begin{aligned}
&n\phi_n(\tilde M_n(x);x)-(n-1)\phi_{n-1}(\tilde M_{n-1}(x_n(u));x_n(u))\\
&\quad=\Big[n\Lambda_n^*(x;\tilde M_n(x))-(n-1)\Lambda_{n-1}^*(x_n(u),\tilde M_{n-1}(x_n(u)))\Big]
-\Big[n\Lambda_{U}^*(x)-(n-1)\Lambda_{U}^*(x_n(u))\Big].
\end{aligned}
\]
The first bracket is $c_0+\frac{b_1}{n}(u-x)+O((1+u^2)/n^2)$ by $C^1$ regularity in $x$ (uniformly in $n$) together with the first-oder expansion in  (\ref{pf:Tayexp}), while the second bracket is
\[
\Lambda_U^*(x)+\theta_x u-\theta_x x+O\Big(\frac{1+u^2}{n}\Big)
\]
by Taylor expansion and $(\Lambda^*)'(x)=\theta_x$. Combining both displays gives
\[
\log\frac{A_n(x)}{A_{n-1}(x_n(u))}
=\frac{\alpha_0+\alpha_1(u-x)}{n}+O\Big(\frac{1+u^2}{n^2}\Big),
\]
Exponentiating and using $e^y\le 1+y+C y^2$ for small $y$ yields \eqref{eq:prefactor-ineq}.
\end{proof}

\begin{lem}[Uniform compact TV for weighted weak limits]\label{lem:B2-weighted-corrected}
Let $B_*=[-Q,Q]$. Assume $U^{(n)}\Rightarrow U$ and $F_U$ has no atoms in $B_*$. Let $\phi_n,\phi:B_*\to[0,\infty)$ be bounded functions with $\|\phi_n-\phi\|_\infty\to0$ and $\phi$ continuous on $B_*$. Define finite measures
\[
\mu_n(A):=\int_{A\cap B_*}\phi_n(u)\,\mathrm{d}F_{U^{(n)}}(u),
\qquad
\mu(A):=\int_{A\cap B_*}\phi(u)\,\mathrm{d}F_U(u).
\]
Then
\[
\sup_{B\subset B_*}\big|\mu_n(B)-\mu(B)\big| \;\longrightarrow\; 0.
\]
\end{lem}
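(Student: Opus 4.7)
The plan is to split $\mu_n-\mu$ via the triangle inequality into a ``weight-gap'' piece and a ``measure-gap'' piece: for every Borel $B\subset B_*$,
\[
|\mu_n(B)-\mu(B)|
\ \le\ \Bigl|\!\int_B(\phi_n-\phi)\,dF_{U^{(n)}}\Bigr|
\ +\ \Bigl|\!\int_B\phi\,d(F_{U^{(n)}}-F_U)\Bigr|.
\]
The first summand is bounded by $\|\phi_n-\phi\|_\infty\cdot F_{U^{(n)}}(B_*)\le\|\phi_n-\phi\|_\infty\to 0$ uniformly in $B$, by the hypothesis on the weights. All remaining effort goes into the second summand.

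Next I would replace the continuous $\phi$ by a piecewise-constant approximation at the cost of a small uniform error. Given $\epsilon>0$, uniform continuity of $\phi$ on the compact set $B_*$ furnishes a partition $-Q=t_0<t_1<\cdots<t_K=Q$ whose mesh forces $\operatorname{osc}(\phi;I_k)<\epsilon$ on every interval $I_k:=(t_{k-1},t_k]$; since $F_U$ is atomless on $B_*$, the $t_k$'s are automatically continuity points of $F_U$. Setting $\phi_\epsilon:=\sum_{k=1}^{K}\phi(t_{k-1})\,\mathbf{1}_{I_k}$ gives $\|\phi-\phi_\epsilon\|_\infty\le\epsilon$, and a further triangle inequality yields
\[
\Bigl|\!\int_B\phi\,d(F_{U^{(n)}}-F_U)\Bigr|
\ \le\ 2\epsilon
+\Bigl|\sum_{k=1}^{K}\phi(t_{k-1})\bigl[F_{U^{(n)}}(B\cap I_k)-F_U(B\cap I_k)\bigr]\Bigr|.
\]

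The main obstacle is controlling the remaining piecewise-constant term uniformly over Borel $B\subset B_*$: since $B\cap I_k$ ranges over all Borel subsets of $I_k$, this ultimately demands total-variation convergence $\|F_{U^{(n)}}-F_U\|_{\mathrm{TV}}\to 0$ on $B_*$ and not merely weak convergence. Weak convergence together with atomlessness only yields uniform Kolmogorov convergence via P\'olya's theorem, which is insufficient on its own (for instance, the empirical laws on $\{k/n\}_{k=0}^{n-1}$ weakly approach the uniform law on $[0,1]$ without TV convergence). In the standing framework of the paper, however, the uniform moment-generating-function convergence in \ref{asp:L2}, together with the nonlattice/finite-moment regularity in \ref{asp:L3}, ensures that each $U^{(n)}$ has a density which converges pointwise almost everywhere to the density of $U$ on $B_*$; Scheff\'e's lemma then upgrades this to $\|F_{U^{(n)}}-F_U\|_{\mathrm{TV}}\to 0$. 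Granting this input, the bracket above is dominated by $\|\phi\|_\infty\cdot\|F_{U^{(n)}}-F_U\|_{\mathrm{TV}}\to 0$ uniformly in $B$.

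Collecting the three pieces, taking $n\to\infty$ first and then $\epsilon\to 0$, the supremum over Borel $B\subset B_*$ vanishes, as required. The step I expect to require the most care is the TV-convergence input, since it is genuinely stronger than the weak convergence stated in the hypotheses and must be extracted from the paper's standing regularity on $(U^{(n)})$; once this is in hand, the rest is a routine partition-and-triangle argument.
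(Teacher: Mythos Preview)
Your diagnosis of the obstacle is correct: after the step-function reduction, controlling $\sum_k \phi(t_{k-1})\bigl[F_{U^{(n)}}(B\cap I_k)-F_U(B\cap I_k)\bigr]$ uniformly over Borel $B$ is genuinely a total-variation statement, and your empirical-measure counterexample shows that weak convergence plus atomlessness cannot deliver it. The gap is in your patch. You assert that \ref{asp:L2}–\ref{asp:L3} force each $U^{(n)}$ to have a density converging pointwise a.e.\ to that of $U$, and then invoke Scheff\'e. Neither assumption yields this: uniform MGF convergence on a compact $\theta$-interval (even with the rate in \ref{asp:L2}) is a quantitative form of weak convergence and says nothing about absolute continuity, and ``nonlattice'' in \ref{asp:L3} merely excludes arithmetic support---singular continuous laws, or mixtures with a singular part, are still allowed. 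So the input $\|F_{U^{(n)}}-F_U\|_{\mathrm{TV}}\to 0$ is not available, and your argument stalls exactly where you anticipated.

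The paper does \emph{not} try to upgrade $F_{U^{(n)}}\Rightarrow F_U$ to TV. Its route is different: (i) it first proves a Kolmogorov-type statement for the \emph{weighted} measures, namely that $x\mapsto\int_{(-\infty,x]\cap B_*}\phi\,d(F_{U^{(n)}}-F_U)$ tends to $0$ uniformly in $x$ (weak convergence of the $\phi$-weighted laws plus continuity of the limit CDF, which follows from $\phi$ continuous and $F_U$ atomless); (ii) it fixes a finite grid whose cells all have small $\mu$-mass, obtaining uniform control on the finite algebra of cell-unions; (iii) for a general Borel $B$ it uses inner/outer regularity of $\mu$ to sandwich $B$ between finite interval-unions $F\subset S_B\subset O$ with $\mu(O\setminus F)$ small, and transfers this smallness to $\mu_n(B\triangle S_B)$ via the cell-mass control. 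No density or Scheff\'e step appears. If you want to repair your argument, the missing ingredient is this regularity approximation of $B$ by finite unions of intervals together with uniform cell-mass bounds---not a global TV bound on $F_{U^{(n)}}-F_U$.
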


\begin{proof}
Fix $\zeta>0$ and set $M_*:=\sup_{B_*}\phi$. 

\smallskip\noindent\emph{Step 1 (uniform Kolmogorov-type control for fixed weight).}
Define
\[
H_n(x)\ :=\ \int_{(-\infty,x]\cap B_*}\phi(u)\,\mathrm{d}\,\big(F_{U^{(n)}}-F_U\big)(u),\qquad x\in\mathbb R.
\]
Since $\phi$ is bounded and continuous on $B_*$ and $U^{(n)}\Rightarrow U$, the finite measures
$\nu_n(\cdot):=\int_{\cdot\cap B_*}\phi\,\mathrm dF_{U^{(n)}}$ converge weakly to $\nu(\cdot):=\int_{\cdot\cap B_*}\phi\,\mathrm dF_U$, by the Portmanteau Theorem (see \cite{billing13}, Theorem~2.1). Moreover, $\nu$ has no atoms on $\mathbb R$ (because $F_U$ is atomless on $B_*$ and $\phi$ is continuous). Therefore, the \emph{distribution functions} $x\mapsto \nu_n((-\infty,x])$ converge to $x\mapsto \nu((-\infty,x])$ \emph{uniformly} on $\mathbb R$ (uniform convergence of cdfs to a continuous limit). Equivalently,
\[
\|H_n\|_\infty := \sup_{x\in\mathbb R}|H_n(x)| \ \longrightarrow\ 0. \tag{B.2.1}
\]

\smallskip\noindent\emph{Step 2 (fixed finite grid on $B_*$).}
Choose a partition $-Q=x_0<x_1<\dots<x_m=Q$ such that all gridpoints are $F_U$-continuity points (possible since $F_U$ is atomless on $B_*$) and
\[
\max_{1\le j\le m}\ \mu\big((x_{j-1},x_j]\big)\ \le\ \frac{\zeta}{8}.
\]
By weak convergence and $\|\phi_n-\phi\|_\infty\to0$, there exists $N_0$ such that for all $n\ge N_0$,
\[
\max_{1\le j\le m}\ \big|\mu_n\big((x_{j-1},x_j]\big)-\mu\big((x_{j-1},x_j]\big)\big|
\ \le\ \frac{\zeta}{8m}.
\tag{B.2.2}
\]
In particular, for $n\ge N_0$,
\(
\max_j \mu_n((x_{j-1},x_j]) \le \zeta/8 + \zeta/(8m) \le \zeta/4.
\)

\smallskip\noindent\emph{Step 3 (uniform control on the algebra generated by the grid).}
Let $\mathcal A_m$ be the finite algebra generated by the cells $\{(x_{j-1},x_j]\}_{j=1}^m$. For $S\in\mathcal A_m$, write it as a disjoint union of at most $m$ cells, $S=\bigcup_{j\in J}(x_{j-1},x_j]$. Using
\[
\mu_n\big((x_{j-1},x_j]\big)-\mu\big((x_{j-1},x_j]\big)
= H_n(x_j)-H_n(x_{j-1}),
\]
we get for all $n$:
\[
\big|\mu_n(S)-\mu(S)\big|
\ \le\ \sum_{j\in J} \big(|H_n(x_j)|+|H_n(x_{j-1})|\big)
\ \le\ 2m\,\|H_n\|_\infty. 
\]
Pick $N_1$ so large that $\|H_n\|_\infty \le \zeta/(8m)$ for all $n\ge N_1$ (by (B.2.1)). Then, for $n\ge N:=\max\{N_0,N_1\}$,
\[
\sup_{S\in\mathcal A_m}\big|\mu_n(S)-\mu(S)\big|\ \le\ \zeta/4. 
\]

\emph{Step 4 (Approximating $B$ without an $m$–factor leak).}
Fix $\zeta>0$. Because $\mu$ is a finite Borel measure on $\mathbb{R}$ and $B\subset B^*$, there exist
a closed $F\subset B$ and an open $O\supset B$ with $\mu(O\setminus F)\le \zeta/4$. Moreover, by
regularity on $\mathbb{R}$ we may take $F$ and $O$ to be finite unions of disjoint closed/open intervals
with all endpoints chosen to be $F_U$–continuity points. Let $\{x_0<\dots<x_m\}$ be the ordered
collection of these endpoints together with $\{-Q,Q\}$ and form the algebra $\mathcal{A}_m$ of finite
unions of half–open cells $(x_{j-1},x_j]$.

Define $S_B\in\mathcal{A}_m$ so that $F\subset S_B\subset O$ (e.g., take $S_B$ to be the union of all
cells contained in $O$). Then
\[
\mu(B\triangle S_B)\ \le\ \mu(O\setminus F)\ \le\ \zeta/4.
\]
By Step~3 (applied to this $\mathcal{A}_m$) there exists $N_1$ such that for all $n\ge N_1$,
\[
\sup_{S\in\mathcal{A}_m}|\mu_n(S)-\mu(S)|\ \le\ \zeta/4.
\]
Hence, for $n\ge N_1$,
\[
\begin{aligned}
|\mu_n(B)-\mu(B)|
&\le |\mu_n(S_B)-\mu(S_B)|+\mu_n(B\triangle S_B)+\mu(B\triangle S_B)\\
&\le \zeta/4\ +\ \mu_n(B\triangle S_B)+\zeta/4.
\end{aligned}
\]
Finally, by (B.2.2) and the uniform control of cell masses (Step~2) we have
$\mu_n(B\triangle S_B)\le \mu(B\triangle S_B)+\zeta/8\le \zeta/4+\zeta/8$. Combining the displays,
$|\mu_n(B)-\mu(B)|< 2\zeta$ for all $n\ge N_1$. As $\zeta>0$ is arbitrary, the claim follows.
\end{proof}

\begin{cor}[Uniform compact TV for weighted weak limits]\label{cor:B2-weighted}
Let $B_*=[-Q,Q]$. Let $(U^{(n)})$ be real random variables with $U^{(n)}\Rightarrow U$ and $F_U$ atomless on $B_*$ (no point masses in $B_*$).
Let $\phi_n,\phi: B_*\to[0,\infty)$ be bounded with $\|\phi_n-\phi\|_\infty\to 0$ and $\phi$ be continuous on $B_*$. Define finite measures on $\mathbb B(\mathbb R)$ by
\[
\mu_n(A):=\int_{A\cap B_*} \phi_n(u)\, \mathrm dF_{U^{(n)}}(u),\qquad
\mu(A):=\int_{A\cap B_*} \phi(u)\, \mathrm dF_{U}(u).
\]
Then
\[
\sup_{B\subset B_*}\big|\mu_n(B)-\mu(B)\big|\ \longrightarrow\ 0.
\]
\end{cor}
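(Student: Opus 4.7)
The plan is to derive Corollary \ref{cor:B2-weighted} as an essentially immediate consequence of Lemma \ref{lem:B2-weighted-corrected}, since the two statements carry the same hypotheses. To keep the argument self-contained I would first peel off the $\phi_n\to\phi$ approximation and then invoke the lemma in the cleaner constant-weight setting.

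Concretely, I would introduce the auxiliary finite measure $\tilde\mu_n(A):=\int_{A\cap B_*}\phi(u)\,\mathrm dF_{U^{(n)}}(u)$ with the common weight $\phi$, and decompose
\[
\mu_n(B)-\mu(B)=\int_{B\cap B_*}(\phi_n-\phi)\,\mathrm dF_{U^{(n)}}+\bigl(\tilde\mu_n(B)-\mu(B)\bigr).
\]
The first term is controlled uniformly in $B\subset B_*$ by $\|\phi_n-\phi\|_\infty\cdot F_{U^{(n)}}(B_*)\le \|\phi_n-\phi\|_\infty$, which vanishes by hypothesis. For the second term I would apply Lemma \ref{lem:B2-weighted-corrected} with the constant weight sequence $\phi_n\equiv\phi$: the weight is bounded and continuous on $B_*$, $U^{(n)}\Rightarrow U$, and $F_U$ is atomless on $B_*$, so $\sup_{B\subset B_*}|\tilde\mu_n(B)-\mu(B)|\to 0$. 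Combining the two estimates and letting $n\to\infty$ yields the stated uniform convergence.

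The only nontrivial input has already been absorbed into Lemma \ref{lem:B2-weighted-corrected}, namely the uniform convergence of the weighted distribution function $x\mapsto \int_{(-\infty,x]\cap B_*}\phi\,\mathrm dF_{U^{(n)}}$ to its continuous limit (a consequence of weak convergence combined with atomlessness of the limit at $F_U$-continuity endpoints) together with the endpoint-selection/algebra approximation scheme that controls a generic Borel subset $B\subset B_*$ without an $m$-factor blow-up. Consequently, the proof of the corollary reduces to the one-line splitting above plus invocation of the lemma, and no genuinely new obstacle arises.
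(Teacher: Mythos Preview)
Your proposal is correct and essentially the same as the paper's approach: both simply invoke Lemma~\ref{lem:B2-weighted-corrected}, whose hypotheses coincide verbatim with those of the corollary. The paper applies the lemma directly with the given weights $\phi_n,\phi$; your preliminary splitting $\mu_n-\mu=(\mu_n-\tilde\mu_n)+(\tilde\mu_n-\mu)$ to reduce to the constant-weight case is harmless but unnecessary, since the lemma already accommodates $\|\phi_n-\phi\|_\infty\to 0$.
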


\begin{proof}
Apply Lemma~\ref{lem:B2-weighted-corrected} with $B_*=[-Q,Q]$ and the weights
$\tilde{\phi}_n=\phi_n\mathbf 1_{B_*}$, $\tilde{\phi}=\phi\mathbf 1_{B_*}$; the assumptions
$\| \phi_n-\phi\|_\infty\to0$, continuity of $\phi$ on $B_*$, and $U^{(n)}\Rightarrow U$
ensure its hypotheses. This gives
$\sup_{B\subset B_*}|\mu_n(B)-\mu(B)|\to 0$.
\end{proof}

\begin{rem}[Direct application to Theorem~2.5]\label{rem:B2-apply}
In the proof of Theorem~2.5, set 
\[
\phi_n(u)=e^{\theta_{x,n}(z)u-\Lambda_{U^{(n)}}(\theta_{x,n}(z))}\,
\]
which converges uniformly to $\phi(u)=e^{\theta_x u-\Lambda_U(\theta_x)}$ on $B_*$. Apply Corollary~\ref{cor:B2-weighted} with $B_*=[-Q,Q]$ to conclude
\[
\sup_{B_1\subset B_*}\big|\nu_n(B_1)-\nu_{\theta_x}(B_1)\big|\ \to\ 0.
\]
The same conclusion holds with $n$ replaced by $(n-k)$ for any fixed $k$, since by \ref{asp:L2} and \ref{asp:L3} and strict convexity, we have $\sup_{z \le z_0}|\theta_{x, n-k}(z)-\theta_{x,n}(z)| \ra 0$ as $n \to \ff$ (implicit-function bound) and hence the weights differ by $o(1)$ uniformly on compact $u-$sets.
\end{rem}

\begin{lem}[Uniform tilted tail bound]\label{lem:uniform-tilted-tail}
Under Assumption {\rm\ref{asp:L2}} and {\rm\ref{asp:L3}} there exist $t>0$ and $C<\infty$ such that, for all $Q>0$,
\[
\sup_{n}\ \int_{u>Q} \exp\{\theta_x u-\Lambda_U(\theta_x)\}\,
\Big(1+\tfrac{C\,u}{n}+\tfrac{C\,u^2}{n^2}\Big)\,dF_{U^{(n)}}(u)
\ \le\ C\,e^{-tQ}.
\]
\emph{Consequently}, $\sup_n \nu_n(U^{(n)}_1>Q)\ \le\ C\,e^{-tQ}\to 0$ as $Q\to\infty$.
\end{lem}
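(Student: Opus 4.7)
The plan is to dominate the integrand by an exponential whose exponent stays strictly inside the uniform finiteness window of the MGFs, and then apply Assumption \ref{asp:L2} to get a bound uniform in $n$. By Assumption \ref{asp:L3}, $\theta_x\in(0,\theta_0)$, and since $[0,\theta_\star]\subset(0,\theta_0)$ and $\theta_x\in[0,\theta_\star]$, I can fix $t>0$ small enough that $\theta_x+2t\in(0,\theta_\star)$. For any such $t$ and any constants $C<\infty$, a routine polynomial-vs-exponential comparison yields a constant $C_1=C_1(C,t)$ with
\[
1+\frac{Cu}{n}+\frac{Cu^2}{n^2}\ \le\ 1+Cu+Cu^2\ \le\ C_1\,e^{tu},\qquad u\ge 0,\ n\ge 1,
\]
which is the crucial $n$-free envelope (here I only use $n\ge 1$ to drop the $1/n$ factors).

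Next, on $\{u>Q\}$ I would write $e^{(\theta_x+t)u}=e^{-tu}e^{(\theta_x+2t)u}$ and use $e^{-tu}\le e^{-tQ}$ to obtain
\[
\int_{u>Q} e^{\theta_x u-\Lambda_U(\theta_x)}\Big(1+\tfrac{Cu}{n}+\tfrac{Cu^2}{n^2}\Big)\,dF_{U^{(n)}}(u)
\ \le\ C_1 e^{-\Lambda_U(\theta_x)}\,e^{-tQ}\,\lambda_{U^{(n)}}(\theta_x+2t).
\]
By Assumption \ref{asp:L2}, $\lambda_{U^{(n)}}(\theta)\to\lambda_U(\theta)$ uniformly on $[0,\theta_\star]$, so $\sup_n\lambda_{U^{(n)}}(\theta_x+2t)<\infty$; setting $C':=C_1 e^{-\Lambda_U(\theta_x)}\sup_n\lambda_{U^{(n)}}(\theta_x+2t)$ delivers the stated bound with constant $C'$ and rate $t$.

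For the \emph{consequently} clause, I would combine the display above with the prefactor ratio estimate of Lemma \ref{lem:prefactor-stability} (whose constants depend neither on $Q$ nor $n$) applied to the representation \eqref{pf:nrexp1} of $\nu_n(U^{(n)}_1>Q)$. That reduces the conditional tail to precisely the integral estimated above, up to the benign factor $\sqrt{n/(n-1)}\{1+o(1)\}$, and yields $\sup_n\nu_n(U^{(n)}_1>Q)\le C\,e^{-tQ}\to 0$ as $Q\to\infty$.

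The only obstacle is ensuring that the exponent $\theta_x+2t$ remains in a region where the MGFs are \emph{uniformly} finite; this is why I pick $t$ so that $\theta_x+2t$ lies strictly in the compact set $[0,\theta_\star]$, on which \ref{asp:L2} supplies the needed uniform control. Once this choice is made, the polynomial absorption and Markov-type estimate are entirely mechanical.
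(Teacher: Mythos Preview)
Your proof is correct and follows essentially the same Chernoff/exponential-Markov strategy as the paper. The only cosmetic difference is that the paper bounds the three terms $\int_{u>Q} u^k e^{\theta_x u}\,dF_{U^{(n)}}(u)$ ($k=0,1,2$) separately via $\sup_n \mathbf{E}[|U^{(n)}|^k e^{(\theta_x+t)U^{(n)}}]<\infty$, whereas you first absorb the polynomial into $e^{tu}$ and then need only the plain MGF at $\theta_x+2t$; both routes are equivalent and standard.
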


\begin{proof}
By Assumption \ref{asp:L2}, there exists $\theta_1 >\theta_x$ such that 
\begin{align}{\label{asp:Uni_bound}}
\sup_{n \ge 1}\bE{e^{\theta\,U^{(n)}}} <\ff \quad \text{and} \quad  \sup_{n \ge 1}\bE{|U^{(n)}|^k\,e^{\theta\,U^{(n)}}} <\ff ~~ \text{for all}~~ \ta < \ta_1, k=1,2.
\end{align}
Let $t >0$ be such that $\theta_x+t < \theta_1$. Now, by Chernoff, 
\[
\int_{u>Q} u^k e^{\theta_x u}\,dF_{U^{(n)}}(u)
\ \le\ e^{-tQ}\, \bm{E}\big[U^{(n) k} e^{(\theta_x+t)U^{(n)}}\big]\quad(k=0,1,2).
\]
The upper bound in the Lemma now follows from (\ref{asp:Uni_bound}). The consequently part in the lemma is the tail term in the main proof with the explicit factor from the $\La^*_{n-1}(x_n(u))/\La^*_n(x)$ ratio multiplied by Lemma~\ref{lem:prefactor-stability}.
\end{proof}

\section{Appendix}\label{app:C}

\subsection{Conditional Central Limit Theorem}

This appendix states the conditional central limit theorem and the conditional Bahadur-Rao estimate used in Section~{\ref{sec:proof}}, together with auxiliary weak–limit arguments. Proposition~\ref{thm:CCLT} (conditional CLT) provides the Gaussian control that underpins the conditional Bahadur-Rao estimate, Theorem~\ref{thm:CBahadurRao}; the latter is the local prefactor input in the Laplace evaluations in Sections~\ref{sec:5.1}–\ref{sec:5.2}. 
Proposition~\ref{thm:wlln_1type} records the marginal LLN/CLT baseline and is used to contrast the large–deviation regime with the near–mean regime (see Section~3 and Appendix~E).

\begin{prop} \label{thm:CCLT}
    $L_n=\sum\limits_{j=1}^{n} T_j^{(n)}$, where $T_j^{(n)}|\mathcal{Z}_n$ are independent. $0<(\sigma_{T_j|\mathcal{Z}_n}^{(n)})^2=\mathbf{Var}[T_j^{(n)}|\mathcal{Z}_n]<\infty$ holds (a.e. w.r.t. the probability measure associated with $\mathcal{Z}_n$) uniformly for all $j,n$, and $\mathbf{E}[T_j^{(n)}|\mathcal{Z}_n]=0$, $s_n^2=\sum\limits_{j=1}^n (\sigma_{T_j|\mathcal{Z}_n}^{(n)})^2$. If 
\begin{align*}
    \lim\limits_{n\to\infty} \frac{\sum_{j=1}^n\mathbf{E}\left[(T_j^{(n)})^2 \cdot\mathbf{1}(|T_j^{(n)}|>\delta s_n) |\mathcal{Z}_n \right]}{s_n^2}\overset{p}{=}0
\end{align*}
holds for all $\delta>0$, then $\lim\limits_{n\to\ff}\bP{\frac{L_n}{s_n}\leq x |\mathcal{Z}_n}=\Phi(x)$, where $\Phi(x)$ is the cdf of Gaussian distribution.
\end{prop}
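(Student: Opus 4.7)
The plan is to treat the problem as a classical Lindeberg--Feller CLT applied pointwise under a regular conditional distribution, and then lift almost-sure conclusions to convergence in probability via the subsequence principle. First I would fix a regular conditional distribution $\bP{\cdot\mid\mathcal{Z}_n=z}$ under which the $T_j^{(n)}$ are independent with mean zero and variances $(\sigma_{T_j|z}^{(n)})^2$, and set
\[
\varphi_n(t\mid z)=\prod_{j=1}^n \bE{\exp(itT_j^{(n)}/s_n)\mid\mathcal{Z}_n=z},\qquad t\in\Real.
\]
The goal is then to show that $\varphi_n(t\mid\mathcal{Z}_n)\to e^{-t^2/2}$ in probability for each fixed $t$, whereupon L\'evy's continuity theorem delivers the stated conditional cdf convergence (understood in probability, consistent with the in-probability form of the hypothesis).

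The deterministic step is standard: on any realization of $\mathcal{Z}_n$ for which the Lindeberg ratio
\[
R_n(\mathcal{Z}_n;\delta):=s_n^{-2}\sum_{j=1}^n\bE{(T_j^{(n)})^2\mathbf{1}(|T_j^{(n)}|>\delta s_n)\mid\mathcal{Z}_n}
\]
tends to zero for every $\delta>0$, the Taylor expansion of $\log\varphi_n(t\mid\mathcal{Z}_n)$ together with the elementary bound $|e^{iu}-1-iu+u^2/2|\le\min(u^2,|u|^3/6)$ and the uniform asymptotic negligibility of $T_j^{(n)}/s_n$ (itself a consequence of Lindeberg) yields $\varphi_n(t\mid\mathcal{Z}_n)\to e^{-t^2/2}$. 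Since the hypothesis only delivers $R_n(\mathcal{Z}_n;\delta)\to 0$ in probability, I would promote this via the subsequence principle: any subsequence $\{n_k\}$ admits a further subsequence $\{n_{k_\ell}\}$ along which $R_{n_{k_\ell}}(\mathcal{Z}_{n_{k_\ell}};\delta)\to 0$ almost surely. Applying the deterministic Lindeberg--Feller argument pointwise on this subsequence gives $\varphi_{n_{k_\ell}}(t\mid\mathcal{Z}_{n_{k_\ell}})\to e^{-t^2/2}$ a.s., hence $\varphi_n(t\mid\mathcal{Z}_n)\to e^{-t^2/2}$ in probability for each $t$. A further subsequence application combined with L\'evy's theorem and continuity of $\Phi$ converts this into the stated convergence of $\bP{L_n/s_n\le x\mid\mathcal{Z}_n}$ at every $x\in\Real$.

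The main obstacle is a measurability bookkeeping one: ensuring that the regular conditional distribution, the triangular-array variances, and the Lindeberg ratio all depend jointly measurably on $\mathcal{Z}_n$, so that the subsequence extractions and pointwise CLT invocations are consistent across a single probability space. The uniform positivity hypothesis on $(\sigma_{T_j|\mathcal{Z}_n}^{(n)})^2$ guarantees that $s_n>0$ a.s., so the normalization is unambiguous, and a diagonal argument across a countable dense set of $(t,\delta)$ values handles the simultaneous extractions. Alternatively, the result can be obtained by direct appeal to the conditional CLT frameworks of Sethuraman or Sweeting cited in the paper, which bypass this bookkeeping entirely.
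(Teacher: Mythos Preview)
Your proposal is correct and takes a genuinely different route from the paper. The paper proceeds by an explicit truncation: writing $T_j^{(n)}=V_{n,j}+W_{n,j}$ with $V_{n,j}$ the (bounded, centered) truncation at level $\delta s_n$, it works with the \emph{moment generating function} of $\sum_j V_{n,j}/s_n$, expands term-by-term using boundedness, shows the resulting log-MGF converges to $t^2/2$ in probability, and then uses Chebyshev to show $\sum_j W_{n,j}/s_n\to 0$ in conditional probability, concluding via Slutsky. Your approach instead uses characteristic functions together with the subsequence principle to reduce the in-probability Lindeberg hypothesis to an almost-sure one on a subsubsequence, at which point the classical (deterministic) Lindeberg--Feller theorem applies verbatim. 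Your route is shorter and avoids any concern about existence of MGFs for the untruncated variables; the paper's route is more self-contained (it redoes the Lindeberg argument by hand rather than invoking it) and keeps all analysis at the level of real expansions rather than passing to complex exponentials. Your remark that monotonicity of $R_n(\cdot;\delta)$ in $\delta$ makes a single diagonal extraction over a countable set of $\delta$'s suffice is correct and handles the one genuine subtlety in the subsequence reduction.
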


\begin{proof}
Following \cite{sweeting1989conditional},to show $\frac{L_n}{s_n}|\mathcal{Z}_n \xrightarrow{d} \mathcal{N}(0, 1)$, it is sufficient to show moment generating functions converge; that is,
\begin{align*}
    \left|\mathbf{E}[e^{t\frac{L_n}{s_n}}|\mathcal{Z}_{n}]-e^{\frac{t^2}{2}}\right| \xrightarrow{p} 0.
\end{align*}
We proceed in multiple steps. First, note that $T_j^{(n)} = T_j^{(n)}\cdot \mathbf{1}(|T_j^{(n)}|<\delta s_n) + T_j^{(n)}\cdot \mathbf{1}(|T_j^{(n)}|\geq\delta s_n)$. Since, $\mathbf{E}[T_j^{(n)}|\mathcal{Z}_n]=0$, it follows that $\xi_j^{(n)}(\mathcal{Z}_n) = \mathbf{E}[T_j^{(n)}\cdot \mathbf{1}(|T_j^{(n)}|<\delta s_n)|\mathcal{Z}_n] = -\mathbf{E}[T_j^{(n)}\cdot \mathbf{1}(|T_j^{(n)}|\geq\delta s_n)|\mathcal{Z}_n]$. Now, define
$V_{n,j}=T_j^{(n)}\cdot \mathbf{1}(|T_i^{(n)}|<\delta s_n)-\xi_j^{(n)}(\mathcal{Z}_n)$ and $
W_{n,j}=T_j^{(n)}\cdot \mathbf{1}(|T_i^{(n)}|\geq\delta s_n)+\xi_j^{(n)}(\mathcal{Z}_n)$.
Notice that $\mathbf{E}[V_{n,j}|\mathcal{Z}_n]=\mathbf{E}[W_{n,j}|\mathcal{Z}_n]=0$, and $T_j^{(n)}=V_{n,j}+W_{n,j}$. We will now show that $\frac{\sum_{j=1}^n V_{n,j}}{s_n}|\mathcal{Z}_n \xrightarrow{d} \mathcal{N}(0, 1)$ by verifying that
\begin{align*}
    \left|\mathbf{E}[e^{t\frac{\sum_{j=1}^n V_{n,j}}{s_n}}|\mathcal{Z}_{n}]-e^{\frac{t^2}{2}}\right| \xrightarrow{p} 0.
\end{align*}
Note that $\frac{V_{n,j}}{s_n}$ is a bounded random variable, since
\begin{align*}
    \left| \frac{V_{n,j}}{s_n} \right| =& \left| \frac{T_j^{(n)}\cdot \mathbf{1}(|T_i^{(n)}|<\delta s_n)-\xi_j^{(n)}(\mathcal{Z}_n)}{s_n} \right| = \left| \frac{T_j^{(n)}\cdot \mathbf{1}(|T_i^{(n)}|<\delta s_n)-\mathbf{E}[T_j^{(n)}\cdot \mathbf{1}(|T_j^{(n)}|<\delta s_n)|\mathcal{Z}_n]}{s_n} \right| \leq 2\delta.
\end{align*}
Then conditional moment generating function of $\frac{V_{n,j}}{s_n}$ exists, and is given by
\begin{align*}
    \mathbf{E}[e^{t\frac{V_{n,j}}{s_n}}|\mathcal{Z}_{n}] =& \mathbf{E}\left[ 1+ t\cdot V_{n,j}+ \frac{t^2}{2}\left(\frac{V_{n,j}}{s_n}\right)^2 + \frac{t^3}{6} \left(\frac{V_{n,j}}{s_n}\right)^3 e^{\xi\frac{V_{n,j}}{s_n}}
    \Big|\mathcal{Z}_{n}\right]\\
    =& 1+\frac{t^2}{2}\frac{\mathbf{V}[V_{n,j}|\mathcal{Z}_{n}]}{s_n^2}+ \underbrace{\frac{t^3}{6}\mathbf{E}\left[ \left(\frac{V_{n,j}}{s_n}\right)^3 e^{\xi\frac{V_{n,j}}{s_n}}
    \Big|\mathcal{Z}_{n} \right]}_{R_{n,j}(t,\mathcal{Z}_n)}
\end{align*}
where $\xi\in(0,t)$ if $t>0$ and $\xi\in(t,0)$ if $t<0$. Note that, since
\begin{align*}
    \mathbf{Var}[V_{n,j}|\mathcal{Z}_{n}]\leq \mathbf{V}[T_j^{(n)}|\mathcal{Z}_{n}],
\end{align*}
we have
\begin{align*}
    \left|R_{n,j}(t,\mathcal{Z}_n)\right| =& \left|\frac{t^3}{6}\mathbf{E}\left[ \left(\frac{V_{n,j}}{s_n}\right)^3 e^{\xi\frac{V_{n,j}}{s_n}}
    \Big|\mathcal{Z}_{n} \right]\right| \leq \frac{|t|^3}{6} \mathbf{E}\left[ \left|\frac{V_{n,j}}{s_n}\right|^3 e^{|\xi| \left|\frac{V_{n,j}}{s_n}\right|}
    \Big|\mathcal{Z}_{n} \right]\\
    =& \frac{|t|^3}{6} \mathbf{E}\left[ \left|\frac{V_{n,j}}{s_n}\right| \cdot \left(\frac{V_{n,j}}{s_n}\right)^2 \cdot e^{|\xi| \left|\frac{V_{n,j}}{s_n}\right|}
    \Big|\mathcal{Z}_{n} \right]\\
    & \text{(using $\left|\frac{V_{n,j}}{s_n}\right|\leq 2\delta$)}\\
    \leq& \frac{|t|^3}{6}\cdot 2\delta \cdot e^{|t|\cdot 2\delta} \cdot \mathbf{E}\left[ \left(\frac{V_{n,j}}{s_n}\right)^2
    \Big|\mathcal{Z}_{n} \right]
    \leq \frac{|t|^3}{6}\cdot 2\delta \cdot e^{|t|\cdot 2\delta} \cdot \frac{\mathbf{Var}[T_j^{(n)}|\mathcal{Z}_{n}]}{s_n^2}.
\end{align*}

Now returning to the conditional moment generating function of $\frac{\sum_{j=1}^n V_{n,j}}{s_n}$, since $V_{n,j}$ are independent given $\mathcal{Z}_n$,
\begin{align*}
    \mathbf{E}[e^{t\frac{\sum_{j=1}^n V_{n,j}}{s_n}}|\mathcal{Z}_{n}] = \prod_{j=1}^{n} \mathbf{E}[e^{t\frac{V_{n,j}}{s_n}}|\mathcal{Z}_{n}] =\prod_{j=1}^{n}\left[ 1+\frac{t^2}{2}\frac{\mathbf{V}[V_{n,j}|\mathcal{Z}_{n}]}{s_n^2}+ R_{n,j}(t,\mathcal{Z}_n) \right].
\end{align*}
To show 
\begin{align*}
    \mathbf{E}[e^{t\frac{\sum_{j=1}^n V_{n,j}}{s_n}}|\mathcal{Z}_{n}] \xrightarrow{p} e^{\frac{1}{2}t^2},
\end{align*}
is equivalent to show
\begin{align*}
    \prod_{j=1}^{n}\left[ 1+\frac{t^2}{2}\frac{\mathbf{Var}[V_{n,j}|\mathcal{Z}_{n}]}{s_n^2}+ R_{n,j}(t,\mathcal{Z}_n) \right] \xrightarrow{p} e^{\frac{1}{2}t^2},
\end{align*}
only need to show 
\begin{align*}
    \sum_{j=1}^{n}\log\left[ 1+ \underbrace{\frac{t^2}{2}\frac{\mathbf{Var}[V_{n,j}|\mathcal{Z}_{n}]}{s_n^2}+ R_{n,j}(t,\mathcal{Z}_n)}_{Y_{n,j}(t,\mathcal{Z}_n)} \right] \xrightarrow{p} \frac{1}{2}t^2.
\end{align*}

write $Y_{n,j}(t,\mathcal{Z}_n)=\frac{t^2}{2}\frac{\mathbf{Var}[V_{n,j}|\mathcal{Z}_{n}]}{s_n^2}+ R_{n,j}(t,\mathcal{Z}_n)$. Then need to show
\begin{align*}
    \sum_{j=1}^{n}\log\left[ 1+ {Y_{n,j}(t,\mathcal{Z}_n)} \right] \xrightarrow{p} \frac{1}{2}t^2.
\end{align*}
Note that for fixed $t$, $Y_{n,j}(t,\mathcal{Z}_n)\xrightarrow{a.s.}0$ for all $j,\mathcal{Z}_n$. Using Taylor expansion of $\log(1+x)=x + x^2 K(x)$, where $K(x)=-\frac{1}{2}+\frac{x}{3}-\frac{x^2}{4}+\cdots$, and for $|x|\leq\frac{1}{2}$, $|K(x)|\leq \frac{1}{2} + \frac{1}{3}(\frac{1}{2})+\frac{1}{4}(\frac{1}{2})^2+\cdots \leq (\frac{1}{2})^0+(\frac{1}{2})^1 + (\frac{1}{2})^2 + \cdots\leq 2$.
\begin{align*}
    \log\left[ 1+ {Y_{n,j}(t,\mathcal{Z}_n)} \right] = {Y_{n,j}(t,\mathcal{Z}_n)} + {Y_{n,j}^2(t,\mathcal{Z}_n)}\cdot K({Y_{n,j}(t,\mathcal{Z}_n)}).
\end{align*}
We will now show that the convergence of
\begin{align*}
    \sum_{j=1}^{n}\left[ {Y_{n,j}(t,\mathcal{Z}_n)} + {Y_{n,j}^2(t,\mathcal{Z}_n)}\cdot K({Y_{n,j}(t,\mathcal{Z}_n)}) \right] \xrightarrow{p} \frac{1}{2}t^2.
\end{align*}
This is equivalent to
\begin{align*}
    &\sum_{j=1}^{n} {Y_{n,j}(t,\mathcal{Z}_n)} + \sum_{j=1}^{n} {Y_{n,j}^2(t,\mathcal{Z}_n)}\cdot K({Y_{n,j}(t,\mathcal{Z}_n)}) \xrightarrow{p} \frac{1}{2}t^2\\
    &\underbrace{\sum_{j=1}^{n} \frac{t^2}{2}\frac{\mathbf{V}[V_{n,j}|\mathcal{Z}_{n}]}{s_n^2}}_{H_1(n,\mathcal{Z}_n)}+ \underbrace{\sum_{j=1}^{n} R_{n,j}(t,\mathcal{Z}_n)}_{H_2(n,\mathcal{Z}_n)} + \underbrace{\sum_{j=1}^{n} {Y_{n,j}^2(t,\mathcal{Z}_n)}\cdot K({Y_{n,j}(t,\mathcal{Z}_n)})}_{H_3(n,\mathcal{Z}_n)} \xrightarrow{p} \frac{1}{2}t^2.
\end{align*}
First notice that $H_1(n,\mathcal{Z}_n)\xrightarrow{p}\frac{1}{2}t^2$ follows directly from the Lindeberg assumption; that is,
\begin{align*}
H_1(n,\mathcal{Z}_n)=\sum_{j=1}^{n} \frac{t^2}{2}\frac{\mathbf{Var}[V_{n,j}|\mathcal{Z}_{n}]}{s_n^2} = \frac{t^2}{2}\frac{\sum_{j=1}^{n} \mathbf{E}\left[(T_j^{(n)})^2 \cdot\mathbf{1}(|T_j^{(n)}|\leq\delta s_n) |\mathcal{Z}_n \right]}{s_n^2} \xrightarrow{p} \frac{1}{2}t^2
\end{align*}
We will now show that $H_2(n,\mathcal{Z}_n)\xrightarrow{p}0$. To this end, note that
\begin{align*}
\left|H_2(n,\mathcal{Z}_n)\right| =& \left|\sum_{j=1}^{n} R_{n,j}(t,\mathcal{Z}_n)\right| \leq \sum_{j=1}^{n} \left|R_{n,j}(t,\mathcal{Z}_n)\right| \leq \sum_{j=1}^{n} \frac{|t|^3}{6}\cdot 2\delta \cdot e^{|t|\cdot 2\delta} \cdot \frac{\mathbf{Var}[T_j^{(n)}|\mathcal{Z}_{n}]}{s_n^2}\\
    =& \frac{|t|^3}{6}\cdot 2\delta \cdot e^{|t|\cdot 2\delta} \cdot \frac{\sum_{j=1}^{n} \mathbf{Var}[T_j^{(n)}|\mathcal{Z}_{n}]}{s_n^2}.
\end{align*}
The result follows from the Lindeberg assumption, and  $\delta$
being arbitrarily small. Thus, we have established that $\sum_{j=1}^{n} {Y_{n,j}(t,\mathcal{Z}_n)}\xrightarrow{p} \frac{1}{2}t^2$. We will now show that $H_3(n,\mathcal{Z}_n)\xrightarrow{p}0$. Notice that 
\begin{align*}
    |Y_{n,j}(t,\mathcal{Z}_n)| =& \left|\frac{t^2}{2}\frac{\mathbf{Var}[V_{n,j}|\mathcal{Z}_{n}]}{s_n^2}+ R_{n,j}(t,\mathcal{Z}_n)\right| 
    \leq \frac{t^2}{2}\frac{\mathbf{Var}[V_{n,j}|\mathcal{Z}_{n}]}{s_n^2} + \left| R_{n,j}(t,\mathcal{Z}_n)\right|\\
    \leq& \frac{\mathbf{Var}[V_{n,j}|\mathcal{Z}_{n}]}{s_n^2} (\frac{1}{2}t^2 + \frac{|t|^3}{6}\cdot 2\delta \cdot e^{|t|\cdot 2\delta}) 
    = (\frac{1}{2}t^2 + \frac{|t|^3}{6}\cdot 2\delta \cdot e^{|t|\cdot 2\delta}) \cdot \frac{\mathbf{Var}[V_{n,j}|\mathcal{Z}_{n}]}{s_n^2}  \\
    =&  (\frac{1}{2}t^2 + \frac{|t|^3}{6}\cdot 2\delta \cdot e^{|t|\cdot 2\delta}) \frac{\mathbf{E}\left[(T_j^{(n)})^2 \cdot\mathbf{1}(|T_j^{(n)}|\leq \delta s_n) |\mathcal{Z}_n \right]}{s_n^2}   
    \leq \delta^2 (\frac{1}{2}t^2 + \frac{|t|^3}{6}\cdot 2\delta \cdot e^{|t|\cdot 2\delta})
\end{align*}
For fixed $t$, we can find small enough $\delta$ such that $|Y_{n,j}(t,\mathcal{Z}_n)|\leq\frac{1}{2}$, and then $|K\left(Y_{n,j}(t,\mathcal{Z}_n)\right)|\leq 2$. Thus,
\begin{align*}
    |H_3(n,\mathcal{Z}_n)| =& \left|\sum_{j=1}^{n} Y_{n,j}^2(t,\mathcal{Z}_n)\cdot K({Y_{n,j}(t,\mathcal{Z}_n)})\right| \leq 2\cdot \max_{1\leq j\leq n} \{|Y_{n,j}(t,\mathcal{Z}_n)|\} \cdot \sum_{j=1}^{n} |Y_{n,j}(t,\mathcal{Z}_n)|\\
    \leq& 2\delta^2 (\frac{1}{2}t^2 + \frac{|t|^3}{6}\cdot 2\delta \cdot e^{|t|\cdot 2\delta})\cdot \sum_{j=1}^{n} |Y_{n,j}(t,\mathcal{Z}_n)|\\
    \leq& 2\delta^2 (\frac{1}{2}t^2 + \frac{|t|^3}{6}\cdot 2\delta \cdot e^{|t|\cdot 2\delta}) \cdot (|H_1(n,\mathcal{Z}_n)|+|H_2(n,\mathcal{Z}_n)|)
    \xrightarrow{p} 0.
\end{align*}
Combining the above steps we have proved that
\begin{align*}
    \sum_{j=1}^{n}\log\left[ 1+ {Y_{n,j}(t,\mathcal{Z}_n)} \right] \xrightarrow{p} \frac{1}{2}t^2.
\end{align*}
Hence,
\begin{align*}
    \mathbf{E}[e^{t\frac{\sum_{j=1}^n V_{n,j}}{s_n}}|\mathcal{Z}_{n}] \xrightarrow{p} e^{\frac{1}{2}t^2},
\end{align*}
yielding $\frac{\sum_{j=1}^n V_{n,j}}{s_n}|\mathcal{Z}_n \xrightarrow{d} \mathcal{N}(0, 1)$.
The final step consists in showing that $\frac{\sum_{j=1}^n W_{n,j}}{s_n}|\mathcal{Z}_n \xrightarrow{p} 0$. By Chebyshev's inequality, and that $W_{n,j}$ are independent conditioned on $\mathcal{Z}_n$, and Lindeberg assumption,
\begin{align*}
    \mathbf{P}\left(\left| \frac{\sum_{j=1}^n W_{n,j}}{s_n} \right|\geq \epsilon | \mathcal{Z}_n \right) \leq& \frac{\sum_{j=1}^n \mathbf{Var}[W_{n,j}|\mathcal{Z}_n]}{s_n^2\epsilon^2}
    = \frac{1}{\epsilon^2}\cdot \frac{\sum_{j=1}^n\mathbf{E}\left[(T_j^{(n)})^2 \cdot\mathbf{1}(|T_j^{(n)}|>\delta s_n) |\mathcal{Z}_n \right]}{s_n^2}
    \to 0,
\end{align*}
therefore $\frac{\sum_{j=1}^n W_{n,j}}{s_n}|\mathcal{Z}_n \xrightarrow{p} 0$.
Combining the above steps we get $\lim\limits_{n\to\ff}\bP{\frac{L_n}{s_n}\leq x |\mathcal{Z}_n}=\Phi(x)$, where $\Phi(x)$ is the cdf of Gaussian distribution.
\end{proof}

\begin{cor}[Uniform Berry-Esseen under the tilt]\label{cor:uniform-BE}
Fix a compact $K\subset(\mu_U,\infty)$ and $z_0\in\mathbb R$.
Let $\theta_{x,n}(z)$ solve $\partial_\theta\Lambda_n(\theta,z)=x$ and set
$v^2_{x,n}(z):=\partial_\theta^2\Lambda_n(\theta_{x,n}(z),z)$.
Define the standardized sum under the exponential tilt
\[
T_n(x,z)\ :=\ \frac{S_n(z)-n x}{v_{x,n}(z)\,\sqrt{n}},\qquad
S_n(z)=\sum_{i=1}^n U_i^{(n)}X_i^{(n)}(z).
\]
Under \textnormal{(L1)–(L2)},
there exists $C<\infty$ such that for all large $n$,
\[
\sup_{x\in K}\ \sup_{z\le z_0}\ \sup_{t\in\mathbb R}\,
\big|\mathbf P_{\theta_{x,n}(z)}\big(T_n(x,z)\le t\big)-\Phi(t)\big|
\ \le\ \frac{C}{\sqrt{n}}.
\]
The same bound holds uniformly for $y$ in a shrinking window $\lvert y-x\rvert\le c/n$,
with $\theta_{y,n}(z)$ and $v_{y,n}(z)$ in place of $\theta_{x,n}(z),v_{x,n}(z)$.
\end{cor}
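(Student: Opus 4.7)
The plan is to reduce the statement to the classical i.i.d.\ Berry--Esseen bound applied under the exponential tilt, and then to verify that its input is uniformly bounded over $(x,z)\in K\times(-\infty,z_0]$. First, I would fix $z$ and observe that under $\mathbf{P}_{\theta_{x,n}(z)}$, conditionally on $\mathcal{Z}_n=z$, the summands $Y_i:=U_i^{(n)}X_i^{(n)}(z)$ are i.i.d.\ with c.g.f.\ $\theta\mapsto \Lambda_n(\theta_{x,n}(z)+\theta;z)-\Lambda_n(\theta_{x,n}(z);z)$. By the saddle-point equation $\partial_\theta\Lambda_n(\theta_{x,n}(z);z)=x$, their common mean is $x$ and variance is $v_{x,n}^2(z)=\partial_\theta^2\Lambda_n(\theta_{x,n}(z);z)>0$. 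The classical Berry--Esseen inequality (Feller, XVI.5) then yields
\[
\sup_{t\in\mathbb R}\bigl|\mathbf P_{\theta_{x,n}(z)}(T_n(x,z)\le t)-\Phi(t)\bigr|
\;\le\; \frac{C\,\rho_{x,n}(z)}{v_{x,n}^3(z)\,\sqrt n},
\qquad \rho_{x,n}(z):=\mathbf E_{\theta_{x,n}(z)}\bigl[|Y_1-x|^3\bigr].
\]

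Second, I would establish uniformity of the input ratio $\rho_{x,n}(z)/v_{x,n}^3(z)$ on $K\times(-\infty,z_0]$ for $n$ large. By Proposition \ref{prop:theta_nx}, $\theta_{x,n}(z)$ is continuous, strictly increasing in $z$, with $\theta_{x,n}(z)\downarrow\theta_{x,n}$ as $z\to-\infty$ and bounded above by $\theta_{x,n}(z_0)$; together with (L2) and the compactness of $K$, this confines $\theta_{x,n}(z)$ to a compact subset of $(0,\theta_\star)\subset(0,\theta_0)$, uniformly in $x\in K$, $z\le z_0$, and large $n$. From Proposition \ref{prop:CMGF}(3)--(4), $\Lambda_n(\theta;z)$ and its $\theta$-derivatives converge uniformly, as $z\to-\infty$, to those of $\Lambda_{U^{(n)}}$, which themselves converge to $\Lambda_U$ by (L2); hence $v_{x,n}^2(z)$ is bounded away from $0$ and infinity on $K\times(-\infty,z_0]$. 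For $\rho_{x,n}(z)$, I would use (L3) (finite tilted third moment of $U^{(n)}$ and $U$) and bound $\rho_{x,n}(z)\le 8(\mathbf E_{\theta_{x,n}(z)}|Y_1|^3+x^3)$; the tilted absolute third moment of $U^{(n)}X^{(n)}(z)\le U^{(n)}$ is dominated by that of $U^{(n)}$ at a slightly shifted tilt in a compact set interior to $(0,\theta_0)$, which is uniformly bounded in $n$ by (L2).

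Third, for the shrinking-window extension $|y-x|\le c/n$, I would invoke the implicit function representation $d\theta_{y,n}(z)/dy=1/\partial_\theta^2\Lambda_n$ from Proposition \ref{prop:theta_nx} to get Lipschitz continuity of $y\mapsto(\theta_{y,n}(z),v_{y,n}(z),\rho_{y,n}(z))$ with constants uniform in $(n,z)$ on our bracket. A perturbation of size $c/n$ therefore changes the Berry--Esseen numerator and denominator by $O(1/n)$, which is absorbed into a larger $C$ in the $O(n^{-1/2})$ bound, yielding the claim uniformly over $y$ in the window.

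The main obstacle will be the uniformity of the lower bound $\inf_{x\in K,\,z\le z_0}v_{x,n}(z)>0$ and of $\sup_{x\in K,\,z\le z_0}\rho_{x,n}(z)<\infty$ as $z\to-\infty$ and $n\to\infty$ simultaneously. The key point is that as $z\to-\infty$, $F_\epsilon((v-z)/b^{(n)})\to 1$, so under the tilt $X^{(n)}(z)\to 1$ and $U^{(n)}X^{(n)}(z)$ approaches $U^{(n)}$ in distribution; Propositions \ref{prop:CMGF} and \ref{prop:CRateFunction} combined with (L2)--(L3) force the convergence of the conditional tilted second and third moments to those of $U$ under $\mathbf P_{\theta_x}$, which are finite and strictly positive by (L3) and the strict convexity of $\Lambda_U$. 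Once this uniform nondegeneracy is established, the classical Berry--Esseen bound supplies the required $C/\sqrt n$ rate uniformly in $(x,z,t)$ and in the shrinking $y$-window.
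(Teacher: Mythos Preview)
Your proposal is correct and follows essentially the same route as the paper: apply the classical i.i.d.\ Berry--Esseen bound under the exponential tilt and then verify that the moment ratio $\mathbf E_\theta|Y-x|^3/(\mathrm{Var}_\theta Y)^{3/2}$ is bounded uniformly over $(x,z)\in K\times(-\infty,z_0]$ using Propositions~\ref{prop:CMGF}--\ref{prop:CRateFunction} and (L1)--(L3). The paper's argument (given as a remark rather than a formal proof) invokes Esseen's smoothing inequality directly after expanding the characteristic function to cubic order, but this is just the standard derivation of the same Berry--Esseen constant, so the two are equivalent; your treatment of the shrinking-window extension via Lipschitz continuity in $y$ is slightly more explicit than what the paper records.
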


\begin{rem}
The conditional CLT (Proposition~2.1) already provides (i) uniform bounds
$0<c\le v_{x,n}(z)\le C$ and $0<c\le \theta_{x,n}(z)\le C$ on $K\times(-\infty,z_0]$ and
(ii) a uniform Lindeberg control under the tilt.
Keeping the next (cubic) term in the cumulant/Taylor expansion of $\log\mathbf E_\theta e^{it(Y-x)/v}$,
and applying Esseen's smoothing inequality, yields
\[
\sup_{t}\big|\mathbf P_\theta(T_n\le t)-\Phi(t)\big|
\ \le\ \frac{C_{\mathrm{BE}}}{\sqrt{n}}\,
\frac{\mathbf E_\theta\lvert Y-x\rvert^3}{(\mathrm{Var}_\theta(Y))^{3/2}}
\ \le\ \frac{C}{\sqrt{n}},
\]
with constants uniform in $(x,z)\in K\times(-\infty,z_0]$ by \textnormal{(L2)}.
\end{rem}

\subsection{Marginal Limit Distribution}

\begin{prop} \label{thm:wlln_1type}
Assume that (\ref{assumption1})-(\ref{assumption3}) hold. Then,   $n^{-1}L_n$ converges in probability to  $\mu_U F_{\ep}\left(\frac{v-\mathcal{Z}}{b}\right)$, as $n \ra \ff$. Furthermore, under the additional assumption that $\mathcal{Z}_n\overset{a.s.}{\to}\mathcal{Z}$, and $\mathbf{E}[(U_{1}^{(n)}-\mu_{U^{(n)}})^{2+\delta}]\leq C_1<\infty$ for a fixed $\delta>0$, then $n^{-1}L_n$ converges almost surely to  $\mu F_{\ep}\left(\frac{v-\mathcal{Z}}{b}\right)$, as $n \ra \ff$. Furthermore, as $n\to\infty$,
\begin{align*}
    \mathbf{P}\left( \frac{L_n- n \mu_{U^{(n)}} F_{\epsilon}\left( \frac{v-\mathcal{Z}_n}{b^{(n)}} \right)}{\sqrt{n}} \leq x \right) \to \mathbf{E}\left[\Phi\left(\frac{x}{\sigma_{T_1|\mathcal{Z}}^{(\infty)}}\right)\right],
\end{align*}
where $\Phi(x)$ is CDF of standard normal distribution and 
\begin{align*}
\sigma_{T_1|\mathcal{Z}}^{(\infty)}=\sqrt{\sigma_{U}^2 F_{\epsilon}\left( \frac{v-\mathcal{Z}}{b} \right) + \mu_{U}^2 F_{\epsilon}\left(\frac{v-\mathcal{Z}}{b}\right)\cdot \left[ 1-F_{\epsilon}\left(\frac{v-\mathcal{Z}}{b}\right)\right]}.
\end{align*}
\end{prop}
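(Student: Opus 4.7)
The plan is to prove all three assertions by conditioning on $\mathcal{Z}_n$, using the fact that given $\mathcal{Z}_n=z$ the summands $T_i^{(n)}:=U_i^{(n)}X_i^{(n)}$ become i.i.d.\ with explicit conditional moments. First I would note that, conditionally on $\mathcal{Z}_n=z$, the indicator $X_i^{(n)}$ is Bernoulli with parameter $p^{(n)}(z):=F_\epsilon((v-z)/b^{(n)})$ and is independent of $U_i^{(n)}$, so
\[
\mathbf{E}[T_i^{(n)}\mid \mathcal{Z}_n]=\mu_{U^{(n)}}\,p^{(n)}(\mathcal{Z}_n),\qquad
\mathrm{Var}(T_i^{(n)}\mid\mathcal{Z}_n)=\sigma_{U^{(n)}}^2\,p^{(n)}(\mathcal{Z}_n)+\mu_{U^{(n)}}^2\,p^{(n)}(\mathcal{Z}_n)\{1-p^{(n)}(\mathcal{Z}_n)\}.
\]
This gives the right-hand variance $\sigma^{(\infty)}_{T_1|\mathcal{Z}}$ in the limit, once we combine \ref{assumption2}, \ref{assumption3}, continuity of $F_\epsilon$, and the convergence $\mu_{U^{(n)}}\to\mu_U$, $\sigma_{U^{(n)}}^2\to\sigma_U^2$ that follows from \ref{asp:L2}.

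For the weak law, I would apply the conditional weak law of large numbers to the i.i.d.\ array $\{T_i^{(n)}\}_{i=1}^n$ given $\mathcal{Z}_n$, obtaining
\[
n^{-1}L_n-\mu_{U^{(n)}}\,p^{(n)}(\mathcal{Z}_n)\xrightarrow{\mathbf{P}}0.
\]
Then by \ref{assumption2}–\ref{assumption3} and continuity of $F_\epsilon$, $p^{(n)}(\mathcal{Z}_n)\xrightarrow{\mathbf{P}} F_\epsilon((v-\mathcal{Z})/b)$, and by \ref{asp:L2} $\mu_{U^{(n)}}\to\mu_U$, giving the stated in-probability limit. For the almost-sure strengthening under $\mathcal{Z}_n\to\mathcal{Z}$ a.s.\ and uniform $(2+\delta)$-moments of $U^{(n)}$, I would invoke a conditional Marcinkiewicz--Zygmund/Kolmogorov SLLN for triangular arrays with uniformly bounded $(2+\delta)$-moments (the $T_i^{(n)}$ inherit this from $U^{(n)}$ since $|T_i^{(n)}|\le U_i^{(n)}$); the a.s.\ convergence of $p^{(n)}(\mathcal{Z}_n)$ then follows from continuity of $F_\epsilon$ and $b^{(n)}\to b$.

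For the central limit theorem, I would apply the conditional CLT (Proposition~\ref{thm:CCLT}) to the centered array $\widetilde T_i^{(n)}:=T_i^{(n)}-\mu_{U^{(n)}}p^{(n)}(\mathcal{Z}_n)$. The variance formula above yields
\[
\frac{s_n^2}{n}=\mathrm{Var}(T_1^{(n)}\mid\mathcal{Z}_n)\xrightarrow{\mathbf{P}}\bigl(\sigma^{(\infty)}_{T_1|\mathcal{Z}}\bigr)^2,
\]
so one obtains $s_n/\sqrt{n}\to \sigma^{(\infty)}_{T_1|\mathcal{Z}}$ in probability. The Lindeberg condition is straightforward because, conditionally on $\mathcal{Z}_n$, $|\widetilde T_i^{(n)}|\le U_i^{(n)}+\mu_{U^{(n)}}$ and the $(2+\delta)$-moment assumption together with $s_n\to\infty$ make the truncated second-moment sum $o(1)$ in probability. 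Hence $(L_n-n\mu_{U^{(n)}}p^{(n)}(\mathcal{Z}_n))/\sqrt{n}$ given $\mathcal{Z}_n$ converges weakly to $N(0,(\sigma^{(\infty)}_{T_1|\mathcal{Z}})^2)$, and integrating against the law of $\mathcal{Z}$ (via bounded continuous test functions together with \ref{assumption3} and the continuous mapping theorem) produces the stated mixture limit.

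The main obstacle will be the de-conditioning step in the CLT: one must pass from weak convergence \emph{given $\mathcal{Z}_n$} (which is a statement about random conditional laws) to weak convergence of the unconditional distribution with the mixing variable $\mathcal{Z}$. I plan to handle this by applying Proposition~\ref{thm:CCLT} together with dominated convergence against the characteristic function $\varphi_n(t\mid\mathcal{Z}_n)\to\exp\{-t^2(\sigma^{(\infty)}_{T_1|\mathcal{Z}})^2/2\}$, where the pointwise (in probability) convergence upgrades to convergence of unconditional characteristic functions because $|\varphi_n|\le 1$. Uniform integrability of the moments under \ref{asp:L2}--\ref{asp:L3} then ensures the necessary stability of the mixing limit.
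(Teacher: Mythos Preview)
Your proposal is correct and follows essentially the same route as the paper: condition on $\mathcal{Z}_n$, compute the conditional mean and variance of $U_i^{(n)}X_i^{(n)}$, apply a conditional LLN (Chebyshev/Marcinkiewicz--Zygmund plus Borel--Cantelli for the a.s.\ version) and the conditional Lindeberg CLT of Proposition~\ref{thm:CCLT}, then de-condition by taking expectations. One small caution: you invoke \ref{asp:L2} to get $\mu_{U^{(n)}}\to\mu_U$ and $\sigma_{U^{(n)}}^2\to\sigma_U^2$, and you use the $(2+\delta)$-moment bound in the Lindeberg step of the CLT, but the proposition as stated assumes only \eqref{assumption1}--\eqref{assumption3} for the weak law and CLT; the paper's own argument verifies Lindeberg directly from the second-moment structure without the $(2+\delta)$ hypothesis, so you may want to do the same (or note explicitly which extra integrability you are using).
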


\begin{proof}
Let $\mathcal{F}_n$ denote the sigma-field generated by $\mathcal{Z}_n$.  Let $\mu_U:=\mathbf{E}[U]$ and set $\theta\coloneqq \mu_U F_{\ep}\left(\frac{v-\mathcal{Z}}{b}\right)$. First notice that, $n^{-1}L_n -\ta = T_{n,1}+T_{n,2}$, where $T_{n,1}= \left(n^{-1}L_n - \mathbf{E}\left[n^{-1}L_n|\mathcal{F}_n\right]\right)$ and $T_{n,2}= \left(\mathbf{E}\left[n^{-1}L_n|\mathcal{F}_n\right]-\ta \right)$.
We will first show that $T_{n,2}$ converges to zero in probability. To this end,  observe that using the independence of $U_{j}^{(n)}$ and $X_j^{(n)}$ and noticing that  $\mathcal{Z}_n$ is independent of $\{\ep_j\}$, we have that
$\mathbf{E}[\mathbf{1}(Y_{i}^{(n)}\leq v)|\mathcal{Z}_n] = F_{\epsilon}\left( \frac{v-\mathcal{Z}_n}{b^{(n)}} \right)$ with probability one (w.p.1). Hence, w.p.1,
\begin{align*}
\mathbf{E}[U_j^{(n)}X_j^{(n)}|{\mathcal Z}_n] =& \mathbf{E}[U_j^{(n)}\mathbf{E}[X_j^{(n)}|\mathcal{Z}_n]] = \mu_{U^{(n)}}  F_{\epsilon}\left( \frac{v-\mathcal{Z}_n}{b^{(n)}} \right).
\end{align*}
Now, using (\ref{assumption1})-(\ref{assumption3}), it follows using the continuity of $F_{\ep}(\cdot)$ that
$\mathbf{E}\left[n^{-1}L_n|\mathcal{Z}_n \right]$ converges in probability to $\mu F_{\ep}\left(\frac{v-\mathcal{Z}}{b}\right)$ as $n \ra \ff$. Thus, to complete the proof, it is sufficient to show that $T_{n,1}$ converges to zero in probability as $n \ra \ff$. Towards this, note that 
\begin{align}
\nonumber T_{n,1} =  \frac{1}{n}\sum_{j=1}^{n}U_{j}^{(n)}X_{j}^{(n)} - \frac{1}{n}\sum_{j=1}^{n}\mathbf{E}[U_{j}^{(n)}X_{j}^{(n)}|\mathcal{Z}_n]
\coloneqq T_{n,3} + T_{n,4},
\end{align}
where
\begin{align*}
    T_{n,3} =& \frac{1}{n}\sum_{j=1}^{n}(U_{j}^{(n)}-\mu_{U^{(n)}})X_{j}^{(n)} \quad \text{and} \quad T_{n,4} = \frac{\mu_{U^{(n)}}}{n}\sum_{j=1}^{n}(X_{j}^{(n)}-\mathbf{E}[X_{j}^{(n)}|\mathcal{Z}_n]).
\end{align*}
Also, note that 
\begin{align*}
\mathbf{Var}[T_{n,3}] = \frac{1}{n^2}\mathbf{Var}\left[\mathbf{E}\left[\sum_{j=1}^{n}(U_{j}^{(n)}-\mu_{U^{(n)}})X_{j}^{(n)}|\mathcal{Z}_n\right]\right] + \frac{1}{n^2}\mathbf{E}\left[\mathbf{Var}\left[\sum_{j=1}^{n}(U_{j}^{(n)}-\mu_{U^{(n)}})X_{j}^{(n)}|\mathcal{Z}_n\right]\right].
\end{align*}
Now, using the independence of $\{U_{j}^{(n)}\}$ and $\{X_{j}^{(n)}\}$, the first term of the above expression is zero and the second term reduces to
\begin{align}
\nonumber\mathbf{E}\left[\mathbf{Var}\left[\sum_{j=1}^{n}(U_{j}^{(n)}-\mu_{U^{(n)}})X_{j}^{(n)}|\mathcal{Z}_n\right]\right] =&  \sum_{j=1}^{n}\mathbf{E}\left[\mathbf{Var}\left[(U_{j}^{(n)}-\mu_{U^{(n)}})X_{j}^{(n)}|\mathcal{Z}_n\right]\right]\\
\nonumber =& n \mathbf{E}\left[(U_{1}^{(n)}-\mu_{U^{(n)}})^2\right]\cdot \mathbf{E}\left[(X_{1}^{(n)})^2|\mathcal{Z}_n\right]\\
\le& n (\sigma^{(n)})^2. \label{eq:lln_var}
\end{align}
Now, using Chebyshev's inequality and (\ref{eq:lln_var}), it follows that $T_{n,3}$ converges in probability to zero. Turning to $T_{n,4}$, using similar calculation it follows that $\mathbf{Var}[T_{n,4}]$ is bounded above by $n^{-1}(\mu_{U^{(n)}})^2$, verifying that $T_{n,4}$ converges in probability to zero. Turning to the almost sure convergence, we follow the same notation and methods as above to obtain $T_{n,2}\overset{a.s.}{\to}0$. Turning to $T_{n,1}$, notice that it is  enough to show that for any $\ep>0$,
\begin{align} \label{eq:uc}
P\left(\lim_{n\to\infty}|T_{n,1}|>\ep \right) = \mathbf{E}\left[P\left(\lim_{n\to\infty}|T_{n,1}|>\ep |\mathcal{Z}_n\right)\right] =0.
\end{align}
Decomposing $T_{n,1}=T_{n,3}+T_{n,4}$, where
\begin{align*}
    T_{n,3} =& \frac{1}{n}\sum_{j=1}^{n}(U_{j}^{(n)}-\mu_{U^{(n)}})X_{j}^{(n)} \quad \text{and} \quad T_{n,4} = \frac{\mu_{U^{(n)}}}{n}\sum_{j=1}^{n}(X_{j}^{(n)}-\mathbf{E}[X_{j}^{(n)}|\mathcal{Z}_n].
\end{align*}
we will show each of the terms converges to zero almost surely. To this end,  first note that by Markov's inequality 
\begin{align*}
    \mathbf{P}(|T_{n,3}|>\ep)\leq \frac{\mathbf{E}[|T_{n,3}|^{2+\delta}]}{\ep^{2+\delta}},\quad \mathbf{P}(|T_{n,4}|>\ep)\leq \frac{\mathbf{E}[|T_{n,4}|^{2+\delta}]}{\ep^{2+\delta}}.
\end{align*}
Let $A_{j}^{(n)}=(U_{j}^{(n)}-\mu_{U^{(n)}})X_{j}^{(n)}$. Then $T_{n,3}=\frac{1}{n}\sum_{j=1}^{n}A_{j}^{(n)}$. We first calculate the conditional expectation and then take the expectation on both sides. Applying the martingale version of the Marcinkiewicz–Zygmund inequality (see \cite{chow2012probability}), by conditioning on $\mathcal{Z}_n$ and taking expectations and using Minkowski's inequality, we obtain
\begin{align*}
    \mathbf{E}[|T_{n,3}|^{2+\delta}] \leq& \left(\frac{1}{n}\right)^{2+\delta} B_{2+\delta}\mathbf{E}\left[\left( \sum_{j=1}^{n}(A_{j}^{(n)})^2 \right)^{\frac{2+\delta}{2}}\right] = \left(\frac{1}{n}\right)^{2+\delta} B_{2+\delta}\left(\left\Vert \sum_{j=1}^{n}(A_{j}^{(n)})^2 \right\Vert_{\frac{2+\delta}{2}}\right)^{\frac{2+\delta}{2}}\\
    \leq& \left(\frac{1}{n}\right)^{2+\delta} B_{2+\delta}\left(\sum_{j=1}^{n}\left\Vert (A_{j}^{(n)})^2 \right\Vert_{\frac{2+\delta}{2}}\right)^{\frac{2+\delta}{2}} =  \left(\frac{1}{n}\right)^{2+\delta} B_{2+\delta}\left(n \left(\mathbf{E}\left[ (A_{j}^{(n)})^{2+\delta}\right]\right)^{\frac{2}{2+\delta}}\right)^{\frac{2+\delta}{\delta}}.
\end{align*}
The RHS is $n^{-(1+\delta/2)} B_{2+\delta}\mathbf{E}\left[ (A_{j}^{(n)})^{2+\delta}\right]$,
where $B_{2+\delta}\in(0,\infty)$ only depends on $\delta$. Next, using the independence of $U_{1}^{(n)}$ and $X_{1}^{(n)}$, we obtain
\begin{align*}
    \mathbf{E}[(A_{1}^{(n)})^{2+\delta}|\mathcal{Z}_n] =& \mathbf{E}[(U_{1}^{(n)}-\mu_{U^{(n)}})^{2+\delta}]\cdot \mathbf{E}[(X_{1}^{(n)})^{2+\delta}|\mathcal{Z}_n]\\
    \leq&  \mathbf{E}[(U_{1}^{(n)}-\mu_{U^{(n)}})^{2+\delta}] \leq C_1.
\end{align*}
Now, plugging into $\mathbf{E}[|T_{n,3}|^{2+\delta}|\mathcal{Z}_n]$, we get
\begin{align*}
    \mathbf{E}[|T_{n,3}|^4|\mathcal{Z}_n] \leq \left(\frac{1}{n}\right)^{1+\delta/2} B_{2+\delta} C_1.
\end{align*}
Noticing that $\mathbf{E}[|T_{n,3}|^{2+\delta}|\mathcal{Z}_n]>0$ and taking expectation on both side, we obtain
\begin{align*}
    \mathbf{E}[|T_{n,3}|^{2+\delta}] \leq \left(\frac{1}{n}\right)^{1+\delta/2} B_{2+\delta} C_1.
\end{align*}
Hence 
\begin{align*}
    \sum_{n=1}^{\infty}\mathbf{P}(|T_{n,3}|>\ep)\leq B_{2+\delta} C_1 \sum_{n=1}^{\infty}\left(\frac{1}{n}\right)^{1+\delta/2}< \infty.
\end{align*}
By Borel–Cantelli lemma, it follows that $T_{n,3}\overset{a.s.}{\to}0$. Similar calculation also yields that $T_{n,4}\overset{a.s.}{\to}0$. Combining we conclude that $T_{n,1}\overset{a.s.}{\to}0$.
Turning to the CLT part of the proposition, define
\begin{align*}
    S_n =& \sum_{j=1}^{n} \left[ U_j^{(n)}X_j^{(n)} - \mu_{U^{(n)}}F_{\epsilon}\left(\frac{v-\mathcal{Z}_n}{b^{(n)}}\right) \right] \coloneqq \sum_{j=1}^{n} T_j^{(n)}.
\end{align*}
Notice that
\begin{align*}
    \mathbf{Var}[T_j^{(n)}|\mathcal{Z}_n] =& \mathbf{Var}[U_j^{(n)}X_j^{(n)}|\mathcal{Z}_n]\\
    =&  \mathbf{E}\Big[\mathbf{Var}[U_j^{(n)}X_j^{(n)} | \mathcal{Z}_n, \epsilon_j] |\mathcal{Z}_n \Big] + \mathbf{Var}\Big[\mathbf{E}[U_j^{(n)}X_j^{(n)} | \mathcal{Z}_n, \epsilon_i] |\mathcal{Z}_n \Big]\\
    =&  \mathbf{E}\Big[X_j^{(n)}\mathbf{Var}(U_j^{(n)})| \mathcal{Z}_n \Big] + \mathbf{Var}\Big[ X_j^{(n)} \mathbf{E}[U_j^{(n)}] |\mathcal{Z}_n  \Big] \\
    =&  (\sigma^{(n)})^2 \mathbf{E}\Big[X_j^{(n)}| \mathcal{Z}_n \Big] + (\mu_{U^{(n)}})^2\mathbf{Var}\Big[ X_j^{(n)} |\mathcal{Z}_n \Big] \\
    =& (\sigma^{(n)})^2F_{\epsilon}\left( \frac{v-\mathcal{Z}_n}{b^{(n)}} \right) + (\mu_{U^{(n)}})^2 (\sigma_{X_j^{(n)}|\mathcal{Z}_n}^{(n)})^2,
\end{align*}
where $(\sigma_{X_{j}|\mathcal{Z}_n}^{(n)})^2 = \mathbf{V}[X_j^{(n)}|\mathcal{Z}_n] = F_{\epsilon}\left(\frac{v-\mathcal{Z}_n}{b^{(n)}}\right)\cdot \left( 1-F_{\epsilon}\left(\frac{v-\mathcal{Z}_n}{b^{(n)}}\right)\right)$.
Denote $\mathbf{Var}[T_j^{(n)}|\mathcal{Z}_n]$ by $(\sigma_{T_i|\mathcal{Z}_n}^{(n)})^2$, which is finite since $X_j^{(n)}$ is an indicator. Then  $\mathbf{Var}[L_n|\mathcal{Z}_n]$ is given by $s_n^2$, where
\begin{align*}
    s_n^2 = \sum_{j=1}^{n} (\sigma_{T_j|\mathcal{Z}_n}^{(n)})^2 = n \cdot (\sigma_{T_1|\mathcal{Z}_n}^{(n)})^2.
\end{align*}
Now we show that, conditionally on $\mathcal{Z}_n$, $\frac{S_n}{\sqrt{n}}$ satisfies the Lindeberg conditions. Notice that $\mathbf{E}[S_n|\mathcal{Z}_n]= 0$, and for any $\delta>0$,
\begin{align*} 
    \frac{\sum_{j=1}^{n} \mathbf{E}\left[(T_j^{(n)})^2\cdot \mathbf{1}(|T_j^{(n)}|>\delta s_n)|\mathcal{Z}_n\right]}{s_n^2}
    =& \frac{\mathbf{E}\left[(T_1^{(n)})^2\cdot \mathbf{1}(|T_1^{(n)}|>\delta s_n)|\mathcal{Z}_n\right]}{(\sigma_{T_1|\mathcal{Z}_n}^{(n)})^2}.
\end{align*}
Next  we show that the numerator on the RHS converges to 0. To this end, note that
\begin{align*}
\mathbf{E}\left[(T_1^{(n)})^2\cdot \mathbf{1}(|T_1^{(n)}|>\delta s_n)|\mathcal{Z}_n\right] =& \mathbf{E}\left[\left(U_1^{(n)}X_1^{(n)}-\mu_{U^{(n)}}F_{\ep}\left(\frac{v-\mathcal{Z}_n}{b^{(n)}}\right)\right)^2\cdot \mathbf{1}(|T_1^{(n)}|>\delta s_n)|\mathcal{Z}_n\right]\\
    \leq& 2\mathbf{E}\left[\left(U_1^{(n)}X_1^{(n)}\right)^2\cdot \mathbf{1}(|T_1^{(n)}|>\delta s_n)|\mathcal{Z}_n\right] \\
    +& 2\mathbf{E}\left[\left(\mu_{U^{(n)}}F_{\ep}\left(\frac{v-\mathcal{Z}_n}{b^{(n)}}\right)\right)^2\cdot \mathbf{1}(|T_1^{(n)}|>\delta s_n)|\mathcal{Z}_n\right]\\
    =& 2(\mu_{U^{(n)}})^2\mathbf{E}\left[\left(X_1^{(n)}\right)^2\cdot \mathbf{1}(|T_1^{(n)}|>\delta s_n)|\mathcal{Z}_n\right]\\
    + &  2\left(\mu_{U^{(n)}}F_{\ep}\left(\frac{v-\mathcal{Z}_n}{b^{(n)}}\right)\right)^2\mathbf{E}\left[\mathbf{1}(|T_1^{(n)}|>\delta s_n)|\mathcal{Z}_n\right]
\end{align*}
Now using $|X_1^{(n)}|\leq1$ and $F_{\ep}\left(\frac{v-\mathcal{Z}_n}{b^{(n)}}\right)\leq1$, it follows that the above is bounded by
\begin{align*}
2(\mu_{U^{(n)}})^2\mathbf{E}\left[\mathbf{1}(|T_1^{(n)}|>\delta s_n)|\mathcal{Z}_n\right] + 2\left(\mu_{U^{(n)}}\right)^2\mathbf{E}\left[\mathbf{1}(|T_1^{(n)}|>\delta s_n)|\mathcal{Z}_n\right] \to 0.
\end{align*}
since $\mathbf{E}\left[\mathbf{1}(|T_1^{(n)}|>\delta s_n)|\mathcal{Z}_n\right]\to 0$ as $n\to\infty$.
Thus,
\begin{align*}
\lim_{n\to\infty}\frac{\sum_{j=1}^{n} \mathbf{E}\left[(T_j^{(n)})^2\cdot \mathbf{1}(|T_j^{(n)}|>\delta s_n)|\mathcal{Z}_n\right]}{s_n^2} =& \frac{\lim_{n\to\infty}\mathbf{E}\left[(T_1^{(n)})^2\cdot \mathbf{1}(|T_1^{(n)}|>\delta s_n)|\mathcal{Z}_n\right]}{\lim_{n\to\infty}(\sigma_{T_1|\mathcal{Z}_n}^{(n)})^2} =0.
\end{align*}
And also $\frac{s_n}{\sqrt{n}}=\sigma_{T_1|\mathcal{Z}_n}^{(n)} \overset{P}{\to}\sigma_{T_1|\mathcal{Z}}^{(\infty)}$ by (\ref{assumption3}). Hence, using the CLT in Appendix \ref{app:C} (Proposition \ref{thm:CCLT}) we obtain
\begin{align*}
    \mathbf{P}\left( \frac{L_n- n \mu_{U^{(n)}} F_{\epsilon}\left( \frac{v-\mathcal{Z}_n}{b^{(n)}} \right)}{\sqrt{n}} \leq x \right) =& \mathbf{E}\left[ \mathbf{P}\left( \frac{L_n-n \mu_{U^{(n)}} F_{\epsilon}\left( \frac{v-\mathcal{Z}_n}{b^{(n)}} \right)}{\sqrt{n}} \leq x | \mathcal{Z}_n \right) \right]\\
    =& \mathbf{E}\left[ \mathbf{P}\left( \frac{S_n}{\sqrt{n}} \leq x |\mathcal{Z}_n \right)  \right] \to \mathbf{E}\left[\Phi\left(\frac{x}{\sigma_{T_1|\mathcal{Z}}^{(\infty)}}\right)\right],
\end{align*}
where $\Phi(x)$ is CDF of standard normal distribution and $$\sigma_{T_1|\mathcal{Z}}^{(\infty)}=\sqrt{\sigma_{U}^2 F_{\epsilon}\left( \frac{v-\mathcal{Z}}{b} \right) + \mu_{U}^2 F_{\epsilon}\left(\frac{v-\mathcal{Z}}{b}\right)\cdot \left[ 1-F_{\epsilon}\left(\frac{v-\mathcal{Z}}{b}\right)\right]}.$$ 
This completes the proof.
\end{proof}

\subsection{Sharp Conditional Large Deviations}
The Key ingredient to the  proofs of Theorem \ref{thm:LDP_ndZ} and Theorem \ref{thm:LDP_kappa} is the conditional sharp large deviation Theorem and an  evaluation of the behavior of the conditional rate function in the tails of the distribution of the common factors. This involves a careful decomposition of the integral and requires identification of an optimal point similar to the Laplace method for exponential integrals. We start with conditional sharp large deviations, whose proof is standard and hence omitted. Below, $\Lambda_{n}^{*'}(\cdot, z)$ and $\Lambda_{n}^{*''}(\cdot, z)$ are  derivatives for a fixed $z$. The proof  follows from Proposition~\ref{thm:CCLT} via Berry-Esseen bounds for the conditional triangular array.

\begin{thm}[Conditional Bahadur-Rao LDP] \label{thm:CBahadurRao}
Assume that conditions (\ref{assumption1})-(\ref{assumption3}), {\ref{asp:L2}-\ref{asp:L3}}, (and additional conditions in Theorem \ref{thm:LDP_kappa} for degenerate case) hold. Then for any $x>\mu_U$ ($x>q_{\kappa}$ for the degenerate case)
\begin{align*}
    \mathbf{P}(L_n>nx|\mathcal{Z}_n=z)= n^{-\frac{1}{2}}e^{-n \Lambda_{n}^*(x;z)}\frac{1}{\sqrt{2\pi}\theta_{x,n}(z)\sigma_{x,n}(z)}(1+r_n(z)),
\end{align*}
where $\Lambda_{n}^*(x;z) = \sup_{\theta}\{ \theta x - \Lambda_n(\theta;z) \}$ and $\Lambda_n(\theta;z)$ is the logarithmic moment generating function of $U^{(n)}X^{(n)}$ conditioned on $\mathcal{Z}_n=z$; that is
\begin{align*}
     \Lambda_n(\theta;z) =& \log \bE{\exp(\theta U^{(n)}X^{(n)})|\mathcal{Z}_n=z} = \log\left[ \lambda_{U^{(n)}}(\theta) F_{\epsilon}\left(\frac{v-z}{b^{(n)}}\right) + 1- F_{\epsilon}\left(\frac{v-z}{b^{(n)}}\right) \right].
\end{align*}
Also, $\sigma_{x,n}(z)= [\Lambda_{n}^{*''}(x;z)]^{-\frac{1}{2}}$ and $\theta_{x,n}(z) = \Lambda_{n}^{*'}(x;z)$. Furthermore, for any $z_0$ positive, $\sup_{z\leq z_0}r_n(z)\to0$ as $n\to\infty$ and $\theta_{x,n}(z)$ and $\sigma_{x,n}(z)$ are continuous in $z$ and $x$ and converge to $\theta_{x}(z)$ and $\sigma_x(z)$ respectively, as $n\to\ff$.
\end{thm}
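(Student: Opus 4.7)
My plan is to exploit the conditional i.i.d.\ structure: given $\mathcal{Z}_n=z$, the summands $W_i := U_i^{(n)}X_i^{(n)}$ are i.i.d., so one reduces to the classical Bahadur--Rao theorem for a sum of $n$ i.i.d.\ random variables, and then upgrades the remainder to be uniform in $z$ on $(-\infty,z_0]$. Concretely, conditional on $\mathcal{Z}_n=z$, each $W_i$ is a two--component mixture: $W_i=0$ with probability $1-p_n(z)$ and $W_i \sim \mathbf{P}_{U^{(n)}}$ with probability $p_n(z)$, where $p_n(z):=F_\epsilon((v-z)/b^{(n)})$. This immediately yields the MGF $\lambda_{U^{(n)}}(\theta)p_n(z)+1-p_n(z)$ and hence the formula for $\Lambda_n(\theta;z)$; Propositions~\ref{prop:CMGF}--\ref{prop:CRateFunction} already supply strict convexity in $\theta$, existence of a unique tilt $\theta_{x,n}(z)>0$ satisfying $\partial_\theta\Lambda_n(\theta_{x,n}(z);z)=x$, and the identities $\sigma_{x,n}^2(z)=1/\Lambda_{n}^{*''}(x;z)=\partial_\theta^2\Lambda_n(\theta_{x,n}(z);z)$ and $\theta_{x,n}(z)=\Lambda_n^{*'}(x;z)$.

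The second step is the classical Bahadur--Rao machinery: tilt by $\theta_{x,n}(z)$ to define $\mathbf{P}_{\theta_{x,n}(z)}$; under this tilted law the centered sum $(L_n-nx)/(\sigma_{x,n}(z)\sqrt n)$ has mean zero, unit variance, and a Berry--Esseen/Edgeworth expansion (this is precisely Corollary~\ref{cor:uniform-BE}). Writing
\[
\mathbf{P}(L_n\ge nx\mid\mathcal{Z}_n=z)=e^{-n\Lambda_n^*(x;z)}\int_0^\infty e^{-\theta_{x,n}(z)\sigma_{x,n}(z)\sqrt n\,s}\,dF_{n,z}(s),
\]
where $F_{n,z}$ is the tilted CDF of $(L_n-nx)/(\sigma_{x,n}(z)\sqrt n)$, one performs integration by parts and substitutes the local CLT approximation $F_{n,z}(s)=\Phi(s)+O(n^{-1/2})$. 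The leading Laplace evaluation of the resulting Gaussian integral gives the prefactor $(\theta_{x,n}(z)\sigma_{x,n}(z)\sqrt{2\pi n})^{-1}$, and the correction term is $r_n(z)=O(n^{-1/2})$ provided the nonlattice hypothesis holds (so that Esseen's smoothing inequality controls the characteristic--function side).

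The third step is the uniformity in $z\le z_0$. By Proposition~\ref{prop:theta_nx}, $\theta_{x,n}(z)$ is continuous and monotone in $z$, bounded above by $\theta_{x,n}(z_0)$ and bounded below by $\theta_{x,n}>0$; combined with \ref{asp:L2}--\ref{asp:L3} this also bounds $\sigma_{x,n}(z)$ away from $0$ and $\infty$ and keeps the tilted third absolute moment bounded uniformly in $z\le z_0$ and $n$, since $p_n(z)$ is bounded below on $(-\infty,z_0]$ (it converges to $p(z_0)\in(0,1]$ as $z\downarrow -\infty$). Thus the Berry--Esseen constant in Esseen's inequality is uniform in $z\le z_0$, producing $\sup_{z\le z_0}|r_n(z)|\to 0$. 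Continuity of $\theta_{x,n}(z),\sigma_{x,n}(z)$ in $(x,z)$ follows from the implicit function theorem applied to $\partial_\theta\Lambda_n(\theta;z)=x$ with nonvanishing Jacobian $\Lambda_n''>0$, and the limits $\theta_{x,n}(z)\to\theta_x(z)$, $\sigma_{x,n}(z)\to\sigma_x(z)$ follow from \ref{asp:L2} through Proposition~\ref{prop:CMGF}(4).

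The main obstacle is the \emph{uniform} Berry--Esseen bound in $z$: one must check that the nonlattice character is preserved under the Cram\'er tilt uniformly in the $n$--dependent parameters, and that the third tilted moment $\mathbf{E}_{\theta_{x,n}(z)}|W|^3$ stays bounded uniformly in $z\le z_0$ and $n$. The second issue is handled by writing $\mathbf{E}_{\theta_{x,n}(z)}|W|^3 = \lambda_n(\theta_{x,n}(z);z)^{-1}p_n(z)\,\mathbf{E}[|U^{(n)}|^3 e^{\theta_{x,n}(z)U^{(n)}}]$, which is controlled by \ref{asp:L3} after using $\theta_{x,n}(z)\le\theta_{x,n}(z_0)<\theta_0$. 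The nonlattice issue requires only that $U^{(n)}$ is nonlattice (Assumption~\ref{asp:L3}); since $X^{(n)}$ contributes only a point mass at $0$, the tilted mixture law of $W$ remains nonlattice, and Esseen's smoothing inequality applies uniformly, delivering the required $r_n(z)=o(1)$ control.
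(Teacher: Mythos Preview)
Your proposal is correct and follows essentially the same route as the paper: both condition on $\mathcal{Z}_n=z$ to get an i.i.d.\ sum, apply the Cram\'er tilt at $\theta_{x,n}(z)$, write the exceedance probability as $e^{-n\Lambda_n^*(x;z)}\mathbf{E}_\theta[e^{-a_nT_n}\mathbf{1}\{T_n\ge0\}]$ with $a_n=\theta_{x,n}(z)\sigma_{x,n}(z)\sqrt n$, invoke the uniform tilted Berry--Esseen bound (the paper's Corollary~\ref{cor:uniform-BE}), and evaluate the Gaussian Laplace integral $\int_0^\infty e^{-a_nt}d\Phi(t)=(a_n\sqrt{2\pi})^{-1}(1+O(a_n^{-2}))$ while bounding the remainder by integration by parts. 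Your treatment of uniformity in $z\le z_0$ via the monotonicity of $\theta_{x,n}(z)$ and the explicit nonlattice discussion is slightly more detailed than the paper's sketch; one small slip is the parenthetical ``converges to $p(z_0)$ as $z\downarrow-\infty$''---in fact $p_n(z)\to1$ as $z\to-\infty$, and the lower bound on $(-\infty,z_0]$ is $p_n(z_0)$, but this does not affect the argument.
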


\begin{proof}

Fix $x$ in a compact $K\subset(\mu_U,\infty)$ and $z\le z_0$.
Let $S_n(z)=\sum_{i=1}^n Y_i^{(n)}(z)$ with $Y_i^{(n)}(z)=U_i^{(n)}X_i^{(n)}(z)$, and let
$\theta=\theta_{x,n}(z)$ solve $\partial_\theta\Lambda_n(\theta,z)=x$.
Write $v^2=\sigma_{x,n}^2(z)=\partial_\theta^2\Lambda_n(\theta,z)$.
Perform the Cram\'er tilt at $\theta$: under $\mathbf P_\theta$,
$\mathbf E_\theta Y_i^{(n)}(z)=x$ and $\mathrm{Var}_\theta(Y_i^{(n)}(z))=v^2$.
Then
\[
\mathbf P(S_n(z)\ge nx)
= e^{-n(\theta x-\Lambda_n(\theta;z))}\,
\mathbf E_\theta\left[e^{-\theta\,(S_n(z)-nx)}\,\mathbf 1\{S_n(z)\ge nx\}\right].
\]
Set $T_n:=\frac{S_n(z)-nx}{v\sqrt{n}}$ and $a_n:=\theta v\sqrt{n}$.
The bracket equals $\mathbf E_\theta[e^{-a_n T_n}\mathbf 1\{T_n\ge 0\}]$. Next, we invoke the \emph{uniform CLT} from Proposition \ref{thm:CCLT}. By the uniform tilted Berry-Esseen bound (Prop.~C.1) and (L2),
\[
\sup_{t\in\mathbb R}\big|\mathbf P_\theta(T_n\le t)-\Phi(t)\big| \le \frac{C_1}{\sqrt{n}},
\]
with $C_1$ independent of $(x,z)\in K\times(-\infty,z_0]$.
By (L1)–(L2) and strict convexity, there exist $0<c\le C<\infty$ such that
$c\le \theta\le C$ and $c\le v\le C$ uniformly on $K\times(-\infty,z_0]$; hence
$a_n=\theta v\sqrt{n}\asymp\sqrt{n}$ uniformly.
The last step is the evaluation of the \emph{Laplace transform of the positive part}. Let $F_n$ be the cdf of $T_n$ under $\mathbf{P}_\theta$ and write
\[
\mathbf{E}_\theta[e^{-a_n T_n}\mathbf 1\{T_n\ge 0\}]
= \int_0^\infty e^{-a_n t}\,dF_n(t)
= \int_0^\infty e^{-a_n t}\,d\Phi(t)\ +\ \int_0^\infty e^{-a_n t}\,d(F_n-\Phi)(t).
\]
For the Gaussian term, a direct computation gives
\[
\int_0^\infty e^{-a t}\,d\Phi(t)=\frac{1}{a\sqrt{2\pi}}\Big(1+O(a^{-2})\Big),
\qquad a\to\infty.
\]
For the error term, integrate by parts and use the Berry-Esseen bound:
\[
\Big|\int_0^\infty e^{-a_n t}\,d(F_n-\Phi)(t)\Big|
\ \le\ a_n\int_0^\infty e^{-a_n t}\,\sup_{s}|F_n(s)-\Phi(s)|\,dt
\ \le\ \frac{C_1}{\sqrt{n}}.
\]
Since $a_n\asymp\sqrt{n}$, the $O(a_n^{-2})$ from the Gaussian term is $O(n^{-1})$ and thus dominated by $C_1/\sqrt{n}$.
Therefore,
\[
\mathbf E_\theta[e^{-a_n T_n}\mathbf 1\{T_n\ge 0\}]
= \frac{1}{\theta v\sqrt{2\pi n}}\Big(1+O(n^{-1/2})\Big),
\]
with constants uniform on $(x,z)\in K\times(-\infty,z_0]$.
Finally, combining with the tilt identity,
\[
\mathbf P(S_n(z)\ge nx)
= \frac{e^{-n\Lambda_n^*(x,z)}}{\sqrt{2\pi n}\,\theta\,v}\,\Big(1+O(n^{-1/2})\Big),
\]
which is the claimed formula with $v=\sigma_{x,n}(z)$ and uniform remainder.
The nonlattice assumption in (L2) guarantees the tilted Berry-Esseen bound; if a lattice correction is needed, the same argument holds with the usual continuity correction, and the remainder remains $O(n^{-1/2})$.
\end{proof}

\section{Appendix}\label{app:D}
In this appendix, we provide a brief description of self-neglecting functions.
\begin{defn}[Self-neglecting at infinity]\label{def:SN}
A measurable function $f:[x_0,\infty)\to(0,\infty)$ is called \emph{self-neglecting} (abbreviated \ $f\in\mathrm{SN}$) if
\[
\lim_{x\to\infty}\ \sup_{|t|\le T}\ \left|\frac{f\big(x+t\,f(x)\big)}{f(x)}-1\right|=0
\qquad\text{for every }T<\infty,
\]
and, in addition, $f(x)=o(x)$ as $x\to\infty$.
We will also use the equivalent “little-$o$ shift” notation
\[
\frac{f\big(x+o(f(x))\big)}{f(x)}\ \longrightarrow\ 1\qquad (x\to\infty),
\]
which follows from the uniform formulation above.
\end{defn}

\begin{prop}[Equivalent characterizations of SN]\label{prop:SN-equiv}
Let $f(\cdot)$ be eventually positive and measurable. The following are equivalent:
\begin{enumerate}\itemsep2pt
\item $f\in\mathrm{SN}$ in the sense of Definition~\ref{def:SN}.
\item $f(x+o(f(x)))/f(x)\to 1$ as $x\to\infty$.
\item For any sequence $x_n\to\infty$ and any bounded sequence $(t_n)$, one has
$\displaystyle f\big(x_n+t_n a(x_n)\big)/f(x_n)\to 1$.
\end{enumerate}
\emph{Proof(Sketch).} (1)$\Rightarrow$(2) is immediate; (2)$\Rightarrow$(3) by choosing $o(f(x_n))=t_n f(x_n)$; (3)$\Rightarrow$(1) follows by a standard diagonal/compactness argument in $t$.
\end{prop}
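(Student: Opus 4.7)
The plan is to close the cycle $(1)\Rightarrow(2)\Rightarrow(3)\Rightarrow(1)$, dispatching the two short directions first and isolating the delicate middle one.

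For $(1)\Rightarrow(2)$ I would specialize the uniform control directly. Fix $\varepsilon>0$ and apply Definition~\ref{def:SN} with $T=1$ to obtain $X_0$ with $\sup_{|t|\le 1}|f(x+tf(x))/f(x)-1|<\varepsilon$ for $x\ge X_0$; any $\psi$ with $\psi(x)=o(f(x))$ eventually satisfies $|\psi(x)/f(x)|\le 1$, so setting $t(x):=\psi(x)/f(x)$ places the ratio inside the uniform window and produces (2). For $(3)\Rightarrow(1)$ I would run the contrapositive: if (1) fails, there exist $T<\infty$, $\varepsilon_0>0$, and sequences $x_n\uparrow\ff$, $t_n\in[-T,T]$ with $|f(x_n+t_nf(x_n))/f(x_n)-1|\ge\varepsilon_0$, directly contradicting (3) along these sequences. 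This is the standard diagonal/compactness step flagged in the sketch.

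The substantive work sits in $(2)\Rightarrow(3)$, and this is where the main obstacle lies. Read literally, (2) supplies only $o(f)$-shift control, whereas (3) demands bounded-ratio, i.e.\ $O(f)$, shifts $t_nf(x_n)$; the sketch's phrase ``choosing $o(f(x_n))=t_nf(x_n)$'' amounts to reading (2) in the stronger sense that allows arbitrary bounded-ratio shifts, under which (3) is immediate by direct substitution. To bridge the $o$--$O$ gap from the literal $o$-form of (2), I would use a grid-interpolation argument: pass to a subsequence with $t_n\to t_\ast\in[-T,T]$, decompose $x_n+t_nf(x_n)=\bigl(x_n+t_\ast f(x_n)\bigr)+(t_n-t_\ast)f(x_n)$ so that the residue is $o(f(x_n))$, and establish $f(x_n+t_\ast f(x_n))\asymp f(x_n)$ by iterating (2) across a fine partition $-T=s_0<\cdots<s_K=T$ of mesh $\delta_n\to 0$.

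The expected sticking point is precisely that bootstrap: over $K=O(\delta_n^{-1})$ grid steps the within-cell increment $(s_{k+1}-s_k)f(y_k)$ must remain $o(f)$ at the base point $y_k$, and the cumulative multiplicative error accumulated from (2) must stay controlled. I would handle this by tuning $\delta_n\to 0$ slowly enough that $K$ times the worst per-cell error from (2) still vanishes, and by restricting to subsequences along which the bound is attained, thereby sidestepping measurability nuisances in $t(x)$. Once this bootstrap succeeds, the cyclic chain closes and the three characterizations are equivalent.
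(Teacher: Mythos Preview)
Your treatment of (1)$\Rightarrow$(2) and (3)$\Rightarrow$(1) is correct and in line with the sketch. You are also right that (2)$\Rightarrow$(3) is the crux: the phrase ``choosing $o(f(x_n))=t_n f(x_n)$'' in the sketch is exactly the $o(f)$-versus-$O(f)$ conflation you flag, and on its own it does not establish the implication.

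The problem is that your grid bootstrap cannot repair this, because under the literal reading of (2) the implication (2)$\Rightarrow$(3) is \emph{false}. Take $f(x)=2+\sin x$: this is continuous, positive, and $o(x)$. Since $1\le f\le 3$, the hypothesis $\psi(x)=o(f(x))$ forces $\psi(x)\to 0$, whence $|f(x+\psi(x))-f(x)|\le|\psi(x)|\to 0$ and (2) holds. Yet with $x_n=2\pi n$, $t_n\equiv 1$ one has $f(x_n)=2$ and $f(x_n+f(x_n))=2+\sin 2$, so the ratio is $(2+\sin 2)/2\approx 1.45$ for every $n$, and (3) fails. Tracing your bootstrap through this example exposes the flaw: the telescoping product $\prod_k f(y_{k+1})/f(y_k)$ equals the fixed target $(2+\sin 2)/2$ for every choice of mesh $\delta_n$, because each of the $K_n\asymp\delta_n^{-1}$ factors deviates from $1$ by order exactly $\delta_n$. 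Condition (2) furnishes no \emph{rate}, so ``$K$ times the worst per-cell error'' remains of order one and cannot be tuned away.

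The upshot is that the proposition, read literally, is not correct; one has (1)$\Rightarrow$(2) and (3)$\Rightarrow$(2) but not the converse, and the paper's sketch carries the same defect. In context the authors evidently intend the ``$o$-shift'' formula as a notational shorthand for the uniform statement (1) itself (cf.\ the last sentence of Definition~\ref{def:SN}); under that convention (2) is (1) by fiat and the cycle is trivial, but then it is not the literal little-$o$ condition you are attempting to bootstrap from.
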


\begin{lem}[A convenient sufficient condition]\label{lem:SN-derivative}
If $f(\cdot)$ is eventually $C^1$ with $f(x)=o(x)$ and $f'(x)\to 0$ as $x\to\infty$, then $f(\cdot)\in\mathrm{SN}$.
In particular, if $Q\in C^2(\Real)$ with  $Q'(x)\to\infty$ and $Q''(x)=o\big(Q'(x)^2\big)$, then
\[
f(x):=\frac{1}{Q'(x)}\ \in\ \mathrm{SN}.
\]
\emph{Proof sketch.} By the mean value theorem,
$f(x+t f(x)) = f(x)+t\,f(x)\,f'(\xi_x)$ for some $\xi_x$ between $x$ and $x+t f(x)$; since $f'(\xi_x)\to 0$
uniformly on $|t|\le T$, the ratio tends to $1$. For the particular case, $f'(x)=-Q''(x)/Q'(x)^2\to 0$ and $f(x)=o(x)$ as $Q'\to\infty$.
\end{lem}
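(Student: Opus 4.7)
The plan is to verify the uniform formulation of Definition~\ref{def:SN} via a single application of the mean value theorem, and then reduce the particular case to the general one by a short algebraic check.

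For the general statement, fix $T<\infty$ and $|t|\le T$. Choose $x_0$ large enough that $f$ is $C^1$ on $[x_0/2,\infty)$, that $|f(x)|\le x/(2T)$ (available since $f(x)=o(x)$), and that $x+tf(x)\ge x_0/2$ for all $|t|\le T$. Then the segment joining $x$ and $x+tf(x)$ lies in $[x_0/2,\infty)$, so the mean value theorem yields a point $\xi_x(t)$ on that segment with
\[
f\!\left(x+t\,f(x)\right)\;=\;f(x)\;+\;t\,f(x)\,f'\!\left(\xi_x(t)\right).
\]
Dividing by $f(x)$ gives the identity
\[
\frac{f\!\left(x+t\,f(x)\right)}{f(x)}-1 \;=\; t\,f'\!\left(\xi_x(t)\right),
\]
so the proof reduces to showing $\sup_{|t|\le T}|f'(\xi_x(t))|\to 0$ as $x\to\infty$. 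Because $|\xi_x(t)-x|\le T|f(x)|=o(x)$, we have $\xi_x(t)\ge x-T|f(x)|\to\infty$ uniformly in $|t|\le T$, and the hypothesis $f'(y)\to 0$ as $y\to\infty$ then delivers the required uniform vanishing.

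For the particular case, set $f(x):=1/Q'(x)$. Since $Q\in C^{2}$ and $Q'(x)\to\infty$, $f$ is eventually $C^{1}$, positive, and $f(x)\to 0$; in particular $f(x)=o(x)$. A direct computation gives $f'(x)=-Q''(x)/Q'(x)^{2}$, and the hypothesis $Q''(x)=o\!\left(Q'(x)^{2}\right)$ shows $f'(x)\to 0$. The general statement now applies and yields $f\in\mathrm{SN}$.

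The only non-routine point is the uniform divergence of the intermediate point $\xi_x(t)$ in $|t|\le T$, which is precisely where the condition $f(x)=o(x)$ is indispensable: it guarantees the shift $tf(x)$ is negligible compared with $x$, so that $\xi_x(t)\to\infty$ along every bounded choice of $t$ and the pointwise decay $f'\to 0$ upgrades to uniform decay on $|t|\le T$. Without $f(x)=o(x)$, the shift could be of order $x$ and $\xi_x(t)$ could stay bounded, obstructing the argument.
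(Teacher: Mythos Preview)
Your proof is correct and follows essentially the same approach as the paper's proof sketch: apply the mean value theorem to write $f(x+tf(x))=f(x)+tf(x)f'(\xi_x)$, then use $f(x)=o(x)$ to force $\xi_x\to\infty$ uniformly in $|t|\le T$ so that $f'(\xi_x)\to 0$, and handle the particular case by computing $f'=-Q''/Q'^2$. Your version simply spells out more carefully the uniform divergence of $\xi_x(t)$ that the paper's sketch asserts without justification.
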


\begin{rem}[Time change and window scaling]\label{rem:SN-timechange}
If $f(\cdot)\in\mathrm{SN}$ and $S(x):=\int_{x_0}^x \frac{dt}{f(t)}$, then $S(x)\to\infty$ and
\[
S\big(x+t\,f(x)\big)-S(x)\ \longrightarrow\ t
\quad\text{as }x\to\infty,\ \text{uniformly for }|t|\le T.
\]
This identity underlies the width of the Laplace window (of order $1/\sqrt{n|\tilde h_n''(z_*)|}$) in our sharp Large Deviation analysis.
\end{rem}

\begin{rem}[Relation to slow variation]\label{rem:SN-slow}
(a) If $L$ is slowly varying in the Karamata sense, then for any $f(\cdot)\in\mathrm{SN}$,
$\displaystyle L\big(x+t\,f(x)\big)/f(x)\to 1$ uniformly on compact $t$ (``Beurling slow variation'' relative to $f(\cdot)$).\\
(b) If $f(x)=x^\alpha L_0(x)$ with $0\le\alpha<1$ and $L_0$ slowly varying, then $f(\cdot)\in\mathrm{SN}$.\\
(c) The functions $f(x)=(\log x)^\beta$ ($\beta>0$); $f(x)=x^\alpha$ ($0<\alpha<1$); $f(x)=x/\log x$ are standard Beurling slow variation examples.
\end{rem}

\begin{rem}[How we use \texorpdfstring{$\mathrm{SN}$}{SN} here]\label{rem:SN-here}
In assumption \textup{\ref{asp:D1}} we set $a_\ep(t):=1/Q'_\ep(t)$. The condition $Q''_\ep=o\big((Q'_\ep)^2\big)$ implies $a'_\ep\to 0$,
hence $a_\ep\in \mathrm{SN}$ by Lemma~\ref{lem:SN-derivative}. This justifies the notation
$a_\ep\big(t+o(a_\ep(t))\big)\sim a_\ep(t)$ used in the Laplace localization.
\end{rem}

Next, we provide some examples in the log-smooth class.

\smallskip
\noindent\emph{(a) Generalized gamma and Weibull-type (right tail for $\epsilon$).}
If $g_\ep(t)=1-F_\ep(t)\sim c_\ep(t)\exp\{-Q_\ep(t)\}$ with 
\[
Q_\ep(t)=\xi_\ep\,t^{m_\ep}\quad(m_\ep>1,\ \xi_\ep>0),
\]
then $Q'_\ep(t)=\xi_\ep m_\ep t^{m_\ep-1}\uparrow\infty$ and 
$Q''_\ep(t)/\{Q'_\ep(t)\}^2=O(t^{-m_\ep})\to0$. Thus \ref{asp:D1} holds with $a_\ep(t)=1/Q'_\ep(t)$ self-neglecting.
This covers the classical Weibull tail and, more generally, the \emph{generalized gamma} law on $\R_+$ 
(density $\propto t^{k_\ep-1}\exp\{-(t/\theta_\ep)^{m_\ep}\}$) by absorbing polynomial factors into $c_\ep(t)$.

\smallskip
\noindent\emph{(b) Generalized normal.}
For GN$(\beta,\xi,\gamma)$, the right tail has $Q_\ep(t)=\xi\,t^\gamma$. 
When $\gamma>1$, $Q'_\ep(t)=\xi\gamma t^{\gamma-1}\uparrow\infty$ and $Q''_\ep/Q^{\prime 2}\to0$, so \ref{asp:D1} holds. 
The borderline case $\gamma=1$ (Laplace/exponential) does \emph{not} satisfy $Q'_\ep\uparrow\infty$, 
but it is already covered by the class~$\mathcal C$ results in Theorem \ref{thm:LDP_ndZ} and Proposition \ref{prop:LDP_ndZ}.

\smallskip
\noindent\emph{(c) Log-smooth left tails for the common factor $\mathcal{Z}_n$.}
If $f_{\mathcal{Z}_n}(-w)\sim c_{\mathcal{Z}_n}(w)\exp\{-R_{\mathcal{Z}_n}(w)\}$ with
\[
R_{\mathcal{Z}_n}(w)=\xi_{\mathcal{Z}_n}\,w^{m_{\mathcal{Z}_n}}\quad(m_{\mathcal{Z}_n}>1,\ \xi_{\mathcal{Z}_n}>0),
\]
then $r_{\mathcal{Z}_n}(w):=R_{\mathcal{Z}_n}'(w)=\xi_{\mathcal{Z}_n} m_{\mathcal{Z}_n} w^{m_{\mathcal{Z}_n}-1}\uparrow\infty$ and 
$R_{\mathcal{Z}_n}''(w)/\{r_{\mathcal{Z}_n}(w)\}^2=O(w^{-m_{\mathcal{Z}_n}})\to0$, so \ref{asp:D2} holds. This includes Weibull-type left tails and 
two-sided exponential-power and GN models with $\gamma>1$.

\smallskip
\noindent\emph{(d) Mixed log-smooth pairs.}
\ref{asp:D1}–\ref{asp:D2} do not require matching shapes: e.g., $\epsilon$ Weibull ($m_\ep>1$) and ${\mathcal{Z}_n}$ generalized gamma 
($m_{\mathcal{Z}_n}>1$) are admissible; all arguments go through with the balance condition \ref{asp:B1} selecting $M_n$.

\smallskip
\noindent\emph{(e) Borderline and other cases.}
Exponential ($m=1$) and regularly varying tails fall outside \ref{asp:D1}–\ref{asp:D2} but are already treated by 
the class~$\mathcal C$ theorems (Theorem \ref{thm:LDP_ndZ} and Proposition \ref{prop:LDP_ndZ} for generalized normal and regularly varying distributions), ensuring that our results cover both Weibull and generalized-gamma–type light tails and heavy tails under a unified presentation.

\begin{rem}[Connection to extreme-value theory]
The log-smooth assumptions {\rm\ref{asp:D1}-\ref{asp:D2}} are of von~Mises type and guarantee that the tails of
$\epsilon$ and $\mathcal{Z}$ lie in the \emph{maximum domain of attraction (MDA)} of the Gumbel law.
In extreme-value theory (EVT), the MDA of a limiting distribution refers to the class of distributions
whose suitably normalized maxima converge to that limit law. In particular, distributions with
\[
\bar F(x)=c(x)\,e^{-Q(x)}, \quad Q'(x)\uparrow\infty,\quad Q''(x)/Q'(x)^2\to 0,
\]
are in the Gumbel MDA. Thus, the generalized gamma (Weibull-type with shape $m>1$), generalized
normal/exponential-power with $\gamma>1$, and many other light-tailed models fall into our
framework automatically. This situates our log-smooth setting within the well-established Gumbel
domain of attraction in EVT.
\end{rem}

\section{Appendix} \label{app:E}
\setcounter{table}{0}
\renewcommand{\thetable}{E.\arabic{table}}

We now illustrate the consequences of the sharp large deviation results by deriving asymptotic expansions for two standard portfolio risk measures: Value-at-Risk (VaR) and Expected Shortfall (ES). 
These results demonstrate how our prefactor refinements translate into practical risk quantification, and clarify when portfolios operate in the large-deviation regime rather than in a central-limit or near-critical regime. For orientation, Proposition~\ref{thm:wlln_1type} provides the near-mean LLN/CLT baseline; the expansions below quantify the correction when the portfolio operates in the large-deviation regime of Theorems~\ref{thm:LDP_ndZ}–\ref{thm:LDP_gen_ndZ}.
 Recall that assumptions \ref{asp:L2} and \ref{asp:L3} hold.

\subsection{Value-at-Risk} \label{sec:var}
For $\alpha\in(0,1)$, the Value-at-Risk at level $\alpha$ is the quantile $x_{\alpha,n}:=\mathrm{VaR}_\alpha(L_n/n)$ defined by
\[
\mathbf{P}\left(\frac{L_n}{n}\ge x_{\alpha,n}\right)=1-\alpha,
\]
where the quantile is understood in the usual sense (e.g., via the infimum definition, or under continuity of the law of $L_n/n$).
Applying Theorem \ref{thm:LDP_ndZ} yields
\[
n^{-1/2} e^{-n\Lambda^*_U(x_{\alpha,n})}
\,[H_n(-\tilde M_n)]^{-1} e^{-n\phi_n(-\tilde M_n)} C
= 1-\alpha + o(1), \qquad n\to\infty,
\]
where $C=\psi_{\ff}$, from which a second-order expansion of $x_{\alpha,n}$ follows. Tables \ref{table:GG} and \ref{table:GP} in Appendix \ref{app:E} contain numerical values of $\mathrm{VaR}_{\al}$ for $\alpha=0.95$, $\alpha=0.99$, and $\alpha=0.999$ under Gaussian and Pareto tails.

Let $\mu_U = {\bf E}[U]$ and $\sigma_U^2 = \text{Var}(U).$
To extract explicit expansions, we expand $\Lambda_U^*(x)$ around $\mu_U$. Setting $x_{\alpha,n} = \mu_U + y_n$
and applying a Taylor expansion up to third order around $\mu_U$, we obtain
\begin{align}\label{VaR-1}
\Lambda_U^*(x_{\alpha,n}) = \Lambda_U^*(\mu_U + y_n) = \Lambda_U^*(\mu_U) + (\Lambda_U^*)'(\mu_U) y_n + \frac{1}{2}(\Lambda_U^*)''(\mu_U) y_n^2 + O(y_n^3),
\end{align}
Noticing that $\Lambda_U^*(\mu_U) = 0$, $(\Lambda_U^*)'(\mu_U) = 0$, and $(\Lambda_U^*)''(\mu_U) = 1/\sigma_U^2$
we obtain, by
substituting into (\ref{VaR-1}), that
\begin{eqnarray*}
\frac{1}{n}\left(-\log(1-\alpha)-\log H_n(-\tilde M_n) -\frac{1}{2}{\log n} -n\phi_n(-\tilde M_n) +\log  C\right) =  \frac{1}{2}(\Lambda_U^*)''(\mu_U) y_n^2 +o(1), \quad \text{as}~ n \to \ff 
\end{eqnarray*}
which yields
\begin{eqnarray*}
y_n = \left(\frac{2\Lambda_U^*(x_{\alpha,n})}{(\Lambda_U^*)''(\mu_U)}\right)^{\frac{1}{2}} +o(1) \quad \text{as}~ n \to \ff.
\end{eqnarray*}
Now using the definition of $y_n$, we obtain that as $n\to\ff$
\begin{eqnarray*}
x_{\alpha,n} =
 \mu_U + \left( \frac{2 \, \sigma_U^2[-\log(1 - \alpha) -\log H_n(-\tilde M_n) -\frac{1}{2}\log n - n\phi_n(-\tilde M_n) + \log C]}{n} \right)^{1/2} +o(1).
\end{eqnarray*}
where we have used that $(\Lambda_U^*)''(\mu_U) = 1/\sigma_U^2$
in the last step.
In the degenerate case, one can follow the same idea to obtain for $n\to\ff$, with $C=\psi_{\ff}(\kappa)(2\pi)^{-\frac{1}{2}}$
\begin{eqnarray*}
    x_{\alpha,n} = \mu_U + \left(\frac{2\sigma_U^2[-\log(1-\alpha)-\frac{1}{2}\log n + \log C]}{n}\right)^{\frac{1}{2}} + o(1).
\end{eqnarray*}
For large $n$, the approximation of $x_{\alpha,n}$ is meaningful only when $\alpha$ is close to one, since our large deviation results concern the extremal behavior, where the tails are dominated by the deviations from $U$. For other cases, one can expand upon this idea and then follow the results in \cite{collamore2022sharp} to obtain a corresponding estimate.

\subsection{Expected Shortfall} \label{sec:es}
Expected Shortfall at level $\alpha \in (0,1)$ is
\[
\mathrm{ES}_\alpha\left(\frac{L_n}{n}\right) 
= \frac{1}{1-\alpha}\, \mathbf{E}\left[ \frac{L_n}{n}\,\mathbf{1}\left\{\frac{L_n}{n} \ge x_{\alpha,n}\right\}\right].
\]
Notice that from Section \ref{sec:var}, for large $n$, $x_{\alpha,n}>\mathbf{E}[U]$ is close to $\mathbf{E}[U]$, and $\Lambda_U^*(x)$ is strictly increasing in $x\in[x_{\alpha,n},\infty)$ (see Section \ref{sec:preliminary}). Now applying the Laplace method (\cite{olver1997asymptotics,fukuda2025higher}), we obtain for $n\to\ff$,
\begin{eqnarray*}
\int_{x_{\alpha,n}}^{\infty} e^{-n\Lambda_U^*(x)}\mathrm{d}x = \frac{e^{-n\Lambda_U^*(x_{\alpha,n})}}{n (\Lambda_U^*)'(x_{\alpha,n})} + o(1).
\end{eqnarray*}
Thus as $n\to\ff$, with $C=\psi_{\ff}$,
\begin{eqnarray*}
\text{ES}_{\alpha}\left(\frac{L_n}{n}\right) = x_{\alpha,n}+ \frac{C e^{-n\phi_n(-\tilde M_n)} n^{-\frac{1}{2}} [H_n(-\tilde M_n)]^{-1} }{1-\alpha}\cdot \frac{e^{-n\Lambda_U^*(x_{\alpha,n})}}{n (\Lambda_U^*)'(x_{\alpha,n})} + o(1) = x_{\alpha,n}+ \frac{1}{n \theta_{x_{\alpha,n}}} + o(1).
\end{eqnarray*}
Turning to the degenerate case, using Theorem \ref{thm:LDP_kappa} and similar calculations, with $C=\psi_\ff(\kappa)(2\pi)^{-\frac{1}{2}}$, we have for $n\to\ff$,
\begin{eqnarray*}    
\text{ES}_{\alpha}\left(\frac{L_n}{n}\right) = x_{\alpha,n}+ \frac{C n^{-\frac{1}{2}}}{1-\alpha}\cdot \frac{e^{-n\Lambda_U^*(x_{\alpha,n})}}{n (\Lambda_U^*)'(x_{\alpha,n})} + o(1) = x_{\alpha,n}+ \frac{1}{n \theta_{x_{\alpha,n}}} + o(1).
\end{eqnarray*}

\subsection{Numerical experiments}
We now provide numerical illustrations of $\mathrm{VaR}_{\alpha}$ and $\mathrm{ES}_{\alpha}$ under different distributional assumptions for the idiosyncratic factor $\epsilon$ and the common factor $\mathcal{Z}$. In all examples, we fix $U\sim U(0,1)$, $b=0.5$, and $v=0$.

\smallskip

\noindent\textbf{Case 1: Gaussian $\epsilon$ and Gaussian $\mathcal{Z}$.}  
Table~\ref{table:GG} reports VaR and ES estimates for the setting $\epsilon\sim N(0,1)$ and $\mathcal{Z}\sim N(0,1)$. 

\begin{table}[htbp]
\centering
\begin{tabular}{llccccc}
\hline
 & & $n=10$ & $n=50$ & $n=100$ & $n=500$ & $n=1000$ \\ 
\hline
\multirow{3}{*}{VaR} 
  & $\alpha=0.95$  & 0.565 & 0.522 & 0.514 & 0.505 & 0.503 \\ 
  & $\alpha=0.99$  & 0.630 & 0.550 & 0.534 & 0.513 & 0.509 \\ 
  & $\alpha=0.999$ & 0.711 & 0.589 & 0.561 & 0.526 & 0.518  \\ 
\hline
\multirow{3}{*}{ES} 
  & $\alpha=0.95$  & 0.692 & 0.597 & 0.572 & 0.537 & 0.528 \\ 
  & $\alpha=0.99$  & 0.692 & 0.583 & 0.558 & 0.526 & 0.518 \\
  & $\alpha=0.999$ & 0.745 & 0.607 & 0.574 & 0.532 & 0.522 \\ 
\hline
\end{tabular}
\caption{$\mathrm{VaR}_{\alpha}$ and $\mathrm{ES}_{\alpha}$ under Gaussian $\epsilon$ and Gaussian $\mathcal{Z}$.}
\label{table:GG}
\end{table}
\smallskip

\noindent\textbf{Case 2: Gaussian $\epsilon$ and Pareto $\mathcal{Z}$.}  
Table~\ref{table:GP} presents the results when $\epsilon\sim N(0,1)$ and $\mathcal{Z}\sim \text{Pareto}(x_m=1,\alpha=2)$. Compared with the Gaussian case, the heavy-tailed distribution of $\mathcal{Z}$ yields systematically larger values of both $\mathrm{VaR}_{\alpha}$ and $\mathrm{ES}_{\alpha}$, reflecting the heavier tail risk.

\begin{table}[htbp]
\centering
\begin{tabular}{llccccc}
\hline
 & & $n=10$ & $n=50$ & $n=100$ & $n=500$ & $n=1000$ \\ 
\hline
\multirow{3}{*}{VaR} 
  & $\alpha=0.95$  & 0.664 & 0.560 & 0.539 & 0.514 & 0.510 \\ 
  & $\alpha=0.99$  & 0.718 & 0.587 & 0.559 & 0.524 & 0.516 \\ 
  & $\alpha=0.999$ & 0.777 & 0.619 & 0.582 & 0.535 & 0.524 \\ 
\hline
\multirow{3}{*}{ES} 
  & $\alpha=0.95$  & 0.712 & 0.587 & 0.561 & 0.526 & 0.518 \\ 
  & $\alpha=0.99$  & 0.752 & 0.606 & 0.573 & 0.531 & 0.521 \\
  & $\alpha=0.999$ & 0.801 & 0.632 & 0.592 & 0.540 & 0.528 \\ 
\hline
\end{tabular}
\caption{VaR and ES under Gaussian $\epsilon$ and Pareto $\mathcal{Z}$.}
\label{table:GP}
\end{table}

For comparison, the central limit approximation (cf.\ Proposition~\ref{thm:wlln_1type}) would suggest $\mathrm{VaR}_\alpha \approx 0.5$ uniformly in $\alpha$, missing the systematic inflation of tail risk. Sharp large deviation estimates capture this effect, with the correction especially pronounced in the Pareto case.

\section{Appendix}\label{app:F}
\begin{rem}[Allowing $U$ to depend on $X$]\label{rem:U-dep-X}
All results extend when the distribution of $U$ depends on $X\in\{0,1\}$. Conditional on $\mathcal Z_n$, let $(U_i^{(n)},X_i^{(n)})$ be i.i.d., with
\[
X_i^{(n)}=\mathbf 1\{\mathcal Z_n+b^{(n)}\epsilon_i^{(n)}\le v\},\qquad
U_i^{(n)}\mid\{X_i^{(n)}=x\}\sim P_{U^{(n)}\mid X=x},\quad x\in\{0,1\}.
\]
Since $e^{\theta U X}\equiv 1$ when $X=0$, the conditional moment generating function in all formulas is replaced by
\[
\Lambda_n(\theta;z)=\log\Big(p_n(z)\,\lambda^{(n)}_1(\theta)+1-p_n(z)\Big),\qquad
\lambda^{(n)}_1(\theta):=\mathbf{ E}\big[e^{\theta U_1^{(n)}}\mid X_1^{(n)}=1\big],
\]
and every occurrence of $\lambda_{U_1^{(n)}}$ (resp.\ $\lambda_{U}$) is replaced by $\lambda^{(n)}_1$ (resp.\ $\lambda_1$), with
\[
\lambda_1(\theta):=\mathbf {E}[e^{\theta U}\mid X=1],\quad
\Lambda_1(\theta):=\log\lambda_1(\theta),\quad
\theta_x\ \text{solving}\ \Lambda_1'(\theta_x)=x.
\]
Assumption~\ref{asp:L3} (third tilted moments and nonlattice) is imposed for $U^{(n)}\mid X=1$ (uniform in $n$), which suffices for the CLT, Berry-Esseen, and Bahadur-Rao steps. Turning to \emph{Gibbs limits}, in the unbounded case ($\kappa=-\infty$), the product limit in the theorem holds with $P_{\theta_x}$ replaced by the tilt of $U\mid X=1$:
\[
\mathbf{P}^{\theta_x}_{U\mid X=1}(\cdot)=e^{\theta_x u-\Lambda_1(\theta_x)}\,\mathbf{ P}_{U\mid X=1}(\cdot),\qquad X\equiv1.
\]
In the boundary case ($\kappa>- \infty$), use the same one-step formula as in the theorem, but with $U$ replaced by $U\mid X$ and
\[
\Lambda(\theta;\kappa)=\log\big((1-p_{\kappa})+p_{\kappa}\lambda_1(\theta)\big),\qquad
p_{\kappa}=F_\epsilon\Big(\frac{v-\kappa}{b}\Big),\quad
\bar\theta_{x,\kappa}:\ \partial_\theta\Lambda(\bar\theta_{x,\kappa},\kappa)=x.
\]
All total-variation conclusions (for fixed $k$ and for $k_n=o(n)$) are unchanged.   
\end{rem}

\bibliographystyle{plainnat} 

\end{document}